\documentclass[reqno]{amsart}

\usepackage{amsmath,amssymb,enumerate, amsfonts,mathtools}

\usepackage{epsfig,amsthm}
\usepackage[dvipsnames]{xcolor}
\usepackage{soul}

\textwidth 14cm
\oddsidemargin 12mm

\newtheorem{maintheorem}{Theorem}

\newtheorem*{theorem*}{Theorem}
\newtheorem*{corollary*}{Corollary}
\newtheorem{theorem}{Theorem}[section]

\newtheorem{example}[theorem]{Example}
\newtheorem{proposition}[theorem]{Proposition}
\newtheorem{lemma}[theorem]{Lemma}
\newtheorem{corollary}[theorem]{Corollary}

\newtheorem{definition}[theorem]{Definition}

\newtheorem{claim}[theorem]{Claim}

 \theoremstyle{remark}
\newtheorem{remark}[theorem]{Remark}

\newcounter{jbStepCounter}

\newcommand\diam{{\operatorname{diam}}}
\newcommand\Diff{\operatorname{Diff}}
\newcommand\eps{\varepsilon}

\newcommand\cR{\mathcal R}

\newcommand\cL{\mathcal L}
\newcommand\cT{\mathcal T}

\newcommand\loc{{\operatorname{loc}}}

\newcommand\NN{{\mathbb N}}

\newcommand\Prob{{\mathbb P}}

\newcommand\RR{{\mathbb R}}

\renewcommand\top{{\operatorname{top}}}

\newcommand\ZZ{{\mathbb Z}}

\newcommand\hF{\widehat{F}}

\newcommand\hK{\widehat K}
\newcommand\hW{\widehat W}
\newcommand\hz{\widehat z}

\newcommand\Neutral{\mathfrak N}

\newcommand\vf{\varphi}
\newcommand\hm{\widehat{m}}
\renewcommand\emptyset{\varnothing}
\newcommand\dist{{\mathrm{dist}}}
\newcommand\wh{\widehat}
\newcommand\R{\mathbb R}

\newcommand\weakstar{weak-$*$}
\newcommand\Card{\operatorname{Card}}
\newcommand\rbd[1]{\overset{\scriptscriptstyle #1-\text{bd}}\longrightarrow}

\begin{document}

\title[Continuity properties of Lyapunov exponents]{Continuity properties of Lyapunov exponents\\ for surface diffeomorphisms}

\begin{abstract}
We study the entropy and  Lyapunov exponents of
invariant measures $\mu$ for smooth surface diffeomorphisms $f$, as functions of $(f,\mu)$. The main result is an inequality relating
the discontinuities  of these functions.
One consequence  is that for a $C^\infty$  surface diffeomorphisms,
on any set of ergodic measures with entropy bounded away from zero, continuity of the entropy implies continuity of the exponents.
Another consequence is the upper semi-continuity of the Hausdorff dimension on the set of  ergodic invariant measures with entropy bounded away from zero.
We also obtain a new criterion for the existence of SRB measures with positive entropy.

\end{abstract}

\renewcommand{\keywordsname}{Keywords}

\author{J\'er\^ome Buzzi}
\address{J.~Buzzi, \rm Laboratoire de Math\'ematiques d'Orsay, CNRS - UMR 8628.
Universit\'e Paris-Saclay, 91405 Orsay, France. \emph{E-mail address:}
\tt{jerome.buzzi@math.cnrs.fr}}
\thanks{J.B. was partially supported by the ISDEEC project ANR-16-CE40-0013}

\author{Sylvain Crovisier}
\address{S.~Crovisier, \rm Laboratoire de Math\'ematiques d'Orsay, CNRS - UMR 8628.
Universit\'e Paris-Saclay, 91405 Orsay, France. \emph{E-mail address:}
\tt{sylvain.crovisier@math.u-psud.fr}}
\thanks{S.C. was partially  supported  by  the  ERC  project 692925 \emph{NUHGD}}

\author{Omri Sarig}
\address{O.~Sarig, \rm Weizmann Institute of Science, 234 Herzl Street, 7610001 Rehovot, Israel. \emph{E-mail address:}
\tt{omsarig@gmail.com}}

\date{\today}
\thanks{Part of this work was done when O.S. was visiting Universit\'e Paris-Sud and IH\'ES, and he would like to thank these institutions for their hospitality and excellent working conditions. O.S. also acknowledges partial support of ISF grant  1149/18. }

\subjclass[2010]{37C40, 37D30, 37A35, 37D35}
\keywords{Dynamical systems; smooth ergodic theory; entropy; Lyapunov exponents}

\maketitle

\newcommand\HC{\operatorname{HC}}
\newcommand\cO{\mathcal O}
\newcommand\Probhyp{\Prob_{\rm hyp}}
\newcommand\heps{\widehat \eps}
\newcommand\hM{\widehat M}
\newcommand\hP{\widehat P}
\newcommand\hU{\widehat U}
\newcommand\hV{\widehat V}
\newcommand\hY{\widehat Y}
\newcommand\hf{\widehat f}
\newcommand\hg{\widehat g}
\newcommand\hx{\widehat x}
\newcommand\hy{\widehat y}
\newcommand\hlambda{\widehat \lambda}
\newcommand\hsigma{\widehat \sigma}
\newcommand\hpi{\widehat \pi}
\newcommand\hmu{\widehat \mu}
\newcommand\hnu{\widehat \nu}
\newcommand\Lip{\operatorname{Lip}}
\newcommand{\supnorm}{{\sup}}
\newcommand\oh{\overline{h}}
\newcommand\Crr{$C^r$}
\newcommand\cU{\mathcal U}
\newcommand{\cV}{{\mathcal V}}

\newcommand\proofitem{\medbreak\noindent--\ }
\newcommand\magenta{\color{magenta}}

\newcommand\mumax{\mu_{\max}}

\section*{Introduction}

Entropy and Lyapunov exponents play a major role in the study of  differentiable dynamical
systems, and their dependence on the measure and the map is of great interest.
This dependence is sometimes continuous, but not always (for entropy, see
~\cite{misiurewicz,Newhouse1990,BuzziPhD,Boyle-Downarowicz-2004,Downarowicz-Newhouse-2005,Burguet},
and for Lyapunov exponents, see ~\cite{Ruelle-exponent-continuity,Furstenberg-Kifer,Bochi,Bocker-Viana, Avila-Viana, Viana-survey}).
{While there are many works relating the values of the  entropy to the  values of the Lyapunov exponents ~\cite{Ruelle-Entropy-Inequality,pesin-formula,Ledrappier-Young-I,Ledrappier-Young-II},
the relation between the (dis)continuity of these objects as functions of the measure and the diffeomorphism has not yet been studied. The purpose of this work is to fill this gap, in the smooth two-dimensional case.} 
For instance, we show:

\begin{theorem*}
Let $f$ be a $C^\infty$ diffeomorphism of a compact surface without boundary.  Let
$\nu_1,\nu_2,\dots$ be ergodic measures for $f$, which  converge in the \weakstar\  topology
to an ergodic measure $\mu$ with positive entropy.

If the entropy of $\nu_k$ converges to the entropy of $\mu$,
then the Lyapunov exponents of $\nu_k$ converge to the Lyapunov exponents of $\mu$.
\end{theorem*}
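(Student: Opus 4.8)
The plan is to combine Pesin theory and the Ledrappier--Young dimension formula with the finite‑smoothness entropy estimates of Yomdin and Newhouse, which become sharp in the $C^\infty$ category.

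\smallskip
\noindent\emph{Reductions.} Since $f$ is $C^\infty$ on a compact surface, $\log|\det Df|$ and $\log\|Df^{\pm1}\|$ are continuous. By the Margulis--Ruelle inequality applied to $f$ and to $f^{-1}$, every ergodic measure of positive entropy is hyperbolic and satisfies $\lambda^+\ge h>0>-h\ge\lambda^-$; so $\mu$ is hyperbolic, and, since $h(\nu_k)\to h(\mu)>0$, so are the $\nu_k$ for $k$ large, with $\lambda^+(\nu_k)\ge h(\nu_k)$ and $-\lambda^-(\nu_k)\ge h(\nu_k)$. As $\lambda^+(\nu)+\lambda^-(\nu)=\int\log|\det Df|\,d\nu$ depends continuously on $\nu$ for the \weakstar\ topology, $\lambda^+(\nu_k)+\lambda^-(\nu_k)\to\lambda^+(\mu)+\lambda^-(\mu)$, so it is enough to prove $\lambda^+(\nu_k)\to\lambda^+(\mu)$. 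Writing $\lambda^+(\nu)=\inf_n\tfrac1n\int\log\|Df^n\|\,d\nu$ exhibits $\nu\mapsto\lambda^+(\nu)$ as an infimum of \weakstar-continuous functions, hence upper semi‑continuous: $\limsup_k\lambda^+(\nu_k)\le\lambda^+(\mu)$. It remains to show $\liminf_k\lambda^+(\nu_k)\ge\lambda^+(\mu)$.

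\smallskip
\noindent\emph{Reduction to semi‑continuity of the unstable dimension.} For an ergodic hyperbolic measure $\nu$ of a $C^{1+}$ surface diffeomorphism the Ledrappier--Young formula gives $h(\nu)=\lambda^+(\nu)\,\delta^u(\nu)$, where $\delta^u(\nu)\in[0,1]$ is the exact dimension of the conditional measures of $\nu$ along unstable manifolds; symmetrically $h(\nu)=-\lambda^-(\nu)\,\delta^s(\nu)$, and $\dim_H\nu=\delta^u(\nu)+\delta^s(\nu)$ (Young). Hence $\lambda^+(\nu_k)=h(\nu_k)/\delta^u(\nu_k)$ with $h(\nu_k)\to h(\mu)>0$, and $\liminf_k\lambda^+(\nu_k)\ge h(\mu)/\delta^u(\mu)=\lambda^+(\mu)$ will follow once we establish the upper semi‑continuity
\[
\limsup_{k\to\infty}\ \delta^u(\nu_k)\ \le\ \delta^u(\mu).
\]
(Equivalently one may prove $\limsup_k\dim_H\nu_k\le\dim_H\mu$; the stable side then also controls $\lambda^-$.) This inequality is the crux, and it is where $C^\infty$ is used.

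\smallskip
\noindent\emph{Sketch of the semi‑continuity.} Fix $\eps>0$. Using Pesin theory, pick a compact Pesin block $\Lambda$ with $\mu(\Lambda)>1-\eps$ on which the Oseledets splitting $E^s\oplus E^u$ is continuous and obeys uniform hyperbolicity estimates: local invariant manifolds of size $\ge r_0>0$, and $\tfrac1n\log\|Df^n|_{E^u}\|$ within $\eps$ of $\lambda^+(\mu)$ along any orbit segment with endpoints in $\Lambda$ that stays in a fixed neighbourhood $U\supset\Lambda$. Enlarging $\Lambda$ and shrinking $U$, \weakstar\ convergence yields $\nu_k(U)>1-2\eps$ for $k$ large, so a $\nu_k$‑generic orbit spends a $(1-2\eps)$‑fraction of its time $\eps$‑close to $\Lambda$ — expanding along the unstable cone field there at rate $\approx\lambda^+(\mu)$, and at a bounded rate during the $\le2\eps$‑fraction of excursions. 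Now, for a $\nu_k$‑generic $x$ and small $r$, bound the number $N_k(x,r)$ of dynamical $r$‑pieces needed to cover the trace of $\nu_k$ on a local unstable curve through $x$. Since $f\in C^\infty$, Yomdin's reparametrization theorem — in the form used by Newhouse and Burguet to show $C^\infty$ maps are asymptotically $h$‑expansive — makes $N_k(x,r)$ grow like $e^{n(h(\nu_k)+o(1))}$ \emph{without} a smoothness defect, $n$ being the number of iterates that expand scale $r$ to scale $r_0$; and $n\approx(-\log r)/(\lambda^+(\mu)-O(\eps))$ by the expansion estimate. Hence the local unstable dimension of $\nu_k$ at $x$ is $\le h(\nu_k)/\lambda^+(\mu)+O(\eps)$, so $\delta^u(\nu_k)\le h(\mu)/\lambda^+(\mu)+O(\eps)=\delta^u(\mu)+O(\eps)$ for $k$ large. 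Letting $\eps\to0$ finishes the proof.

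\smallskip
\noindent The main obstacle is this last step. The two delicate points are: (i) each $\nu_k$ is only \weakstar-close to $\mu$, so its orbits merely \emph{visit} neighbourhoods of the Pesin block $\Lambda$ rather than being carried by the unstable lamination of $\mu$ — one must control the expansion and, more importantly, the \emph{branching} of unstable curves during excursions away from $\Lambda$, uniformly in $k$; and (ii) the quantitative role of $C^\infty$: it is exactly the vanishing of the Yomdin--Newhouse smoothness defect (the asymptotic $h$‑expansiveness of $C^\infty$ maps) that lets the dimension of the unstable conditionals be read off from the near‑equality $h(\nu_k)\approx h(\mu)$ with no loss. Both hypotheses are essential: when $h(\mu)=0$ the exponents may genuinely jump, and in finite smoothness $C^r$ a defect of order $1/r$ survives.
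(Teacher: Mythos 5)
Your reductions are all correct and standard: hyperbolicity of $\mu$ and of the $\nu_k$ via Ruelle's inequality, continuity of $\lambda^++\lambda^-=\int\log|\det Df|\,d\nu$, upper semicontinuity of $\nu\mapsto\lambda^+(\nu)$ from subadditivity, and the reformulation via Ledrappier--Young as the inequality $\limsup_k\delta^u(\nu_k)\le\delta^u(\mu)$. Note that this inequality is logically upstream of the theorem: the paper proves it as a \emph{consequence}, whereas you propose to establish it directly and then divide. That is a legitimate strategy, but it is exactly where your sketch has a genuine gap.

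The gap is the ``expansion estimate.'' You assert that a $\nu_k$-generic local unstable curve ``expands along the unstable cone field at rate $\approx\lambda^+(\mu)$'' whenever the orbit is $\eps$-close to the Pesin block $\Lambda$ of $\mu$, and at a bounded rate during the remaining $\le 2\eps$-fraction. But the tangent direction of the local unstable manifold of $\nu_k$ at a point $y$ is $E^+(\nu_k,y)$, and nothing forces it to be near $E^+(\mu,\cdot)$ even when $y\in\Lambda$: it may lie close to the stable direction $E^-(\mu,\cdot)$ on a positive, persistent fraction of the orbit, during which the curve is contracted at rate $\approx\lambda^-(\mu)<0$. This alignment --- escape of the lifted measure $\widehat\nu_k^+$ towards $\mathrm{graph}(E^-)$ in the projective bundle --- is precisely what makes $\lambda^+$ drop in the limit, is invisible at the level of the base dynamics, and occurs \emph{inside} $\Lambda$, not only during excursions away from it as your point~(i) suggests. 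Consequently the number $n$ of iterates needed to blow a scale-$r$ piece of the unstable curve up to scale $r_0$ is $\approx(-\log r)/\lambda^+(\nu_k)$, not $(-\log r)/\lambda^+(\mu)$; plugging the correct $n$ into the Yomdin bound $N_k(x,r)\lesssim e^{n(h(\nu_k)+o(1))}$ returns only the Ledrappier--Young tautology $\delta^u(\nu_k)=h(\nu_k)/\lambda^+(\nu_k)$, with no gain. To obtain a nontrivial bound one must show that during the neutral stretches (when $E^+(\nu_k)$ is near $E^-(\mu)$ and expansion is weak or negative) the unstable curve produces essentially no entropy, so that $h(\nu_k)\le\beta\,h(\mu)+o(1)$ where $\beta$ is the asymptotic fraction of non-neutral time, while $\lambda^+(\nu_k)=\beta\,\lambda^+(\mu)+o(1)$. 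Making ``no entropy during a neutral stretch'' precise requires lifting the dynamics to $\hM$ so as to track the tangent direction, a neutral-block decomposition of $\nu_k$-typical orbits, and a sharpened Yomdin-type estimate showing that along a neutral block the $C^r$-complexity of a short unstable curve grows at rate at most $\lambda(\hf)/(r-1)+o(1)$, which vanishes as $r\to\infty$. This is the content of the paper's Propositions on reparametrizations and cannot be replaced by an appeal to asymptotic $h$-expansiveness, which controls tail entropy but says nothing about the alignment of Oseledets directions.
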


This has strong dynamical consequences, some of which we will discuss here, and some of which we will discuss in  a companion paper.
For  example, we have the following  application  to the problem of the existence of SRB measures.
Let $\delta^u(\mu)$ denote the unstable dimension of $\mu$ in the sense of Ledrappier and Young (see Section \ref{ss.applications}).

\begin{corollary*}
Let $f$ be a $C^\infty$ diffeomorphism of a compact surface without boundary.
If there exist ergodic invariant probability measures $\nu_k$, with entropy uniformly bounded away from $0$, and such that
$\delta^u(\nu_k)\to1$,  then $f$ admits an ergodic SRB measure with positive entropy.
\end{corollary*}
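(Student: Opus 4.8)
The plan is to take a \weakstar\ accumulation point $\mu$ of the $\nu_k$, to use $\delta^u(\nu_k)\to1$ to see that $\lim_k h(\nu_k)=\lim_k\lambda^u(\nu_k)$ is a positive number $h_\infty$, and then to invoke the main theorem --- the continuity of the exponents that it supplies --- to produce an ergodic component of $\mu$ satisfying Pesin's entropy formula, which is the desired ergodic SRB measure of positive entropy.

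\medbreak\noindent\emph{Step 1: extraction and first estimates.} Let $c>0$ be a uniform lower bound for the $h(\nu_k)$. After passing to a subsequence we may assume $\nu_k\to\mu$ in the \weakstar\ topology, $h(\nu_k)\to h_\infty$ and $\lambda^u(\nu_k)\to\lambda_\infty$; then $h_\infty\ge c$. Each $\nu_k$ is ergodic of positive entropy, hence hyperbolic (apply Ruelle's inequality to $f$ and to $f^{-1}$), so $\lambda^u(\nu_k)$ is positive and, by Ruelle's inequality, $c\le h(\nu_k)\le\lambda^u(\nu_k)\le\log\sup_x\|D_xf\|$, whence $\lambda_\infty\in[c,\log\sup_x\|D_xf\|]$. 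In dimension two the Ledrappier--Young dimension formula reads $\delta^u(\nu_k)=h(\nu_k)/\lambda^u(\nu_k)$, so $\delta^u(\nu_k)\to1$ together with the boundedness of $\lambda^u(\nu_k)$ forces $h(\nu_k)-\lambda^u(\nu_k)\to0$, i.e.\ $h_\infty=\lambda_\infty$.

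\medbreak\noindent\emph{Step 2: the limit has positive entropy.} Since $f$ is $C^\infty$, the metric entropy $\nu\mapsto h(\nu)$ is upper semi-continuous (Newhouse), so $h(\mu)\ge\limsup_k h(\nu_k)=h_\infty>0$. Likewise $\nu\mapsto\int\lambda^+\,d\nu$, the integrated top Lyapunov exponent, is upper semi-continuous, being the infimum over $n\ge1$ of the \weakstar\ continuous maps $\nu\mapsto\tfrac1n\int\log\|Df^n\|\,d\nu$; hence $\int\lambda^+\,d\mu\ge\lambda_\infty=h_\infty$. In particular, in the ergodic decomposition $\mu=\int\eta\,d\hat\mu(\eta)$ a set of components $\eta$ of positive $\hat\mu$-measure has $h(\eta)>0$; each such $\eta$ is hyperbolic, so $\lambda^u(\eta)=\int\lambda^+\,d\eta>0$ and $h(\eta)\le\lambda^u(\eta)$ by Ruelle, with equality precisely when $\eta$ is an SRB measure (equality in Ruelle's inequality being Ledrappier's characterisation of SRB). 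It remains to find one such $\eta$ for which equality holds; it will automatically have entropy $\ge h_\infty>0$.

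\medbreak\noindent\emph{Step 3: producing an SRB component, and the main obstacle.} Here the main theorem is the decisive input: its quantitative form bounds the drop $\lambda^u(\mu)-\liminf_k\lambda^u(\nu_k)$ along $\nu_k\to\mu$ (for an ergodic limit $\mu$) by a controlled multiple of the entropy drop $h(\mu)-\liminf_k h(\nu_k)$. Fed with Step 1 (where $\liminf_k\lambda^u(\nu_k)=h_\infty$) and with Ruelle's inequality $h\le\lambda^u$, this forces $h(\mu)=\lambda^u(\mu)$, i.e.\ Pesin's entropy formula, so $\mu$ is an ergodic SRB measure of entropy $h(\mu)\ge h_\infty>0$. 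To dispense with ergodicity of $\mu$ one applies the same mechanism --- via the ergodic decomposition, Ruelle's inequality on the components, and the fact that components of zero entropy do not contribute to $h(\mu)$ --- to a positive-$\hat\mu$-measure set of positive-entropy components of $\mu$. The main obstacle is precisely this last reduction: one must rule out that ``$\delta^u(\nu_k)\to1$'' is an artefact of an escape of mass of the $\nu_k$ toward components of $\mu$ --- such as sinks or low-entropy periodic saddles --- on which the defect $\lambda^u-h$ is invisible; the soft semi-continuity facts of Step 2 do not detect this, and it is the inner workings of the main theorem that must be pushed through. When $\mu$ happens to be ergodic with $h(\nu_k)\to h(\mu)$, no reduction is needed and one is exactly in the setting of the Theorem stated in the introduction, which gives $\lambda^u(\nu_k)\to\lambda^u(\mu)$ and hence $h(\mu)=\lambda^u(\mu)$ directly.
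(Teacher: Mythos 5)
Your Steps 1 and 2, and the ergodic case of Step 3, are correct and agree with the paper's logic: from $\delta^u(\nu_k)\to1$ and $c\le h(\nu_k)\le\lambda^+(\nu_k)\le\log\|Df\|_{\sup}$ one gets $\lim_k h(\nu_k)=\lim_k\lambda^+(\nu_k)=:h_\infty\ge c>0$; and when $\mu$ is ergodic, Theorem~\ref{t.ergodic} together with Ruelle's inequality pins down $h(f,\mu)=\lambda^+(f,\mu)$.

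The gap is exactly the point you flag at the end of Step 3 and then do not resolve. You write that ``one applies the same mechanism --- via the ergodic decomposition, Ruelle's inequality on the components, and the fact that components of zero entropy do not contribute to $h(\mu)$.'' This does not work: Theorem~\ref{t.ergodic} compares $(\nu_k)$ to its \emph{ergodic} weak-$*$ limit, and if $\mu$ is not ergodic you cannot apply it to an ergodic component $\eta$ of $\mu$, since $\nu_k\not\to\eta$. Averaging the conclusion over the ergodic decomposition is not available either, because the hypothesis of the theorem fails componentwise. So knowing that some components have positive entropy, and that Ruelle's inequality holds on each component, does not by itself produce a component with $P^u=0$: precisely the escape-of-mass scenario you describe (part of the mass of $\nu_k$ drifting to sinks or to low-entropy, far-from-SRB saddles) is not excluded by anything in Steps 1--3.

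The paper closes this gap with the non-ergodic version, Theorem~\ref{t.ergodic3}. It produces a decomposition $\mu=(1-\beta)\mu_0+\beta\mu_1$ (built from neutral blocks of orbits, not from the ergodic decomposition of $\mu$) with $\beta\in(0,1]$, $h(f,\mu_1)>0$, and the two key identities/inequalities
\begin{equation*}
\lim_k\lambda^+(f,\nu_k)=\beta\,\lambda^+(f,\mu_1),\qquad \lim_k h(f,\nu_k)\le\beta\, h(f,\mu_1).
\end{equation*}
Since $P^u(\nu_k)=h(\nu_k)-\lambda^+(\nu_k)\to0$ (equivalent to $\delta^u(\nu_k)\to1$ given the two-sided bounds on $\lambda^+(\nu_k)$), these give $0=\lim_k P^u(\nu_k)\le\beta\,P^u(f,\mu_1)\le0$ by Ruelle, hence $P^u(f,\mu_1)=0$ and $\beta>0$. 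Finally, writing $\mu_1=\int\mu'_\xi\,d\xi$ and using $\int P^u(f,\mu'_\xi)\,d\xi=0$ with each integrand $\le0$ shows $P^u(f,\mu'_\xi)=0$ a.e.; and $h(f,\mu_1)\ge\beta^{-1}h_\infty\ge h_\infty>0$ forces some such component to have entropy at least $h_\infty$. That component is the required ergodic SRB measure. You should replace your appeal to ``the inner workings of the main theorem'' by an explicit use of Theorem~\ref{t.ergodic3} and this chain of inequalities.
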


\noindent
For other consequences, including the upper semi-continuity of
{the unstable dimension and of the Hausdorff dimension of ergodic measures with positive entropy, see Section \ref{ss.applications}.

\medbreak
These results follow from  inequalities between the multiplicative size of the  defects in continuity of the entropy and the top Lyapunov exponent. These inequalities, which are the main results of this work, are described in detail in the next section.

\section{Main results}
Throughout this paper,  $M$ is a two-dimensional compact  $C^\infty$ Riemannian manifold without boundary. Let $\Diff^r(M)$ denotes the class of $C^r$ diffeomorphisms on $M$ (see \S\ref{ss.size}).

Suppose  $\mu$ is an $f$-invariant probability measure for some $f\in\Diff^1(M)$. The {\em Kolmogorov-Sina{\u\i} entropy} of $\mu$ will be denoted by
$
h(f,\mu)
$.
Almost every $x\in M$ has two well-defined Lyapunov exponents $\lambda^-(f,x)\leq \lambda^+(f,x)$. The {\em Lyapunov exponents of $\mu$}  are defined by
$$
\lambda^-(f,\mu):=\lambda^-_\mu:=\int \lambda^-(f,x)d\mu\ , \ \lambda^+(f,\mu):=\lambda^+_\mu:=\int \lambda^+(f,x)d\mu.
$$
We are interested in the regularity of $(f,\mu)\mapsto h(f,\mu)$ and $(f,\mu)\mapsto\lambda^{\pm}(f,\mu)$.

In the $C^\infty$ case, these functions are semi-continuous. Specifically,
suppose $f_k,f\in\Diff^\infty(M)$, $\nu_k$ are ergodic $f_k$-invariant measures,  $f_k\to f$ in $C^\infty$ and $\nu_k\to\mu$ \weakstar. Then
\begin{equation}\label{usc-ineq}
\begin{aligned}
\limsup\limits_{k\to\infty}h(f_k,\nu_k&)    \;\; \leq\;  h(f,\mu),\\
\limsup\limits_{k\to\infty}\lambda^+(f_k,\nu_k) \leq \lambda^+(f,\mu), \quad &\liminf\limits_{k\to\infty}\lambda^-(f_k,\nu_k) \geq \lambda^-(f,\mu).
\end{aligned}\end{equation}
(See Section \ref{ss.related-work} for the history of these results.)

By \eqref{usc-ineq} and Ruelle's inequality, if $\limsup_k h(f_k,\nu_k)>0$, then $\lambda^+(f,\mu)\geq h(f,\mu)\geq \limsup_k h(f_k,\nu_k)>0$, and
$$ \frac{\limsup\limits_{k\to\infty}\lambda^+(f_k,\nu_k)}{\lambda^+(f,\mu)} \ ,\ \frac{\limsup\limits_{k\to\infty}h(f_k,\nu_k)}{h(f,\mu)}\in (0,1].
$$
We call these quantities the {\em discontinuity ratios}, and think of them as measures for the difference between the two sides of the inequalities in \eqref{usc-ineq}.

We will provide inequalities relating the discontinuity ratio of the entropy to the discontinuity ratio of $\lambda^+$. (It is enough to consider $\lambda^+$, because $\lambda^-(f,\mu)=-\lambda^+(f^{-1},\mu)$.)

\subsection{The ergodic $C^\infty$ case.}
Our results are simplest and strongest when the maps are $C^\infty$ and the limiting measure is ergodic:

\begin{maintheorem}\label{t.ergodic}
For every $k\geq 1$, let $f_k\in\Diff^\infty(M)$ and
let $\nu_k$ be  an $f_k$-ergodic invariant measure. Suppose
 \begin{itemize}
   \item[--] $\lim_k \lambda^+(f_k,\nu_k)$ and $\lim_k h(f_k,\nu_k)$ exist and are  positive,
  \item[--] $f_k$ converge in the $C^\infty$ topology to a diffeomorphism $f\in\Diff^\infty(M)$,
    \item[--] $\nu_k$ converge \weakstar\ to a probability measure $\mu$ (necessarily $f$-invariant).
\end{itemize}
If $\mu$ is $f$-ergodic, then
$\displaystyle
    \lim_{k\to\infty}\frac{h(f_k,\nu_k)}{{h(f,\mu)}} \leq \lim_{k\to\infty}  \frac{\lambda^+(f_k,\nu_k)}{\lambda^+(f,\mu)}.
 $
\end{maintheorem}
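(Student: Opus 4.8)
The plan is to recast the inequality as an upper–semicontinuity statement for the Ledrappier--Young unstable dimension, and then prove that by approximating the $\nu_k$ by horseshoes and comparing conditional measures on unstable manifolds. On a surface, Ruelle's inequality applied to $f_k$ and $f_k^{-1}$ gives $0<h(f_k,\nu_k)\le\min\{\lambda^+(f_k,\nu_k),-\lambda^-(f_k,\nu_k)\}$, and likewise for $(f,\mu)$ since $h(f,\mu)\ge\lim_k h(f_k,\nu_k)>0$ by \eqref{usc-ineq}; so all these measures are nonuniformly hyperbolic and the Ledrappier--Young formula applies, giving $\delta^u(f_k,\nu_k)=h(f_k,\nu_k)/\lambda^+(f_k,\nu_k)$ and $\delta^u(f,\mu)=h(f,\mu)/\lambda^+(f,\mu)\in(0,1]$. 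Writing $h_\infty=\lim_k h(f_k,\nu_k)>0$ and $\lambda^+_\infty=\lim_k\lambda^+(f_k,\nu_k)>0$, we have $\delta^u(f_k,\nu_k)\to h_\infty/\lambda^+_\infty$, and dividing the target inequality by $\lambda^+_\infty\lambda^+(f,\mu)>0$ shows it is \emph{equivalent} to $\lim_k\delta^u(f_k,\nu_k)\le\delta^u(f,\mu)$, i.e.\ to upper semicontinuity of the unstable dimension along the sequence.

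\emph{Reduction to a fixed diffeomorphism.} As $f_k\to f$ in $C^\infty$, the $f_k$ are uniformly bounded in $C^2$ and $h(f_k,\nu_k),\lambda^+(f_k,\nu_k)$ lie in a compact subset of $(0,\infty)$, so the constants in Katok's horseshoe theorem — in the refined form that also approximates $\nu_k$ in the weak-$*$ topology and approximates its Lyapunov exponents — can be taken uniform in $k$. This produces $f_k$-horseshoes $\Lambda_k$ with $k$-uniform hyperbolicity constants and ergodic measures $\rho_k$ supported on $\Lambda_k$ with $\rho_k\to\mu$ weak-$*$, $h(f_k,\rho_k)\to h_\infty$, $\lambda^+(f_k,\rho_k)\to\lambda^+_\infty$; proving the theorem for $(f_k,\rho_k)$ proves it for $(f_k,\nu_k)$. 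Since $\Lambda_k$ is uniformly hyperbolic and $\|f-f_k\|_{C^1}\to0$, a uniform version of structural stability yields a hyperbolic set $\widehat\Lambda_k$ for $f$ and a conjugacy $h_k\colon\Lambda_k\to\widehat\Lambda_k$ with $d(h_k,\mathrm{id})\to0$; then $\widehat\rho_k:=(h_k)_*\rho_k$ is $f$-ergodic, supported on a horseshoe, with $\widehat\rho_k\to\mu$ weak-$*$, $h(f,\widehat\rho_k)=h(f_k,\rho_k)\to h_\infty$ and $\lambda^+(f,\widehat\rho_k)=\lambda^+(f_k,\rho_k)+o(1)\to\lambda^+_\infty$ (the cocycle $Df$ is $C^0$-close to $Df_k$ on a uniformly hyperbolic set). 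So I may assume $f_k\equiv f$: given ergodic hyperbolic measures $\rho_k\to\mu$ supported on horseshoes of a fixed $f\in\Diff^\infty(M)$, with $h(f,\rho_k)\to h_\infty>0$ and $\lambda^+(f,\rho_k)\to\lambda^+_\infty>0$, the goal is $\limsup_k\delta^u(f,\rho_k)\le\delta^u(f,\mu)$.

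\emph{Core estimate.} Fix $\eps>0$ and a Pesin block $R$ for $(f,\mu)$ of measure $>1-\eps$ on which $x\mapsto W^u_\loc(x)$ is a uniform Lipschitz lamination with absolutely continuous holonomies and the $\mu$-conditionals $\mu^u_x$ have pointwise dimension $\delta^u(f,\mu)$ (Ledrappier--Young). It suffices to prove the reverse scaling bound $\mu^u_x(B(y,r))\le C\,r^{\,d-\eps}$ for small $r$ on a set of $x$ of positive $\mu$-measure, with $d:=\limsup_k\delta^u(f,\rho_k)$, for then $\delta^u(f,\mu)\ge d-\eps$ and $\eps\to0$ concludes. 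Because $\rho_k\to\mu$ and each $\rho_k$ is carried by a horseshoe with $k$-uniform hyperbolicity, for large $k$ most of $\rho_k$ sits near $R$, its unstable lamination is comparable there to $\mu$'s Pesin unstable lamination, and — by the Ledrappier--Young/Young estimates, with constants uniform in $k$ because $h(f,\rho_k),\lambda^+(f,\rho_k)$ are bounded away from $0$ — the $\rho_k$-conditionals assign a ball of radius $r$ measure $\approx r^{\,\delta^u(f,\rho_k)}$; transporting these conditionals to the $\mu$-leaves through the holonomies and passing to the weak limit yields the desired bound on $\mu^u_x$.

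\emph{Main obstacle.} The serious difficulty is exactly this passage to the limit: a weak-$*$ limit of measures does not control the limits of their conditional measures on a lamination — conditioning and weak limits do not commute — so the comparison in the last step must be carried out with constants that are uniform in $k$. This is where the hypotheses are used: carrying the $\rho_k$ on horseshoes with exponents bounded away from $0$ provides a $k$-uniform hyperbolic lamination and $k$-uniform local scaling for its conditionals; one then needs the absolute continuity of the unstable holonomies with uniform modulus, and a disintegration/martingale argument whose constants survive the limit — in effect a quantitative, uniform form of the continuity of conditional measures that underlies the Ledrappier--Young dimension formula. By contrast, the uniformity of the constants in Katok's theorem and in structural stability along $f_k\to f$ is routine, given the uniform $C^2$ bounds together with the non-degeneracy of entropy and exponents.
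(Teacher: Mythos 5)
Your first reduction is correct: after multiplying through by the positive quantities $\lambda^+_\infty\lambda^+(f,\mu)$, the stated inequality is equivalent to $\limsup_k\delta^u(f_k,\nu_k)\le\delta^u(f,\mu)$, and the paper's Corollary in \S\ref{ss.applications} records exactly this equivalence. The difficulty is everything after that.

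The reduction to a fixed $f$ via uniform horseshoes does not work as stated. Katok's theorem approximates $\nu_k$ by a measure $\rho_k$ on a horseshoe $\Lambda_k$ whose hyperbolicity constants depend on the Pesin structure of $\nu_k$ (the measure of $\ell$-Pesin blocks, the distortion constants there, the angle bound between the Oseledets subspaces), not merely on the numerical values $h(f_k,\nu_k)$ and $\lambda^\pm(f_k,\nu_k)$. Having these bounded away from zero does not give uniform Pesin blocks along the sequence; indeed, degeneration of the Pesin structure as $k\to\infty$ is exactly the mechanism producing the discontinuities that the theorem tolerates (cf.\ Example~\ref{e.example}). Once the constants of $\Lambda_k$ are allowed to degrade, the structural-stability step also loses uniformity, and the identity $\lambda^+(f,\widehat\rho_k)=\lambda^+(f_k,\rho_k)+o(1)$ need not hold.

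The more serious gap is in the ``core estimate.'' You claim that near a Pesin block $R$ for $\mu$, the unstable lamination of $\rho_k$ ``is comparable there to $\mu$'s Pesin unstable lamination.'' This is precisely what fails in the regime the theorem is designed for. As the paper explains (Theorem~\ref{thm-defect-cont} and the heuristic in \S\ref{heuristic-neutral}), the source of the defect in $\lambda^+$ is that the unstable lifts $\hnu^+_k$ may accumulate on $\mathrm{graph}(E^-)$, i.e.\ the unstable directions of $\nu_k$ can drift toward the stable direction of $\mu$ on a set of positive asymptotic frequency. When that happens there is simply no uniform comparison between the unstable laminations of $\rho_k$ and the Pesin unstable lamination of $\mu$, and no holonomy you can use to transport conditionals across. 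Put differently: if the comparison you need were available, the unstable bundle would be continuous along the sequence, the exponents would converge, and the theorem would be trivial with equality. You acknowledge at the end that ``conditioning and weak limits do not commute'' and that a uniform disintegration argument would be needed — but this is the content of the theorem, not a technicality, and the mechanism you propose (holonomy transport between two laminations) is unavailable precisely when the inequality is strict. The paper avoids this entirely by never comparing the unstable laminations of $\nu_k$ and $\mu$; instead it works on the projective bundle $\hM$, isolates the ``neutral blocks'' where the unstable direction of $\nu_k$ sees weak expansion, shows via Yomdin-type reparametrizations (Propositions~\ref{p.estimate1'} and~\ref{p.estimate2'}) that these blocks create essentially no entropy, and feeds the remaining blocks into Zang's entropy formula (Theorem~\ref{t.zang}). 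So the proposal is not a variant proof; the central step is a gap, and the approach breaks exactly where the phenomenon being quantified lives.
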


\medskip

The following result is an immediate consequence of this and \eqref{usc-ineq}, and was the original aim of our work:
\begin{corollary}
For every $k\geq 1$,
let $f_k\in\Diff^\infty(M)$ and let $\nu_k$ be an $f_k$-ergodic invariant measure. Suppose
$f_k\to f$ in the $C^\infty$ topology, and  $\nu_k\to\mu$ \weakstar\, where
$\mu$ is an $f$-ergodic invariant measure with positive entropy. If $h(f_k,\nu_k)\to h(f,\mu)$, then:
$$\lambda^+(f_k,\nu_k)\to  \lambda^+(f,\mu) \text{ and }
\lambda^-(f_k,\nu_k)\to  \lambda^-(f,\mu).$$
\end{corollary}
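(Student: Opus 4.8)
\medskip
\noindent\emph{Proof proposal.}
The plan is to derive the corollary from Theorem~\ref{t.ergodic} and the semicontinuity bounds~\eqref{usc-ineq}; the statement is essentially a formal consequence of these, the only point being that the corollary does not assume a priori that $\lim_k\lambda^+(f_k,\nu_k)$ exists, so this limit has to be produced by a compactness argument. I would first record two facts about the sequence $\bigl(\lambda^+(f_k,\nu_k)\bigr)_k$. It is bounded above: since $f_k\to f$ in $C^1$, the norms $\|Df_k\|_\infty$ are uniformly bounded, and this bounds $\lambda^+(f_k,\nu_k)$ from above. And all of its subsequential limits are positive: by Ruelle's inequality (applied to $f_k$, and to $f_k^{-1}$, using that $\nu_k$ is ergodic so that the pointwise exponents are $\nu_k$-a.e.\ constant) one has $h(f_k,\nu_k)\le\lambda^+(f_k,\nu_k)$ whenever $h(f_k,\nu_k)>0$; since $h(f_k,\nu_k)\to h(f,\mu)>0$, this holds for all large $k$, so $\liminf_k\lambda^+(f_k,\nu_k)\ge h(f,\mu)>0$.

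Next, given an arbitrary subsequence, I would extract a further subsequence (re-indexed by $k$) along which $\lambda^+(f_k,\nu_k)$ converges to some $\ell$, which by the above satisfies $\ell>0$. All hypotheses of Theorem~\ref{t.ergodic} are then met along this subsequence: the maps converge in $C^\infty$, the measures converge \weakstar\ to the $f$-ergodic measure $\mu$, and both $\lim_k h(f_k,\nu_k)=h(f,\mu)$ and $\lim_k\lambda^+(f_k,\nu_k)=\ell$ exist and are positive. The theorem then gives
$$
1=\lim_{k\to\infty}\frac{h(f_k,\nu_k)}{h(f,\mu)}\ \le\ \lim_{k\to\infty}\frac{\lambda^+(f_k,\nu_k)}{\lambda^+(f,\mu)}=\frac{\ell}{\lambda^+(f,\mu)},
$$
so $\ell\ge\lambda^+(f,\mu)$. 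Since~\eqref{usc-ineq} gives $\limsup_k\lambda^+(f_k,\nu_k)\le\lambda^+(f,\mu)$ along this subsequence as well, we conclude $\ell=\lambda^+(f,\mu)$. As every subsequence admits a further subsequence along which $\lambda^+(f_k,\nu_k)\to\lambda^+(f,\mu)$, the full sequence converges to $\lambda^+(f,\mu)$.

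Finally, for the lower exponent I would apply the case just proved to the inverse diffeomorphisms $f_k^{-1}\to f^{-1}$ (which converge in $C^\infty$): each $\nu_k$ is $f_k^{-1}$-ergodic, the $\nu_k$ converge \weakstar\ to the $f^{-1}$-ergodic measure $\mu$, and $h(f_k^{-1},\nu_k)=h(f_k,\nu_k)\to h(f,\mu)=h(f^{-1},\mu)>0$. Hence $\lambda^+(f_k^{-1},\nu_k)\to\lambda^+(f^{-1},\mu)$, which is exactly $\lambda^-(f_k,\nu_k)\to\lambda^-(f,\mu)$ since $\lambda^-(g,\cdot)=-\lambda^+(g^{-1},\cdot)$. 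There is no genuine obstacle in this deduction: all the depth of the corollary is contained in Theorem~\ref{t.ergodic}, and the only step requiring (minor) care is the use of Ruelle's inequality to guarantee that subsequential limits $\ell$ are strictly positive, which is what makes that theorem applicable.
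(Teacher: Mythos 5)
Your proof is correct and takes the same route the paper has in mind: the paper states the corollary as an ``immediate consequence'' of Theorem~\ref{t.ergodic} and~\eqref{usc-ineq}, and your write-up simply fills in the implicit subsequence-extraction argument needed because the corollary does not assume $\lim_k\lambda^+(f_k,\nu_k)$ exists. The use of Ruelle's inequality to ensure subsequential limits of $\lambda^+(f_k,\nu_k)$ are strictly positive (so Theorem~\ref{t.ergodic} applies) and the passage to $f_k^{-1}$ for $\lambda^-$ are exactly the intended steps.
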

\noindent
The result mentioned in the introduction is the special case $f_1=f_2=\cdots=f$.

\subsection{The ergodic $C^r$ case}\label{ss.Cr}
The following result extends Theorem \ref{t.ergodic} to the $C^r$ case up to an extra term similar to what happens in Yomdin's theory.
We define the {\em asymptotic dilation} of a $C^1$ map $f:M\to M$ to be
\begin{equation}\label{asymp-dil}
 \lambda(f):=\lim_{n\to +\infty}\frac1n\log\|D f^n\|_\supnorm,\text{ where }   \|Df\|_\supnorm:=\sup_{x\in M}\underset{v\neq 0}{\sup_{v\in T_x M}} \frac{\|Df_x v\|_{f(x)}}{\|v\|_x}.
\end{equation}
Since $M$ is compact, $\lambda(f)$ is independent of the choice of a Riemannian metric $\|\cdot\|_x$.

\begin{maintheorem}\label{t.ergodic-Cr}
Fix $r>2$.
For every $k\geq 1$, let $f_k\in\Diff^r(M)$ and
let $\nu_k$ be  an $f_k$-ergodic invariant measure. Suppose
 \begin{itemize}
   \item[--] $\lim\limits_{k\to\infty} \lambda^+(f_k,\nu_k)$ and $\lim\limits_{k\to\infty} h(f_k,\nu_k)$ exist
   and are positive,
  \item[--] $f_k\to f$ in the $C^r$-topology,
    \item[--] $\nu_k$ converge \weakstar\ to a probability measure $\mu$.
\end{itemize}
If $\mu$ is $f$-ergodic and has positive entropy, then
\begin{equation*}
\frac{\lim\limits_{k\to\infty}h(f_k,\nu_k)}{h(f,\mu)}-\frac{1}{h(f,\mu)}\frac{{\lambda}(f)+ \lambda(f^{-1})}{r-1}
\leq\;
\frac{\lim\limits_{k\to\infty}\lambda^+(f_k,\nu_k)}{\lambda^+(f,\mu)}.
\end{equation*}
\end{maintheorem}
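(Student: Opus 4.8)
The plan is to reduce the $C^r$ statement to the $C^\infty$ statement of Theorem \ref{t.ergodic} by a standard truncation argument in the spirit of Yomdin's theory, quantifying the loss that occurs when one passes from finite smoothness to infinite smoothness via the $C^r$ size of the map. The key intermediate object should be a lower bound for $h(f,\mu)$ in terms of the limit $\lim_k h(f_k,\nu_k)$ and the defect $\lambda^+(f,\mu) - \lim_k\lambda^+(f_k,\nu_k)$ that is uniform in the smoothness class, plus a Yomdin-type correction involving $(\lambda(f)+\lambda(f^{-1}))/(r-1)$. Concretely, I would first recall that for a $C^r$ diffeomorphism, the topological entropy (and hence, by the variational principle applied locally along the Pesin blocks carrying the measures $\nu_k$) can only drop by at most $\frac{1}{r-1}\log\|Df\|$-type quantities when one approximates $f$ by $C^\infty$ maps or when one controls the exponential growth of derivatives of iterates; this is exactly where the factor $\frac{\lambda(f)+\lambda(f^{-1})}{r-1}$ enters, the two terms accounting for the forward and backward dynamics (i.e.\ for $\lambda^+$ and $\lambda^-$ respectively, since $\lambda^-(f,\mu) = -\lambda^+(f^{-1},\mu)$).

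The first step is to fix $\eps>0$ and choose, for each large $k$, a Pesin block $\Lambda_k$ of $\nu_k$-measure close to $1$ on which the hyperbolicity estimates are uniform, with parameters converging (along a subsequence) to those of a limiting block for $\mu$; here one uses that $\mu$ is ergodic with positive entropy, so $\mu$ is hyperbolic by Ruelle's inequality and the positivity of $h(f,\mu)$. The second step is to use these blocks to build, for the limiting map $f$, a horseshoe-like invariant set — or rather to invoke the estimates of the companion analysis establishing Theorem \ref{t.ergodic} — but now keeping track of the fact that $f$ is only $C^r$. The third step is to insert the Yomdin/Gromov bound: the number of connected pieces into which $f^n$ subdivides a curve, or equivalently the defect in the semicontinuity of entropy along a sequence of $C^r$ approximations, is controlled by $\exp\!\big(n\,\tfrac{\lambda(f)}{r-1} + o(n)\big)$ for the forward direction and symmetrically $\exp\!\big(n\,\tfrac{\lambda(f^{-1})}{r-1}+o(n)\big)$ for the backward direction; summing these two contributions and dividing by $h(f,\mu)$ gives precisely the subtracted term $\frac{1}{h(f,\mu)}\cdot\frac{\lambda(f)+\lambda(f^{-1})}{r-1}$. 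The final step is to let $\eps\to0$ and collect the inequalities.

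The main obstacle I expect is the \emph{uniformity of the Pesin-block and reparametrization estimates in the smoothness exponent $r$}: the proof of Theorem \ref{t.ergodic} presumably exploits $C^\infty$ smoothness to make certain remainder terms vanish (e.g.\ in Yomdin's lemma the correction is $\frac{1}{r-1}$ and tends to $0$ only as $r\to\infty$), so for finite $r$ one must carry that correction through every step — through the construction of the approximating measures, through the estimate relating their entropy to that of $\mu$, and through the comparison of Lyapunov exponents — without letting it be amplified by the (potentially large) number of iterates or by the size of the Pesin blocks. A secondary difficulty is ensuring the $o(n)$ errors in the Yomdin counting are genuinely subexponential uniformly over the relevant family of curves and iterates; this is where one needs the hypothesis $r>2$ (so that $r-1>1$ and the geometric series controlling multiplicities converges) and the compactness of $M$ (so that $\lambda(f)$ and $\lambda(f^{-1})$ are finite and metric-independent, as noted after \eqref{asymp-dil}). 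Once these uniform estimates are in place, the passage to the limit $k\to\infty$ and then $\eps\to0$ is routine, and one recovers Theorem \ref{t.ergodic} in the limit $r\to\infty$ since the correction term then vanishes.
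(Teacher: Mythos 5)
The central difficulty is not where you place it, and your account of the correction term is not correct. The paper does not prove Theorem~\ref{t.ergodic-Cr} by starting from a finished proof of the $C^\infty$ Theorem~\ref{t.ergodic} and inserting a Yomdin defect; both statements are corollaries of the single technical result Theorem~\ref{t.ergodic4}, whose whole argument is carried out directly for $C^r$ maps, with the finite-regularity loss baked into every Yomdin reparametrization estimate (Propositions~\ref{p.estimate1'} and~\ref{p.estimate2'}). There is therefore no ``companion analysis establishing Theorem~\ref{t.ergodic}'' that one can invoke and then correct, and a smoothing/truncation argument --- replacing $f$ by $C^\infty$ approximations --- would be the wrong move: it would change the dynamics, so the measures $\nu_k$ would no longer be invariant, and their entropies and exponents would not be controlled.

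Your explanation of the factor $\lambda(f)+\lambda(f^{-1})$ is also wrong. You attribute $\lambda(f)/(r-1)$ to the forward dynamics and $\lambda(f^{-1})/(r-1)$ to the backward dynamics via $\lambda^-(f,\mu)=-\lambda^+(f^{-1},\mu)$. But the proof only applies reparametrization estimates to the \emph{forward} iteration of unstable curves; there is no symmetric backward step. The true origin is that the whole entropy estimate is run on the projective tangent bundle for the lift $\hf$, which is only $C^{r-1}$ when $f$ is $C^r$. Yomdin theory for $\hf$ therefore yields a correction $\lambda(\hf)/(r-1)$, and one then bounds $\lambda(\hf)\le\lambda(f)+\lambda(f^{-1})$ by the explicit computation of $D\hf$ in Section~\ref{ss.size-lift}: the term $\lambda(f^{-1})$ enters because $D\hf$ involves $\|Dg^{-1}\|$ through the normalization of the projective action, not through any time-reversal argument. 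Relatedly, your heuristic for the hypothesis $r>2$ (``so that $r-1>1$ and the geometric series controlling multiplicities converges'') is invented; the actual reason is simply that $\hf$ must be at least $C^1$ (indeed $C^{r-1}$ with $r-1>1$) for the lifted Yomdin theory and Zang's entropy formula to apply.

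Finally, the proof is not a Pesin-block/horseshoe argument. The key ingredients are: the neutral-block decomposition of Section~\ref{s.neutral}, which partitions typical orbit segments into stretches that see the expansion of $\mu_1$ and stretches that are effectively non-expanding; the Ledrappier--Young/Zang formula (Corollary~\ref{lemma entropy reparametrizations}) that bounds $h(f_k,\nu_k)$ by the growth rate of admissible reparametrization families of a local unstable curve; and the two reparametrization propositions, one giving $\oh(f,\mu_{1,c})+\lambda(\hf)/(r-1)+\eta$ per unit time on ``expanding'' sub-segments and one giving only $\lambda(\hf)/(r-1)+\eta$ on neutral ones. If you want to present a proof of Theorem~\ref{t.ergodic-Cr}, you should either derive it from Theorem~\ref{t.ergodic3}/\ref{t.ergodic4} (which is immediate) or reconstruct this pipeline, not attempt a smoothing reduction.
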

\noindent
By~\cite{Burguet}, the condition $h(f,\mu)>0$ holds once $\lim\limits_{k\to\infty} h(f_k,\nu_k)> \frac{\min(\lambda(f),\lambda(f^{-1}))}{r}$.

As we will explain in Section~\ref{s.comments}, the smoothness index $r$ above does not need to be an integer.

\subsection{The non-ergodic case}
The  assumption that the limiting measure is ergodic is often difficult to check, and we now explain what can be said in its absence  (a more general but also more technical result, Theorem~\ref{t.ergodic4}, will be given in section~\ref{s.conclude}).

\begin{maintheorem}\label{t.ergodic3}
Fix $r>2$.
For every $k\geq 1$, let  $f_k\in\Diff^r(M)$ and
let $\nu_k$ be an $f_k$-ergodic invariant measure. Suppose
 \begin{itemize}
   \item[--] $\lim\limits_{k\to\infty}\lambda^+(f_k,\nu_k)$ and $\lim\limits_{k\to\infty} h(f_k,\nu_k)$ exist
and are positive,
  \item[--] $f_k\to f$ in the $C^r$-topology,
  \item[--] $\nu_k\to \mu$ \weakstar\ for some $f$-invariant probability measure $\mu$ (perhaps non-ergodic).
 \end{itemize}
If $\lim\limits_{k\to\infty} h(f_k,\nu_k)>\tfrac{\lambda(f)+\lambda(f^{-1})}{r-1}$ then
there exist $\beta\in (0,1]$, and  two  $f$-invariant probability measures $\mu_0,\mu_1$
with $h(f,\mu_1)>0$ such that
$\mu=(1-\beta)\mu_0+\beta \mu_1$  and
\begin{equation}\label{e.eqC2}
\frac{\lim\limits_{k\to\infty}h(f_k,\nu_k)}{h(f,\mu_1)} \;-\;\frac{1}{h(f,\mu_1)}\frac{{\lambda}(f)+ \lambda(f^{-1})}{r-1}
\;\leq\;
\beta=\frac{\lim\limits_{k\to\infty}\lambda^+(f_k,\nu_k)}{\lambda^+(f,\mu_1)}.
\end{equation}
Moreover  $\lambda^+(f,x)>0\geq \lambda^-(f,x)$  for $\mu_1$-a.e. $x\in M$.
\end{maintheorem}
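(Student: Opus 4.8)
\medbreak\noindent\emph{Proof proposal.}
The plan is to reduce to the ergodic case, Theorem~\ref{t.ergodic-Cr}, by isolating inside the \weakstar\ limit $\mu$ a hyperbolic measure $\mu_1$ that carries the entropy and the expansion of the $\nu_k$, the remaining part of $\mu$ being collected into $\mu_0$. Write $h_\infty:=\lim_k h(f_k,\nu_k)>0$ and $\lambda_\infty:=\lim_k\lambda^+(f_k,\nu_k)>0$. First, Ruelle's inequality applied to $f_k$ and to $f_k^{-1}$ gives, for all large $k$,
\[
\lambda^+(f_k,\nu_k)\ \ge\ h(f_k,\nu_k)\qquad\text{and}\qquad \lambda^-(f_k,\nu_k)\ \le\ -h(f_k,\nu_k),
\]
so each $\nu_k$ is hyperbolic, with both Lyapunov exponents bounded away from $0$, uniformly in $k$.

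For each $k$ I would apply Katok's horseshoe theorem to the diffeomorphism $f_k$ (which is $C^{1+\alpha}$ since $r>2$) and the ergodic hyperbolic measure $\nu_k$, with an accuracy $\eps_k\to 0$: this yields a locally maximal uniformly hyperbolic $f_k$-invariant set $\Lambda_k$ with $h_{\mathrm{top}}(f_k,\Lambda_k)\ge h(f_k,\nu_k)-\eps_k$, all of whose invariant measures are $\eps_k$-close to $\nu_k$ in a fixed metric for the \weakstar\ topology and have Lyapunov exponents $\eps_k$-close to those of $\nu_k$. The stable bundle of $\Lambda_k$ is uniformly contracted, and the unstable bundle is expanded at rate $\ge\lambda_\infty-O(\eps_k)$, but only after a ``waiting time'' that is \emph{not} uniform in $k$, since it is governed by the size of the Pesin block of $\nu_k$ used in the construction --- a quantity not controlled by $h(\nu_k)$ and the exponents alone. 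The $C^r$-geometry of these horseshoes --- the number of pieces, and the entropy cost of subdividing so that local unstable curves can be reparametrised with bounded distortion --- is the point at which Yomdin--Gromov reparametrisation estimates, and hence the term $\tfrac{\lambda(f)+\lambda(f^{-1})}{r-1}$, enter the picture. Passing to a subsequence, the $\Lambda_k$ converge in the Hausdorff topology to a compact $f$-invariant set $\Lambda$ carrying a dominated splitting $E^s\oplus E^u$ with $E^s$ non-expanded; the bundle $E^u$, however, is only non-contracted, its expansion possibly having degenerated to neutral on part of $\Lambda$.

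Choose ergodic measures $\rho_k$ on $\Lambda_k$ with $h(f_k,\rho_k)\ge h(f_k,\nu_k)-\eps_k$. Since $d(\rho_k,\nu_k)\to 0$ and $\nu_k\to\mu$, we get $\rho_k\to\mu$, so $\mu$ is carried by $\Lambda$; in particular $\lambda^-(f,x)\le 0$ for $\mu$-a.e.\ $x$. Let $\mu_1$ be the normalised restriction of $\mu$ to the invariant set $\{\lambda^+(f,\cdot)>0\}$, of weight $\beta_*\in(0,1]$. Because $M$ is a surface, Ruelle's inequality forces $h(f,\cdot)=0$ off $\{\lambda^+(f,\cdot)>0\}$, so all of the entropy of $\mu$ is carried by $\mu_1$, and $h(f,\mu_1)=h(f,\mu)/\beta_*>0$ --- here $h(f,\mu)>0$ follows from the standing hypothesis $h_\infty>\tfrac{\lambda(f)+\lambda(f^{-1})}{r-1}$ together with the $C^r$ near-upper-semicontinuity of the entropy quoted after Theorem~\ref{t.ergodic-Cr} (equivalently, from the surviving-entropy bound $h(f,\mu_1)\ge h_\infty-\tfrac{\lambda(f)+\lambda(f^{-1})}{r-1}$ coming from the horseshoe construction), and by construction $\lambda^+(f,x)>0\ge\lambda^-(f,x)$ for $\mu_1$-a.e.\ $x$. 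Now put $\beta:=\lambda_\infty/\lambda^+(f,\mu_1)$; a short computation using $\lambda^+(f,\cdot)\le 0$ off $\{\lambda^+>0\}$ and $\lambda^+(f,\mu)\ge\lambda_\infty$ (upper semicontinuity of $\lambda^+$) gives $\beta_*\lambda^+(f,\mu_1)=\int_{\{\lambda^+>0\}}\lambda^+\,d\mu\ge\lambda^+(f,\mu)\ge\lambda_\infty$, hence $\beta\le\beta_*\le 1$, so $\mu-\beta\mu_1$ is a nonnegative measure of mass $1-\beta$ and $\mu=(1-\beta)\mu_0+\beta\mu_1$ with $\mu_0:=(\mu-\beta\mu_1)/(1-\beta)$.

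It remains to establish \eqref{e.eqC2}. Since $\mu_1$ is hyperbolic with $h(f,\mu_1)>0$ and is the \weakstar\ limit of the entropy-carrying measures $\rho_k$, the ergodic estimate of Theorem~\ref{t.ergodic-Cr}, applied to the ergodic components of the $\rho_k$ and then integrated over $\mu_1$, yields
\[
\frac{h_\infty}{h(f,\mu_1)}\;-\;\frac1{h(f,\mu_1)}\,\frac{\lambda(f)+\lambda(f^{-1})}{r-1}\;\le\;\frac{\lambda_\infty}{\lambda^+(f,\mu_1)}\;=\;\beta .
\]
The main obstacle is the passage to the limit for the \emph{entropy}: upper semicontinuity of the entropy is the easy inequality and points the wrong way, so proving that the corrected entropy $h_\infty$ is genuinely carried, in the limit, by one hyperbolic piece of mass $\beta$ --- that none of it leaks onto the neutral part as $k\to\infty$ --- and identifying the weight $\beta$ precisely, calls for a quantitative ``limit-measure'' analysis: a symbolic (countable-Markov-partition) model for the maps $f_k$, together with explicit control of how its pieces, their entropies, and their contraction and expansion rates behave as $k\to\infty$. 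This refines the scheme already needed for Theorems~\ref{t.ergodic} and~\ref{t.ergodic-Cr}.
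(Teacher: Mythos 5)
Your construction sets up a decomposition $\mu=(1-\beta)\mu_0+\beta\mu_1$ in a formally acceptable way, but the heart of the theorem---the entropy inequality $\beta\,h(f,\mu_1)\ge\lim_k h(f_k,\nu_k)-\tfrac{\lambda(f)+\lambda(f^{-1})}{r-1}$---is never actually proved, and the step you offer for it is circular. Invoking Theorem~\ref{t.ergodic-Cr} ``applied to the ergodic components of the $\rho_k$ and then integrated over $\mu_1$'' does not make sense: the $\rho_k$ are already ergodic, they converge \weakstar\ to the \emph{non-ergodic} measure $\mu$ (not to $\mu_1$), and Theorem~\ref{t.ergodic-Cr} is precisely the ergodic special case of what you are trying to prove; there is no principle allowing you to ``integrate'' that estimate over a non-ergodic limit. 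This is not a minor technical point: the whole reason Theorem~\ref{t.ergodic3} is harder than Theorem~\ref{t.ergodic-Cr} is that the limit measure is not ergodic, and the bound must be localised to a piece $\mu_1$ of mass $\beta$. Your own last paragraph concedes the gap---``proving that the corrected entropy $h_\infty$ is genuinely carried, in the limit, by one hyperbolic piece of mass $\beta$, that none of it leaks onto the neutral part, \dots calls for a quantitative limit-measure analysis''---but that analysis is exactly what is missing from the proof.

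There is also a substantive mismatch in the choice of $\mu_1$. You define $\mu_1$ as the normalised restriction of $\mu$ to $\{\lambda^+>0\}$, but that is \emph{not} the decomposition the paper uses, and it is not clear it satisfies the inequality. The paper's $\mu_1$ is the projection of the ``non-neutral'' part $\hm_1$ coming from Proposition~\ref{p.neutral-block} on the projective bundle, and the two decompositions genuinely differ (see Example~1 after Proposition~\ref{p.neutral-block}, where they give visibly different $m_0,m_1$). If $\{\lambda^+>0\}$ includes ergodic components with a large exponent but small entropy, their inclusion inflates $\lambda^+(f,\mu_1)$, shrinking $\beta=\lambda_\infty/\lambda^+(f,\mu_1)$, without a compensating drop in $h(f,\mu_1)$; nothing in your argument rules this out. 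So either you must prove that your specific $\mu_1$ satisfies the bound (which you have not done), or you must construct a different $\mu_1$ adapted to the approximating sequence---and the latter is what the paper does, via the neutral-block decomposition of the lifts $\hnu_k^+$ (Section~\ref{s.neutral}) combined with the reparametrisation estimates of Propositions~\ref{p.estimate1'} and~\ref{p.estimate2'} and Zang's entropy formula (Corollary~\ref{lemma entropy reparametrizations}). The Katok-horseshoe/Hausdorff-limit route you propose is not part of this machinery and, as you note yourself, runs into the non-uniform Pesin block sizes that the projective-bundle approach is precisely designed to avoid; it also leaves claims like ``$\lambda^-(f,x)\le 0$ for $\mu$-a.e.\ $x$'' unjustified, since uniform non-expansion of the limit splitting on $\Lambda$ would require uniform-in-$k$ hyperbolicity constants that Katok's theorem does not supply.
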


\noindent
Note that for $C^\infty$ diffeomorphisms, the
term $(\lambda(f)+\lambda(f^{-1}))/(r-1)$ can be replaced by zero, because the theorem can be applied with $r$ arbitrarily large.

The decomposition $\mu=(1-\beta)\mu_0+\beta \mu_1$ depends on the sequences $(f_k)_{k\geq 1}$, $(\nu_k)_{k\geq 1}$, and not just on their limits.
We give a  heuristic description of this decomposition 
in Section~\ref{heuristic-neutral}.

\subsection{Additional comments}\label{s.comments}
We now supplement Theorems~\ref{t.ergodic}, \ref{t.ergodic-Cr} and~\ref{t.ergodic3} by some examples, comments, strengthenings, and generalizations. The proofs can be found in Section~\ref{s.supplements}.

\subsubsection{Examples of discontinuities}
Theorem~\ref{t.ergodic} is sharp in the following sense:

\begin{example}\label{e.example}
For every $0<\alpha\leq\beta\leq 1$, there exist a $C^\infty$ surface diffeomorphism $f$ and a sequence of ergodic and invariant measures $\nu_k$ converging \weakstar\;  to an ergodic invariant probability measure $\mu$
such that $\lim_k h(f,\nu_k)>0$, and
  $$
    \lim_{k\to\infty} h(f,\nu_k)/h(f,\mu) = \alpha \ ,\  \lim_{k\to\infty} \lambda^+(f,\nu_k)/\lambda^+(f,\mu)=\beta.
  $$
\end{example}

\subsubsection{Variant inequality}
Theorem~\ref{t.ergodic} does not use the symmetry between a diffeomorphism and its inverse. When $0<-\lambda^-(f,\mu)<\lambda^+(f,\mu)$, this symmetry yields a sharper bound:
\begin{corollary}\label{c.variant} Under the assumptions of Theorem~\ref{t.ergodic},
\begin{equation}\label{stronger-ineq}
\lambda^+(f,\mu)-\lim_{k} \lambda^+(f_k,\nu_k)
      \leq |\lambda^-(f,\mu)|\left(1-\frac {\lim_{k} h(f_k,\nu_k)}{h(f,\mu)}\right).
\end{equation}
\end{corollary}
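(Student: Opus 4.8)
\textbf{Proof plan for Corollary~\ref{c.variant}.}
The plan is to apply Theorem~\ref{t.ergodic} twice — once to the given data $(f_k,\nu_k)\to(f,\mu)$ (which is essentially free, but not even needed) and once to the inverse data $(f_k^{-1},\nu_k)\to(f^{-1},\mu)$ — and then to glue the two estimates together with the Oseledets identity expressing the sum of the Lyapunov exponents through the Jacobian. The symmetry being exploited is $h(g^{-1},\eta)=h(g,\eta)$ and $\lambda^-(g,\eta)=-\lambda^+(g^{-1},\eta)$, together with the fact that $\mu$ is $f^{-1}$-ergodic since it is $f$-ergodic.

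First I would set up the inverse sequence. Ruelle's inequality applied to $f_k^{-1}$ gives $h(f_k,\nu_k)=h(f_k^{-1},\nu_k)\le\max(\lambda^+(f_k^{-1},\nu_k),0)=\max(-\lambda^-(f_k,\nu_k),0)$, so for all large $k$ one has $\lambda^+(f_k^{-1},\nu_k)=-\lambda^-(f_k,\nu_k)\ge h(f_k,\nu_k)>0$; hence $\liminf_k\lambda^+(f_k^{-1},\nu_k)\ge\lim_k h(f_k,\nu_k)>0$. Next, since $f_k\to f$ in $C^1$ we have $\log|\det Df_k|\to\log|\det Df|$ uniformly, and combined with $\nu_k\to\mu$ \weakstar\ this yields $\int\log|\det Df_k|\,d\nu_k\to\int\log|\det Df|\,d\mu$. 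By Oseledets' theorem (sum the two pointwise exponents along an orbit and apply Birkhoff) one has $\lambda^+(g,\eta)+\lambda^-(g,\eta)=\int\log|\det Dg|\,d\eta$ for every $C^1$ surface diffeomorphism $g$ and every $g$-invariant $\eta$. Applying this to $(f_k,\nu_k)$ and to $(f,\mu)$ and using that $\lim_k\lambda^+(f_k,\nu_k)$ exists (hypothesis of Theorem~\ref{t.ergodic}), we deduce that $\lim_k\lambda^-(f_k,\nu_k)$ exists as well, and therefore so does $\ell:=\lim_k\lambda^+(f_k^{-1},\nu_k)=-\lim_k\lambda^-(f_k,\nu_k)>0$, with
$$
\ell=\lambda^+(f,\mu)+\lambda^-(f,\mu)-\lim_k\lambda^+(f_k,\nu_k)+|\lambda^-(f,\mu)|+ \Big(\lim_k\lambda^+(f_k,\nu_k)-\lambda^+(f,\mu)\Big)= \lim_k\lambda^+(f_k,\nu_k)-\lambda^+(f,\mu)+|\lambda^-(f,\mu)|,
$$
where I used $\lambda^-(f,\mu)=-|\lambda^-(f,\mu)|$ (valid since $\lambda^-(f,\mu)<0$ by hypothesis).

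Now I would apply Theorem~\ref{t.ergodic} to $(f_k^{-1},\nu_k)\to(f^{-1},\mu)$: its hypotheses hold by the previous paragraph ($\lim_k h(f_k^{-1},\nu_k)=\lim_k h(f_k,\nu_k)$ and $\lim_k\lambda^+(f_k^{-1},\nu_k)=\ell$ exist and are positive, $f_k^{-1}\to f^{-1}$ in $C^\infty$, $\nu_k\to\mu$ \weakstar, and $\mu$ is $f^{-1}$-ergodic with $h(f^{-1},\mu)=h(f,\mu)>0$). It gives
$$
\frac{\lim_k h(f_k,\nu_k)}{h(f,\mu)}\ \le\ \frac{\ell}{\lambda^+(f^{-1},\mu)}=\frac{\ell}{|\lambda^-(f,\mu)|},
$$
i.e. $\ell\ge |\lambda^-(f,\mu)|\cdot\frac{\lim_k h(f_k,\nu_k)}{h(f,\mu)}$. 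Substituting the expression for $\ell$ from the previous paragraph and rearranging yields exactly $\lambda^+(f,\mu)-\lim_k\lambda^+(f_k,\nu_k)\le |\lambda^-(f,\mu)|\big(1-\frac{\lim_k h(f_k,\nu_k)}{h(f,\mu)}\big)$, which is~\eqref{stronger-ineq}.

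The only non-formal point — hence the main obstacle — is verifying that Theorem~\ref{t.ergodic} genuinely applies to the inverse sequence, namely that $\lim_k\lambda^+(f_k^{-1},\nu_k)$ exists and is positive: positivity is Ruelle's inequality for $f_k^{-1}$, while existence of the limit is precisely what the Jacobian computation buys us, converting the assumed convergence of $\lambda^+(f_k,\nu_k)$ into convergence of $\lambda^-(f_k,\nu_k)$. Everything else is bookkeeping with the inversion symmetries. I note finally that the hypothesis $-\lambda^-(f,\mu)<\lambda^+(f,\mu)$ is not used in the argument at all; it serves only to guarantee that \eqref{stronger-ineq} is a strict improvement over the bound of Theorem~\ref{t.ergodic}.
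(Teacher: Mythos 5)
Your proof is correct and follows essentially the same route as the paper's: both convert the defect in $\lambda^+$ into a defect in $\lambda^-$ via the Jacobian identity $\lambda^++\lambda^-=\int\log|\det Df|$ (and its continuity in $(f,\mu)$), then apply Theorem~\ref{t.ergodic} to $(f_k^{-1},\nu_k)\to(f^{-1},\mu)$ and rearrange; you merely spell out the verification of the hypotheses for the inverse sequence that the paper leaves implicit. One nit: the middle expression in your displayed computation of $\ell$ simplifies to $0$ rather than to your (correct) final answer $\ell=\lim_k\lambda^+(f_k,\nu_k)-\lambda^+(f,\mu)+|\lambda^-(f,\mu)|$, so that intermediate line should be fixed, but it does not affect the validity of the argument.
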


\subsubsection{Sequences of non-ergodic measures}
Our results can be extended to the case when  the invariant measures $\nu_k$ are not ergodic, but this requires stronger assumptions on the Lyapunov exponents of $\nu_k$, which in the non-ergodic case are functions and not constants.
See Corollary~\ref{c.non-ergodic-nu}.  

\subsubsection{Lifted version}
 $f$ induces a dynamical system $\hf$ on the projective tangent bundle $\hM$, see Section~\ref{ss.projective}. It turns out that $\mu=(1-\beta)\mu_0+\beta\mu_1$ is a projection of a decomposition of a limit point $\hmu:=\lim\hnu_{k_i}$, where $\hnu_k$ are lifts of $\nu_k$ to $\hf$-invariant measures on $\hM$. The decomposition of $\hmu$ contains more information than the decomposition of $\mu$, and leads to a stronger statement,  Theorem~\ref{t.ergodic4}, in Section~\ref{s.conclude}. This strengthening is essential to the proof of 
Corollary~\ref{c.non-ergodic-nu} on the case when $\nu_k$ are not ergodic.

\subsubsection{Convergence of $C^r$-diffeomorphisms}
In finite differentiability, Theorem~\ref{t.ergodic4} allows a weaker convergence assumption (denoted by $f_k\rbd{r}f$, see
Section~\ref{ss.size}), and $r$ does not have to be an integer.

\subsubsection{Entropy upper semi-continuity in the $C^r$ case}
It is well-known that for $C^r$-diffeo\-morphisms the entropy may fail to be upper semi-continuous,
but the defect in upper semi-continuity can be bounded (see the discussion in Section~\ref{ss.related-work} below). This bound manifests itself in  Theorems~\ref{t.ergodic-Cr} and~\ref{t.ergodic3} in the expression $\frac{\lambda(f)+\lambda(f^{-1})}{r-1}$.
But our proof gives a slightly stronger bound $\frac{\lambda(\widehat{f})}{r-1}$, in terms of the dynamics of $\hf$ on the projective tangent bundle, see \S\ref{ss.projective}, \S\ref{ss.size-lift}, and  Theorem~\ref{t.ergodic4}.

In some special cases, {even this stronger} bound can be improved.
For instance,  if $\lambda^+(f_k,\nu_k)\to\lambda^+(f,\mu)$, then
\begin{equation}\label{e.improved-eq.2}
\lim_{k\to\infty} h(f_k,\nu_k)\leq h(f,\mu)+\frac{\min\{\lambda(f),\lambda(f^{-1})\}}{r},
\end{equation}
which is stronger than the assertion of Theorem \ref{t.ergodic-Cr}.
When $f_1=f_2=\cdots =f$, \eqref{e.improved-eq.2} is a refinement of a classical inequality of Yomdin and Newhouse; it follows from bounds on the tail entropy, which were first written explicitly in \cite{Burguet}, and which
are consequences of {the} Downarowicz variational principle \cite{Downarowicz-Entropy}.
When the sequence $(f_k)$ is non-constant, it follows  from a bound on  robust tail entropy, which can be shown using techniques in \cite{BurguetFibered2017}. We thank David Burguet for explaining this to us.

\subsection{Applications to dimension theory and to SRB measures}\label{ss.applications}
The Hausdorff dimension of a Borel measure $\mu$ on $M$ is defined to be the infimum of the Hausdorff dimensions of all Borel sets of full $\mu$-measure \cite[p.115]{young}. We denote this by $\mathrm{HD}(\mu)$.

\begin{corollary}\label{c.HD}
For every $k\geq 1$, let  $f_k\in\Diff^\infty(M)$ and
let $\nu_k$ be an $f_k$-ergodic invariant measure. Suppose
$f_k\to f$ in the $C^\infty$ topology, $\nu_k\to\mu$ \weakstar, and
{$\liminf\limits_{k\to\infty} h(f_k,\nu_k)>0$}. If $\mu$ is ergodic, then
$
\displaystyle{\limsup\limits_{k\to\infty}\mathrm{HD}(\nu_k)\leq \mathrm{HD}(\mu).}
$
\end{corollary}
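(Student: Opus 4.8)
The plan is to combine the Ledrappier--Young dimension formula for surface diffeomorphisms with the continuity of the Lyapunov exponents supplied by Theorem~\ref{t.ergodic}. Recall that for a $C^\infty$ (indeed $C^2$) surface diffeomorphism $g$ and a $g$-ergodic measure $\nu$, Young's formula gives
\begin{equation*}
\mathrm{HD}(\nu)=h(g,\nu)\left(\frac{1}{\lambda^+(g,\nu)}-\frac{1}{\lambda^-(g,\nu)}\right)
\end{equation*}
when $\lambda^+(g,\nu)>0>\lambda^-(g,\nu)$; if $\nu$ is hyperbolic with both exponents of the same sign, or has a zero exponent, the formula degenerates but can be handled by the same bookkeeping (for an atomic or zero-entropy-with-zero-exponent measure one checks directly that $\mathrm{HD}=0$ or uses $h=0$, and the inequality becomes trivial or follows from the one-sided contributions). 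So first I would reduce to the case where $\mu$ — and, for large $k$, each $\nu_k$ — is hyperbolic with $\lambda^+>0>\lambda^-$; this reduction uses that $\liminf_k h(f_k,\nu_k)>0$ together with Ruelle's inequality applied to $f_k$ and $f_k^{-1}$, which forces $\lambda^+(f_k,\nu_k)\geq h(f_k,\nu_k)$ bounded away from $0$ and, by \eqref{usc-ineq} and the same Ruelle inequality for the limit, $\lambda^+(f,\mu)\geq h(f,\mu)\geq\liminf_k h(f_k,\nu_k)>0$ and likewise $\lambda^-(f,\mu)<0$.

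Next, passing to a subsequence realizing $\limsup_k \mathrm{HD}(\nu_k)$, I would extract a further subsequence along which $\lim_k\lambda^+(f_k,\nu_k)$, $\lim_k\lambda^-(f_k,\nu_k)$ and $\lim_k h(f_k,\nu_k)$ all exist (and are positive, resp.\ negative, resp.\ positive). Applying Theorem~\ref{t.ergodic} to $(f_k,\nu_k)$ and also to $(f_k^{-1},\nu_k)$ — using $\lambda^-(f,\mu)=-\lambda^+(f^{-1},\mu)$ — yields the two discontinuity-ratio inequalities
\begin{equation*}
\frac{\lim_k h(f_k,\nu_k)}{h(f,\mu)}\leq\frac{\lim_k\lambda^+(f_k,\nu_k)}{\lambda^+(f,\mu)},\qquad
\frac{\lim_k h(f_k,\nu_k)}{h(f,\mu)}\leq\frac{\lim_k|\lambda^-(f_k,\nu_k)|}{|\lambda^-(f,\mu)|}.
\end{equation*}
(Here I use that $h(f_k^{-1},\nu_k)=h(f_k,\nu_k)$ and $h(f^{-1},\mu)=h(f,\mu)$.) Writing $\rho:=\lim_k h(f_k,\nu_k)/h(f,\mu)\in(0,1]$, these say $\lambda^+(f,\mu)\leq \lim_k\lambda^+(f_k,\nu_k)/\rho$ and $|\lambda^-(f,\mu)|\leq\lim_k|\lambda^-(f_k,\nu_k)|/\rho$, i.e.\ $1/\lim_k\lambda^+(f_k,\nu_k)\leq 1/(\rho\,\lambda^+(f,\mu))$ and similarly for the negative exponent.

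Finally I would plug these into Young's formula for $\nu_k$, take the limit along the subsequence, and compare with Young's formula for $\mu$:
\begin{align*}
\limsup_k\mathrm{HD}(\nu_k)
&=\lim_k h(f_k,\nu_k)\left(\frac{1}{\lim_k\lambda^+(f_k,\nu_k)}+\frac{1}{\lim_k|\lambda^-(f_k,\nu_k)|}\right)\\
&\leq \rho\, h(f,\mu)\left(\frac{1}{\rho\,\lambda^+(f,\mu)}+\frac{1}{\rho\,|\lambda^-(f,\mu)|}\right)
= h(f,\mu)\left(\frac{1}{\lambda^+(f,\mu)}+\frac{1}{|\lambda^-(f,\mu)|}\right)=\mathrm{HD}(\mu),
\end{align*}
which is the claim. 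The main obstacle I anticipate is not the inequality chain above but the careful treatment of the degenerate cases in the Ledrappier--Young/Young dimension formula — in particular making sure that along the subsequence the $\nu_k$ stay in the regime where the surface dimension formula applies with the correct form, and that no mass escapes into a part of $\mu$ where $\lambda^-(f,\mu)=0$ or $\mu$ has atoms; handling this cleanly may require invoking the ergodicity of $\mu$ and the positivity of $h(f,\mu)$ to rule out a zero negative exponent (via Ruelle applied to $f^{-1}$), after which $\mathrm{HD}(\mu)$ is genuinely given by the two-sided formula.
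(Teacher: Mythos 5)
Your proof is correct and follows essentially the same route as the paper: pass to a subsequence realizing $\limsup_k\mathrm{HD}(\nu_k)$ along which $h(f_k,\nu_k)$, $\lambda^+(f_k,\nu_k)$, $\lambda^-(f_k,\nu_k)$ all converge (using Ruelle's inequality and \eqref{usc-ineq} to guarantee hyperbolicity), invoke Young's dimension formula, and apply Theorem~\ref{t.ergodic} once for $f$ and once for $f^{-1}$ to compare $h/\lambda^+$ and $h/|\lambda^-|$ with their limits. The algebra you package via the ratio $\rho=\lim_k h(f_k,\nu_k)/h(f,\mu)$ is exactly the paper's ``discontinuity ratio'' bookkeeping, just spelled out more explicitly.
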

\begin{proof}
One can always take a subsequence such that
$\lim_k h(f_k,\nu_k)>0$, $\lim_k \lambda^+(f_k,\nu_k)>0$ and $\lim_k \lambda^-(f_k,\nu_k)<0$ exist,
and such that $\mathrm{HD}(\nu_k)$ converges to the limsup of the initial sequence.
In \cite{young}, Young  gives the following formula for the Hausdorff dimension:
 \begin{equation}\label{eq-Young}
    \mathrm{HD}(\mu)=h(f,\mu)\left(1/\lambda^+(f,\mu)+1/|\lambda^-(f,\mu)|\right).
 \end{equation}
Writing
$
\displaystyle\lim_{k}\frac{h(f_k,\nu_k)}{\lambda^+(f,\nu_k)}=\left(\frac{h(f,\mu)}{\lambda^+(f,\mu)}\right)
\biggl(\frac{\lim_k h(f_k,\nu_k)}{h(f,\mu)}\biggr)\biggl(\frac{\lim_k \lambda^+(f_k,\nu_k)}{\lambda^+(f,\mu)}\biggr)^{-1}
$, we conclude from Theorem \ref{t.ergodic} that
$\lim_{k}\frac{h(f_k,\nu_k)}{\lambda^+(f,\nu_k)}\leq \frac{h(f,\mu)}{\lambda^+(f,\mu)}$. Working with $f_k^{-1}$ and $f^{-1}$ we obtain in a similar way that $\lim_{k}\frac{h(f_k,\nu_k)}{|\lambda^-(f,\nu_k)|}\leq \frac{h(f,\mu)}{|\lambda^-(f,\mu)|}$.
\end{proof}

Suppose $f\in\Diff^2(M)$. An $f$ invariant probability measure is called a {\em Sinai-Ruelle-Bowen (SRB) measure}, if $\lambda^+(f,x)>0$ $\mu$-a.e.,  and if the conditional measures of $\mu$ obtained by disintegrating it with respect to a measurable partition subordinated to the lamination by unstable manifolds are a.e. absolutely continuous with respect to the induced Riemannian measures. We recall two classical characterizations of SRB measures from \cite{Ledrappier-Young-I} and  \cite{Ledrappier-Young-II}.

Suppose $\lambda^+(f,x)>0$ $\mu$-almost everywhere. The \emph{geometric pressure of $\mu$} is $$P^u(\mu)=P^u(f,\mu):=h(f,\mu)-\lambda^+(f,\mu).$$
By Ruelle's inequality, $P^u(\mu)\leq 0$ for all invariant probability measures, and
by Ledrappier-Young \cite{Ledrappier-Young-I}, $P^u(\mu)=0$ iff $\mu$ is an SRB measure.

Next suppose $\mu$ is ergodic and $\lambda^+(f,\mu)>0$. The {\em unstable dimension} of $\mu$  is
$$\displaystyle\delta^u(f,\mu):=\frac{h(f,\mu)}{\lambda^+(f,\mu)}.$$
By Ledrappier-Young Theory \cite{Ledrappier-Young-II}, $\delta^u(f,\mu)$ is the a.s. value of the Hausdorff dimension of the conditional measures of $\mu$ on a measurable partition subordinated to the local unstable manifolds, and $\mu$ is an SRB measure iff $\delta^u(\mu)=1$.

Theorem \ref{t.ergodic} immediately implies the following:
\begin{corollary}
For every $k\geq 1$, let  $f_k\in\Diff^\infty(M)$ and
let $\nu_k$ be an $f_k$-ergodic invariant measure. Suppose
$f_k\to f$ in the $C^\infty$ topology, $\nu_k\to\mu$ \weakstar, and
{$\lim h(f_k,\nu_k)$ exists and is positive}.
If $\mu$ is ergodic, then:
\begin{enumerate}
\item $\limsup\limits_{k\to\infty}  \delta^u(\nu_k)\leq  \delta^u(\mu)$
\item $\limsup\limits_{k\to\infty} P^u(f_k,\nu_k)\leq  P^u(f,\mu)\frac{\lim\limits_{k\to\infty}
    h(f_k,\nu_k)}{h(f,\mu)}$
\item if $P^u(f_k,\nu_k)\to 0$ or $\delta^u(\nu_k)\to 1$, then $\mu$ is an SRB measure.
\end{enumerate}
\end{corollary}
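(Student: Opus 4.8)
The plan is to deduce all three items directly from Theorem~\ref{t.ergodic}, combined with Ruelle's inequality, the semi-continuity bounds~\eqref{usc-ineq}, and the two Ledrappier--Young characterizations of SRB measures recalled above. First I would set $h_\infty:=\lim_k h(f_k,\nu_k)>0$ and record the following elementary consequences. By Ruelle's inequality, $\lambda^+(f_k,\nu_k)\geq h(f_k,\nu_k)$, so $\liminf_k\lambda^+(f_k,\nu_k)\geq h_\infty>0$; in particular $\delta^u(\nu_k)$ and $P^u(f_k,\nu_k)$ are well defined for all large $k$. Similarly, \eqref{usc-ineq} gives $h(f,\mu)\geq h_\infty>0$, and then Ruelle's inequality gives $\lambda^+(f,\mu)\geq h(f,\mu)>0$; since $\mu$ is ergodic this means $\lambda^+(f,x)=\lambda^+(f,\mu)>0$ for $\mu$-a.e.\ $x$, so $\delta^u(\mu)$ and $P^u(f,\mu)$ are also well defined.

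For item (1), I would pass to a subsequence $(k_i)$ realizing $\limsup_k\delta^u(\nu_k)$ and then refine it once more so that $\lambda^+(f_{k_i},\nu_{k_i})$ converges to some $\lambda^+_\infty$; by the previous paragraph $\lambda^+_\infty\geq h_\infty>0$, so Theorem~\ref{t.ergodic} applies along $(k_i)$ and yields $h_\infty/h(f,\mu)\leq\lambda^+_\infty/\lambda^+(f,\mu)$. Rearranging, $\limsup_k\delta^u(\nu_k)=\lim_i\delta^u(\nu_{k_i})=h_\infty/\lambda^+_\infty\leq h(f,\mu)/\lambda^+(f,\mu)=\delta^u(\mu)$. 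For item (2), I would instead choose the subsequence to realize $\limsup_k P^u(f_k,\nu_k)$ and again refine so that $\lambda^+(f_{k_i},\nu_{k_i})\to\lambda^+_\infty$, whence $\limsup_k P^u(f_k,\nu_k)=h_\infty-\lambda^+_\infty$; the same inequality of Theorem~\ref{t.ergodic}, rewritten as $\lambda^+_\infty\geq\lambda^+(f,\mu)\,h_\infty/h(f,\mu)$, then gives $h_\infty-\lambda^+_\infty\leq h_\infty-\lambda^+(f,\mu)\,h_\infty/h(f,\mu)=P^u(f,\mu)\,h_\infty/h(f,\mu)$, which is the claim.

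For item (3), note first that $P^u(f,\mu)=h(f,\mu)-\lambda^+(f,\mu)=\lambda^+(f,\mu)(\delta^u(\mu)-1)$, so $P^u(f,\mu)=0$ if and only if $\delta^u(\mu)=1$, and either of these is equivalent to $\mu$ being an SRB measure by Ledrappier--Young. If $\delta^u(\nu_k)\to1$, item (1) gives $1\leq\delta^u(\mu)$, while Ruelle's inequality gives $\delta^u(\mu)\leq1$, so $\delta^u(\mu)=1$. If $P^u(f_k,\nu_k)\to0$, item (2) gives $0\leq P^u(f,\mu)\,h_\infty/h(f,\mu)$ with the factor $h_\infty/h(f,\mu)$ strictly positive, while Ruelle's inequality gives $P^u(f,\mu)\leq0$, so $P^u(f,\mu)=0$. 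In both cases $\mu$ is SRB.

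The only point requiring any care — and it is minor — is the subsequence bookkeeping: Theorem~\ref{t.ergodic} demands that $\lim_k\lambda^+(f_k,\nu_k)$ exist and be positive, which is not part of the present hypotheses, so one must first pass to a subsequence along which the relevant $\limsup$ is attained and then refine to make $\lambda^+(f_k,\nu_k)$ converge; positivity of that limit is automatic from Ruelle's inequality and $h_\infty>0$. Beyond this there is no real obstacle, since the corollary is essentially a repackaging of Theorem~\ref{t.ergodic} in the language of the geometric pressure and the unstable dimension.
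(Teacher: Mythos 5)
Your argument is correct and follows essentially the same route as the paper: pass to a subsequence along which $\lambda^+(f_k,\nu_k)$ converges (positivity being guaranteed by Ruelle's inequality and $h_\infty>0$), invoke Theorem~\ref{t.ergodic}, rearrange, and finish with the Ledrappier--Young characterizations. The only cosmetic difference is that you re-run the subsequence argument to get item (2), whereas the paper obtains (2) from (1) by a short algebraic manipulation using $h(f_k,\nu_k)\to h_\infty$; both are valid.
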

\begin{proof}
The proof of Corollary \ref{c.HD} also shows (1);  $(1)\Rightarrow (2)$ is a simple algebraic manipulation; and (1)+(2)$\Rightarrow(3)$ by the Ledrappier-Young characterizations of SRB measures as measures with zero pressure and/or  unstable dimension equal to one. \end{proof}

We can remove the assumption that $\mu$ is ergodic, using Theorem \ref{t.ergodic3}:
\begin{corollary}
For every $k\geq 1$, let  $f_k\in\Diff^\infty(M)$ and
let $\nu_k$ be an $f_k$-ergodic invariant measure. Suppose
$f_k\to f$ in the $C^\infty$ topology, $\nu_k\to\mu$ \weakstar, and
{$\lim h(f_k,\nu_k)$ exists and is positive}.
Then there are ergodic components $\mu',\mu''$ of $\mu$ satisfying $\lambda^+(\mu'), \lambda^+(\mu'')>0$
and
 $$
    \delta^u(\mu'') \geq \lim_{k\to\infty}  \delta^u(\nu_k)    \text{ , }  P^u(f,\mu') \geq \lim_{k\to\infty} P^u(f_k,\nu_k).
 $$
\end{corollary}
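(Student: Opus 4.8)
My plan is to apply Theorem~\ref{t.ergodic3} and then read off the two ergodic components from the ergodic decomposition of the ``hyperbolic part'' $\mu_1$ that it produces. First, exactly as in the proof of Corollary~\ref{c.HD}, pass to a subsequence along which $\lim_k\lambda^+(f_k,\nu_k)$ exists (it is positive since $\lambda^+\ge h$ by Ruelle's inequality) and along which $\delta^u(\nu_k)$ and $P^u(f_k,\nu_k)$ converge to their upper limits over the whole sequence; since the quantities in the statement do not exceed these subsequence limits, it suffices to handle this subsequence. Because $f_k\to f$ in $C^\infty$ we have $f_k\to f$ in $C^r$ for every $r>2$, so — using $\lim_k h(f_k,\nu_k)>0$ — the $C^\infty$ form of Theorem~\ref{t.ergodic3} applies with the correction term $(\lambda(f)+\lambda(f^{-1}))/(r-1)$ replaced by zero. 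It provides $\beta\in(0,1]$ and $f$-invariant probability measures $\mu_0,\mu_1$ with $\mu=(1-\beta)\mu_0+\beta\mu_1$, $h(f,\mu_1)>0$, $\lambda^+(f,x)>0\ge\lambda^-(f,x)$ for $\mu_1$-a.e.\ $x$, and
\begin{equation}\label{eq-corP-key}
\lim_k h(f_k,\nu_k)\ \le\ \beta\,h(f,\mu_1),\qquad \lim_k\lambda^+(f_k,\nu_k)\ =\ \beta\,\lambda^+(f,\mu_1).
\end{equation}

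Next I would pass to the ergodic decomposition $\mu_1=\int\mu_\omega\,d\Prob(\omega)$. By the hyperbolicity clause, $\Prob$-a.e.\ $\mu_\omega$ is ergodic with $\lambda^+(f,\mu_\omega)>0\ge\lambda^-(f,\mu_\omega)$, and since $\beta>0$ each such $\mu_\omega$ is itself an ergodic component of $\mu$; moreover entropy and the top Lyapunov exponent are affine in the measure, so $h(f,\mu_1)=\int h(f,\mu_\omega)\,d\Prob$, $\lambda^+(f,\mu_1)=\int\lambda^+(f,\mu_\omega)\,d\Prob$, and $\delta^u(\mu_\omega)=h(f,\mu_\omega)/\lambda^+(f,\mu_\omega)$ is defined $\Prob$-a.e. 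For the unstable dimension, \eqref{eq-corP-key} gives $\lim_k\delta^u(\nu_k)=\lim_k h(f_k,\nu_k)/\lim_k\lambda^+(f_k,\nu_k)\le h(f,\mu_1)/\lambda^+(f,\mu_1)=\bigl(\int h(f,\mu_\omega)\,d\Prob\bigr)/\bigl(\int\lambda^+(f,\mu_\omega)\,d\Prob\bigr)$, which is at most the essential supremum of $\omega\mapsto\delta^u(\mu_\omega)$ because the denominator is positive. So there is a positive-measure set of $\omega$ with $\delta^u(\mu_\omega)\ge\lim_k\delta^u(\nu_k)$; let $\mu''$ be any such $\mu_\omega$. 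Then $\mu''$ is an ergodic component of $\mu$ with $\lambda^+(f,\mu'')>0$ and $\delta^u(\mu'')\ge\lim_k\delta^u(\nu_k)$.

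For the geometric pressure I would argue in the same spirit. Using $\lim_k P^u(f_k,\nu_k)=\lim_k h(f_k,\nu_k)-\lim_k\lambda^+(f_k,\nu_k)$ together with the two relations in \eqref{eq-corP-key} bounds this limit by $\beta\bigl(h(f,\mu_1)-\lambda^+(f,\mu_1)\bigr)=\beta\int P^u(f,\mu_\omega)\,d\Prob$, and the aim is to extract an ergodic component $\mu'$ of $\mu$ with $\lambda^+(f,\mu')>0$ and $P^u(f,\mu')\ge\lim_k P^u(f_k,\nu_k)$. I expect this final extraction to be the main obstacle: since Ruelle's inequality forces $P^u\le0$ while the weight $\beta$ can be strictly less than $1$, a bare mediant argument on $\beta\int P^u(f,\mu_\omega)\,d\Prob$ does not by itself yield such a component. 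One has to choose $\mu'$ more carefully — for instance controlling $\lambda^+(f,\mu')$ against $\lim_k\lambda^+(f_k,\nu_k)$ and exploiting the precise structure of the decomposition $\mu=(1-\beta)\mu_0+\beta\mu_1$ — or else invoke the sharper lifted statement, Theorem~\ref{t.ergodic4}, on the projective tangent bundle. The remaining ingredients (the reduction to a subsequence, affinity of entropy and exponents, and the mediant inequality used for $\delta^u$) are routine.
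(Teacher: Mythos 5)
Your argument for the unstable-dimension inequality is correct and is what the paper intends: pass to a subsequence so that all relevant limits exist, apply the $C^\infty$ version of Theorem~\ref{t.ergodic3} to get $\mu=(1-\beta)\mu_0+\beta\mu_1$ with $\lim_k h(f_k,\nu_k)\le\beta h(f,\mu_1)$ and $\lim_k\lambda^+(f_k,\nu_k)=\beta\lambda^+(f,\mu_1)$, divide to cancel $\beta$ and get $\lim_k\delta^u(\nu_k)\le h(f,\mu_1)/\lambda^+(f,\mu_1)$, and then use the mediant inequality over the ergodic decomposition of $\mu_1$ (valid because $\lambda^+(f,\mu_\omega)>0$ a.e.) to produce an ergodic component $\mu''$. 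Since $\beta>0$, these $\mu_\omega$ are indeed ergodic components of $\mu$. That part is complete and matches the paper's one-line proof.

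Your hesitation about the pressure inequality is also well-founded, and in fact the obstruction you describe cannot be circumvented: the bound $\lim_k P^u(f_k,\nu_k)\le\beta P^u(f,\mu_1)$ really is what the theorem delivers (division by $\beta$ does not cancel because the two sides involve an inequality and an equality in opposite roles), and since $P^u\le 0$ and $\beta\le 1$ one has $\beta P^u(f,\mu_1)\ge P^u(f,\mu_1)$, so the mediant argument only yields a component with $P^u(\mu_\omega)\ge P^u(f,\mu_1)$, which is \emph{weaker} than $\ge\beta P^u(f,\mu_1)\ge\lim_k P^u(f_k,\nu_k)$. This is not a gap in your reasoning but a discrepancy with the statement itself. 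Compare with the ergodic corollary immediately preceding this one, where item~(2) reads $\limsup_k P^u(f_k,\nu_k)\le P^u(f,\mu)\cdot\frac{\lim_k h(f_k,\nu_k)}{h(f,\mu)}$: the discontinuity ratio $\frac{\lim h}{h(\mu)}\le 1$ multiplies $P^u(f,\mu)\le 0$ and \emph{cannot} be dropped. Likewise, in the proof of the final corollary (the SRB criterion) the paper writes $0=\lim P^u(f,\nu_{k_i})\le\beta P^u(f,\mu_1)\le 0$, keeping the factor $\beta$. So the inequality $P^u(f,\mu')\ge\lim_k P^u(f_k,\nu_k)$ as printed appears to be missing a multiplicative factor (of $\beta$, or equivalently $\lim h/h(\mu_1)$); the correct statement obtainable from Theorem~\ref{t.ergodic3} and the mediant inequality is $\beta\,P^u(f,\mu')\ge\lim_k P^u(f_k,\nu_k)$ for some ergodic component $\mu'$ of $\mu_1$. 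Your two suggested repairs (controlling $\lambda^+(f,\mu')$ against $\lim\lambda^+(f_k,\nu_k)$, or using Theorem~\ref{t.ergodic4}) do not rescue the stronger inequality either: the lifted decomposition gives the same $\beta$-weighted bound, and the ergodic components of $\mu_0$ come with no pressure information at all. In short: your $\delta^u$ argument is right, and your skepticism about the $P^u$ claim is justified — it is not you but the statement that needs fixing.
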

\begin{proof}
Consider the decomposition $\mu=(1-\beta)\mu_0+\beta\mu_1$ and take suitable ergodic components of $\mu_1$.
\end{proof}

{The following statement implies  the corollary in the introduction.}
\begin{corollary}
Let $f$ be a $C^\infty$ diffeomorphism of a compact smooth  surface without boundary, and fix some $h>0$. The following are equivalent:
 \begin{enumerate}[\quad(i)]
  \item $f$ admits an SRB measure with entropy at least $h$;
  \item $\sup \{ P^u(f,\mu): \mu \text{ ergodic measure for $f$ s.t. }h(f,\mu)\geq h\}=0$;
  \item $\sup \{ \delta^u(\mu): \mu \text{ ergodic measure for $f$ s.t. }h(f,\mu)\geq h\} = 1$.
 \end{enumerate}
\end{corollary}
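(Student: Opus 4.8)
The plan is to establish the three-way equivalence via (i)$\Rightarrow$(ii), the purely algebraic equivalence (ii)$\Leftrightarrow$(iii), and the substantial implication (ii)$\Rightarrow$(i). Throughout one uses that an ergodic $\mu$ with $h(f,\mu)\geq h>0$ has $\lambda^+(f,\mu)\geq h>0$ by Ruelle's inequality, so $P^u(f,\mu)$ and $\delta^u(f,\mu)$ are well defined, $P^u(f,\mu)\leq 0$, $\delta^u(f,\mu)\leq 1$, and $h\leq\lambda^+(f,\mu)\leq\lambda(f)<\infty$. (If no ergodic measure of entropy $\geq h$ exists, all three statements fail: (ii) and (iii) because the suprema are over $\emptyset$, and (i) because any SRB measure of entropy $\geq h$ has an ergodic component of entropy $\geq h$; so the equivalence is then vacuous and I may assume the set is nonempty.) For (ii)$\Leftrightarrow$(iii) I would use the identity $P^u(f,\mu)=\lambda^+(f,\mu)\bigl(\delta^u(f,\mu)-1\bigr)$: since $h\leq\lambda^+(f,\mu)\leq\lambda(f)$ on the relevant set, a sequence $\nu_k$ of such measures satisfies $P^u(f,\nu_k)\to 0$ if and only if $\delta^u(f,\nu_k)\to 1$, and combined with $P^u\leq 0$, $\delta^u\leq 1$ this gives $\sup P^u=0\iff\sup\delta^u=1$.

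For (i)$\Rightarrow$(ii): let $\mu$ be an SRB measure with $h(f,\mu)\geq h$. By Ruelle's inequality the supremum in (ii) is $\leq 0$, so it suffices to produce an ergodic measure of entropy $\geq h$ with $P^u=0$. Disintegrate $\mu$ into ergodic components $\mu=\int\mu_x\,d\mu(x)$. Since entropy and exponents are affine, $\int h(f,\mu_x)\,d\mu(x)=h(f,\mu)\geq h$, so the set $A:=\{x:h(f,\mu_x)\geq h\}$ has positive $\mu$-measure (otherwise the integrand $h-h(f,\mu_x)$ would be a.e.\ positive, forcing $h(f,\mu)<h$). Moreover $\mu$ being SRB gives $\lambda^+>0$ $\mu$-a.e.\ and $0=P^u(f,\mu)=\int P^u(f,\mu_x)\,d\mu(x)$ with each $P^u(f,\mu_x)\leq 0$, hence $P^u(f,\mu_x)=0$ for $\mu$-a.e.\ $x$. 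Choosing $x_0\in A$ in this full-measure set, $\mu_{x_0}$ is ergodic with $h(f,\mu_{x_0})\geq h$ and $P^u(f,\mu_{x_0})=0$, which shows the supremum is $0$.

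For (ii)$\Rightarrow$(i): assuming the supremum is $0$, pick ergodic $\nu_k$ with $h(f,\nu_k)\geq h$ and $P^u(f,\nu_k)\to 0$; after passing to a subsequence, $\nu_k\to\mu$ weak-$*$, $h(f,\nu_k)\to a$ and $\lambda^+(f,\nu_k)=h(f,\nu_k)-P^u(f,\nu_k)\to a$ with $a\geq h>0$. I would then apply Theorem~\ref{t.ergodic3} with $f_k\equiv f$; since $f\in\Diff^\infty(M)$ the term $(\lambda(f)+\lambda(f^{-1}))/(r-1)$ may be taken to be $0$, by the remark following that theorem. This yields $\beta\in(0,1]$ and $f$-invariant $\mu_0,\mu_1$ with $\mu=(1-\beta)\mu_0+\beta\mu_1$, $h(f,\mu_1)>0$, $\lambda^+(f,x)>0\geq\lambda^-(f,x)$ for $\mu_1$-a.e.\ $x$, and
\[
\frac{a}{h(f,\mu_1)}\ \leq\ \beta\ =\ \frac{a}{\lambda^+(f,\mu_1)}.
\]
The equality gives $\lambda^+(f,\mu_1)=a/\beta\geq a$; the inequality gives $h(f,\mu_1)\geq a/\beta=\lambda^+(f,\mu_1)$; Ruelle's inequality gives the reverse, so $h(f,\mu_1)=\lambda^+(f,\mu_1)\geq a\geq h$ and $P^u(f,\mu_1)=0$. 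As $\lambda^+>0$ $\mu_1$-a.e., the Ledrappier-Young characterization shows $\mu_1$ is an SRB measure, of entropy $\geq h$; if an ergodic SRB measure is wanted, one takes an ergodic component of $\mu_1$ of entropy $\geq h$ as in the previous paragraph. This closes the cycle.

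I expect the one genuinely substantial step to be (ii)$\Rightarrow$(i): everything hinges on squeezing out of Theorem~\ref{t.ergodic3} a limit component $\mu_1$ whose entropy is bounded below by $a$ (not merely positive) \emph{and} whose geometric pressure vanishes — both being immediate from the two-sided estimate of Theorem~\ref{t.ergodic3} precisely because $\lim_k h(f_k,\nu_k)=\lim_k\lambda^+(f_k,\nu_k)$ — while the slightly delicate point is knowing the $C^\infty$ error term can be dropped for a single decomposition, which is exactly the content of the remark after that theorem. The remaining ingredients (weak-$*$ compactness of the space of invariant measures, Ruelle's inequality, affineness of entropy and exponents over the ergodic decomposition, the Ledrappier-Young characterization of SRB measures, and elementary algebra) are soft.
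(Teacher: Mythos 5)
Your proof is correct and follows essentially the same route as the paper's: the equivalence (ii)$\Leftrightarrow$(iii) via the algebraic identity $P^u=\lambda^+(\delta^u-1)$, the implication (i)$\Rightarrow$(ii) by Ledrappier--Strelcyn plus the ergodic decomposition, and the substantial implication (ii)$\Rightarrow$(i) by applying Theorem~\ref{t.ergodic3} to a sequence along which entropy and top exponent converge to the same limit $a$, squeezing $h(f,\mu_1)=\lambda^+(f,\mu_1)\geq a\geq h$ from the two-sided estimate and concluding by Ledrappier--Young. The only stylistic difference is that you make explicit the small algebra the paper leaves as "trivial" and the ergodic-decomposition step behind the Ledrappier--Strelcyn citation; you also correctly flag, as the paper does in a remark, that the Yomdin term can be dropped for a fixed $C^\infty$ decomposition by letting $r\to\infty$.
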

\begin{proof}
 (i)$\Rightarrow$(ii) is due to Ledrappier \& Strelcyn \cite{Ledrappier-Strelcyn}, and  (ii)$\Leftrightarrow$(iii) is trivial.

To see (ii)$\Rightarrow$ (i), we take a sequence of $f$-ergodic measures $\nu_k$  with $h(f,\nu_k)\to h'\geq h$ and $P^u(f,\nu_k)\to 0$.  We select  a  subsequence $(\nu_{k_i})_{i\geq 1}$ s.t. $\lambda^+:=\lim \lambda^+(\nu_{k_i})$ and $\mu:=\lim \nu_{k_i}$ exist. By Ruelle's inequality, $\lambda^+\geq h'>0$, and by Theorem \ref{t.ergodic3}, $\mu=\beta\mu_1+(1-\beta)\mu_0$ where
$$
\lim_{i\to\infty}\lambda^+(f,\nu_{k_i})=\beta\lambda^+(f,\mu_1)
\text{, and }\lim_{i\to\infty}h(f,\nu_{k_i})\leq \beta h(f,\mu_1).
$$
It follows that $0=\lim_{i\to\infty}P^u(f,\nu_{k_i})\leq \beta P^u(f,\mu_1)\leq 0$, where the last inequality is Ruelle's inequality for $\mu_1$.
Since $\beta \lambda^+(f,\mu_1)=\lim\lambda^+(f,\nu_{k_i})=\lambda^+\neq 0$, it must be the case that $\beta\neq 0$, and $P^u(f,\mu_1)=0$. If $\mu_1=\int\mu_\xi' d\xi$ is the ergodic decomposition of $\mu_1$, then $\int P^u(f,\mu_\xi')d\xi=P^u(f,\mu_1)=0$. By  Ruelle's inequality, the integrand is non-positive, so  $P^u(f,\mu'_\xi)=0$ for $\mu_1$-a.e. ergodic component. At the same time, $h(f,\mu_1)\geq \beta^{-1}\lim h(f_k,\nu_k)>h'$, so some  of these ergodic components must have entropy $\geq h'$. Thus $\mu_1$ has ergodic components with entropy bigger than $h$, and zero pressure. By Ledrappier-Young Theory, these are SRB measures with entropy bigger than $h$, and (i) is proved.
\end{proof}
Notice that it is essential in this proof to be able to deal with non-ergodic limits, since we have no control of $\lim\nu_k$.

\subsection{Related works}\label{ss.related-work}
In this paper we relate the continuity properties of the entropy to those of the Lyapunov exponents. The continuity of these objects has been studied separately before in several works, which we now recall.

\medskip
\paragraph{\it Entropy.}
In general, the entropy map $(f,\mu)\mapsto h(f,\mu)$ is not lower semi-continuous,   even in the uniformly hyperbolic case. For example, it is easy to construct sequences of atomic measures (with zero entropy) on a basic set,  which converge to limits  with positive entropy.

However, for $C^\infty$ diffeomorphisms on compact manifolds, the entropy map is upper semi-continuous: This is due to  Newhouse \cite{Newhouse1990}.
For $C^r$ diffeomorphisms with finite $r$, even upper semi-continuity may fail (for examples in dimension four see \cite{misiurewicz}, and for examples in dimension two see \cite{BuzziNoMax}). However,
the (additive) defect in semi-continuity:
$$D(f,\mu):=\limsup_{(g,\nu)\to(f,\mu)} h(g,\nu)-h(f,\mu)$$
can be  bounded from above
by $\min(\lambda(f),\lambda(f^{-1}))/r$,
using Yomdin theory~\cite{Newhouse1990,BuzziPhD,Burguet,BurguetFibered2017}.
{A subject of more recent interest is the loss of semi-continuity due to non-compactness. This has been studied for} countable Markov shifts \cite{Iommi-Todd-Velozo,Iommi-Todd-Velozo-USC}, geodesic flows on non-compact homogeneous spaces \cite{Einsiedler-Kadyrov-Pohl,Kadyrov-Pohl}, and geodesic flows on non-compact manifolds with negative sectional curvatures \cite{Iommi-Riquelme-Velozo,Riquelme-Velozo}.
\medskip

\paragraph{\it Lyapunov exponents.}
The {top} Lyapunov exponent map $(f,\mu)\mapsto \lambda^+(f,\mu)$ varies continuously
for uniformly hyperbolic systems on surfaces. It even depends analytically on the diffeomorphism $f$~\cite{Ruelle-exponent-continuity}.  Moreover  if $\mu_{\max}$ is the unique measure of maximal entropy of a mixing Anosov surface diffeomorphism, then \cite{Kadyrov-Effective} (see also \cite{Polo,Ruhr}) implies that $|\lambda^+(\mu_{\max})-\lambda^+(\nu)|\leq c\sqrt{|h(f,\mu_{\max})-h(f,\nu)|}$, where $c$ only depends on $f$.

In the non-uniformly hyperbolic case, the situation is different. For example,  \cite{Bochi} proves that among conservative systems
the Lyapunov exponents of the volume measure are discontinuous when the diffeomorphism varies in the $C^1$-topology, unless they vanish.

 We are not aware of other general results on
the continuity of the Lyapunov exponents
for general non-uniformly hyperbolic surface diffeomorphisms.

By contrast, much is known on the continuity of Lyapunov exponents  of random products of independent identically distributed $SL(2,\R)$ matrices, as functions of the underlying   Bernoulli process, see \cite{Furstenberg-Kifer,Bocker-Viana}.
More general  H\"older continuous matrix cocycles with holonomies are considered in~\cite{Backes-Brown-Butler},
and a higher-dimensional extension has been announced in~\cite{Viana-survey}.
\medskip

\paragraph{\it Dimension.} L.-S. Young gave the famous formula \eqref{eq-Young} for the dimension of hyperbolic  invariant measures in \cite{young} in terms of the entropy and the Lyapunov exponents of the measure. For further dimension theoretic properties of hyperbolic invariant measures, see  \cite{Barreira-Pesin-Schmeling} and \cite{Barreira-Gelfert}.
The continuity of the  dimension of invariant sets and measures for hyperbolic systems have been considered in numerous works, for instance \cite{palis-viana} proves that
basic sets on surface have a Hausdorff dimension which varies continuously with the diffeomorphism,
\cite{Barreira-Wolf} proves that the supremum of the Hausdorff dimensions of ergodic measures on such a basic set
is attained by a measure of maximal dimension
and \cite{Barreira-Gelfert} discusses some non-uniformly hyperbolic cases.

\section{A heuristic overview of the proof}

All our results follow from Theorem \ref{t.ergodic3}, and the remainder of the paper is dedicated to the proof of this theorem.
Here we give a heuristic overview of the proof, in the special case when $f_1=f_2=\cdots=f$
is a $C^\infty$ diffeomorphism.

\subsection{The origin of the discontinuities in $\lambda^+$}
As Furstenberg discovered, the Lyapunov exponents are easier to study in terms of the \emph{projective dynamics} $\hf(x,E)=(f(x),Df_x(E))$ on the projective tangent bundle
\begin{align*}
&\hM:=\{(x,E):x\in M,\ E\subset T_x M\text{ is a one-dimensional linear space}\}.
\end{align*}
Indeed by Ledrappier's work, a  Lyapunov exponent of an $f$-ergodic measure $\mu$ is  simply the integral of the continuous function
 $$
   \vf(x,E):=\log\|Df_x|_E\|,
 $$
 with respect to the lift of $\mu$ to the bundle of the associated Oseledets spaces.

Suppose $\nu_k$ are ergodic  measures with positive entropy such that $\nu_k\to\mu$ weak$^\ast$, and suppose for the moment that $\mu$ is ergodic and with positive entropy.
Since $\dim(M)=2$,  $\nu_k$ have two simple Lyapunov exponents, and there are exactly two ergodic lifts $\hnu^+_k$ and $\hnu^-_k$, one carried by the   bundle $\mathcal E^u$ of unstable Oseledets spaces,  and the other carried by the bundle $\mathcal E^s$ of the stable Oseledets spaces.
(The third bundle $\mathcal E^0$ associated to the zero exponent has measure zero for all lifts of $\nu_k$.)
Hence,
 $$
   \lambda^\pm(f,\nu_k)=\int \varphi d\hnu_k^\pm.
 $$
Suppose $\hnu^+_k$ converge weak-star on $\hM$ to an $\hf$-invariant probability
measure $\hmu$ (this is true for a subsequence). Since $\vf:\hM\to\R$ is continuous,
 $$
 \lim_{k\to\infty} \lambda^+(f,\nu_k)=\lim_{k\to\infty} \hnu_k(\vf) = \hmu(\vf).
 $$
 The limiting measure $\hmu$ is a lift of $\mu$, but this does not have to be  the lift of $\mu$ to $\mathcal E^u$, $\hmu^+$. If  $\hmu(\vf)\neq \hmu^+(\vf)$,  then $ \lim_{k\to\infty} \lambda^+(f,\nu_k)\neq \lambda^+(f,\mu)$.

It is certainly possible that $\hmu\neq \hmu^+$: The Oseledets bundle $\mathcal E^u$ carrying the lifts $\hnu_k^+$ is not necessarily bounded away from $\mathcal E^s$,  and some mass $0\leq \rho\leq1$ on $\mathcal E^u$ can escape to $\mathcal E^s$.

Escape of mass to $\mathcal E^s$ is reflected in long stretches of time when  $\nu_k$-typical orbits do not experience the exponential growth of  $\mathcal E^u$-directions predicted by $\lambda^+$. Instead, they see, temporarily, exponential decay at rate $\lambda^-$, cancelling some of the previous growth.
If $\hmu^+,\hmu^-$ denote the two ergodic lifts. we must have
 $
    \hmu=(1-\rho)\hmu^+ + \rho\hmu^-$  and thus,
 $$
    \lim_{k\to\infty} \lambda^+(f,\nu_k) = (1-\rho)\hmu^+(\vf) + \rho\hmu^-(\vf)
     = \lambda^+_\mu-\rho(\lambda^+_\mu-\lambda^-_\mu).
  $$
In the language of Theorem \ref{t.ergodic3} (and since  $\mu=\mu_1$ by ergodicity), the discontinuity ratio $\beta$ is:
 $$
   \beta := \frac{ \lim_k \lambda^+(f,\nu_k)}{\lambda^+(f,\mu)} = 1 - \rho(1-\lambda^-_\mu/\lambda^+_\mu).
 $$
(A different description of $\beta$ will be given below.) So if $\mu$ is ergodic, then $\beta$ is a function of $\rho$, whence of the amount of mass which escapes to $\mathcal E^s$.

In the case where $\mu$ is not ergodic, the different ergodic components of $\mu$ have to be considered, and some of them may have zero Lyapunov exponents. The way in which $\nu_k$-typical orbits approximate those ergodic components determine the possible cancellations. So if $\mu$ is not ergodic, then  $\beta$ may depend on the entire sequence $(\nu_k)$, not just on its limit $\mu$.

\subsection{Neutral  blocks, the decomposition of $\mu$, and the parameter $\beta$}
\label{heuristic-neutral}
 Recall the measurable $\hf$-invariant decomposition
$\mathcal{X}=\mathcal{E}^s\cup \mathcal{E}^u \cup \mathcal{E}^0$ defined by the Oseledets theorem according to the sign of the limit $\tfrac 1 n \log \|Df^n_x|_E\|$. It has full measure with respect to any $\hf$-invariant measure.
To get quantitative estimates, we select compact subsets $K^*\subset \mathcal{E}^*$, for $*\in \{s,u,0\}$,
from which the contraction, expansion, or ``central" behavior of the sequence $\|Df^n_{x}|_{E}\|$ are uniformly controlled,
and such that the $\hmu$-measure of $K:=K^s\cup K^u\cup K^0$ is close to $1$. Since each $\mathcal E^*$ is invariant, we can choose these compact sets to be nearly invariant: Points in a very small neighborhood stay close for a long time.

Hence, if $\hx_0$ is a $\hnu_k^+$-typical  point for some very large $k$, its orbit under $\hf$ spends nearly all its time close to $K$ and
every visit in a small neighborhood of $K^s\cup K^0$ is the beginning of a long period of uniform contraction (or weak expansion/contraction). One expects no entropy creation not only during this period, but also during the ``recovery period" which follows, i.e., until the expansion  predicted by the Lyapunov exponent of $\nu_k$ cancels this period of contraction (or weak expansion/contraction).

We select such long time intervals along the orbit of $\hx_0$ in the following greedy way. Fixing $\alpha>0$ small and $L$ large, an \emph{$(\alpha,L)$--neutral block} is a maximal interval of integers
$(n_0,\dots,n_0+\ell)$ such that $\ell\geq L$ and
$$\|Df^n_{f^{n_0}(x_0)}|_{E^u}\|\leq \exp(\alpha(n-n_0)) \text{ for all }0<n\leq \ell.$$
We will check that indeed, there is very little if any entropy creation during neutral blocks.

Our estimates will be in terms of the distribution of these long neutral blocks.
Let $\hx_k=(x_k,E^u(x_k))$ be $\hnu_k^+$-generic points and let $\mathfrak N_{\alpha,L}(\hx_k)$ denote the union of all $(\alpha,L)$-neutral blocks of the orbit of $\hx_k$.
In Section \ref{s.neutral} we show that it is possible to choose a subsequence $k_i\to\infty$ so that following limits
make sense \weakstar\ on $\hM$ for $(\hnu^+_{k_1}\times\hnu^+_{k_2}\times\cdots)$--a.e. $(\hx_{k_1},\hx_{k_2},\ldots)$:
\begin{align*}
&\hm_0:=\underset{L\to\infty}{\lim\limits_{\alpha\to 0}}\lim_{i\to\infty}\left(\lim_{N\to\infty} \frac{1}{N}\sum_{j\in [0,N)\cap\mathfrak N_{\alpha,L}(\hx_k)} \delta_{\hf^j(\hx_{k_i})}\right)\\
&\hm_1:=\underset{L\to\infty}{\lim\limits_{\alpha\to 0}}\lim_{i\to\infty}\left(\lim_{N\to\infty} \frac{1}{N}\sum_{j\in [0,N)\setminus\mathfrak N_{\alpha,L}(\hx_k)}\delta_{\hf^j(x_{k_i})}\right)\, .
\end{align*}
Notice that the sum of the two limits in the brackets is a.s. $\hnu_{k_i}^+$, because this is the limit of the empirical measure of $\hx_{k_i}$, and $\hx_{k_i}$ are all  a.s. $\hnu_{k_i}^+$--generic. So
$$
\hm_0+\hm_1=\lim_{k\to\infty}\hnu_k^+=\hmu.
$$

The measures $\hm_0,\hm_1$ are $\hf$-invariant. We will see that $\int \varphi d\hm_0=0$, and that $\hm_1$ is carried by $\mathcal{E}^u$. The decomposition
$\mu=\beta\mu_1+(1-\beta)\mu_0$ in Theorem \ref{t.ergodic3} is defined by
 $$
   \beta:=1-\hmu_0(\hM),\; (1-\beta)\mu_0:=\hpi_*(\hmu_0),\text{ and }\beta\mu_1:=\hpi_*(\hmu-\hmu_0),
 $$
where $\hpi:\hM\to M$ is the natural projection.
Note that $\beta$ is indeed the discontinuity ratio $\lim_k \lambda^+(f,\nu_k)/\lambda^+(f,\mu_1)$ and the quantity $1-\beta$ coincides with the fraction of the time spent in maximal neutral blocks.
The measures $\mu_0,\mu_1$ and $\beta$ depend not just on  $\hmu$, but also on the way the measures $\hnu^+_k$ accumulate on  $\hmu$.

\subsection{Upper bound on the entropy}  To complete the proof of the theorem it remains to show that
$
\displaystyle\lim_{k\to\infty}h(f,\nu_k)\leq \beta h(f,\mu_1)$. This is the heart of the proof, and where most of the difficulties lie. We use Ledrappier-Young Theory  and Yomdin Theory.
\begin{itemize}
\item \emph{Ledrappier-Young theory} bounds $h(f,\nu_k)$ by the exponential rate of growth of the minimal number of $(n,\epsilon)$-balls needed to cover a definite fraction of a local unstable manifold $W^u_{loc}(x_k)$,
where $x_k$ is a fixed $\nu_k$-typical point and the scale $\eps$ tends to zero. The ``fraction" is measured using the conditional measure $\nu^u_{x_k}$ of $\nu_k$ on $W^u_{loc}(x_k)$. In particular, it suffices to follow points $x\in W^u_\loc(x_k)$ with $T_x W^u_\loc(x_k)=\mathcal E^u(x)$.

\medskip
\item \emph{Yomdin theory} provides tools for  controlling the number  of $(n,\eps)$-balls needed for such covers, for $C^r$ maps. Instead of working with $(n,\eps)$-balls, one works with parametrized pieces of unstable manifolds which lie inside $(n,\eps)$-balls and which have uniformly bounded $C^r$ size, and Yomdin Theory allows to bound the number of such pieces.  Here the regularity assumptions on $f$ come into play.  The expression $\frac{\lambda(f)+\lambda(f^{-1})}{r-1}$ in \eqref{e.eqC2} is due to Yomdin theory (see section \ref{ss.expansion-in-neutral}).
\end{itemize}

\medskip
Let us sketch our argument for the upper bound on the entropy using the neutral blocks.
Since the unstable lift of $\nu_{x_{k}}^u$-almost every point is $\hnu^+_k$-typical, neutral blocks represent roughly a fraction $1-\beta$ of their time.
During a neutral block,  typical points on a small piece of $f^n(W^u_{loc}(x_k))$ do not separate much, therefore this piece remains small (or can be kept small by a subdivision into a small exponential number of pieces).
For the rest of the time, these subcurves follow the ergodic components of $\mu_1$, hence they experience entropic separation at an exponential rate given by $h(f,\mu_1)$. Since the time outside neutral blocks is a proportion $\beta$ of the total time, this leads to  the bound
 $$
  h(f,\nu_k)\leq \beta h(f,\mu_1).
 $$
This argument explains the link between the entropy bound and the semicontinuity defect of the Lyapunov exponents.

This sketch glosses over several difficulties. We will only comment on the main issue: How to  use non-expansion of the linearization $Df$ at $(x_k,E^u(x_k))$ during a neutral block, to infer non-expansion of the map $f$ itself on a small piece of $W^u_{loc}(x_k)$  during this neutral block. The difficulty is in controlling $Df$ on $(x'_k,E^u(x'_k))$ for $x_k'$ close to $x_k$.

\subsection{Control of the expansion during neutral blocks}\label{ss.expansion-in-neutral}
This is one of the most delicate points in the proof. To deduce the non-expansion of the small piece of the unstable manifold containing this point, we need to know that not only the diameter of this curve is small but that its tangent is almost constant too. This forces us to work with pieces of $W^u_\loc(x_k)$ whose lifts to $\hM$  are also small: The size in the fiber of $\hM$ measures the variability of the tangent directions.

How small is small enough? To use information on $Df$ to control what happens on $W^u_{loc}(x_k)$, we need the fluctuations of the tangent direction along any piece to be smaller than some $\heps>0$,   determined (mostly) by the modulus of continuity of $Df$. Using the uniform continuity of the measurable unstable bundle on a set of large measure, we find an $\eps>0$ such that if the diameter of the projection to $M$ is less than $\eps$, then  the fluctuation of the tangent is smaller than $\heps$.

The price we pay for this solution is that we need to work with different scales in $M$ and along the fibers of the bundle $\hM\to M$. This leads us to introduce \emph{fibered $(n,\varepsilon,\heps)$-balls}, and to work with Yomdin theory for $\hf:\hM\to\hM$.  When dealing with diffeomorphisms of finite regularity, there is an additional price to pay:  If $f$ is $C^r$, then  $\hf$ is only $C^{r-1}$, and this  accounts for the extra term from Yomdin theory
 $$
   \frac{\hlambda(\hf)}{r-1}\leq \frac{\lambda(f)+\lambda(f^{-1})}{r-1} \quad \text{ in \eqref{e.eqC2}}.
 $$

\subsection{Organisation of the paper}
The different ingredients of the proof appear as follows in the text.
\begin{description}
\item[\it Section \ref{s.lift}] background on tangent dynamics and Lyapunov exponents.
\item[\it Section \ref{sec-entropy}] results from Ledrappier-Young and Yomdin theories on the entropy in differentiable dynamics.
\item[\it Section \ref{s.reparametrization}] reparametrization lemmas estimating the entropy from neutral blocks and other time intervals.
\item[\it Section \ref{s.neutral}] neutral decomposition of typical orbits.
\item[\it Section \ref{s.conclude}] proof of the technical version of our main theorem.
\item[\it Section \ref{s.supplements}] proof of the remaining statements.
\end{description}

\medskip

\noindent{\it A remark on style.} Our constructions, estimating entropy for a sequence of measures converging to a nonergodic one, require many parameters. We have chosen to make the dependences as explicit as possible to help the reader check that there is no circular argument.

{\subsection{Standing notations for the duration of the paper} We collect here some notations that we will use frequently below. \begin{enumerate}[\quad $\bullet$]
\item $|X|$ or $\Card(X)$: the cardinality of a set $X$.
\item $M$ is a compact Riemannian $C^\infty$ manifold without boundary, with tangent bundle $TM$, tangent spaces $T_x M$, and Riemannian norm $\|\cdot\|_x$. Derivatives of maps $f:M\to M$ are denoted by $Df:TM\to TM$ or $Df_x:T_x M\to T_{f(x)}M$.
\item $h(f,\mu)$, $\lambda^\pm_\mu$, $\lambda^\pm_x$, $TM=E^+\oplus E^-$: the entropy, average and pointwise Lyapunov exponents, Osededets splitting
associated to a measure $\mu$ (also denoted $\lambda^{s/u}_\mu$, $E^{s/u}$ when the measure is hyperbolic of saddle type), see section~\ref{ss.exponent}.
\item $\hM$, $\hf$, $\hx=(x,E)$: the projective tangent bundle, the lift of a diffeomorphism $f$ and of a point $x$, see section~\ref{ss.projective}.
\item $B_f(x,n,\eps)$, $r_f(n,\eps,X)$: an $(n,\eps)$-Bowen ball for $f$ and the $(n,\eps)$-covering number of a set $X\subset M$, see section~\ref{sec-Bowen-Katok}.
\item $\|Df\|_\supnorm$: the sup-norm of the tangent map, see section~\ref{ss.Cr}.
\item $\lambda(f):=\lim_{n\to +\infty}\frac1n\log\|D f^n\|_{\sup}$: the asymptotic dilation.
\item $\|f\|_{C^r}$: the $C^r$ size of $f$, see section~\ref{ss.size}.
\item $Q_{r,N}(f)$: the supremum of the $C^r$ sizes of $f,f^2,\dots,f^N$ and of the $C^{r-1}$ semi-norm of $\wh f,\wh f^2,\dots,\wh f^N$ see section~\ref{ss.yomdin}.
\item If $v$ is a vector, then $\R.v:=\{tv:t\in\R\}$.
\end{enumerate}}

\section{Tangent dynamics and the semi-continuity of Lyapunov exponents}\label{s.lift}

Let $M$ be a smooth compact Riemannian surface without boundary.

{\subsection{Review of the $C^r$ size of maps}\label{ss.size}
Let $U$ be an open subset of $\R^n$.

Given $k\in \NN$, we say that a map $F:U\to\R^d$ is $C^k$ if
for all $\omega\in (\NN\cup\{0\})^n$ such that $|\omega|:=\omega_1+\cdots+\omega_n=k$,
the partial derivative
$$\partial^\omega F:=\frac{\partial^{\omega_1+\cdots+\omega_n}F}{\partial^{\omega_1}x_1\cdots\partial^{\omega_n}x_n}
$$
exists and is  continuous on $U$. For any compact subset $K\subset U$, we then define the $C^k$ \emph{size}
$$\|F\|_{C^k,K}:=\max_{1\leq |\omega|\leq k}\max_{x\in K} \|\partial^\omega F(x)\|.$$
Given $\alpha\in (0,1)$, we say that a map $F$ is $C^\alpha$ if the following quantity is finite for any compact set $K\subset U$,
 $$
     \| F \|_{C^\alpha,K} := \sup_{\tiny\begin{array}{c} {x,y\in K}\\x\ne y\end{array}} \frac{\|F(x)-F(y)\|}{\|x-y\|^\alpha}.
 $$
Given $r>1$ which is not an integer, we decompose it as $r=k+\alpha$, with $k=\lfloor r \rfloor$ and $\alpha\in (0,1)$.
We say that $F$ is $C^r$ if it is $C^k$ and each partial derivative $\partial^\omega F$, $|\omega|=k$ is $C^\alpha$.
For any compact set $K\subset U$, we define the $C^r$ size
$$\| F \|_{C^r,K}:=\|F\|_{C^k,K} + \max_{|\omega|=k} \| \partial^\omega F\|_{C^\alpha, K}.$$

Let $\Omega$ be a compact subset of $\R^n$ which is equal to the closure of its interior (we mostly need $[0,1]^n$). A map $F:\Omega\to\R^d$ is $C^r$ if $F$ has a $C^r$ extension to an open neighborhood of $\Omega$.
In this case, the {\em $C^r$ size of $F$ on $\Omega$} is
$$
\|F\|_{C^r}:=\;\sup_{K\subset \mathrm{int}(\Omega)}\;\|F\|_{C^r,K}.
$$
This (finite) quantity is independent of the extension of $F$ to the neighborhood of $\Omega$. Notice that the $C^r$ size of a constant function is zero.
\medskip

A {\em $C^r$ structure} on a smooth manifold $N$  is defined by a maximal atlas $\mathfrak A$ with $C^r$ changes of coordinates. A smooth manifold equipped with a $C^r$ structure $\mathfrak A$ is called a $C^r$ manifold. A finite subset of $\mathfrak A$ which covers $N$ is called a $C^r$ atlas of $N$.

Let $N_1,N_2$ be two compact $C^r$ manifolds (later this will be $M$, $\hM$ or the circle $S^1$), and let $\mathcal A_i$ be  finite  $C^r$ atlases of $N_i$.
Let $\Omega$ be a compact subset of $N_1$ equal to the closure of its interior.
We say that $f\colon \Omega\to N_2$ is a {\em $C^r$ map} if
each map $\chi_2^{-1}\circ f\circ\chi_1$, where $\chi_i$ ranges over $\mathcal A_i$, is $C^r$. The \emph{$C^r$ size} of $f$ is:
$$
\|f\|_{C^r}:=\max_{\chi_1\in\mathcal A_1, \chi_2\in\mathcal A_2}\|\chi_2^{-1}\circ f\circ\chi_1\|_{C^r}<\infty.
$$
Again, the constant map has size zero.

The quantity $\|f\|_{C^r}$ depends on the choice of atlases $\mathcal A_i$, but if $N_i$ are compact, then finite atlases
induce  equivalent $C^r$ sizes. In case $N_1=S^1$, we will always use the Euclidean atlas.
\medskip

Suppose $f_k,f\in\Diff^r(M)$ and $1\leq r<\infty$.
We will say that $f_k$ converges to $f$  {\em uniformly in a $C^r$-bounded way}, if
$f_k\to f$ uniformly, and
$\sup_{k\geq1} \|f_k\|_{C^r} < \infty.
$
We write in this case
$$f_k\rbd{r}f.$$
If $M$ is compact, $f_k,f\in\Diff^\infty(M)$, and $f_k\to f$ in $C^\infty$, then $f_k\rbd{r}f$ for all $r$ finite.

The Arzela-Ascoli theorem implies the following.
\begin{lemma}\label{l-ArzelaAscoli}
{Let $N_1,N_2$ be compact $C^r$ manifolds},  and $f,f_1,f_2,\dots\colon N_1\to N_2$ be a collection of $C^r$ maps such that $(f_k)$ converges to $f$ uniformly, and $\sup_k \|f\|_{C^r}<\infty$.
Then $(f_k)$ converges to $f$ in the $C^\ell$-topology for any $\ell<r$, $\ell\in\NN$.

{Thus, if for some real $r>1$ {s.t. $r\not\in\NN$}, $f,f_1,f_2,\dots\in \Diff^r(M)$ where $M$ is a compact manifold,  then  $f_k\rbd{r} f$ implies that $f_k\to f$ in the $C^{\lfloor r\rfloor}$-topology.}
\end{lemma}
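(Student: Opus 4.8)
The plan is to reduce the statement to a local problem in Euclidean charts and then invoke the Arzela--Ascoli theorem, together with the elementary calculus fact that if a sequence of functions and all of their partial derivatives up to order $\ell$ converge uniformly, then the limits of the derivatives are the derivatives of the limit function. The argument is a routine compactness argument; the only points that require care are the bookkeeping with the finitely many charts and a uniform treatment of the integer and non-integer cases of $r$ (when $r\notin\NN$ the partial derivatives of top order $\lfloor r\rfloor$ are only H\"older, not Lipschitz). These are the places to watch, but no genuine analytic obstacle arises.

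First I would pass to the finite atlases $\mathcal A_1$ of $N_1$ and $\mathcal A_2$ of $N_2$. By definition $\|f_k\|_{C^r}=\max_{\chi_1\in\mathcal A_1,\chi_2\in\mathcal A_2}\|\chi_2^{-1}\circ f_k\circ\chi_1\|_{C^r}$, so the hypothesis $\sup_k\|f_k\|_{C^r}<\infty$ says precisely that each local representation $F_k:=\chi_2^{-1}\circ f_k\circ\chi_1$ has $C^r$ size bounded uniformly in $k$ on compact subsets of its domain, while $f_k\to f$ uniformly means $F_k\to F:=\chi_2^{-1}\circ f\circ\chi_1$ uniformly there. Since $C^\ell$ convergence of a sequence of maps $N_1\to N_2$ is equivalent to $C^\ell$ convergence of all such local representations on compact subsets of their domains, it suffices to prove: if $\Omega\subset\R^n$ is compact and equal to the closure of its interior, $F_k\colon\Omega\to\R^d$ are $C^r$ with $\sup_k\|F_k\|_{C^r}<\infty$, and $F_k\to F$ uniformly, then $\partial^\omega F_k\to\partial^\omega F$ uniformly for every multi-index $\omega$ with $|\omega|\le\ell$.

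Write $r=\lfloor r\rfloor+\alpha$ with $\alpha\in[0,1)$, and note $\ell\le\lfloor r\rfloor$, with equality possible only when $\alpha>0$. Fix $\omega$ with $|\omega|\le\ell$. The bound on $\|F_k\|_{C^r}$ gives a uniform sup-bound on $\partial^\omega F_k$ together with a uniform modulus of continuity: if $|\omega|<\lfloor r\rfloor$, then the first partial derivatives of $\partial^\omega F_k$ are uniformly bounded, so $\partial^\omega F_k$ is uniformly Lipschitz; if $|\omega|=\lfloor r\rfloor$, then $\partial^\omega F_k$ is uniformly $\alpha$-H\"older. In either case $\{\partial^\omega F_k\}_k$ is bounded and equicontinuous on $\Omega$, so by Arzela--Ascoli and finitely many successive extractions (one per multi-index with $|\omega|\le\ell$), every subsequence of $(F_k)$ has a further subsequence along which $\partial^\omega F_k$ converges uniformly to some continuous function $G_\omega$, for all $|\omega|\le\ell$. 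Uniform limits being unique, $G_0=F$; and applying inductively on $|\omega|$ the classical lemma ``$g_k\to g$ uniformly and $\partial_i g_k\to h$ uniformly $\Rightarrow$ $g$ is $C^1$ with $\partial_i g=h$'' (legitimate because $\Omega$ is the closure of an open set, so segments joining nearby points stay in $\Omega$), we obtain $G_\omega=\partial^\omega F$ for all $|\omega|\le\ell$, that is, $F_k\to F$ in $C^\ell$ along that subsequence. Since the space of $C^\ell$ maps $\Omega\to\R^d$ is a normed (hence metric) space and the limit $F$ does not depend on the chosen subsequence, the full sequence $(F_k)$ converges to $F$ in $C^\ell$. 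Reassembling over the finitely many charts gives $f_k\to f$ in the $C^\ell$ topology, which is the first assertion.

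For the last assertion, recall that $f_k\rbd{r}f$ means by definition that $f_k\to f$ uniformly and $\sup_k\|f_k\|_{C^r}<\infty$, which are exactly the hypotheses of the first part. When $r>1$ is not an integer, $\ell:=\lfloor r\rfloor\in\NN$ satisfies $\ell<r$, so the first part applies and yields $f_k\to f$ in the $C^{\lfloor r\rfloor}$ topology.
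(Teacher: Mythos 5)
Your proof is correct and follows exactly the route the paper intends: the paper states the lemma as an immediate consequence of Arzela--Ascoli and provides no written argument beyond that remark. Your write-up supplies the routine details --- reduction to the finitely many charts, equicontinuity of the partials of order at most $\ell$ from the uniform $C^r$ bound (Lipschitz when $|\omega|<\lfloor r\rfloor$, $\alpha$-H\"older when $|\omega|=\lfloor r\rfloor$), Arzela--Ascoli together with the derivative-of-uniform-limit lemma to identify each subsequential limit with $\partial^\omega F$, and the subsequence trick to upgrade subsequential to full convergence in the metric space of $C^\ell$ maps --- and they are all sound.
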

}

\subsection{The projective tangent bundle}\label{ss.projective}  Let
$$
P_x M:=\{E: E\text{ is a one-dimensional linear subspace of }T_x M\}.
$$
$P_x M$ is the quotient of $T_x M\setminus\{0\}$ by the  equivalence relation
$v\sim w$ $\iff$  $\exists\lambda\ne0,\; v=\lambda w$.
It can also be viewed as the image of $\{v\in T_x M:\|v\|_x=1\}$ by the two-to-one map $v\mapsto \mathrm{Span}\{v\}$. These identifications allow us to endow $P_x M$ with a topology and with a smooth structure, and to identify the tangent spaces $T_E(P_xM)$ with $\{w\in T_xM: w\perp E\}$.  
We can also pull back the induced Riemannian inner product on $\{w\in T_xM: w\perp E\}$ to an inner product on $T_E(P_x M)$. This endows $P_x M$ with a Riemannian structure.
The resulting Riemannian distance on $P_xM$ is simply $\dist(E_1,E_2)=|\measuredangle(E_1,E_2)|$.
With this structure, $P_x M$ is isometric to the circle with perimeter $\pi$.

The {\em projective tangent bundle} (or just ``projective bundle") of $M$ is the bundle
$(\hM,{\hpi},M)$ where $\hpi:\hM\to M$ is the natural projection $\hpi(x,E)=x$, and
$$
\hM:=\{(x,E):x\in M,\text{$E$ is a one-dimensional linear subspace of $T_x M$}\} =\bigsqcup_{x\in M} P_xM.
$$
$\hM$ is a smooth compact three-dimensional  manifold. We endow it with the Riemannian metric $\sqrt{ds^2+d\theta^2}$, where $ds$ is the length element on $M$ and $d\theta$ is the length element on $P_x M$.
Points in $\hM$ will  be denoted by $\hx=(x,E)$.

Let  $f:M\to M$ be a $C^1$ diffeomorphism. The {\em canonical lift} of $f$ is the  homeomorphism $\hf:\hM\to\hM$ given by
\begin{equation}\label{canonical-lift}
\hf(x,E)=(f(x),Df_x(E)).
\end{equation}
If $f$ is of class $C^r$, then $\hf$ is of class $C^{r-1}$.
Notice that
 $\hpi\circ \hf=f\circ\hpi$, and $$\hf^n(x,E)=(f^n(x),(Df^n)_x(E)).$$

Every $\hf$-invariant probability measure $\hnu$ on $\hM$ projects to an $f$-invariant probability measure $\nu$ on $M$ given by
$$
\nu:=\hpi_\ast(\hnu):=\hnu\circ\pi^{-1}.
$$
We call $\nu$ the {\em projection} of $\hnu$, and $\hnu$ a {\em lift} of $\nu$. In what follows, when we ``lift", we always mean an $\hf$-invariant lift. The following lemma is a well-known consequence of the compactness of $\widehat M$.

\begin{lemma}\label{Lemma-lift}\begin{enumerate}[(1)]
\item Every $f$-invariant probability measure $\nu$ has at least one lift $\hnu$.
\item If $\nu$ is $f$-ergodic and $\hnu$ lifts $\nu$, then a.e. ergodic component of $\hnu$ is a lift of $\nu$.
\end{enumerate}
Hence every ergodic $f$-invariant probability measure has at least one ergodic lift.
\end{lemma}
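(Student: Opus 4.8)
The plan is to prove Lemma~\ref{Lemma-lift} using the compactness of $\hM$ together with standard ergodic decomposition, since the final assertion is an immediate corollary of (1) and (2).

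\textbf{Part (1).} First I would note that for any $f$-invariant probability measure $\nu$, the fiber $P_x M$ is compact, so by a measurable selection theorem there is a Borel section $x \mapsto E(x)$ of $\hpi$; pushing $\nu$ forward under $x \mapsto (x, E(x))$ gives a probability measure $\hnu_0$ on $\hM$ with $\hpi_*\hnu_0 = \nu$, but $\hnu_0$ need not be $\hf$-invariant. To fix this, I would form the Cesàro averages $\hnu_n := \frac1n \sum_{j=0}^{n-1} \hf^j_* \hnu_0$. Since $\hM$ is compact, the space of Borel probability measures on $\hM$ is \weakstar\ compact, so $(\hnu_n)$ has a \weakstar\ accumulation point $\hnu$. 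A routine argument shows $\hnu$ is $\hf$-invariant. Moreover $\hpi_* \hnu_n = \frac1n\sum_{j=0}^{n-1} \hpi_*\hf^j_*\hnu_0 = \frac1n\sum_{j=0}^{n-1} f^j_* \nu = \nu$ for every $n$, using $\hpi\circ\hf = f\circ\hpi$ and the $f$-invariance of $\nu$; since $g \mapsto \hpi_* g$ is \weakstar-continuous, $\hpi_*\hnu = \nu$. Hence $\hnu$ is a lift of $\nu$.

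\textbf{Part (2).} Suppose $\nu$ is $f$-ergodic and $\hnu$ lifts $\nu$. Let $\hnu = \int \hnu_\xi \, d\xi$ be the ergodic decomposition of $\hnu$ with respect to $\hf$. Projecting, $\nu = \hpi_*\hnu = \int \hpi_*\hnu_\xi\, d\xi$, and each $\hpi_*\hnu_\xi$ is an $f$-invariant probability measure (the pushforward of an $\hf$-ergodic, hence $\hf$-invariant, measure). So $\nu = \int \hpi_*\hnu_\xi\, d\xi$ exhibits $\nu$ as an average of $f$-invariant measures. Since $\nu$ is $f$-ergodic, it is an extreme point of the simplex of $f$-invariant measures, so $\hpi_*\hnu_\xi = \nu$ for a.e.\ $\xi$. (Equivalently: for any $f$-invariant Borel set $A\subset M$, the set $\hpi^{-1}(A)$ is $\hf$-invariant, hence has $\hnu_\xi$-measure $0$ or $1$ for a.e.\ $\xi$; by $f$-ergodicity $\nu(A)\in\{0,1\}$, and one checks the two decompositions are compatible, forcing $\hpi_*\hnu_\xi = \nu$ a.e.) Thus a.e.\ ergodic component of $\hnu$ is a lift of $\nu$.

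\textbf{Conclusion and main obstacle.} Given (1), an $f$-ergodic $\nu$ has some lift $\hnu$; by (2) a.e.\ ergodic component of $\hnu$ is again a lift of $\nu$, and such components exist, so $\nu$ has an ergodic lift. I expect no serious obstacle here: the only mildly delicate point is the measurable selection $x\mapsto E(x)$ in Part~(1), but this is standard since $\hpi$ is a continuous surjection between compact metric spaces with compact fibers (one can even take $E(x)$ to depend continuously on $x$ locally and patch, or invoke the general Kuratowski--Ryll-Nardzewski selection theorem). The interchange of \weakstar\ limits with $\hpi_*$ and the extremality argument in Part~(2) are both routine. Since the statement is labelled ``well-known,'' the proof should be kept brief.
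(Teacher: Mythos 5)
Your proof is correct, and it is essentially the standard argument the paper has in mind when it dismisses the lemma as ``a well-known consequence of the compactness of $\hM$'' without supplying a proof. Part (1) is the Krylov--Bogolyubov averaging argument applied to an arbitrary (non-invariant) lift of $\nu$; Part (2) is the barycenter/extremality argument applied to the pushforward of the ergodic decomposition of $\hnu$, using that an ergodic measure is an extreme point of the simplex of $f$-invariant probability measures. The only thing I would tighten is the parenthetical aside at the end of Part (1): local continuous sections of $\hpi$ cannot in general be \emph{patched} into a global continuous section (there may be a topological obstruction, e.g.\ a nonzero Euler class), but this is harmless since a Borel section is all you need, and that does follow either from Kuratowski--Ryll-Nardzewski or by choosing local continuous sections over a finite open cover and gluing them along a Borel partition subordinate to the cover.
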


\subsection{Review of Lyapunov exponents}\label{ss.exponent}  We review some facts on Lyapunov exponents {in dimension two} (see \cite[Theorems 3.12~and~3.14]{Viana-book}).
Suppose $f\in\Diff^1(M)$ and $\mu$ is an $f$-invariant Borel probability measure.
Oseledets' theorem asserts that for $\mu$-a.e. $x$, $\lim_{|n|\to\infty}\frac1n\log\|Df^n_x v\|$ exists for all $v\in T_x M\setminus\{0\}$. The possible values of the limit are called the {\em Lyapunov exponents} of $x$. There are at most two such values. We denote them by $\lambda^+(f,x)$, $\lambda^-(f,x)$, or
$\lambda^+_x$, $\lambda^-_x$, with the convention
$$
\lambda^+(f,x)\geq \lambda^-(f,x).
$$
If $\lambda^+(f,x)\neq \lambda^-(f,x)$ then  $T_x M=E^+(x)\oplus E^-(x)$ where
 $$
     E^\pm(x):=\left\{v\in T_xM\setminus \{0\}:\lim_{|n|\to\infty} \frac1n\log\|Df^n_x v\|=\lambda^\pm(f,x)\right\}\cup\{0\}.
 $$
{The decomposition} $T_x M=E^+(x)\oplus E^-(x)$ is called the {\em Oseledets splitting}.

If $\mu$ is {\em ergodic},
the functions $\lambda^+(f,x)\geq\lambda^-(f,x)$ are equal $\mu$-almost everywhere to constants called the
{\em Lyapunov exponents} of $\mu$ and denoted by $\lambda^+(f,\mu),\lambda^-(f,\mu)$.
If $\mu$ is not ergodic,  the Lyapunov exponents of $\mu$ are defined by
$$\lambda^{+}(f,\mu):=\int \lambda^{+}(f,x)d\mu(x),\;
\lambda^{-}(f,\mu):=\int \lambda^{-}(f,x)d\mu(x).$$
In both cases, $\lambda^+(f,\mu)\geq \lambda^-(f,\mu)$.

By the subadditive ergodic theorem, the {largest Lyapunov exponent also satisfies}
\begin{equation}\label{e.subbadditive}
    \lambda^+(f,\mu) = \lim_{n\to\infty} \frac1n\int \log \|Df^n_y\| \,d\mu(y)=\inf_{n} \frac1n\int \log \|Df^n_y\| \,d\mu(y).
 \end{equation}

Throughout this paper, an ergodic invariant probability measure $\mu$ is called \emph{hyperbolic} if one of its Lyapunov exponents is positive, and the other is negative (sometimes this is called hyperbolic of saddle-type).
If $\mu$ is hyperbolic, we sometimes write $\lambda^u=\lambda^+$, $\lambda^s=\lambda^-$, $E^u=E^+$ and $E^s=E^-$.

\subsection{Semi-continuity of Lyapunov exponents}\label{s.lyapunov-exp}
We will use the  dynamics of the projective tangent bundle to study the semi-continuity properties of $(f,\mu)\mapsto \lambda^{+}(f,\mu)$ (and by symmetry of $(f,\mu)\mapsto \lambda^{-}(f,\mu)$).
 The principal tool is the function
$$
\vf:\hM\to\RR\ , \vf(x,E)=\log\|Df_x|_E\|.
$$
Notice that if $f$ is a $C^1$ diffeomorphism, then $\vf$ is bounded and uniformly continuous.
We will make frequent use of the following  identity:
\begin{equation}\label{varphi-sums}
\log\|Df^n|_E\|=\sum_{k=0}^{n-1}(\vf\circ\hf^k)(x,E)\ \ \ ((x,E)\in \hM).
\end{equation}
This is because $E$ is a one-dimensional subspace of $T_x M$, and therefore by the chain rule
$
\|Df^n_x|_E\|=\prod_{k=0}^{n-1}\|Df_{f^{k}(x)}|_{Df^{k}_x(E)}\|
=\prod_{k=0}^{n-1}\exp[(\vf\circ\hf^k)(x,E)]$. Equation \eqref{varphi-sums} presents the  {subadditive} cocycle $\log\|Df^n|_E\|$ for $f$ as an {additive}  cocycle for $\hf$.
See \cite[Prop. 5.1 on p. 328]{Ledrappier1982} for a proof of a more general fact (and \cite[Lemma 8.7]{Furstenberg-1963} for the first use of a related idea).

\begin{lemma}\label{Lemma-Lyap}
Suppose $f\in\Diff^1(M)$, and $\mu$ is an ergodic $f$-invariant probability measure.
 Then:
  \begin{enumerate}[(1)]
   \item $\lambda$ is a Lyapunov exponent of $\mu$ iff $\mu$ has an $\hf$-ergodic lift $\hmu$ s.t. $\int\vf d\hmu=\lambda$;

\item If $\mu$ has two different Lyapunov exponents, then it has exactly two ergodic $\hf$--invariant  lifts:
 $$\hmu^+:=\int_{\hM}\delta_{(x,E^+(x))}d\mu(x)\text{, and } \hmu^-:=\int_{\hM}\delta_{(x,E^-(x))}d\mu(x).$$
{Moreover}  $\int \vf d\hmu^{\pm}=\lambda^{\pm}(f,\mu)$.
  \end{enumerate}
\end{lemma}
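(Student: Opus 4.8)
The plan is to prove the two parts of Lemma~\ref{Lemma-Lyap} by exploiting the identity~\eqref{varphi-sums}, which turns $\log\|Df^n|_E\|$ into a Birkhoff sum of $\vf$ along $\hf$, together with the Birkhoff and Oseledets ergodic theorems.

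\emph{Part (1).} First I would show the ``if'' direction. Suppose $\hmu$ is an $\hf$-ergodic lift of $\mu$ with $\int\vf\,d\hmu=\lambda$. By the Birkhoff ergodic theorem applied to $\vf$ and $\hf$, for $\hmu$-a.e. $(x,E)$ we have $\frac1n\sum_{k=0}^{n-1}(\vf\circ\hf^k)(x,E)\to\int\vf\,d\hmu=\lambda$; by~\eqref{varphi-sums} this says $\frac1n\log\|Df^n_x|_E\|\to\lambda$. Since $E$ is one-dimensional and spanned by some $v\ne0$, $\frac1n\log\|Df^n_x v\|\to\lambda$, so $\lambda$ is (by Oseledets' theorem) one of the Lyapunov exponents of $x$; and since $\hmu$ projects to $\mu$ which is ergodic, this value is a Lyapunov exponent of $\mu$. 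Conversely, given a Lyapunov exponent $\lambda$ of $\mu$, pick a measurable choice $x\mapsto E(x)$ of a one-dimensional subspace realizing that exponent (either the full Oseledets space if there is only one exponent, or $E^+(x)$ resp.\ $E^-(x)$); by Oseledets this is $Df$-equivariant on a full-measure invariant set, so $\hmu:=\int\delta_{(x,E(x))}\,d\mu(x)$ is $\hf$-invariant and projects to $\mu$. Then $\int\vf\,d\hmu=\int\log\|Df_x|_{E(x)}\|\,d\mu(x)$, and by the invariance of the splitting and~\eqref{varphi-sums}, $\frac1n\int\log\|Df^n_x|_{E(x)}\|\,d\mu=\int\vf\,d\hmu$ for all $n$; letting $n\to\infty$ and using dominated convergence ($\vf$ is bounded) plus the pointwise convergence $\frac1n\log\|Df^n_x|_{E(x)}\|\to\lambda$ gives $\int\vf\,d\hmu=\lambda$. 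Finally, passing to an $\hf$-ergodic component $\hmu'$ of $\hmu$: by Lemma~\ref{Lemma-lift}(2) it still lifts $\mu$, and its integral of $\vf$ is again a Lyapunov exponent of $\mu$ equal to $\lim\frac1n\log\|Df^n_x|_{E(x)}\|$ along a $\hmu'$-generic point, which is $\lambda$.

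\emph{Part (2).} Assume $\lambda^+(f,\mu)>\lambda^-(f,\mu)$. The measures $\hmu^\pm:=\int\delta_{(x,E^\pm(x))}\,d\mu(x)$ are $\hf$-invariant (equivariance of the Oseledets splitting) and project to $\mu$; by the computation in Part (1), $\int\vf\,d\hmu^\pm=\lambda^\pm(f,\mu)$. Since $\mu$ is ergodic and $x\mapsto E^\pm(x)$ is $\mu$-essentially the unique measurable equivariant line field with exponent $\lambda^\pm$, each $\hmu^\pm$ is $\hf$-ergodic (any $\hf$-invariant set of intermediate measure would descend to an $f$-invariant set of intermediate $\mu$-measure, using that the fiber of $\hmu^\pm$ over $\mu$-a.e.\ $x$ is a single point). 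It remains to show there are no other ergodic lifts. Let $\hmu$ be any $\hf$-ergodic lift of $\mu$. For $\hmu$-a.e.\ $(x,E)$, Birkhoff for $\vf$ gives $\frac1n\log\|Df^n_x|_E\|\to\int\vf\,d\hmu=:c$; but for $\mu$-a.e.\ $x$ (hence $\hmu$-a.e.\ $(x,E)$, since $\hmu$ projects to $\mu$), a vector $v$ spanning $E$ satisfies $\frac1n\log\|Df^n_xv\|\to\lambda^+_x$ if $v\notin E^-(x)$ and $\to\lambda^-_x$ if $v\in E^-(x)$. Hence $c\in\{\lambda^+(f,\mu),\lambda^-(f,\mu)\}$, and correspondingly $E=E^+(x)$ for $\hmu$-a.e.\ $(x,E)$ (if $c=\lambda^+$) or $E=E^-(x)$ for $\hmu$-a.e.\ $(x,E)$ (if $c=\lambda^-$); in the first case $\hmu$ pushes forward under $x\mapsto(x,E^+(x))$ to $\hmu$ and projects to $\mu$, forcing $\hmu=\hmu^+$, and similarly $\hmu=\hmu^-$ in the second case.

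The main obstacle I anticipate is the careful bookkeeping of null sets: one must fix a single full-$\mu$-measure, $f$-invariant set on which the Oseledets splitting, its equivariance, and all the Birkhoff limits hold simultaneously, and then argue that any ergodic lift is carried by the graph of $x\mapsto E^+(x)$ or $x\mapsto E^-(x)$ over this set — i.e.\ that an ergodic lift cannot ``spread'' over the fiber. This is where ergodicity of $\mu$ and the dichotomy ``$v\in E^-(x)$ or not'' are essential; everything else is a routine application of~\eqref{varphi-sums}, Birkhoff's theorem, and Lemma~\ref{Lemma-lift}.
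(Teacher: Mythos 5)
Your Part (1) and the identification $\int\vf\,d\hmu^\pm=\lambda^\pm(f,\mu)$ follow essentially the same route as the paper (identity~\eqref{varphi-sums}, Oseledets, Birkhoff, Lemma~\ref{Lemma-lift}; the paper credits Ledrappier's observation that an invariant measure carried by the graph of an equivariant line field is ergodic). But the uniqueness step of Part (2) has a gap. You deduce $c:=\int\vf\,d\hmu\in\{\lambda^+,\lambda^-\}$ from the \emph{forward-time} dichotomy: $\frac1n\log\|Df^n_xv\|\to\lambda^+_x$ if $v\notin E^-(x)$, and $\to\lambda^-_x$ if $v\in E^-(x)$. This is correct, but it does \emph{not} give the next assertion ``correspondingly $E=E^+(x)$ if $c=\lambda^+$'': \emph{every} line $E\neq E^-(x)$ has forward exponent $\lambda^+_x$, not only $E^+(x)$. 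So from $c=\lambda^+$ you only learn that $\hmu$ gives zero mass to $\operatorname{graph}(E^-)$; you have not yet pinned $\hmu$ to $\operatorname{graph}(E^+)$.

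To close this, also run Birkhoff \emph{backward}. Since $\hmu$ is $\hf$-ergodic, $\frac1n\sum_{k=-n}^{-1}\vf(\hf^k\hx)\to c$ for $\hmu$-a.e.\ $\hx$, and the analogue of~\eqref{varphi-sums} for negative iterates gives $\frac1n\log\|Df^{-n}_x|_E\|\to -c$. Now apply the Oseledets dichotomy to $f^{-1}$: its Oseledets bundle with exponent $-\lambda^+$ is exactly $E^+$, and every $E\neq E^+(x)$ has backward exponent $-\lambda^-$. If $c=\lambda^+$, the forward argument rules out $E=E^-(x)$ and the backward argument rules out $E\neq E^+(x)$, so $E=E^+(x)$ for $\hmu$-a.e.\ $(x,E)$, whence $\hmu=\hmu^+$. (Equivalently: for every $E\neq E^-(x)$ one has $Df^n_x(E)\to E^+(f^n(x))$ projectively, and the function $\hx\mapsto\measuredangle(E,E^+(x))$ is then sent to $0$ along a.e.\ orbit; by $\hf$-invariance of $\hmu$ it must already vanish a.e.) With this correction the rest of your argument — including the ergodicity of $\hmu^\pm$ — goes through.
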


For any $\hf$-invariant probability measure, it will be convenient to denote
$$\hlambda(\hf,\hmu):=\int_{\hM} \varphi d\hmu .$$
When $\mu$ is hyperbolic, the lifts $\hmu^+, \hmu^-$ are called the unstable and stable lifts of $\mu$.

\begin{proof}
{If $\mu$ has equal Lyapunov exponents, (1) follows from eq. \eqref{varphi-sums} and the ergodic theorem. Otherwise, by Oseledets theorem, there are two a.e. defined sections $x\mapsto E^{\pm}_x$ s.t. $Df_x E^{\pm}_x=E^{\pm}_{f(x)}$.}
Every $\hf$-invariant probability measure carried by { the graph of an invariant section $x\mapsto E_x$} is ergodic, and coincides with $\int_M \delta_{(x,E_x)}\, d\mu(x)$.
So (1) and (2) follow from {Lemma \ref{Lemma-lift} and \eqref{varphi-sums}} (see \cite{Ledrappier1982}).
\end{proof}

\begin{corollary}\label{cor-nonergodiclift}
Suppose $f\in\Diff^1(M)$ and $\mu$ is an $f$-invariant probability measure (not necessarily ergodic) s.t. $\lambda^+(f,x)>\lambda^-(f,x)$ for $\mu$-a.e. $x\in M$.
Then any $\hf$-invariant lift $\hmu$ of $\mu$ is carried by $\mathrm{graph}(E^+)\cup \mathrm{graph}(E^-)$,
and there are unique $\hf$-invariant lifts $\hmu^+,\hmu^-$ s.t. $\hmu^+(\mathrm{graph}(E^+))=1$,
$\hmu^-(\mathrm{graph}(E^-))=1$.
 \end{corollary}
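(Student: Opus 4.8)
The plan is to reduce the statement to the ergodic case treated in Lemma~\ref{Lemma-Lyap} by integrating over an ergodic decomposition. First I would fix an $\hf$-invariant lift $\hmu$ of $\mu$ and write its ergodic decomposition $\hmu=\int\hmu_\omega\,d\widehat P(\omega)$ into $\hf$-ergodic measures. Since $\hpi$ is a factor map from $(\hM,\hf,\hmu_\omega)$ onto $(M,f,\hpi_*\hmu_\omega)$ and a factor of an ergodic system is ergodic, each projection $\mu_\omega:=\hpi_*\hmu_\omega$ is $f$-ergodic, and $\mu=\int\mu_\omega\,d\widehat P(\omega)$. From $\mu(\{x:\lambda^+(f,x)>\lambda^-(f,x)\})=1$ together with this decomposition, for $\widehat P$-a.e.\ $\omega$ one has $\mu_\omega(\{\lambda^+>\lambda^-\})=1$; as $\mu_\omega$ is ergodic, the functions $\lambda^\pm(f,\cdot)$ are $\mu_\omega$-a.e.\ equal to two distinct constants, so $\mu_\omega$ has two different Lyapunov exponents. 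Because the Oseledets splitting at $x$ depends only on $f$ and $x$, the Oseledets sections of $\mu_\omega$ agree $\mu_\omega$-a.e.\ with the restriction of $x\mapsto E^\pm(x)$; hence Lemma~\ref{Lemma-Lyap}(2) shows that $\hmu_\omega$ is one of the two ergodic lifts of $\mu_\omega$, each of which is carried by $\mathrm{graph}(E^+)\cup\mathrm{graph}(E^-)$. Integrating over $\omega$ gives the first assertion, that $\hmu(\mathrm{graph}(E^+)\cup\mathrm{graph}(E^-))=1$.

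For the existence of $\hmu^+$ and $\hmu^-$, I would simply set $\hmu^\pm:=\int_M\delta_{(x,E^\pm(x))}\,d\mu(x)$. These are Borel probability measures on $\hM$ because the sections $x\mapsto E^\pm(x)$ are measurable; they are $\hf$-invariant because $Df_x(E^\pm(x))=E^\pm(f(x))$ for $\mu$-a.e.\ $x$; they project to $\mu$; and evidently $\hmu^\pm(\mathrm{graph}(E^\pm))=1$. For uniqueness, suppose $\hnu$ is an $\hf$-invariant lift of $\mu$ with $\hnu(\mathrm{graph}(E^+))=1$. The restriction of $\hpi$ to $\mathrm{graph}(E^+)$ is a Borel bijection onto a set of full $\mu$-measure in $M$, with Borel inverse $x\mapsto(x,E^+(x))$; hence $\hnu$ is the image of $\hpi_*\hnu=\mu$ under this inverse, that is, $\hnu=\hmu^+$. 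The same argument applies verbatim to $\hmu^-$.

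I expect no serious obstacle here: the corollary is essentially the non-ergodic restatement of Lemma~\ref{Lemma-Lyap}, and the only points requiring care are in the passage through the ergodic decomposition --- namely that ergodicity descends to the factor $(M,f,\mu_\omega)$, that the distinctness of the two exponents is inherited by $\widehat P$-a.e.\ component, and that the Oseledets data of a component coincides with the ambient one on the (full-measure) overlap of supports --- together with the elementary observation that a measure supported on the graph of a measurable section is the push-forward of its projection. Everything else is bookkeeping.
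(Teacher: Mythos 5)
Your proof is correct and follows essentially the same route as the paper's: define $\hmu^\pm$ as the pushforwards of $\mu$ under the Oseledets sections (invariance from equivariance, uniqueness from the section being a Borel inverse of $\hpi$ on a full-measure set), then pass to the ergodic decomposition of an arbitrary lift $\hmu$, note that each $f$-ergodic projection $\mu_\omega$ has two distinct exponents, and invoke Lemma~\ref{Lemma-Lyap}(2) to place each ergodic component $\hmu_\omega$ on $\mathrm{graph}(E^+)\cup\mathrm{graph}(E^-)$. The only cosmetic difference is that you spell out a few bookkeeping points (ergodicity of the factor, pointwise nature of the Oseledets data) that the paper leaves implicit.
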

\begin{proof}
By a general Borel construction, {the graphs of $E^+, E^-$ are measurable}. There are  unique lifts $\hmu^+,\hmu^-$ of $\mu$ to  $\mathrm{graph}(E^+), \mathrm{graph}(E^-)$, and  it is easy to check using the identity $Df_x(E^{\pm}_x)=E^{\pm}_{f(x)}$ that $\hmu^{\pm}$  are $\hf$-invariant.

Now let $\hmu$ be an arbitrary lift of $\mu$ and
consider its ergodic decomposition $\hmu=\int\hmu_{\xi}\,dm$. For almost every $\xi$, the ergodic measure $\mu_{\xi}=\hmu_{\xi}\circ\hpi^{-1}$ has two different exponents, hence, by Lemma~\ref{Lemma-Lyap}, its lift $\hmu_{\xi}$ is some combination $a(\xi)\hmu_{\xi}^+ + (1-a(\xi))\hmu_{\xi}^-$ of its lifts $\hmu^\pm_\xi$ where $0\leq a(\xi)\leq 1$. Observe that $a(\xi)=\hmu_\xi(\operatorname{graph}(E^+))$, hence the function $\xi\mapsto a(\xi)$ is measurable.
It follows that
 $$
    \hmu = \int a(\xi)\hmu_{\xi}^+ + (1-a(\xi))\hmu_{\xi}^-\, dm.
 $$
In particular, any lift $\hmu$ is carried by the union of the graphs of $E^+$ and $E^-$.
\end{proof}

For each $n\geq1$, $(f,\mu)\mapsto \int \log \|Df^n_x\|\, d\mu(x)$  is continuous as a function on $\Diff^1(M)\times\{\text{probability measure on $M$}\}$.
Eq.~\eqref{e.subbadditive} now give us the following ``folklore" fact:

\begin{theorem}[Upper semicontinuity]\label{t.Lyap-Semi-Cont}
Let $f_k \in \Diff^1(M)$ be diffeomorphisms with ergodic invariant probability  measures $\nu_k$.
If $f_k\overset{\scriptscriptstyle C^1}\longrightarrow f$ and $\nu_k\overset{w^*}\longrightarrow \mu$, then
the largest Lyapunov exponents $\lambda^+(f_k,\nu_k)$ satisfy
$\limsup_k \lambda^+(f_k,\nu_k)\leq \lambda^+(f,\mu).$
\end{theorem}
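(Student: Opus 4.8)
The plan is to deduce this from the subadditive formula \eqref{e.subbadditive} together with the lift formalism already set up. First I would recall that by \eqref{e.subbadditive}, for the limiting measure $\mu$ we have
\[
\lambda^+(f,\mu) = \inf_{n\geq 1}\frac1n\int \log\|Df^n_y\|\,d\mu(y),
\]
so it suffices to fix $n$ and bound $\limsup_k \lambda^+(f_k,\nu_k)$ by $\frac1n\int\log\|Df^n_y\|\,d\mu(y)$, then let $n\to\infty$.

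Next I would use the upper bound coming from the same formula applied to each $\nu_k$: since \eqref{e.subbadditive} expresses $\lambda^+(f_k,\nu_k)$ as an infimum over $n$, we have, for every fixed $n$,
\[
\lambda^+(f_k,\nu_k) \leq \frac1n\int \log\|Df_k^n\|\,d\nu_k.
\]
Now I would invoke the joint continuity statement made just before the theorem: for each fixed $n$, the map $(g,\nu)\mapsto \int\log\|Dg^n_x\|\,d\nu(x)$ is continuous on $\Diff^1(M)\times\{\text{probability measures}\}$ (with the $C^1$ topology on the first factor and weak-$*$ on the second). The integrand $\log\|Dg^n_x\|$ is continuous in $x$ and depends continuously on $g$ in $C^1$, uniformly over the compact manifold $M$, so $\int\log\|Dg^n_x\|\,d\nu$ depends continuously on $(g,\nu)$. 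Hence $f_k\xrightarrow{C^1} f$ and $\nu_k\xrightarrow{w^*}\mu$ give
\[
\limsup_{k\to\infty}\lambda^+(f_k,\nu_k) \leq \lim_{k\to\infty}\frac1n\int\log\|Df_k^n\|\,d\nu_k = \frac1n\int\log\|Df^n\|\,d\mu.
\]

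Finally I would take the infimum over $n\geq 1$ on the right-hand side and apply \eqref{e.subbadditive} to $(f,\mu)$ (valid even though $\mu$ need not be ergodic, in the averaged sense defined in \S\ref{ss.exponent}), concluding $\limsup_k\lambda^+(f_k,\nu_k)\leq \lambda^+(f,\mu)$. I do not expect any serious obstacle here: the only point requiring a line of care is the joint continuity of $(g,\nu)\mapsto\int\log\|Dg^n\|\,d\nu$, which follows from uniform continuity of $x\mapsto\log\|Dg^n_x\|$ on the compact $M$ together with the fact that $g\mapsto Dg^n$ is continuous in the $C^0$ topology on derivatives when $g$ varies in $C^1$ — standard, and already asserted in the text immediately preceding the theorem, so it may be cited rather than reproved.
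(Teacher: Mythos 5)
Your proof is correct and matches the paper's intended argument exactly: the theorem is presented as a one-line "folklore" consequence of the continuity of $(g,\nu)\mapsto\frac1n\int\log\|Dg^n\|\,d\nu$ for each fixed $n$ together with the infimum representation in \eqref{e.subbadditive}, and you have simply unpacked that (the infimum of continuous functions is upper semi-continuous), including the correct observation that \eqref{e.subbadditive} holds for the possibly non-ergodic limit $\mu$ by Kingman's theorem in its averaged form.
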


The next result {computes} the defect in continuity $\lambda^+(f,\mu)-\limsup \lambda^+(f_k,\nu_k)$ in terms of the dynamics on the {projective}  bundle. It relates the defect in continuity  to the escape of some of the mass of the lifts to $\mathrm{graph}(E^+)$ to the vicinity of $\mathrm{graph}(E^-)$.

 Let $\hmu$ be an $\hf$-invariant probability measure with ergodic decomposition  $\hmu=\int_\Omega\hmu_\xi dm$. The projection $\mu$ of $\hmu$ to $M$ has ergodic decomposition $\mu=\int_\Omega \mu_\xi\, dm$, where $\mu_\xi=\hmu_\xi\circ\pi^{-1}$. {Note that in general, the map $\hmu_\xi\mapsto\mu_\xi$ is  not injective.}

We split the set of ergodic components $\hmu_\xi$ $(\xi\in\Omega)$ by considering whether they are carried by the invariant line bundles $E^+$ or $E^-$ or by a subset of $\hM$ where these bundles are not defined:
  $$\begin{aligned}
   &\Omega^-:=\{\xi\in\Omega: \lambda^-(f,\mu_\xi)<\lambda^+(f,\mu_\xi)\ , \ \hmu_\xi=\hmu_\xi^-\}\\
   &\Omega^+:=\{\xi\in\Omega: \lambda^-(f,\mu_\xi)<\lambda^+(f,\mu_\xi)\ , \ \hmu_\xi=\hmu_\xi^+\}\\
   &\Omega^0:=\{\xi\in\Omega: \lambda^-(f,\mu_\xi)=\lambda^+(f,\mu_\xi)\}.
  \end{aligned}$$

\begin{theorem}[Defect in continuity]\label{thm-defect-cont}
{Let $M$ be a compact smooth boundaryless surface.}
For each $k\geq1$,  fix $f_k\in \Diff^1(M)$ and  ergodic $f_k$-invariant measures $\nu_k$ with $\lambda^-(f_k,\nu_k)<\lambda^+(f_k,\nu_k)$. Let $\hnu_k^+$ be the ergodic lift to $\widehat M$ carried by the bundle $E^+$.
Suppose
$
f_k\overset{\scriptscriptstyle C^1}\longrightarrow f$, $\nu_k\overset{w^*}\longrightarrow \mu$, and $\hnu_k^+\overset{w^*}\longrightarrow \hmu.
$
Considering the ergodic decompositions $\hmu=\int_\Omega\hmu_\xi dm$, $\mu=\int_\Omega \mu_\xi\,dm$
and defining $\Omega=\Omega^+\cup\Omega^-\cup\Omega^0$ as above, we have
\begin{equation*}
\begin{aligned}
\lim_{k\to\infty}\lambda^+(f,\nu_k)
&=\lambda^+(f,\mu)-\int_{\Omega^-}[\lambda^+(f,\mu_\xi)-\lambda^-(f,\mu_\xi)]dm.
\end{aligned}\end{equation*}
\medskip
\noindent
In the special case when  $\mu$ is ergodic,  we have the following (see Lemma \ref{Lemma-Lyap}):
\begin{enumerate}[(1)]
\item If $\lambda^+(f,\mu)=\lambda^-(f,\mu)$ or  $\hmu= \hmu^+$ , 
then
    $\lim_{k} \lambda^+(f_k,\nu_k)=\lambda^+(f,\mu).$

\medskip
\item If $\lambda^+(f,\mu)<\lambda^-(f,\mu)$ and $\hmu\neq\hmu^+$, then $\lim_k \lambda^+(f_k,\nu_k)<\lambda^+(f,\mu)$.\\
More precisely,  there is a unique  $0< a\leq  1$ such that
$\hmu=a\hmu^-+(1-a)\hmu^+$ and
    $$
    \lim_{k\to\infty} \lambda^+(f_k,\nu_k)=\lambda^+(f,\mu)-a(\lambda^+(f,\mu)-\lambda^-(f,\mu)).
    $$
    If $\nu_k$ and $\mu$ are hyperbolic (of saddle type), then $a\neq 1$.
\end{enumerate}
\end{theorem}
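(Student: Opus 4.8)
The plan is to compute $\lim_k \lambda^+(f_k,\nu_k)$ directly from the representation $\lambda^+(f_k,\nu_k)=\int\vf\,d\hnu_k^+$ provided by Lemma \ref{Lemma-Lyap}(2). Since $\vf(x,E)=\log\|Df_x|_E\|$ is bounded and uniformly continuous on $\hM$ (here one uses $f_k\overset{C^1}\to f$, so that the functions $\vf_k$ associated to $f_k$ converge uniformly to $\vf$), weak-$*$ convergence $\hnu_k^+\to\hmu$ gives
\[
\lim_{k\to\infty}\lambda^+(f_k,\nu_k)=\lim_{k\to\infty}\int\vf_k\,d\hnu_k^+=\int\vf\,d\hmu=\hlambda(\hf,\hmu).
\]
So the entire theorem reduces to identifying $\int\vf\,d\hmu$ in terms of the ergodic decomposition of $\hmu$. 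The first real step is to observe that $\hmu$ is a lift of $\mu$ (this is immediate from $\hpi_*\hnu_k^+=\nu_k\to\mu$ and continuity of pushforward), and that for $m$-a.e.\ $\xi$ the ergodic component $\hmu_\xi$ is itself a lift of the ergodic component $\mu_\xi$ — this is exactly Lemma \ref{Lemma-lift}(2) applied fiberwise, or rather the observation that $\hpi$ intertwines the ergodic decompositions. Then for each $\xi$ one applies Lemma \ref{Lemma-Lyap}(1): $\int\vf\,d\hmu_\xi$ is a Lyapunov exponent of $\mu_\xi$, hence equals $\lambda^+(f,\mu_\xi)$ or $\lambda^-(f,\mu_\xi)$, and these coincide precisely when $\xi\in\Omega^0$.

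Next I would make the trichotomy $\Omega=\Omega^+\cup\Omega^-\cup\Omega^0$ do its work. For $\xi\in\Omega^0$ we have $\int\vf\,d\hmu_\xi=\lambda^+(f,\mu_\xi)=\lambda^-(f,\mu_\xi)$. For $\xi\in\Omega^+$, by definition $\hmu_\xi=\hmu_\xi^+$ so $\int\vf\,d\hmu_\xi=\lambda^+(f,\mu_\xi)$; for $\xi\in\Omega^-$, $\hmu_\xi=\hmu_\xi^-$ so $\int\vf\,d\hmu_\xi=\lambda^-(f,\mu_\xi)=\lambda^+(f,\mu_\xi)-(\lambda^+(f,\mu_\xi)-\lambda^-(f,\mu_\xi))$. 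Here I must justify that these three sets exhaust $\Omega$ up to $m$-measure zero: when $\mu_\xi$ has two distinct exponents, Corollary \ref{cor-nonergodiclift} (or Lemma \ref{Lemma-Lyap}(2)) says the only ergodic lifts are $\hmu_\xi^\pm$, so $\hmu_\xi$ is one of them; when the exponents agree, $\xi\in\Omega^0$. Integrating $\int\vf\,d\hmu_\xi$ over $\Omega$ against $m$, and using $\int_\Omega\lambda^+(f,\mu_\xi)\,dm=\lambda^+(f,\mu)$ (the definition of the exponent of a non-ergodic measure), yields
\[
\int\vf\,d\hmu=\lambda^+(f,\mu)-\int_{\Omega^-}[\lambda^+(f,\mu_\xi)-\lambda^-(f,\mu_\xi)]\,dm,
\]
which is the displayed formula. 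The measurability of the sets $\Omega^\pm,\Omega^0$ and of $\xi\mapsto a(\xi):=\hmu_\xi(\mathrm{graph}(E^+))$ is handled exactly as in the proof of Corollary \ref{cor-nonergodiclift}.

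For the ergodic special cases, $\Omega$ is a single point (modulo $m$-null sets): case (1) is immediate since then $\hmu$ is an ergodic lift, forced to be $\hmu^+$ when the exponents differ (and the exponent is just $\lambda^+$ when they agree), so $\Omega^-$ contributes nothing. In case (2) — note the typo $\lambda^+(f,\mu)<\lambda^-(f,\mu)$ in the statement should read $\lambda^-<\lambda^+$ — we have $\hmu\neq\hmu^+$, and by Corollary \ref{cor-nonergodiclift} $\hmu$ is carried by $\mathrm{graph}(E^+)\cup\mathrm{graph}(E^-)$, so $\hmu=a\hmu^-+(1-a)\hmu^+$ with $a=\hmu(\mathrm{graph}(E^-))\in(0,1]$, $a>0$ because $\hmu\neq\hmu^+$; uniqueness of $a$ follows since $\hmu^\pm$ are mutually singular. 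Then $\int\vf\,d\hmu=a\lambda^-(f,\mu)+(1-a)\lambda^+(f,\mu)=\lambda^+(f,\mu)-a(\lambda^+(f,\mu)-\lambda^-(f,\mu))$. Finally, if $\nu_k$ and $\mu$ are hyperbolic of saddle type, then $\lambda^-(f_k,\nu_k)<0<\lambda^+(f_k,\nu_k)$ and $\lambda^-(f,\mu)<0<\lambda^+(f,\mu)$; since $\vf\geq\log\|Df^{-1}\|^{-1}_{\sup}>-\infty$ and more to the point $\int\vf\,d\hnu_k^+=\lambda^+(f_k,\nu_k)>0$, upper semicontinuity (Theorem \ref{t.Lyap-Semi-Cont}) alone is not enough, but one argues that $\hmu$ cannot be entirely carried by $\mathrm{graph}(E^-)$: if $a=1$ then $\int\vf\,d\hmu=\lambda^-(f,\mu)<0$, whereas $\int\vf\,d\hnu_k^+=\lambda^+(f_k,\nu_k)\to\lambda^+\geq h(f,\mu)>0$ by Ruelle's inequality — wait, more simply, $\lim_k\lambda^+(f_k,\nu_k)\geq 0$ is automatic from Ruelle plus the hypotheses elsewhere, but within this theorem one can instead note that $\mu$ hyperbolic forces $\lambda^+(f,\mu)>0$ and that passing to $\hnu_k^-$ and using Theorem \ref{t.Lyap-Semi-Cont} for $f^{-1}$ shows any limit of $\hnu_k^+$ still sees positive $\vf$-average; I would settle this last point by the clean observation that $a=1$ would give $\hmu=\hmu^-$, forcing $\lim_k\lambda^+(f_k,\nu_k)=\lambda^-(f,\mu)<0$, contradicting $\lambda^+(f_k,\nu_k)>0$ together with... — the honest fix is that we are not given $\liminf\lambda^+(f_k,\nu_k)\geq0$ for free, so one instead invokes that $\hnu_k^+$ also accumulates (along the same subsequence) and its projection is $\mu$ which is hyperbolic, hence $\int\lambda^+(f,\cdot)d\mu>0$; combined with $\lambda^-(f,\mu)<0<\lambda^+(f,\mu)$ and $a\le 1$, $a=1$ is excluded by the strict inequality $\lambda^-(f,\mu)<\lambda^+(f,\mu)$ only if we already know the limit is $\ge\lambda^-$, which it is since $\vf\circ\hf^j$ averages are bounded below uniformly — I will phrase this as: $a\neq 1$ because otherwise $\lim_k\lambda^+(f_k,\nu_k)=\lambda^-(f,\mu)<0$, impossible as each $\lambda^+(f_k,\nu_k)>0$.

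\emph{Expected main obstacle.} The one genuinely delicate point is the interchange of the ergodic decomposition with the projection $\hpi$ and with the weak-$*$ limit — i.e.\ justifying that $m$-a.e.\ $\hmu_\xi$ is an \emph{ergodic lift of an ergodic} $\mu_\xi$, so that Lemma \ref{Lemma-Lyap} applies componentwise — together with the measurability of $\Omega^\pm,\Omega^0$. Everything else is bookkeeping with $\vf$ and the identity \eqref{varphi-sums}. The passage $\int\vf_k\,d\hnu_k^+\to\int\vf\,d\hmu$ is routine once one notes $\vf_k\to\vf$ uniformly, which holds because the $f_k$ are $C^1$-convergent.
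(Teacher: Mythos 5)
Your proposal is correct and follows essentially the same route as the paper: express $\lim_k\lambda^+(f_k,\nu_k)$ as $\int\vf\,d\hmu$ via the $\hf$-cocycle identity and weak-$*$ convergence, split $\Omega$ into $\Omega^\pm,\Omega^0$, evaluate $\int\vf\,d\hmu_\xi$ on each piece using Lemma~\ref{Lemma-Lyap}, and integrate against $m$. The one small place where you are actually more careful than the paper's text is the passage $\int\vf_k\,d\hnu_k^+\to\int\vf\,d\hmu$ (the paper writes $\int\vf\,d\hnu_k^+$, suppressing the dependence of $\vf$ on $k$), and your observation that $\vf_k\to\vf$ uniformly from $C^1$-convergence is the correct justification; you also correctly flag the typo in case (2). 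Your meanderings about whether $a=1$ can be excluded can be compressed to exactly the paper's argument: $a=1$ would give $\lim_k\lambda^+(f_k,\nu_k)=\lambda^-(f,\mu)<0$, contradicting $\lambda^+(f_k,\nu_k)>0$ for $\nu_k$ hyperbolic of saddle type.
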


\begin{proof}
Using Lemma~\ref{Lemma-Lyap}, we see that
\begin{equation}\label{Omega-pm}
\int\vf d\hmu_\xi=\begin{cases}
\int\vf d\hmu_\xi^+ = \lambda^+(f,\mu_\xi) & \text{ if }\xi\in\Omega^+\\
\int\vf d\hmu_\xi^- = \lambda^-(f,\mu_\xi) & \text{ if } \xi\in\Omega^-\\
\lambda^+(f,\mu_\xi) & \text{ if } \xi\in\Omega^0,
\end{cases}
\end{equation}
\begin{equation}\label{eq-limlambda-hat}
\lim_{k\to\infty}\lambda^+(f_k,\nu_k)=\lim_{k\to\infty}  \int\vf d\hnu_k^+=
\int\vf d\hmu=\iint \vf d\hmu_\xi dm.
\end{equation}
Substituting \eqref{Omega-pm} in \eqref{eq-limlambda-hat}, we obtain
\begin{align*}
\lim_{k\to\infty}\lambda^+(f_k,\nu_k)&=\int_{\Omega^-}\lambda^-(f,\mu_\xi)dm+
\int_{\Omega^+\cup\Omega^0}\lambda^+(f,\mu_\xi)dm\\
&=
\int_{\Omega^-}\lambda^-(f,\mu_\xi)dm+\int_{\Omega}\lambda^+(f,\mu_\xi)dm-\int_{\Omega^-}\lambda^+(f,\mu_\xi)dm\\
&=\lambda^+(f,\mu)-\int_{\Omega^-}[\lambda^+(f,\mu_\xi)-\lambda^-(f,\mu_\xi)]dm,
\end{align*}
which proves the theorem for general, possibly non-ergodic, limits $\mu$.

When $\mu$ is ergodic, we apply equation~\eqref{eq-limlambda-hat} and Lemma~\ref{Lemma-Lyap}:
\smallskip

\noindent
(1) If $\lambda^+(f,\mu)=\lambda^-(f,\mu)$ or $\hmu=\hmu^+$, then $\int\vf\,d\hmu=\lambda^+(f,\mu)$ and item~(1) follows.
\smallskip

\noindent (2) Otherwise, $\mu$ has two different exponents and $m(\Omega^+)<1$. Necessarily,  $m(\Omega^0)=0$, $a:=m(\Omega^-)\in(0,1]$ and the lift $\hmu$ can be written as $(1-a)\hmu^+ + a\hmu^-$. So
 $$\begin{aligned}
  \lim_{k\to\infty}\lambda^+(f_k,\nu_k)=\int\vf\, d\hmu &= (1-a)\lambda^+(f,\mu)+a\lambda^-(f,\mu)\\
  &=\lambda^+(f,\mu)-a(\lambda^+(f,\mu)-\lambda^-(f,\mu))\\
  &<\lambda^+(f,\mu).
 \end{aligned}$$
 Finally, note that, if $a=1$ and $\mu$ is hyperbolic, then $\lim_{k}\lambda^+(f_k,\nu_k)=\lambda^-(f,\mu)<0$, and  $\nu_k$ are not hyperbolic {of saddle type} for $k$ large enough.
\end{proof}
 Notice that the defect in continuity originates at $\Omega^-$, the set of ergodic components of $\lim \hnu_k^+$ which are carried by $\mathrm{graph}(E^-)$. This confirms the heuristic that discontinuity in Lyapunov exponents is due to the asymptotic escape of mass from $\mathrm{graph}(E^+)$, which carries  $\hnu_k^+$, to  $\mathrm{graph}(E^-)$, which carries $\hmu_\xi$ for $\xi\in\Omega^-$.

\subsection{A bound for the asymptotic dilation of $\hf$}\label{ss.size-lift}

\begin{lemma}
For any $C^2$ diffeomorphism $f$ of a surface $M$, $\lambda(\hf)\leq \lambda(f)+\lambda(f^{-1})$.
\end{lemma}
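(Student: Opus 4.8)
The plan is to bound $\|D\hf^n_{\hx}\|$ at a point $\hx = (x,E) \in \hM$ in terms of $\|Df^n_x\|$ and $\|Df^{-n}_{f^n(x)}\|$. Recall that $\hf^n(x,E) = (f^n(x), Df^n_x(E))$, so $D\hf^n_{\hx}$ splits into a ``horizontal'' part, which is essentially $Df^n_x$ acting on $T_x M$, and a ``vertical'' (fiber) part, which measures how the line $Df^n_x(E)$ varies as $E$ varies inside $P_x M$. The horizontal part is controlled directly: it contributes at most $\|Df^n_x\|_\supnorm \le e^{n(\lambda(f)+o(n))}$. The real content is the vertical part.

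First I would set up coordinates: identify $T_E(P_x M)$ with $E^\perp \subset T_x M$ as in Section~\ref{ss.projective}, and similarly on the fiber over $f^n(x)$. The derivative of the map $E \mapsto Df^n_x(E)$ on projective spaces, at the point $E$, is then the map induced by the linear isomorphism $A := Df^n_x : T_x M \to T_{f^n(x)}M$ on Grassmannians of lines. A standard computation (e.g. writing $A$ in an orthonormal basis adapted to $E$ and $E^\perp$, or using the formula for the differential of the projectivization of a linear map) shows that the norm of this induced map is
$$
\frac{\|A|_{E^\perp}\|\cdot\|A|_E\|}{\|A|_E\|^2} \;\le\; \frac{\|A\|\cdot\|A^{-1}\|^{-1}\cdot\text{(lower bound)}^{-1}}{\cdots},
$$
but more cleanly: the operator norm of the differential of $\mathbb{P}(A)$ at $[v]$ equals $\|A v\|^{-2}\,\|A\| \,\|v\|^2$ up to a bounded factor, and in any case is bounded above by $\|A\|\cdot\|A^{-1}\|$, since $\|Av\|\ge \|A^{-1}\|^{-1}\|v\|$ for all $v$. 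Because $\|A\| = \|Df^n_x\| \le \|Df^n\|_\supnorm$ and $\|A^{-1}\| = \|Df^{-n}_{f^n(x)}\| \le \|Df^{-n}\|_\supnorm$, the fiber contribution is at most $\|Df^n\|_\supnorm \cdot \|Df^{-n}\|_\supnorm$ up to a universal constant depending only on the Riemannian metrics on $M$ and on the fibers.

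Combining, $\|D\hf^n_{\hx}\| \le C\,\|Df^n\|_\supnorm\cdot\|Df^{-n}\|_\supnorm$ for a constant $C$ independent of $n$ and $\hx$ (the horizontal contribution $\|Df^n\|_\supnorm$ is dominated by the product since $\|Df^{-n}\|_\supnorm \ge c > 0$). Taking $\sup$ over $\hx$, then $\frac1n\log$, then $n\to\infty$, the constant $C$ disappears and we get $\lambda(\hf) \le \lambda(f) + \lambda(f^{-1})$, using that $\lim_n \frac1n\log\|Df^{-n}\|_\supnorm = \lambda(f^{-1})$ by definition~\eqref{asymp-dil} applied to $f^{-1}$.

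I expect the main obstacle to be purely bookkeeping: getting the differential of the projectivized linear map written correctly in the chosen identification $T_E(P_xM)\cong E^\perp$, keeping track of the fact that $M$ is not flat so the metric on the fibers varies (handled by compactness, which absorbs everything into the constant $C$), and checking that the $C^2$ hypothesis is only needed so that $\hf$ is $C^1$ and the quantities $\|D\hf^n\|$ are even defined — the inequality itself is a first-order estimate. One should also note that $\|Df^{-n}\|_\supnorm = \|D(f^n)^{-1}\|_\supnorm$, so that the subadditivity giving existence of the limit $\lambda(f^{-1})$ indeed applies.
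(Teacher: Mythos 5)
Your decomposition of $D\hf^n_{\hx}$ misses a term, and the missing term is exactly where the difficulty lies. You write $\hf^n(x,E) = (f^n(x), Df^n_x(E))$ and split the differential into a ``horizontal'' part (essentially $Df^n_x$) and a ``vertical'' part, which you describe as ``how the line $Df^n_x(E)$ varies as $E$ varies inside $P_xM$.'' But the vertical component of $D\hf^n_{\hx}$ also has a contribution from \emph{horizontal} input: moving $x$ while keeping $E$ fixed changes the linear map $Df^n_x$, and this variation is governed by $D^2 f^n_x$. In the paper's explicit formula~\eqref{eq-Dhf} this is the term $\frac{(D^2 F_i)_x(y)\cdot v}{\|(DF_i)_x v\|}$, and it does not appear anywhere in your estimate. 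Consequently the claim ``$\|D\hf^n_{\hx}\| \le C\,\|Df^n\|_\supnorm\|Df^{-n}\|_\supnorm$ with $C$ independent of $n$'' is not justified and is in fact false in general: you can only hope to bound $D\hf^n$ by an expression involving $\|D^2 f^n\|_\supnorm$ as well, and the chain rule gives roughly $\|D^2 f^n\|_\supnorm \lesssim n\,\|D^2f\|_\supnorm\,\|Df^n\|_\supnorm^2$, so the naive exponent you would extract is $2\lambda(f)+\lambda(f^{-1})$, worse than the statement. The closing remark that ``the inequality itself is a first-order estimate'' is exactly backwards --- $\hf$ is defined through $Df$, so $D\hf$ is genuinely second-order in $f$; that is why the statement requires $f\in C^2$ and not just $C^1$.

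The paper's proof confronts the $D^2f^n$ term head on: it computes the full differential in charts, obtains a bound of the form
\[
\|D\hf^n\|_\supnorm \;\lesssim\; \|Df^n\|_\supnorm\,\|Df^{-n}\|_\supnorm \;+\; \|Df^{-n}\|_\supnorm\,\|D^2f^n\|_\supnorm,
\]
and then kills the second term by \emph{dilating the Riemannian metric on $M$}. Dilation leaves $\lambda(f)$, $\lambda(f^{-1})$, $\lambda(\hf)$ unchanged (asymptotic dilation is metric-independent on a compact manifold), but rescales $\|D^2 f^n\|_\supnorm$ by $1/t$; choosing $t$ large for each fixed $n$ makes the second-derivative contribution negligible. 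This dilation trick is the essential idea your proposal is missing; without it, no amount of bookkeeping on the projectivization of a linear map will close the argument. Your computation of the vertical-to-vertical block (norm $\le \|A\|\,\|A^{-1}\|$ for the projectivization of $A$) is correct and agrees with what one extracts from the first term of the paper's formula, but it covers only part of the operator.
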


\newcommand\hG{\widehat G}

\begin{proof}
Working locally in charts, we identify   the iterates $f^n$ locally with diffeomorphisms $F_i\colon U_i\to \RR^2$ defined on open subsets of $\RR^2$.  We choose the charts so that the change of coordinates distorts the metric by a factor of less than $2$.
The lift $\hf^n\in\Diff^1(\hM)$ is identified with:
 $$
    \hF_i(x,v) = \left(F_i(x),\frac{(DF_i)_x v}{\|(D F_i)_x v\|}\right).
 $$
In what follows, we omit the first factor.

The differential of $\wh F_i$ can be computed in a straightforward way (writing $a\ast b$ for the scalar product of two vectors in $\R^2$):
\begin{equation}\label{eq-Dhf}\begin{aligned}
    (D\hF_i)_{(x,v)}(y,w) = &\frac{(DF_i)_x w}{\|(DF_i)_x v\|}  + \frac{(D^2 F_i)_x(y).v}{\|(DF_i)_x v\|}\\
      &\quad - \left( \frac{(DF_i)_x v}{\|(DF_i)_x v\|} \ast \frac{(DF_i)_x w+(D^2F_i)_x(y).v}{\|(DF_i)_x v\|}\right) \frac{(DF_i)_x(v)}{\|(DF_i)_x v\|}.
  \end{aligned}
  \end{equation}
Thus,
 $$
   \| (D\hF_i)_{(x,v)}\| \leq 2\left( \|(DF_i)_x\| \, \|(DF_i^{-1})_{f(x)}\|
                          + \|(D^2F_i)_x\| \, \|(DF_i^{-1})_{f(x)}\|\right) .
 $$
{We apply this to some iterate of $f$ on $M$, remembering the distortion in the metric:
 $$
   \|D\hf^n\|_\supnorm \leq 32 \| Df^n\|_\supnorm \|Df^{-n}\|_\supnorm + 16 \|Df^{-n}\|_\supnorm \|D^2f^n\|_\supnorm
 $$
Let $0<\eps<1/4$. Fix $n$ an integer so large that $32\leq e^{\eps n/4}$ and
 $$
   \|Df^n\|_\supnorm \leq e^{n(\lambda(f)+\eps/8)} \text{ and }
   \|Df^{-n}\|_\supnorm \leq e^{n(\lambda(f^{-1})+\eps/8)}.
 $$
Therefore,
 $$
   \lambda(\hf) \leq \lambda(f)+\lambda(f^{-1}) + \eps/2 + \log\left(1+e^{-\lambda(f)}\|D^2f^n\|_\supnorm^{1/n}\right)
 $$
By dilating the metric on $M$, we can ensure that $\|D^2f^n\|_\supnorm^{1/n}\leq (\eps/4) e^{\lambda(f)}$ without changing the asymptotic dilations. Thus
  $
   \lambda(\hf) \leq \lambda(f)+\lambda(f^{-1})+\eps
  $
As $\eps$ is arbitrarily small, the claim follows.
}
\end{proof}

{These computations allow the following control of the $C^{r-1}$ size of the lift of a $C^r$ diffeomorphism.

\begin{lemma}\label{l-Ds-hg}
For every real $2<r<\infty$, there is a constant $A=A(r)$ with the following property. For any $g\in\Diff^r(M)$ with lift $\hg\in\Diff^{r-1}(\hM)$,
 $$
    \| \hg \|_{C^{r-1}} \leq A \left(\|g\|_{C^r}\cdot \|Dg^{-1}\|_\supnorm\right)^A.
 $$
\end{lemma}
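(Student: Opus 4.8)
The plan is to reduce the estimate to a computation in charts and to isolate the only genuinely singular ingredient, the normalization $w\mapsto w/\|w\|$. Fix once and for all finite $C^r$ atlases of $M$ and of $\hM$, taking the charts of $\hM$ to be products of charts of $M$ with the Euclidean charts of the circles $P_xM$, and choosing all charts to distort the relevant Riemannian metrics by a bounded factor (as in the proof of the previous lemma). Since $M$ and $\hM$ are compact and $\hg$ is continuous, $\|\hg\|_{C^{r-1}}$ is the maximum of $\|\chi_2^{-1}\circ\hg\circ\chi_1\|_{C^{r-1}}$ over finitely many chart pairs, and in each such chart $\hg$ is represented, up to a composition with a fixed smooth circle chart which only costs constants depending on the atlases, by a map of the form
$$(x,v)\ \longmapsto\ \bigl(F(x),\ \eta(\Phi(x,v))\bigr),\qquad \Phi(x,v):=(DF)_x\,v,\quad \eta(w):=w/\|w\|,$$
where $F$ is the local representative of $g$, the variable $x$ ranges over a compact subset of $\R^2$, and $v$ over a compact neighbourhood of the unit circle. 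For the first coordinate there is nothing to do: $\|F\|_{C^{r-1}}\le\|F\|_{C^r}\le C(r)\|g\|_{C^r}$. Likewise $\|\Phi\|_{C^{r-1}}\le C(r)\|g\|_{C^r}$ on the chart domain, because $\Phi$ is linear in $v$ and its $x$-derivatives of order $\le\lfloor r-1\rfloor$ involve only derivatives of $F$ of order $\le\lfloor r\rfloor$, together with the Hölder norm of the top one.

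The work is all in $\eta$. It is bounded, and its $j$-th derivative is positively homogeneous of degree $-j$, so on an annulus $\{\rho\le\|w\|\le R\}$ with $0<\rho\le1$ one has $\|D^j\eta(w)\|\le c_j\rho^{-j}$ for $1\le j\le\lfloor r\rfloor$, while the Hölder seminorm of $D^{\lfloor r-1\rfloor}\eta$ is bounded by $\|D^{\lfloor r\rfloor}\eta\|_\infty\,(2R)^{1-\alpha}$; hence $\|\eta\|_{C^{r-1},\{\rho\le\|w\|\le R\}}\le C(r)\,\rho^{-r}(1+R)$. Next I would bound how far $\Phi$ stays from the singularity: for $v$ near the unit circle, $\|\Phi(x,v)\|=\|(DF)_x v\|\ge\|v\|\,\|(DF_x)^{-1}\|^{-1}=\|v\|\,\|(DF^{-1})_{F(x)}\|^{-1}$, and since $F^{-1}$ represents $g^{-1}$ and the charts have bounded distortion, $\|(DF^{-1})_{F(x)}\|\le c_1\|Dg^{-1}\|_\supnorm$, whence $\|\Phi(x,v)\|\ge\rho:=c_0\|Dg^{-1}\|_\supnorm^{-1}$; we may assume $\|Dg^{-1}\|_\supnorm\ge1$, so $\rho\le1$. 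Similarly $\|\Phi(x,v)\|\le\|DF_x\|\,\|v\|\le R:=c_2\|g\|_{C^r}$. Thus $\Phi$ maps the chart domain into the annulus $\{\rho\le\|w\|\le R\}$ on which the previous estimate applies.

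Finally I would combine these via the standard composition estimate for Hölder--differentiable maps (Faà di Bruno together with the mean value theorem): whenever $\phi$ takes values where $\psi$ is $C^{r-1}$,
$$\|\psi\circ\phi\|_{C^{r-1}}\ \le\ C(r)\,\|\psi\|_{C^{r-1}}\,\bigl(1+\|\phi\|_{C^{r-1}}\bigr)^{r}.$$
Applying this with $\psi=\eta$, $\phi=\Phi$ and inserting the bounds above gives, in each chart,
$$\|\eta\circ\Phi\|_{C^{r-1}}\ \le\ C(r)\,\rho^{-r}(1+R)\bigl(1+\|g\|_{C^r}\bigr)^{r}\ \le\ C(r)\,\|Dg^{-1}\|_\supnorm^{\,r}\,\bigl(1+\|g\|_{C^r}\bigr)^{r+1}.$$
Together with the estimate on the first coordinate and the finiteness of the number of chart pairs, this yields $\|\hg\|_{C^{r-1}}\le C(r)\bigl(1+\|g\|_{C^r}\|Dg^{-1}\|_\supnorm\bigr)^{C(r)}$, which is of the asserted form with $A=A(r)$ large enough (here one uses that $\|g\|_{C^r}$ and $\|Dg^{-1}\|_\supnorm$ are bounded away from $0$ on $\Diff^r(M)$, so the base may be taken $\ge1$). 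The main obstacle is organizing the non-integer-order chain rule cleanly while tracking the single scale $\rho\asymp\|Dg^{-1}\|_\supnorm^{-1}$ that measures how close the linear map $Df$ brings unit vectors to $0$: the factor $\|Dg^{-1}\|_\supnorm^{A}$ in the statement is precisely the price of the singularity of $\eta$ at the origin, and everything else reduces to universal estimates with chart constants.
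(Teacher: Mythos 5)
Your proof is correct and reaches the same estimate, but it organizes the computation differently from the paper. The paper works by explicit induction on the derivative order starting from the displayed formula for $D\hF$ (eq.~\eqref{eq-Dhf}), showing that $D^{k-1}\hF$ is a linear combination of terms of the form $\|D_xF.v\|^{-p}\,(D^{\alpha_1}F*D^{\beta_1}F)\cdots(D^{\alpha_j}F*D^{\beta_j}F)\cdot D^\gamma F$ with integer exponents bounded by $k$, and then adds ``a further computation'' for the Hölder seminorm when $r\notin\NN$. You instead factor the fiber component as $\eta\circ\Phi$ with $\Phi(x,v)=(DF)_xv$ polynomial in the chart data and $\eta(w)=w/\|w\|$ homogeneous of degree $0$, so that the \emph{only} singular ingredient is $\eta$, whose $j$-th derivative scales like $\|w\|^{-j}$; you then bound how close $\Phi$ comes to the singularity (the scale $\rho\asymp\|Dg^{-1}\|_\supnorm^{-1}$) and invoke a standard Faà di Bruno--type composition estimate for Hölder classes. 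This is the same phenomenon the paper tracks through the $\|D_xF.v\|^{-p}$ factors, but your decomposition makes the source of the $\|Dg^{-1}\|_\supnorm^{A}$ factor transparent (it is exactly the distance to the singularity of the projection $\eta$), and it avoids writing out the inductive formula explicitly by delegating to the homogeneity of $\eta$ and a black-box chain rule for $C^{r-1}$ norms. Two small points worth tightening: the fiber variable of a chart of $\hM$ is one-dimensional (an angle on $P_xM$), so strictly one should parametrize the circle by $\theta\mapsto(\cos\theta,\sin\theta)$ and post-compose with a circle chart rather than treating $v$ as a free $2$-dimensional variable; and the last step converting $(1+\|g\|_{C^r}\|Dg^{-1}\|_\supnorm)^A$ to $(\|g\|_{C^r}\|Dg^{-1}\|_\supnorm)^A$ needs the observation that on a compact surface a diffeomorphism satisfies $\|Dg^{-1}\|_\supnorm\geq 1$ (from $\int|\det Dg^{-1}|=\operatorname{vol}(M)$), and likewise $\|g\|_{C^r}$ is bounded below via the chart distortion, so the base of the power is bounded away from zero.
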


\begin{proof}
We first consider the $k$th derivative of $f$ in charts  for the maximal integer $k\leq r$. A straightforward induction on the integer $k\geq2$ based on eq.~\eqref{eq-Dhf} shows that the $(k-1)$th  differential of $\hF$ at some point $(x,v)\in\hM$ can be written as a linear combination of terms:
 $$
     \frac{1}{\|D_xF.v\|^p} (D^{\alpha_1} F * D^{\beta_1} F) \dots (D^{\alpha_j} F * D^{\beta_j} F) \cdot D^\gamma F
 $$
where {$p,\alpha_1,\beta_1\dots,\alpha_j,\beta_j,\gamma$} are integers, and $\alpha_1,\beta_1,\dots,\gamma\leq k$. The coefficients of this linear combination depend only on $k$.

If $k=r$, the claim is immediate. If $\alpha:=r-k>0$, recall that the $C^r$ size is the sum of the $C^k$ size and $\alpha$-H\"older size of the $k$-th derivative. A further computation using the above expression gives the required bound for the H\"older constant of order $\alpha$ of $D^k\hg$.
\end{proof}
}

\section{Entropy formulas and reparametrizations}\label{sec-entropy}
We saw in last section that  the defect  in continuity of $(f,\mu)\mapsto \lambda^+(f,\mu)$ can be described in terms of  the canonical lift $\hf:\hM\to\hM$.  In this section we develop tools for studying the entropy map $(f,\mu)\mapsto h(f,\mu)$ in terms of $\hf:\hM\to\hM$.

Specifically, we will show that the entropy of hyperbolic measures on $M$ can be studied in terms of the exponential rate of growth in $C^r$--complexity of  $\hf^n\circ\hsigma$, where $\hsigma:[0,1]\to\hM$ is the curve $\hsigma(t)=(\sigma(t),\R .\sigma'(t))$ and $\sigma:[0,1]\to M$ is a smooth parameterization of a local unstable manifold.

\subsection{Review of entropy and the ergodic decomposition} This section collects several classical facts on the entropy theory of non-ergodic measures. For proofs and details, see \cite[chap. 13]{ETCS}.

Consider  a compact metric space $X$ together with a continuous map $T$ preserving an invariant Borel probability measure $\mu$ and the $\sigma$-algebra $\mathcal X$ of Borel subsets of $X$. The ergodic decomposition of $\mu$ with respect to $T$ is:
 $$
    \mu = \int_X \mu_x \, d\mu(x)
 $$
where $\mu_x:=\lim_{n\to\infty}\frac1n\sum_{0\leq k<n}\delta_{T^kx}$ (the \weakstar\ limit exists almost everywhere by the ergodic theorem).

The map $x\mapsto\mu_x$ is $\mu$-measurable with respect to the $\sigma$-algebra $\mathcal I$ of invariant measurable subsets; for $\mu$-a.e. $x\in X$, $\mu_x$ is a $T$-invariant and ergodic Borel probability measure and for every Borel $\mu$-absolutely integrable $u$, $x\mapsto\mu_x(u)$ belongs to $L^1(\mu)$ and $\mu(u)=\int_X \mu_x(u)\,d\mu(x)$.
{The metric entropy of $\mu$ and $\mu_x$ are related by
$$h(T,\mu)=\int_Xh(T,\mu_x)d\mu(x).$$

Suppose $\xi$ is  a countable measurable partition of $X$ with finite mean entropy $H_\mu(\xi):=-\sum_{A\in\xi}\mu(A)\log\mu(A)$.
Let $\xi_n:=\{\bigcap_{i=0}^{n-1} T^{-i}A_i:A_i\in\xi\}$.
It is a classical fact that
$
h(T,\mu,\xi):=\lim_{n\to\infty}\frac{1}{n}H_\mu(\xi_n)
$
exists.

Similarly, one defines $h(T,\mu_x,\xi)$.
The function $x\mapsto h(T,\mu_x,\xi)$ is defined $\mu$-a.e., is measurable with respect to the $\mu$-completion of $\mathcal I$.
Let $\xi_n(x)$ denote the atom of $\xi_n$ which contains $x$. The Shannon-McMillan-Breiman theorem, in its version for non-ergodic measures \cite[(13.4)]{ETCS}, states that,
 \begin{equation}\label{eq-SMB-thm}
    \lim_{n\to\infty} -\frac1n\log \mu\left(\xi_n(x)\right) = h(T,\mu_x,\xi) \quad \text{both $\mu$-a.e. and in }L^1(\mu).
 \end{equation}
In addition,  we have the following identity: \cite[(13.3)]{ETCS}:
$    h(T,\mu,\xi) = \int_X h(T,\mu_x,\xi)\, d\mu(x).$
In particular,  $h(T,\mu,\xi)\leq \overline{h}(T,\mu,\xi)$,
where
\begin{equation}\label{eq-ergdec-h}
\overline{h}(T,\mu) := \operatorname{ess-sup}_{x\in X} h(T,\mu_x).
 \end{equation}
We call $\overline{h}(T,\mu)$ the
 {\em essential supremum entropy} of $\mu$:

 }

\subsection{Bowen and Katok entropy formulas}\label{sec-Bowen-Katok}
Let $T:X\to X$ be a continuous map on a compact metrix space $X$.
An  {\em $(n,\epsilon)$-Bowen ball} is a set of the form
  $$
      B_T(x,n,\eps) := \{ y\in X: \forall 0\leq k<n,\; d(T^ky,T^kx)<\eps\}.
  $$
The {\em $(n,\eps)$-covering number} of a subset $Z\subset X$, is
 $$
   r_T(n,\eps,Z) := \min\{{|C|}:\bigcup_{x\in C} B_T(x,n,\eps)\supset Z\}.
 $$

 Bowen \cite{Bowen} defined the topological entropy of a (possibly non-invariant) set $Z\subset X$ for $T$ to be
 \begin{equation}\label{eq-Bowen-htop}
   h_\top(T,Z) = \lim_{\eps\to 0} h_\top(T,Z,\eps) \text{ with }
   h_\top(T,Z,\eps) = \limsup_{n\to\infty}\frac1n\log r_T(n,\eps,Z),
  \end{equation}
 and showed that the topological entropy of $T$ is $h_\top(T)=h_\top(T,X)$.

Katok gave a similar formula for the metric entropy of an invariant measure.
{Let $\mu$ be an invariant probability measure. For every $\gamma\in (0,1)$, let
$$
r_T(n,\varepsilon,\mu,\gamma):=\inf\{r_T(n,\eps,Z):Z\subset X\text{ measurable s.t. }\mu(Z)>\gamma\}.
$$
He showed that  if $\mu$ is ergodic, then
$\displaystyle   {h}(T,\mu) = \lim_{\lambda\to 1}\lim_{\eps\to 0}  \limsup_{n\to\infty}\frac1n\log r_T(n,\eps,\mu,\gamma).$
Katok's  proof in \cite{KatokIHES} also works in the non-ergodic case, if we replace the usual Shannon-McMillan-Breiman Theorem by \eqref{eq-SMB-thm}. The result is that for a general (possily non-ergodic)  invariant probability measure $\mu$,
 \begin{equation}\label{eq-Katok-hKS}
   \overline{h}(T,\mu) = \lim_{\lambda\to 1}\lim_{\eps\to 0}  \limsup_{n\to\infty}\frac1n\log r_T(n,\eps,\mu,\gamma).
  \end{equation}
Here $\overline{h}(T,\mu)$ is the essential supremum  entropy from \eqref{eq-ergdec-h}.}

\subsection{The lift to the projective tangent bundle preserves entropy}
This is a standard consequence of the following theorem~\cite{Ledrappier-Walters}.
\begin{theorem}[Ledrappier-Walters]\label{t.Led-Wal}
Let $(X,T)$, $(Y,S)$ be continuous self-maps of compact metric spaces and let $\pi:(Y,S)\to(X,T)$ be a topological factor map s.t.
 \begin{equation}\label{eq-zero-fiber-htop}
 \forall x\in X \qquad h_\top(S,\pi^{-1}(x))=0.
 \end{equation}
Then, for any $S$-invariant Borel probability measure $\nu$ on $Y$,
 $
  h(S,\nu) = h(T,\pi_*\nu).
 $
\end{theorem}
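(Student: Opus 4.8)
The plan is to prove the two inequalities separately. The inequality $h(S,\nu)\ge h(T,\pi_*\nu)$ is automatic: $\pi$ realises $(X,T,\pi_*\nu)$ as a measure-theoretic factor of $(Y,S,\nu)$, and metric entropy does not increase under factors. So the whole content is $h(S,\nu)\le h(T,\pi_*\nu)$. I would first reduce to the case where $\nu$ is ergodic. Writing the ergodic decomposition $\nu=\int\nu_y\,d\nu(y)$, one has $h(S,\nu)=\int h(S,\nu_y)\,d\nu(y)$. Since the image of an ergodic measure under a factor map is ergodic and $\pi_*$ is affine, $\pi_*\nu=\int\pi_*\nu_y\,d\nu(y)$ is again an expression of $\pi_*\nu$ as an average of ergodic measures, so by uniqueness of the ergodic decomposition $h(T,\pi_*\nu)=\int h(T,\pi_*\nu_y)\,d\nu(y)$. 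Hence it suffices to show $h(S,\nu_y)\le h(T,\pi_*\nu_y)$ for $\nu$-a.e. $y$, i.e. we may assume $\nu$ ergodic and put $m:=\pi_*\nu$, which is then also ergodic.

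For ergodic measures the essential supremum entropy equals the entropy, so Katok's formula \eqref{eq-Katok-hKS} gives $h(S,\nu)=\sup_{\gamma<1}\sup_{\eps>0}\limsup_n\frac1n\log r_S(n,\eps,\nu,\gamma)$ (the limits in \eqref{eq-Katok-hKS} are monotone, hence suprema), and likewise $\limsup_n\frac1n\log r_T(n,\eta,m,\gamma)\le h(T,m)$ for every $\eta>0$ and $\gamma<1$. Fix now $\eps>0$, $\gamma<1$ and $\delta>0$. The key ingredient — call it the \emph{fibered covering lemma} — is that, using $h_\top(S,\pi^{-1}(x))=0$ for all $x$, one can choose $\eps'=\eps'(\eps,\delta)>0$ and a constant $D=D(\eps,\delta)$ such that for every $x\in X$ and every $n\ge1$ the set $\pi^{-1}(B_T(x,n,\eps'))$ is covered by at most $D\,e^{n\delta}$ Bowen balls $B_S(\cdot,n,\eps)$. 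Granting this, pick $Z'\subset X$ with $m(Z')>\gamma$ and $r_T(n,\eps',Z')$ within $1$ of $r_T(n,\eps',m,\gamma)$, cover $Z'$ by that many balls $B_T(x_j,n,\eps')$, and note that $Z:=\pi^{-1}(Z')$ satisfies $\nu(Z)=m(Z')>\gamma$ and is covered by the sets $\pi^{-1}(B_T(x_j,n,\eps'))$. Hence $r_S(n,\eps,\nu,\gamma)\le r_S(n,\eps,Z)\le D\,e^{n\delta}\,(r_T(n,\eps',m,\gamma)+1)$, so $\limsup_n\frac1n\log r_S(n,\eps,\nu,\gamma)\le\delta+h(T,m)$. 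Since this bound is uniform in $\eps$ and $\gamma$, taking the supremum over them gives $h(S,\nu)\le\delta+h(T,m)$, and $\delta\to0$ concludes.

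The heart of the matter — and the only place where the hypothesis of zero fiber entropy is used — is the fibered covering lemma, and I expect this to be the main obstacle. Pointwise, $h_\top(S,\pi^{-1}(x))=0$ says that $\pi^{-1}(x)$ has subexponential $(n,\eps/2)$-covering number; combined with the elementary observation that a finite cover of the compact set $\pi^{-1}(x)$ by open $(n,\eps/2)$-Bowen balls already covers $\pi^{-1}(U)$ for some neighbourhood $U\ni x$, this upgrades to a subexponential $(n,\eps)$-cover of $\pi^{-1}(B_X(x,\rho_x))$ for some $\rho_x>0$. The difficulty is that a priori $\rho_x$, the constant, and the covering rate all depend on $x$ and on $n$; extracting a bound that is uniform in $x$ and genuinely subexponential in $n$ is the technical core. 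It is obtained by covering $X$ with finitely many such balls, cutting the orbit segment $\{0,\dots,n-1\}$ into blocks along which the $T$-orbit of $x$ stays inside a single one of them, and multiplying the per-block fiber-covering numbers — this is precisely Bowen's argument proving $h_\top(S)\le h_\top(T)+\sup_x h_\top(S,\pi^{-1}(x))$, adapted to the measured setting à la Katok. Equivalently, the statement is the special case of the relativised variational principle of \cite{Ledrappier-Walters} in which every fiber entropy vanishes, and one may simply invoke it.
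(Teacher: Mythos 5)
Your proposal is correct, but a comparison with ``the paper's own proof'' is a little tricky to frame: the paper does not prove Theorem~\ref{t.Led-Wal} at all --- it simply cites \cite{Ledrappier-Walters}, and the displayed hypothesis~\eqref{eq-zero-fiber-htop} is only checked in the proof of Corollary~\ref{Lemma-Entropy-Extension} before the theorem is invoked as a black box. So you have supplied a proof where the paper has none.

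That said, your argument is not alien to the paper: the ``fibered covering lemma'' you isolate is, almost verbatim, the Claim inside the paper's proof of Proposition~\ref{fibered Bowen-Katok}, and the Bowen-style block decomposition you sketch (pick, for each $x$, a smallest $n_x$ witnessing subexponential fiber covering, extend to a neighbourhood of $x$ via upper semi-continuity of $x\mapsto\pi^{-1}(x)$, bound $\sup_x n_x$ and $\inf_x r_x$ by compactness, cut $\{0,\dots,n-1\}$ into blocks, and multiply per-block bounds) is exactly how that Claim is proved there. In effect you are showing that the paper's own Bowen--Katok machinery already yields the consequence that it chose to import from \cite{Ledrappier-Walters}. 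The two routes are genuinely different: Ledrappier and Walters argue through a relativised variational principle and do not use Katok's formula, whereas your proof leans on Katok's formula (hence on Shannon--McMillan--Breiman, via \eqref{eq-Katok-hKS}), which is used heavily elsewhere in the paper anyway. Small checks: the reduction to ergodic $\nu$ via the affineness of $h(T,\cdot)$ under integral decompositions is legitimate; expressing the Katok limits as suprema is correct by monotonicity in $\eps$ and $\gamma$; you correctly flag, and correctly resolve, the danger that the neighbourhood $U\ni x$ produced by upper semi-continuity a priori depends on $n$ --- the fix is to fix $n_x$ first, exactly as in the paper's Claim. The fibered covering lemma is only sketched, but the sketch is on target and the full details are in the paper's proof of Proposition~\ref{fibered Bowen-Katok}.
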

\noindent
Thus by the variational principle, in the setup of the theorem,  $h_\top(T)=h_\top(S)$.

\begin{corollary}\label{Lemma-Entropy-Extension}
Suppose $f\in\Diff^1(M)$ and $\hf$ is the canonical extension of $f$ to the {projective} bundle $\hM$. Then
$  h_\top(\hf)=h_\top(f)$, and for every $\hf$-invariant probability measure $\hnu$ with projection $\nu$,
 $
   h(\hf,\hnu) = h(f,\nu)$ {and $\oh(\hf,\hnu) = \oh(f,\nu)$.}
\end{corollary}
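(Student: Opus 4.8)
\emph{Plan.} The plan is to deduce the corollary from the Ledrappier--Walters theorem (Theorem~\ref{t.Led-Wal}) applied to the map $\hpi\colon(\hM,\hf)\to(M,f)$. This map is continuous, surjective, and satisfies $\hpi\circ\hf=f\circ\hpi$, so it is a topological factor map, and the only hypothesis of Theorem~\ref{t.Led-Wal} that needs checking is the vanishing of the topological entropy of the fibers, condition~\eqref{eq-zero-fiber-htop}: $h_\top(\hf,\hpi^{-1}(x))=0$ for every $x\in M$. Once this is established, Theorem~\ref{t.Led-Wal} will give $h(\hf,\hnu)=h(f,\hpi_*\hnu)=h(f,\nu)$ for every $\hf$-invariant probability measure $\hnu$, and the variational principle (as noted right after Theorem~\ref{t.Led-Wal}) will give $h_\top(\hf)=h_\top(f)$.

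For the fiber estimate I would use that each fiber $\hpi^{-1}(x)=P_xM$ is isometric to a circle, and that for every $k\geq0$ the map $\hf^k$ carries $\hpi^{-1}(x)$ onto $\hpi^{-1}(f^k(x))$ by $E\mapsto Df^k_x(E)$, which is a homeomorphism between circles and therefore sends arcs to arcs (and preimages of arcs to arcs). Fix $\eps>0$. For each $0\leq k<n$, choose a partition $\mathcal P_k$ of $\hpi^{-1}(f^k(x))$ into $O(1/\eps)$ arcs of diameter less than $\eps$, and pull it back by $\hf^k|_{\hpi^{-1}(x)}$ to a partition of $\hpi^{-1}(x)$ into the same number of arcs. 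Since the common refinement of finitely many partitions of a circle into arcs has at most as many atoms as the total number of atoms of the individual partitions (its set of endpoints being the union of theirs), the refinement $\bigvee_{k=0}^{n-1}(\hf^k|_{\hpi^{-1}(x)})^{-1}\mathcal P_k$ has $O(n/\eps)$ atoms. If $J$ is such an atom, then $\hf^k(J)$ is contained in an arc of $\mathcal P_k$ for every $k<n$; as that arc lies inside the single fiber $\hpi^{-1}(f^k(x))$, on which the ambient $\hM$-distance is bounded by the angular distance, $\hf^k(J)$ has $\hM$-diameter less than $\eps$ for all $k<n$, so $J$ lies in one $(n,\eps)$-Bowen ball of $\hf$. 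Hence $r_{\hf}(n,\eps,\hpi^{-1}(x))=O(n/\eps)$, which grows subexponentially in $n$, giving $h_\top(\hf,\hpi^{-1}(x),\eps)=0$ for all $\eps>0$ and thus $h_\top(\hf,\hpi^{-1}(x))=0$.

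For the essential supremum entropies, let $\hnu=\int\hnu_\xi\,dm(\xi)$ be the ergodic decomposition of $\hnu$. Since the projection of an ergodic measure is ergodic and $\hpi$ intertwines Birkhoff averages, $\nu=\int\hpi_*\hnu_\xi\,dm(\xi)$ realizes the ergodic decomposition of $\nu=\hpi_*\hnu$. Applying the identity $h(\hf,\hnu_\xi)=h(f,\hpi_*\hnu_\xi)$ for $m$-a.e.\ $\xi$ and taking essential suprema over $\xi$, one obtains $\oh(\hf,\hnu)=\oh(f,\nu)$ from the definition~\eqref{eq-ergdec-h}.

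The only substantial step is the vanishing of the fibered entropy; everything else is a direct application of cited results. The point that makes that step work — and the one I expect to require the most care to state cleanly — is that the fibers are one-dimensional and $\hf$ acts on them by homeomorphisms, so that the relevant refinements of arc-partitions of the circle grow additively, not multiplicatively, in $n$.
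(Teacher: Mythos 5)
Your proof is correct and follows essentially the same route as the paper's: both verify condition~\eqref{eq-zero-fiber-htop} for $\hpi$ by observing that $\hf^k$ carries fibers homeomorphically to fibers, so that pulling back arc-partitions of the circles $\hpi^{-1}(f^k x)$ and taking a common refinement yields only $O(n)$ atoms, each lying in a single $(n,\eps)$-Bowen ball, and then apply Ledrappier--Walters. The paper leaves the $\oh$ identity implicit; your ergodic-decomposition argument for it is the standard way to fill that in and is correct, since $\hpi_*\hnu_{\hy}=\nu_{\hpi(\hy)}$ for $\hnu$-a.e.\ $\hy$ and the distribution of $\hpi(\hy)$ under $\hnu$ is $\nu$.
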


\begin{proof}
We check condition~\eqref{eq-zero-fiber-htop} for $T=f$, $S=\hf$, and apply the previous theorem.
$\hM$ is a topological bundle over $M$, and its fibers $P_x M$ are homeomorphic to circles.  The map $\hf: P_x M\to P_{f(x)} M$ is a homeomorphism. For every $\eps>0$, one can find partitions $\xi_x$ of $P_x M$ into a bounded number of arcs with diameter at most $\eps$. It is easy to see that $\xi_x\vee\hf^{-1}\xi_{\hf\hx}\vee\dots\vee \hf^{-n+1}\xi_{\hf^{n-1}\hx}$ has cardinality at most $\sum_{k=0}^{n-1} \Card(\xi_{\hf^k\hx})=O(n)$. It follows that $h_{top}(\hf,P_x M)=0$ for all $x$.
\end{proof}

\subsection{Bowen and Katok entropy formulas on the bundle $\hM$.}
We need a variant of the Bowen and Katok entropy formulas which uses a different type of Bowen balls, which are  better adapted to the bundle structure of $\hM$.

Recall the natural projection $\hpi:\hM\to M$,  $\hpi(x,E)=x$.
The {\em fibered ball} with {\em center} $\hx\in\hM$ and {\em scales}  $\eps,\heps>0$ is the set
$$B(\hx,\eps,\heps):=\{\hy\in\hM:\; d(\hx,\hy)<\heps \text{ and }
    d(\hpi(\hx),\hpi(\hy))<\eps\}.$$
A set $S\subset\hM$ has {\em fibered size} $\leq (\eps,\heps)$ if $S\subset B(\hx,\eps,\heps)$ for some $\hx$.

Suppose $f\in\Diff^1(M)$ and $\hf$ is the canonical extension of $f$ to $\hM$.
The {\em fibered $(n,\eps,\heps)$--Bowen ball}  with {\em center} $\hx\in\hM$, {\em size} $(\eps,\heps)$ and {\em length} $n$ is the set
\begin{equation}
\begin{split}
    B_{\hf}(\hx,n,\eps,\heps):=\{\hy\in\hM:\;\forall 0\leq k<n,\; d(&\hf^k(\hx),\hf^k(\hy))<\heps\\ &\text{ and }
    d(f^k(\hpi(\hx)),f^k(\hpi(\hy)))<\eps\}.
\end{split}
\end{equation}

The {\em $(n,\eps,\heps)$-covering number} of a subset $Z\subset\hM$ is the minimal number of fibered  $(n,\eps,\heps)$-Bowen balls whose union contains $Z$. It is denoted by $$r_{\hf}(n,\eps,\heps,Z).$$
Clearly if $\eps_1\leq \eps_2$ and $\heps_1\leq \heps_2$, then  $r_{\hf}(n,\eps_1,\heps_1,Z)\geq r_{\hf}(n,\eps_2,\heps_2,Z)$.

Similarly, given an ergodic measure $\hmu$ of $\hf$ and a number $0<\gamma<1$, the {\em $(n,\eps,\heps,\gamma)$-covering number of $\hmu$} is the minimal number of fibered $(n,\eps,\heps)$-Bowen balls whose union has $\hmu$-measure at least $\gamma$. It is denoted by
$$r_{\hf}(n,\eps,\heps,\hmu,\gamma).$$
If $\eps_1\leq \eps_2, \heps_1\leq \heps_2$, and $\gamma_1\geq\gamma_2$,  then  $r_{\hf}(n,\eps_1,\heps_1,\hmu,\gamma_1)\geq r_{\hf}(n,\eps_2,\heps_2,\hmu,\gamma_2)$.

\begin{proposition}\label{fibered Bowen-Katok}
Fix $f\in\Diff^1(M)$ with canonical lift $\hf$. We have:
\begin{enumerate}[(1)]
\item {\em Bowen's formula:} For every $\heps>0$,
$$
h_\top(f)=\lim_{\eps\to0}\limsup_{n\to\infty} \frac1n\log r_{\hf}(n,\eps,\heps,\hM)=
\lim_{\eps\to0}\liminf_{n\to\infty} \frac1n\log r_{\hf}(n,\eps,\heps,\hM)
$$
\item\label{i.Katok} {\em Katok's formula:} Suppose $\mu$ is an $f$-ergodic invariant measure, and let $\hmu$ be an $\hf$-ergodic lift of $\mu$. Then for every $\heps>0$ and $0<\gamma<1$,
 $$
    h(f,\mu)=\lim_{\eps\to0}\limsup_{n\to\infty} \frac1n\log r_{\hf}(n,\eps,\heps,\hmu,\gamma),
 $$
If $\mu$ is $f$-invariant, but possibly not ergodic, then for every lift $\hmu$,
$$
    \overline h(f,\mu)=\lim_{\gamma\to 1} \lim_{\eps\to0}\limsup_{n\to\infty} \frac1n\log r_{\hf}(n,\eps,\heps,\hmu,\gamma),
$$
\end{enumerate}
\end{proposition}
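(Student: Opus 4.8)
The statement to prove is Proposition~\ref{fibered Bowen-Katok}: that the fibered covering numbers $r_{\hf}(n,\eps,\heps,\hM)$ and $r_{\hf}(n,\eps,\heps,\hmu,\gamma)$ compute, respectively, the topological entropy $h_\top(f)$ and the (essential supremum) entropy of an $f$-invariant measure $\mu$. The strategy is to compare fibered Bowen balls to ordinary Bowen balls on $\hM$ and then invoke the classical Bowen and Katok formulas together with Corollary~\ref{Lemma-Entropy-Extension} (the lift preserves entropy). The key geometric point is that the extra requirement $d(f^k(\hpi\hx),f^k(\hpi\hy))<\eps$ defining a fibered ball is \emph{weaker} than the full metric requirement $d(\hf^k\hx,\hf^k\hy)<\heps$ when $\heps\le\eps$, so fibered balls contain ordinary Bowen balls; in the other direction, since $\hM$ is compact and the fibers are arcs of uniformly bounded length, a fibered $(n,\eps,\heps)$-ball can be covered by a bounded (independent of $n$) number of ordinary $(n,\heps)$-Bowen balls.

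\textbf{Step 1 (two-sided comparison of covering numbers).} First I would record the elementary inclusions. For $\heps\le\eps$ one has $B_{\hf}(\hx,n,\heps)\subset B_{\hf}(\hx,n,\eps,\heps)$ since controlling $d(\hf^k\hx,\hf^k\hy)<\heps$ forces $d(f^k\hpi\hx,f^k\hpi\hy)<\heps\le\eps$; hence $r_{\hf}(n,\eps,\heps,Z)\le r_{\hf}(n,\heps,Z)$, where the right side is the ordinary $(n,\heps)$-covering number on $\hM$. Conversely, I claim each fibered ball $B_{\hf}(\hx,n,\eps,\heps)$ is covered by at most $N=N(\eps,\heps)$ ordinary $(n,\eps)$-Bowen balls on $\hM$, with $N$ independent of $n$ and $\hx$. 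Indeed, fix $k$; the set of $\hy$ with $d(f^k\hpi\hx,f^k\hpi\hy)<\eps$ and $d(\hf^k\hx,\hf^k\hy)<\heps$ projects into a ball of radius $\eps$ in $M$, over which $\hM$ is a fiber bundle with fibers of length $\pi$; using the uniform continuity of $\hf^k$ restricted to this (compact) region one checks the image under $\hf^k$ of the fibered constraint lies in a subset of $\hM$ of diameter $O(\eps)$ plus the fiber, which can be broken into $O(1/\eps)$ pieces of diameter $<\eps$. This is the step where a little care is needed, but it is soft: compactness of $\hM$ and of the base, together with the product-like local structure, give a uniform $N$.

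\textbf{Step 2 (topological entropy, part (1)).} Combining Step 1 with Bowen's formula \eqref{eq-Bowen-htop} applied to $\hf$ on $\hM$ gives, for fixed $\heps>0$,
\begin{align*}
\limsup_{n\to\infty}\tfrac1n\log r_{\hf}(n,\eps,\heps,\hM)
&\le \limsup_{n\to\infty}\tfrac1n\log r_{\hf}(n,\heps,\hM)
= h_\top(\hf,\hM,\heps),\\
\liminf_{n\to\infty}\tfrac1n\log r_{\hf}(n,\eps,\heps,\hM)
&\ge \liminf_{n\to\infty}\tfrac1n\log\bigl(N(\eps,\heps)^{-1} r_{\hf}(n,\eps,\hM)\bigr)
= h_\top(\hf,\hM,\eps),
\end{align*}
where for the last equality I use that Bowen's $\limsup$ is in fact a limit for the covering numbers at a fixed scale only up to the outer $\eps\to0$ limit; more precisely, letting $\eps\to0$ squeezes both the $\limsup$ and $\liminf$ of the fibered covering numbers between $h_\top(\hf)=h_\top(f)$ (the last equality by Corollary~\ref{Lemma-Entropy-Extension}), which proves part~(1).

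\textbf{Step 3 (metric entropy, part (2)).} For a measure $\mu$ with ergodic lift $\hmu$, the same inclusions give $r_{\hf}(n,\eps,\heps,\hmu,\gamma)\le r_{\hf}(n,\heps,\hmu,\gamma)$ and $r_{\hf}(n,\eps,\heps,\hmu,\gamma)\ge N(\eps,\heps)^{-1}r_{\hf}(n,\eps,\hmu,\gamma)$: for the lower bound, if a union of $m$ fibered $(n,\eps,\heps)$-balls has $\hmu$-measure $\ge\gamma$, then covering each by $N$ ordinary $(n,\eps)$-balls yields $mN$ ordinary balls of total measure $\ge\gamma$. Now apply Katok's formula \eqref{eq-Katok-hKS} for $\hf$: in the ergodic case $\lim_{\eps\to0}\limsup_n\tfrac1n\log r_{\hf}(n,\eps,\heps,\hmu,\gamma)=h(\hf,\hmu)=h(f,\mu)$, the last equality being Corollary~\ref{Lemma-Entropy-Extension}; the $\gamma$ in Katok's statement is immaterial for ergodic measures. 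In the possibly non-ergodic case one additionally takes $\gamma\to1$ and uses that Katok's formula \eqref{eq-Katok-hKS} on $\hf$ computes $\oh(\hf,\hmu)$, which equals $\oh(f,\mu)$ again by Corollary~\ref{Lemma-Entropy-Extension}. The factor $N(\eps,\heps)$ contributes $\tfrac1n\log N\to0$ and is harmless.

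\textbf{Main obstacle.} The only genuine content beyond bookkeeping is the uniform bound $N(\eps,\heps)$ in Step~1: one must see that intersecting an ordinary $(n,\eps)$-Bowen ball constraint on $\hM$ with the weaker fibered constraint does not produce a set whose $\hf^k$-images are too spread out in the fiber direction. Because $\hf$ acts on each fiber $P_xM\cong S^1$ by a (uniformly bi-Lipschitz, since $f\in\Diff^1(M)$ and $M$ is compact) homeomorphism, the fiber-direction spreading over $n$ steps is at worst multiplied by a fixed constant at each step \emph{only within the first few steps}, but the crucial point is that we do \emph{not} need control over all $n$ steps: we already have the base controlled to scale $\eps$ for all $k<n$, and we only need to subdivide the fiber \emph{once, at time $0$}, into $O(1/\heps)$ arcs of length $<\heps$, then note each such arc, intersected with the fibered-ball conditions, is contained in a single ordinary $(n,\heps)$-Bowen ball by the definition of the latter. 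This reduces the claim to a genuinely $n$-independent count $N=O(1/\heps)$, and completes the argument.
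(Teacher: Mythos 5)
Your argument has a genuine gap in the key covering-number comparison. Here is the problem.

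\textbf{The trivial inequality runs the wrong way in the relevant regime.} In Step~1 you record that, for $\heps\le\eps$, one has $B_{\hf}(\hx,n,\heps)\subset B_{\hf}(\hx,n,\eps,\heps)$ and hence $r_{\hf}(n,\eps,\heps,Z)\le r_{\hf}(n,\heps,Z)$. That is correct, but the proposition fixes $\heps>0$ and sends $\eps\to0$, so eventually $\eps<\heps$. In that regime the fibered ball has \emph{more} constraints than the ordinary $(n,\heps)$-ball, namely $B_{\hf}(\hx,n,\eps,\heps)=B_{\hf}(\hx,n,\heps)\cap\hpi^{-1}B_f(\hpi\hx,n,\eps)\subset B_{\hf}(\hx,n,\heps)$, so $r_{\hf}(n,\eps,\heps,Z)\ge r_{\hf}(n,\heps,Z)$. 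Your Step~2 display then applies the inequality with the wrong orientation (and with $\heps$ where you would need $\eps$), so the upper bound $\limsup_n\frac1n\log r_{\hf}(n,\eps,\heps,\hM)\le h_\top(\hf,\hM,\heps)$ is not established, and letting $\eps\to0$ does not in any case improve a bound that depends only on the fixed $\heps$.

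\textbf{The uniform bound $N(\eps,\heps)$ is the real missing ingredient, and the sketch does not deliver it.} The useful inequality is that each fibered $(n,\eps,\heps)$-ball can be covered by not too many ordinary $(n,\eps)$-balls of $\hf$. You assert a count $N(\eps,\heps)$ independent of $n$ and $\hx$, but the justification in the ``Main obstacle'' paragraph slips from $\eps$ to $\heps$: subdividing the fiber at time $0$ into arcs of length $<\heps$ and intersecting with the fibered constraints yields sets contained in a single $(n,\heps)$-ball --- which is trivially true, since the entire fibered ball is already contained in $B_{\hf}(\hx,n,\heps)$, and says nothing about $(n,\eps)$-balls. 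The obstacle is precisely that a subdivision done once at time $0$ does not give $\eps$-control at later times: a piece of fiber arc of length $\eps$ can be stretched to a length comparable to $\heps$ at some intermediate time $k$ while the whole fibered ball stays within the $\heps$-constraint, because the fiber dynamics can distribute its expansion unevenly among the subdivisions. With a greedy subdivision chosen adaptively in $k$ one can bound the count by something like $O(n\,\heps/\eps)$ (linear in $n$, hence subexponential), but a bound that is actually \emph{independent of $n$} would require a separate argument and is not what the paper proves.

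\textbf{What the paper does instead.} The paper compares to $(n,\eps)$-Bowen covers for $f$ on $M$, not for $\hf$ on $\hM$. One direction is the trivial projection inequality $r_f(n,\eps,Z)\le r_{\hf}(n,\eps,\heps,\hpi^{-1}Z)$. For the other direction, it proves for every $\alpha>0$ the subexponential estimate $r_{\hf}(n,\eps,\heps,\hpi^{-1}B_f(x,n,\eps))\le C e^{\alpha n}$, uniformly in $n$ and $x$, for $\eps$ small. This rests on $h_\top(\hf,\hpi^{-1}(x))=0$, the upper semi-continuity of $x\mapsto\hpi^{-1}(x)$, and a compactness-plus-renewal decomposition of the Bowen ball into blocks of bounded length. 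The extra factor $e^{\alpha n}$ contributes only $\alpha$ to the exponential growth rate, and $\alpha$ can be taken arbitrarily small, which is exactly enough. To repair your proof you would need either to establish this subexponential estimate (as in the paper), or to give a genuine proof of the uniform $N(\eps,\heps)$ bound; the sketch you wrote does neither.
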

\begin{proof}
We
 follow the proof of \cite[Thm 17]{Bowen}. We need the following claim.

\begin{claim}
For every $\heps,\alpha>0$, there are $C,\varepsilon_*>0$ such that
 $$
   \forall \eps\in(0,\varepsilon_*),\; \forall n\geq1,\;\forall x\in M,\quad r_{\hf}(n,\eps,\heps,\hpi^{-1}B_f(x,n,\eps)) \leq C e^{\alpha n}.
 $$
\end{claim}

\medskip
\noindent
{\em Proof of the Claim.}
Let $\heps,\alpha>0$.
Note first that
$$
 r_{\hf}(n,\eps,\heps,\hpi^{-1}B_f(x,n,\eps)){\leq} r_{\hf}(n,\heps,\hpi^{-1}B_f(x,n,\eps)),
$$
hence it is enough to bound the latter. Since $h_\top(\hf,\hpi^{-1}(x))=0$,
for each $x\in M$ there is a smallest integer $n_x\geq1$ such that $r_{\hf}(n_x,\heps/2,\hpi^{-1}(x))\leq e^{\alpha n_x}$.

Recall that a set-valued function $F$ from a topological space $X$ to the set of subsets of a topological set $Y$ is called {\em upper semi-continuous}, if for every $E\subset Y$ closed, $\{x\in X: F(x)\cap E\neq \emptyset\}$ is closed.
The continuity of $\hpi$ and the compactness of $\hM$ implies that $x\mapsto \hpi^{-1}(x)$ is upper semi-continuous.

It follows that if $\hpi^{-1}(x)$ is contained in some open set $U$ (say the union of a minimal cover by   fibered Bowen balls), then  $\hpi^{-1}(y)$ is contained in $U$ for all $y$ sufficiently close to $x$.
Hence there is an $r_x>0$ such that
 \begin{equation}\label{eq-fiber-cover}
    r_{\hf}(n_x,\heps/2,\hpi^{-1}B(x,r_x))\leq r_{\hf}(n_x,\heps/2,\hpi^{-1}(x))\leq e^{\alpha n_x}.
  \end{equation}
Using a compactness argument, we see that $n_*:=\sup\{n_x:x\in M\}$ is finite and that one can arrange for ${\eps_*}:=\inf\{r_x:x\in M\}$ to be positive.

{Let $\eps\in(0,\eps_*)$, $n\geq1$ and $x\in M$.} Define $t_0(x):=0$ and $t_{i+1}(x):=t_i(x)+n_{f^{t^i(x)}x}$. Choose $i\geq0$  maximal such that $t_i(x)\leq n$. Note that $0\leq n-t_i(x)<n_*$. Thus, setting $x_j:=f^{t_j(x)}x$ and $n_j:=n_{x_j}$,
 $$
    B_f(x,n,\eps) = \bigcap_{j=0}^{i-1} f^{-t_j(x)}B_f(x_j,n_j,{\eps}) \cap  f^{-t_i(x)}B(x_i,n-t_i(x),{\eps}).
 $$

Let $\{z_1,\dots,z_C\}$ be a $(n_*-1,\heps/2)$-cover of $\hM$ with 
cardinality $C$.
For each $0\leq j<i$, let $\{y_{j1},\dots,y_{jm}\}$ with $m\leq e^{\alpha n_{j}}$ be the $(n_{j},\heps/2)$-cover of $\hpi^{-1}B(x_j,n_{j},\eps)$ implied by eq.~\eqref{eq-fiber-cover}. Then $ \hpi^{-1}B_f(x,n,\eps)$ is contained in the following union:
 $$
  \bigcup_{\tiny\begin{array}{c} k,k_0,\dots,k_{i-1}\\ k_j\leq \exp \alpha n_j\\ k\leq C\end{array}}  \bigcap_{j=0}^{i-1} \hf^{-t_j(x)}B_{\hf}(y_{jk_j},n_{j},\heps/2) \cap  \hf^{-t_i(x)}B_{\hf}(z_k,n-t^i(x),\heps/2).
 $$

Any two points contained in the same term of this union are not $(n,\heps)$-separated.
So  the cardinality of {any} $(n,\heps)$-separated subset of $\hpi^{-1}B_f(x,n,\eps)$ is bounded by
 $$
   C \times \prod_{j=0}^{i-1} e^{\alpha n_j}  \leq C e^{\alpha n}.
 $$
Since a maximal $(n,\heps)$--separated subset is the set of centers of a cover by $(n,\heps)$--Bowen balls, $r_{\hf}(n,\heps,\hpi^{-1} B_f(x,n,\eps))\leq C e^{\alpha n}$, which  proves the claim.

\medskip
The claim implies that for any $\heps,\alpha>0$, there are $C,\eps_*>0$ such that, for any $0<\eps\leq\eps_*$ and any ${Z}\subset M$,
 $$
    r_f(n,\eps,{Z}) \leq r_{\hf}(n,\eps,\heps,\hpi^{-1}{Z}) \leq C e^{\alpha n} r_f(n,\eps,{Z}).
 $$
The proposition now follows from the classical identities \eqref{eq-Bowen-htop}~and~\eqref{eq-Katok-hKS}.
\end{proof}

\subsection{Curves and $C^r$ reparametrizations}\label{ss.curves}
The entropy of a diffeomorphism can  be related to the exponential rate of growth in $C^r$ complexity of the iterates of  a local unstable manifold. To do this we need to control the curvature, and for this purpose it is useful to lift the curve  to {projective} bundle $\hM$ and study its iterations there. Here we develop the tools needed for doing this.

\begin{definition}
A $C^r$ curve $\sigma:[0,1]\to M$ is \emph{regular} if its derivative $\sigma'(t)$ never vanishes.
In this case, it has a \emph{canonical lift} $\hsigma:[0,1]\to\hM$ defined by $$\hsigma(t):=(\sigma(t),\RR.\sigma'(t)).$$
\end{definition}
\noindent
Here and throughout,  $\R.\sigma'(t)\equiv \mathrm{span}\{\sigma'(t)\}$ is a linear subspace of $T_x M$. By regularity, $\R.\sigma'(t)$ is  one-dimensional.

\medskip
\begin{definition}
Fix $r\geq 2$ and let $\eps,\heps$ be two positive numbers.
A regular $C^r$ curve $\sigma:[0,1]\to M$ has \emph{\Crr\ size} (or just size) {less than} $(\eps,\heps)$ if
 $$
    \|\sigma\|_{C^r} {<} \eps \text{ and }
    \|\hsigma \,\|_{C^{r-1}} {<} \heps.
 $$
The curve has \emph{diameter} less than $(\eps,\heps)$ if
 $$
    \diam_M(\sigma([0,1]))<\eps \text{ and } \diam_{\hM}(\hsigma([0,1]))<\heps.
 $$
To say that a curve has finite \Crr\ size implies that it is regular and $C^r$.
\end{definition}

\begin{remark}
If a curve $\sigma$ is parametrized by length (i.e., $\|\sigma'(t)\|=1$ for all $t$), then it has size at most $(\|\sigma\|_{C^r},C\|\sigma'\|_{C^{r-1}})$ for some constant $C>0$ which depends on the choice of $C^{r-1}$ atlas of  $\hM$ used to define $C^{r-1}$ size (see section \ref{ss.size}).
\end{remark}

Suppose $\sigma:[0,1]\to M$ is a curve. If we cut $[0,1]$ into small intervals $[a_i,b_i]$ and reparametrize $\sigma|_{[a_i,b_i]}$ by $\sigma\circ\psi_i$ where  $\psi_i[0,1]\to [a_i,b_i]$ is an affine bijection, then $\|\sigma\circ\psi_i\|_{C^r}\leq \kappa \|\sigma\|_{C^r}, \|\hsigma\circ\psi_i\|_{C^r}\leq \kappa\|\hsigma\|_{C^r}$, where $\kappa=|a_i-b_i|<1$. Cutting sufficiently finely, we can obtain covering by pieces with affine reparametrizations with $C^r$ size as small as we wish.

Yomdin measured the  $C^r$ complexity of a curve (more generally a set) by counting how many reparametrized pieces with  $C^r$ size less than $1$ are needed to cover it. We {adapt this to the projective dynamics}:

\begin{definition}\label{d.reparametrization-5.8}
{Let $\sigma:[0,1]\to M$ be a $C^r$ curve. A \emph{reparametrization of $\sigma$} is a non-constant affine map $\psi\colon [0,1]\to [0,1]$.}\\
A \emph{family of reparametrizations of $\sigma$ over {a subset} $T\subset[0,1]$} is a collection $\mathcal R$ of reparametrizations such that
$T\subset \bigcup_{\psi\in\mathcal R} \psi([0,1])$.
\end{definition}

Let $f\in\Diff^r(M)$ and let $\sigma$ be a regular $C^r$ curve. We will be interested in families of reparametrizations which remain bounded in $C^r$ size after application of $f^{n}$ for  certain  $n$.
Specifically, fix numbers $\eps,\heps>0$, an integer $N\geq1$, and $T\subset [0,1]$.

\begin{definition}\label{d.reparametrization}
{A reparametrization  $\psi$ of $\sigma$ is {\em $(C^r,f,N,\eps,\heps)$-admissible  up to time $n$},  if
there exists an increasing sequence $(n_0,n_1,\dots,n_\ell)$ such that
\begin{itemize}
\item $n_0=0$, $n_\ell=n$, and $n_{j}-n_{j-1}\leq N$ for each $1\leq j\leq \ell$,
\item for each $0\leq j\leq \ell$ the curve $f^{n_j}\circ\sigma\circ\psi$ has $C^r$ size less than $(\eps,\heps)$.
 \end{itemize}
We call the integers  $n_j$ the {\em admissible times}.

 A family $\cR$ of reparametrizations of $\sigma$ over $T$
 is \emph{$(C^r,f,N,\eps,\heps)$-ad\-mis\-sible up to time} $n$, if each $\psi\in \cR$ is $(C^r,f,N,\eps,\heps)$-admissible  up to time $n$.  }
\end{definition}

\begin{lemma}[Concatenation of reparametrizations]\label{l.concatenation}
Let $f\in\Diff^r(M)$, $r\geq 2$, and consider 
a regular $C^r$ curve $\sigma$ on $M$, and $T\subset [0,1]$. Suppose that
\begin{enumerate}[$\bullet$]
\item $\cR$ is a family of repa\-rame\-tri\-zations of $\sigma$ over $T$;
\item $\cR$ is
$(C^r,f,N,\eps,\heps)$-admissible up to time $n$;
\item for each $\psi\in\cR$, there is a family $\cR_\psi$ of reparametrizations of $f^{n}\circ\sigma\circ\psi$ 
over $\psi^{-1}(T)$,  which is $(C^r,f,N,\eps,\heps)$-admissible up to time $n'$.
\end{enumerate}
Then
  $
    \cR':=\bigcup_{\psi\in\mathcal R} \{\psi\circ\phi : \phi\in\mathcal R_\psi\}
  $
is a family of reparameterizations of $\sigma$ over $T$, and $\cR'$ is $(C^r,f,N,\eps,\heps)$-admissible up to time $n+n'$.
\end{lemma}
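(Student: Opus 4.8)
The statement is essentially a bookkeeping assertion, so the plan is to unwind the definitions and verify the three requirements in Definition~\ref{d.reparametrization} for each element of $\cR'$. First I would check that $\cR'$ really is a family of reparametrizations of $\sigma$ over $T$: each $\psi\circ\phi$ is a composition of two non-constant affine maps $[0,1]\to[0,1]$, hence again a non-constant affine map $[0,1]\to[0,1]$, so it is a reparametrization of $\sigma$ in the sense of Definition~\ref{d.reparametrization-5.8}. For the covering property, note that $\phi\in\cR_\psi$ ranges over a family covering $\psi^{-1}(T)$, i.e. $\psi^{-1}(T)\subset\bigcup_{\phi\in\cR_\psi}\phi([0,1])$; applying $\psi$ gives $T\cap\psi([0,1])\subset\bigcup_{\phi\in\cR_\psi}(\psi\circ\phi)([0,1])$, and since $\cR$ covers $T$ (i.e. $T\subset\bigcup_{\psi\in\cR}\psi([0,1])$), taking the union over $\psi\in\cR$ yields $T\subset\bigcup_{\psi\circ\phi\in\cR'}(\psi\circ\phi)([0,1])$.

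Next I would verify $(C^r,f,N,\eps,\heps)$-admissibility up to time $n+n'$ of a typical element $\psi\circ\phi$. Fix $\psi\in\cR$ with admissible times $0=m_0<m_1<\dots<m_a=n$ (steps $\le N$), witnessing that $f^{m_i}\circ\sigma\circ\psi$ has $C^r$ size $<(\eps,\heps)$ for each $i$. Fix $\phi\in\cR_\psi$ with admissible times $0=p_0<p_1<\dots<p_b=n'$ (steps $\le N$) relative to the curve $\tau:=f^{n}\circ\sigma\circ\psi$, so that $f^{p_j}\circ\tau\circ\phi$ has $C^r$ size $<(\eps,\heps)$ for each $j$. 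The candidate sequence of admissible times for $\psi\circ\phi$ is the concatenation
\[
0=m_0<m_1<\dots<m_a=n<n+p_1<\dots<n+p_b=n+n'.
\]
Consecutive gaps are $m_i-m_{i-1}\le N$ on the first block, $(n+p_1)-n=p_1\le N$ at the junction, and $(n+p_j)-(n+p_{j-1})=p_j-p_{j-1}\le N$ on the second block, so all gaps are $\le N$. The size condition at the times $m_i$ ($0\le i\le a$) is exactly the admissibility of $\psi$; and at the times $n+p_j$ ($0\le j\le b$) we have $f^{n+p_j}\circ\sigma\circ(\psi\circ\phi)=f^{p_j}\circ(f^{n}\circ\sigma\circ\psi)\circ\phi=f^{p_j}\circ\tau\circ\phi$, which has $C^r$ size $<(\eps,\heps)$ by admissibility of $\phi\in\cR_\psi$. (Note $m_a=n$ and $n+p_0=n$ agree, so the junction point is counted consistently.) This establishes admissibility of $\psi\circ\phi$, hence of the whole family $\cR'$.

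There is no serious obstacle here; the only point requiring a moment of care is purely formal --- one must make sure the two sequences of admissible times are glued at the common value $n$ without a gap exceeding $N$ and without double-counting, which is handled by the junction step $p_1\le N$ above. The rest is immediate from the definitions, and I would present it in roughly the two displays and few lines indicated.
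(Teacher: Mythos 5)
Your proof follows the paper's argument almost exactly (same concatenation of admissible times, same covering verification), but there is one small but genuine gap: the treatment of the size condition at the ``first block'' of admissible times. You write that the size condition at the times $m_i$ ($0\le i\le a$) ``is exactly the admissibility of $\psi$,'' but that is not quite right. Admissibility of $\psi$ gives that $f^{m_i}\circ\sigma\circ\psi$ has $C^r$ size $<(\eps,\heps)$; what you need is that $f^{m_i}\circ\sigma\circ(\psi\circ\phi)$ has $C^r$ size $<(\eps,\heps)$. Passing from the first to the second requires an extra observation, which the paper makes explicitly: since $\phi\colon[0,1]\to[0,1]$ is non-constant and affine, its derivative is a constant $c$ with $|c|\le 1$; precomposing a curve with such an affine contraction cannot increase its $C^r$ size (indeed $\|\gamma\circ\phi\|_{C^r}\le|c|\,\|\gamma\|_{C^r}$, because the $C^r$ size of a curve is built from derivatives of order $\ge 1$), and similarly $\|\hf^{m_i}\circ\hsigma\circ\psi\circ\phi\|_{C^{r-1}}\le\|\hf^{m_i}\circ\hsigma\circ\psi\|_{C^{r-1}}$. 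Without this, the inductive concatenation used later in the paper (Proposition~\ref{p.estimate2'}, Step 7, and Section~\ref{s.conclude}, Step 9) would not be justified. Inserting that one-line estimate makes your proof complete and essentially identical to the paper's.
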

\begin{proof}
Since $\psi,\phi:[0,1]\to [0,1]$ are  non-constant and affine, $\psi\circ\phi:[0,1]\to [0,1]$ is non-constant and affine. Next,
$$
\bigcup_{\psi\in\cR}
\bigcup_{\phi\in\cR_\psi}(\psi\circ\phi)[0,1]=
\bigcup_{\psi\in\cR}\psi\bigl(\bigcup_{\phi\in\cR_\psi}\phi[0,1]\bigr)
\supseteq \bigcup_{\psi\in\cR}\psi(\psi^{-1}(T))=\bigcup_{\psi\in\cR}T\cap\psi[0,1]=T,
$$
So $\cR'$ is a family of reparametrizations of $(\sigma,\hsigma)$ over $T$.

To see that $\cR'$ is admissible, fix $\psi\in\cR, \phi\in\cR_\psi$, and  choose admissible times
$0=n_0<n_1<\cdots<n_\ell=n$ and $0=n_0'<n_1'<\cdots<n_m'=n'$ for $\psi$ and $\phi$. Let
$$
n_{\ell+k}:=n+n_k'\ \ (k=0,\ldots,m).
$$
We claim that $0=n_0<n_1<\cdots<n_{\ell+m}=n+n'$ are admissible times for $\psi\circ\phi$.
That the gaps are no larger than $N$ is clear. If $j=\ell+1,\ldots,\ell+m$, then $f^{n_j}\circ\psi\circ\phi$ is a regular curve with size 
$<(\eps,\heps),$ because $\phi\in\cR_\psi$ and $\cR_\psi$ is admissible. If $j\leq \ell$, then using the fact that  $\phi'=c$ with $c$ a constant s.t. $|c|\leq 1$, we find that
$
\|f^{n_j}\circ\sigma\circ\psi\circ\phi\|_{C^r}\leq
\|f^{n_j}\circ\sigma\circ\psi\|_{C^r}\cdot|c|\leq \|f^{n_j}\circ\sigma\circ\psi\|_{C^r}$. This is less than $\eps$,
because of the admissibility of $\cR$.
Similarly $\|\hf^{n_j}\circ\hsigma\circ\psi\circ\phi\|_{C^{r-1}}\leq
\|\hf^{n_j}\circ\hsigma\circ\psi\|_{C^{r-1}}<\heps$.
\end{proof}

\begin{lemma}[Length of reparametrizations]\label{l.length}
Let $f\in\Diff^r(M)$, $r\geq 2$.
For any $\eta>0$ and $N\geq 1$, there exist $\varepsilon_{*}>0$ and
a $C^2$-neighborhood $\cU_{*}$ of $f$ in $\Diff^r(M)$ with the following property.

Consider $g\in\cU_{*}$, $\eps,\heps\in(0,\eps_{*})$, a regular $C^r$ curve $\sigma$, and a reparametrization $\varphi$ of $\sigma$
which is $(C^r,g,N,\eps,\heps)$-admissible up to time $n$. Then for any $(x,E)\in \hsigma[0,1]$,
$$\operatorname{Length}(g^n\circ \sigma\circ \varphi)< e^{\eta n/ 10}\|Dg^{n}_x|_E\| \operatorname{Length}(\sigma\circ \varphi).$$
\end{lemma}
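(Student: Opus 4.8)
\emph{Reduction to a pointwise expansion estimate.} Write $\gamma:=\sigma\circ\varphi$ and let $\widehat\gamma:=\widehat{\sigma\circ\varphi}$ be its canonical lift. Since $\varphi$ is non-constant affine and $\sigma$ is regular, $\gamma$ is a regular $C^r$ curve, $\widehat\gamma=\hsigma\circ\varphi$, and the chain rule gives $\hg^{k}\circ\widehat\gamma=\widehat{g^{k}\circ\gamma}$ for all $k\geq0$. Put $E_t:=\R.\gamma'(t)$, so $\widehat\gamma(t)=(\gamma(t),E_t)$. Since $\|Dg^n_{\gamma(t)}\gamma'(t)\|=\|Dg^n_{\gamma(t)}|_{E_t}\|\cdot\|\gamma'(t)\|$,
$$\operatorname{Length}(g^n\circ\gamma)=\int_0^1\|Dg^n_{\gamma(t)}|_{E_t}\|\,\|\gamma'(t)\|\,dt,\qquad \operatorname{Length}(\gamma)=\int_0^1\|\gamma'(t)\|\,dt,$$
so it suffices to prove the pointwise bound $\|Dg^n_{\gamma(t)}|_{E_t}\|\leq e^{\eta n/10}\|Dg^n_x|_E\|$ for every $t\in[0,1]$ and every $(x,E)$ lying on the lifted reparametrized curve $\widehat\gamma([0,1])=\hsigma(\varphi([0,1]))\subseteq\hsigma([0,1])$ — which are exactly the points of $\hsigma([0,1])$ entering the integral. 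Writing $(x,E)=\widehat\gamma(s)$ and integrating in $t$ then yields the lemma.

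\emph{Controlling the expansion via admissibility.} Set $\vf_g(y,F):=\log\|Dg_y|_F\|$; by \eqref{varphi-sums}, $\log\|Dg^n_y|_F\|=\sum_{k=0}^{n-1}\vf_g(\hg^k(y,F))$ for every $(y,F)\in\hM$, so the pointwise bound amounts to $\sum_{k=0}^{n-1}\bigl(\vf_g(\hg^k\widehat\gamma(t))-\vf_g(\hg^k\widehat\gamma(s))\bigr)\leq\eta n/10$. Fix $k\in\{0,\dots,n-1\}$ and let $n_{j-1}\leq k<n_j$ be consecutive admissible times of $\varphi$, so $0\leq k-n_{j-1}<N$. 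By admissibility at time $n_{j-1}$ the curve $\hg^{n_{j-1}}\circ\widehat\gamma=\widehat{g^{n_{j-1}}\circ\gamma}$ has $C^{r-1}$ size $<\heps$; since $r-1\geq1$ this bounds the supremum of its derivative and hence, up to a constant $C$ depending only on the fixed atlas of $\hM$, its length, so $d_{\hM}\bigl(\hg^{n_{j-1}}\widehat\gamma(t),\hg^{n_{j-1}}\widehat\gamma(s)\bigr)<C\heps$. Applying the remaining $\leq N$ iterates of $\hg$ multiplies this distance by at most $\Lambda^{N}$, where $\Lambda:=\max\bigl(1,\sup_{g\in\cU_*}\Lip(\hg)\bigr)$ is finite once $\cU_*$ is a sufficiently small $C^2$-neighborhood of $f$ (by \eqref{eq-Dhf}, $\|D\hg\|_\supnorm$ is controlled by $\|g\|_{C^2}$ and $\|Dg^{-1}\|_\supnorm$, both bounded near $f$). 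Thus $d_{\hM}(\hg^k\widehat\gamma(t),\hg^k\widehat\gamma(s))\leq C\Lambda^{N}\heps$, and therefore $|\vf_g(\hg^k\widehat\gamma(t))-\vf_g(\hg^k\widehat\gamma(s))|\leq LC\Lambda^{N}\heps$, where $L:=\sup_{g\in\cU_*}\Lip(\vf_g)<\infty$ — finite because $\vf_g$ depends only on $Dg$, which on a $C^2$-neighborhood of $f$ is uniformly $C^1$-bounded and uniformly bounded away from $0$ on the compact $M$.

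\emph{Conclusion and main difficulty.} Summing over $k=0,\dots,n-1$ gives $\sum_{k}\bigl(\vf_g(\hg^k\widehat\gamma(t))-\vf_g(\hg^k\widehat\gamma(s))\bigr)\leq nLC\Lambda^{N}\heps$, so it remains to fix $\eps_*>0$ with $LC\Lambda^{N}\eps_*\leq\eta/20$: then for all $\eps,\heps\in(0,\eps_*)$ this sum is at most $n\eta/20$, whence $\|Dg^n_{\gamma(t)}|_{E_t}\|\leq e^{n\eta/20}\|Dg^n_x|_E\|<e^{\eta n/10}\|Dg^n_x|_E\|$, and integrating in $t$ as above yields the strict inequality claimed. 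The only delicate point is the middle step: converting the $C^{r-1}$-size bound $\heps$ at admissible times into a genuine $\hM$-diameter bound on that piece of the lifted curve, and securing the uniformity of $\Lambda$ and $L$ over the $C^2$-neighborhood $\cU_*$. Note that the base scale $\eps$ plays no active role here; only the fiber scale $\heps$, which bounds the variation of the tangent direction along the curve, enters the estimate — this is exactly why the curves must be lifted to $\hM$.
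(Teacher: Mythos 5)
Your proof is correct and follows the same idea as the paper's: use the cocycle identity \eqref{varphi-sums} to reduce the length estimate to a pointwise comparison of $\vf_g$ along the orbits of two points of the lifted curve, then use admissibility to get a small $\hM$-diameter at each admissible time $n_j$, and finally use uniform continuity of $\vf_g$ over the at-most-$N$ iterates within a block. The only difference in packaging is that the paper picks $\eps_*$ once and for all via equicontinuity of $\{\vf_g\circ\hg^k : 0\leq k\leq N,\ g\in\cU_*\}$ on the compact $\hM$, whereas you trace the same estimate through explicit constants $C$ (chart comparison), $\Lambda$ (Lipschitz constant of $\hg$, bounded on a $C^2$-neighborhood by \eqref{eq-Dhf}), and $L$ (Lipschitz constant of $\vf_g$), and then choose $\eps_*$ at the end. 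Both are fine; yours is a bit more explicit about why the modulus is uniform over $\cU_*$.

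One remark on the quantifier. Your argument establishes the bound for base points $(x,E)\in\hsigma\circ\varphi([0,1])$, i.e.\ for points of $\widehat\gamma([0,1])$ (and you note this explicitly). The lemma as printed quantifies over all of $\hsigma([0,1])$, which is a larger set when $\varphi$ is not surjective, and admissibility only controls the small lifted curve $\hg^{n_j}\circ\widehat\gamma$, not $\hg^{n_j}\circ\hsigma$. The paper's own proof writes $\hg^{n_i}\circ\hsigma[0,1]$ but the argument really only applies to $\hg^{n_i}\circ\hsigma\circ\varphi[0,1]$ — the same imprecision. In the places where the lemma is invoked (Steps~8 and~9 of the proof of Theorem~\ref{t.ergodic4}), the base point of interest is $\hg^{n_i}(x,E)$ with $(x,E)\in\hsigma\circ\psi\circ\varphi[0,1]$, so it does lie on $\hsigma'\circ\varphi[0,1]$, and the version you prove is exactly what is needed. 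So your proof is sound and sufficient; you could just drop the parenthetical claim that your $(x,E)$'s ``are exactly the points of $\hsigma([0,1])$ entering the integral'', which is a slight slip of wording (they are the points of the \emph{reparametrized} lift).
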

\begin{proof}
$\hM$ is compact, so one can find $\eps_{*}>0$ and a small  $C^2$-neighborhood $\cU_{*}$ of $f$ in $\Diff^r(M)$
such that for all $g\in\cU_{*}$ and $\hx,\hy\in\hM$ satisfying $d(\hx,\hy)<\eps_{*}$,
\begin{equation*}\label{ss.continuous0}
 \log \|Dg|_{\hg^k(\hx)}\|\leq \log \|Dg|_{\hg^k(\hy}\|)+\tfrac{\eta}{10}\qquad  (0\leq k\leq N).
\end{equation*}
Let us consider $g\in\cU_{*}$, a curve $\sigma$ and a reparametrization $\varphi$ as in the statement, with admissibility times
$n_0,\dots,n_\ell$. One gets for any $0\leq i<\ell$,
$$\forall (x,E),(y,F)\in \hg^{n_i}\circ\hsigma[0,1],\qquad \|Dg^{n_{i+1}-n_i}_y|_F\|< e^{(n_{i+1}-n_i)\eta / 10}\|Dg^{n_{i+1}-n_i}_x|_E\|,$$
which immediately implies the conclusion of the lemma.
\end{proof}

Admissible families of reparametrizations yield covers by fibered Bowen balls with size of the same order of magnitude.

\begin{lemma}[Bowen covers from admissible reparametrizations]\label{lemma covering}
Let $f\in\Diff^r(M)$, $r\geq2$, $T\subset [0,1]$, and $\sigma$ be a regular $C^r$ curve. Fix $\eps_*,\heps_*>0$ and $N\geq 1$.
Let $\cR$ be a family of repa\-rame\-tri\-zations of $\sigma$ over $T$ which is
$(C^r,f,N,\eps_*,\heps_*)$-admissible up to time $n$.
Then, for every $\eps,\heps>0$,
$$
     r_{\widehat f}(n,\eps,\heps,\hsigma(T)) \leq
    \frac{2\heps_\ast\|D\hf\|_\supnorm^N }{\min(\eps,\heps)} |\mathcal R|.
  $$
\end{lemma}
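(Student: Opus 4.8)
\noindent\emph{Proof proposal.} The plan is to reduce to one reparametrization at a time, and then to cover each reparametrized curve by subdividing its \emph{parameter} interval once, uniformly for all times $0\le k<n$. Since $\cR$ covers $T$, we have $\hsigma(T)\subset\bigcup_{\psi\in\cR}(\hsigma\circ\psi)([0,1])$, so it suffices to cover each $(\hsigma\circ\psi)([0,1])$ by at most $2\heps_*\|D\hf\|_\supnorm^{N}/\min(\eps,\heps)$ fibered $(n,\eps,\heps)$-Bowen balls and then sum over $\psi\in\cR$. Fix $\psi$, put $\widehat\gamma:=\hsigma\circ\psi$ and $\gamma:=\hpi\circ\widehat\gamma=\sigma\circ\psi$, and let $0=n_0<n_1<\dots<n_\ell=n$ be the admissible times. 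The observation that makes everything work is that admissibility controls not just the length but the full $C^{r-1}$ size of the iterated lifted curves: since $\hf^{n_j}\circ\widehat\gamma$ is the canonical lift of $f^{n_j}\circ\sigma\circ\psi$, the hypothesis gives $\|\hf^{n_j}\circ\widehat\gamma\|_{C^{r-1}}<\heps_*$ for every $j$, and the $C^{r-1}$-size dominates the $C^1$-size, so (up to the fixed constant comparing the atlas $C^{r-1}$-size with the Riemannian metric on $\hM$, which I absorb from now on) the $\hM$-speed $\|(\hf^{n_j}\widehat\gamma)'(t)\|$ is $<\heps_*$, uniformly in $t$ and in $j$.

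Next I would record the elementary estimate: for a subinterval $I\subset[0,1]$ of length $|I|$ and any $0\le k<n$, choosing $j$ with $n_j\le k<n_{j+1}$ one has $\hf^{k}\circ\widehat\gamma|_I=\hf^{k-n_j}\circ\bigl(\hf^{n_j}\circ\widehat\gamma|_I\bigr)$ with $0\le k-n_j<N$, so, using $\|D\hf^{m}\|_\supnorm\le\|D\hf\|_\supnorm^{m}$ and $\|D\hf\|_\supnorm\ge1$,
$$\diam_{\hM}\bigl(\hf^{k}\widehat\gamma(I)\bigr)\le\operatorname{Length}(\hf^{k}\widehat\gamma|_I)\le\|D\hf\|_\supnorm^{N}\operatorname{Length}(\hf^{n_j}\widehat\gamma|_I)\le\|D\hf\|_\supnorm^{N}\heps_*\,|I|,$$
and, since $\hpi$ is $1$-Lipschitz for the metric $\sqrt{ds^2+d\theta^2}$ on $\hM$, also $\diam_M(f^{k}\gamma(I))\le\|D\hf\|_\supnorm^{N}\heps_*\,|I|$. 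Now partition $[0,1]$ into $m:=\bigl\lceil\|D\hf\|_\supnorm^{N}\heps_*/\min(\eps,\heps)\bigr\rceil$ intervals of equal length $|I|=1/m\le\min(\eps,\heps)/(\|D\hf\|_\supnorm^{N}\heps_*)$. By the displayed estimate, each piece $I$ satisfies $\diam_{\hM}(\hf^{k}\widehat\gamma(I))<\heps$ and $\diam_M(f^{k}\gamma(I))<\eps$ for every $0\le k<n$, hence $\widehat\gamma(I)$ is contained in the fibered $(n,\eps,\heps)$-Bowen ball centered at any one of its points. Thus $m$ such balls cover $(\hsigma\circ\psi)([0,1])$, and $m\le 2\|D\hf\|_\supnorm^{N}\heps_*/\min(\eps,\heps)$ in the relevant regime $\min(\eps,\heps)\le\|D\hf\|_\supnorm^{N}\heps_*$ (when $\min(\eps,\heps)>\|D\hf\|_\supnorm^{N}\heps_*$ the whole curve already lies in one fibered $(n,\eps,\heps)$-Bowen ball). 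Summing over $\psi\in\cR$ yields the claim.

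The point that has to be recognized — and the only real subtlety — is that admissibility bounds the full $C^{r-1}$ size (equivalently the parametrization speed) of $\hf^{n_j}\circ\widehat\gamma$ at \emph{every} admissible time, not merely its length or diameter; this is exactly what lets a \emph{single} partition of $[0,1]$ into $O\bigl(\heps_*\|D\hf\|_\supnorm^{N}/\min(\eps,\heps)\bigr)$ pieces serve simultaneously for all $0\le k<n$, so that no factor depending on the number $\ell$ of blocks (i.e.\ on $n$) ever enters. A naive argument controlling only the lengths $\operatorname{Length}(\hf^{n_j}\widehat\gamma|_I)$ block by block and taking common refinements would lose precisely such a factor. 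The remaining ingredients are routine: the identity $\widehat{f^{n_j}\circ\sigma\circ\psi}=\hf^{n_j}\circ\hsigma\circ\psi$ for an affine reparametrization $\psi$, submultiplicativity of the sup-norms of iterated differentials, the $1$-Lipschitz property of $\hpi$, and the fixed comparison between the atlas $C^{r-1}$-size and the Riemannian length on $\hM$ (which is why the constant $2$ should be read as a fixed atlas-dependent constant).
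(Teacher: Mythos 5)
Your proof is correct and follows essentially the same route as the paper's: both exploit that admissibility bounds the $C^{r-1}$-size (hence the parametric speed) of the lifted curves $\hf^{n_j}\circ\hsigma\circ\psi$ at \emph{every} admissible time $n_j$, propagate this over the at most $N$ intermediate steps to bound the speed of $\hf^{k}\circ\hsigma\circ\psi$ uniformly in $k\in[0,n]$ by $\heps_*\|D\hf\|_\supnorm^N$, and then discretize $[0,1]$ at scale $\rho=\min(\eps,\heps)/(\heps_*\|D\hf\|_\supnorm^N)$ --- the paper via a $\rho$-dense set $C_\rho$ of centers with $|C_\rho|\leq 2/\rho$, you via a partition into $m\leq 2/\rho$ equal intervals, which are equivalent. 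Your explicit remark that the factor $2$ really absorbs an atlas-dependent comparison constant between the $C^{r-1}$-size and the Riemannian speed on $\hM$ is a scruple the paper itself glosses over; it is harmless since only the exponential growth rate in $n$ of these covering numbers is used downstream (Corollary~\ref{lemma entropy reparametrizations}).
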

\begin{proof}
Let $L:=\|D\hf\|_\supnorm$.  $L\geq 1$ since $\hf$ is surjective.

Let $
{\displaystyle \rho:=\frac{\min(\eps,\heps)}{\heps_\ast L^N}}
$,  and construct a  $\rho$--dense set  $C_\rho\subset[0,1]$ s.t. $|C_\rho|\leq \frac{2}{\rho} $. {Set
$
C_\rho(\cR):=\{(\hsigma\circ\psi)(t'):t'\in C_\rho,\psi\in\cR\}$
}.

Fix $\psi\in\cR$ with admissible times $0=n_0<n_1<\cdots<n_\ell=n$. Then  $n_{j+1}-n_j\leq N$ and the regular curve $f^{n_j}\circ\sigma\circ\psi$ has $C^r$ size at most $(\eps_\ast,\heps_\ast)$, whence
$$
\|D\hf^{k}\circ\sigma\circ\psi\|_\supnorm\leq \heps_\ast L^{k-n_j}\ \text{ for }k\in [n_j, n_{j+1}).
$$
In particular,
$\|D\hf^{k}\circ\sigma\circ\psi\|_\supnorm \leq \heps_\ast L^N\text{ for all }0\leq k\leq n.
$

For every $\hx\in\hsigma(T\cap \psi[0,1])$ with $\hx=\hsigma\circ \psi(t)$, there is $\hy=\hsigma\circ \psi(t')$ with $t'\in C_\rho$
such that $|t-t'|<\rho$, whence
$$
d(\hf^k(\hx),\hf^k(\hy))<\rho\heps_\ast L^N \text{ and }d(f^k(\hpi(\hx)),f^k(\hpi(\hy)))<\rho\heps_\ast L^N\text{ for all }k=0,\ldots,n.
$$
(the second inequality follows from the first). Notice that $\hy\in C_\rho(\cR)$.

All this shows that
$
\displaystyle\hsigma(T)\subset \bigcup_{\hy\in C_\rho(\cR)}B_{\hf}(\hy,{ n,}\rho\heps_\ast L^N,\rho\heps_\ast L^N)\subset \bigcup_{\hy\in C_\rho(\cR)}B_{\hf}(\hy,n,\eps,\heps).
$
So
$
r_{\hf}(n,\eps,\heps,\hsigma(T))\leq |C_\rho(\cR)|\leq |\cR|\cdot |C_\rho|\leq \frac{ 2}{\rho}|\cR|\leq \frac{ 2 \heps_\ast}{\min(\eps,\heps)}\|D\hf\|_\supnorm^N |\cR|
$.
\end{proof}

\subsection{Yomdin estimates}\label{ss.yomdin}
In this section we discuss a converse to Lemma \ref{lemma covering}:  Covers by Bowen balls generate admissible repara\-metrizations with cardinality of the same order of magnitude. This result is much more delicate than Lemma \ref{lemma covering}, and  requires Yomdin's Theorem  \cite{Yomdin-Volume-Growth-Entropy}.
Here is the tool we need from Yomdin's work, in a form adapted to our setup. Let
\begin{equation}\label{Q-r-n}
    Q_r(g):=\max(\|g\|_{C^r},\|\hg\|_{C^{r-1}})\ ,\      Q_{r,N}(g):=\max_{n=1,2,\dots,N} Q_r(g^n).
\end{equation}

\begin{theorem}[Yomdin]\label{t-yomdin}
Given {real numbers} $2\leq r<\infty$ and $Q>0$, there {are}
$\Upsilon=\Upsilon(r)>0$ and $\eps_Y=\eps_Y(r,Q)>0$ with the following properties. For every
\begin{enumerate}[\quad$\bullet$]
\item $C^r$  {diffeomorphism} $g:M\to M$ such that $Q_r(g)\le Q$
\item regular $C^r$ curve $\sigma$ with \Crr\ size at most $(\eps,\heps)$ with $0<\eps,\heps\le\eps_Y$,
\item $\hx\in \hsigma[0,1]$, and $T:=\{t\in [0,1]: \hg(\hsigma(t))\in B(\hg(\hx),\eps,\heps)\}.
        $ \end{enumerate}
there exists a family $\cR$ of reparametrizations of $\sigma$ over $T$ such that
\begin{enumerate}[\quad(1)]
\item for every $\psi\in\cR$, $g\circ\sigma\circ\psi$ is a regular $C^r$ curve with \Crr\ size at most $(\eps,\heps)$,
\item $|\cR|\leq \Upsilon\|D\hg\|_\supnorm^{1/(r-1)}$.
\end{enumerate}
\end{theorem}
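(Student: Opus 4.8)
The plan is to reduce the statement to the classical Yomdin reparametrization theorem applied to the lifted dynamics $\hg$ on $\hM$, rather than to $g$ on $M$. First I would observe that the two conditions defining the \Crr\ size of a curve, namely $\|\sigma\|_{C^r}<\eps$ and $\|\hsigma\|_{C^{r-1}}<\heps$, are exactly designed so that controlling the lifted curve $\hsigma\colon[0,1]\to\hM$ in $C^{r-1}$ gives control of both. The hypothesis $Q_r(g)\le Q$ bounds $\|\hg\|_{C^{r-1}}$ by $Q$, so $\hg$ is a $C^{r-1}$ map of the compact manifold $\hM$ with uniformly bounded $C^{r-1}$ size; since $r>2$ we have $r-1>1$, which is the regularity threshold where Yomdin's theory operates. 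The set $T=\{t:\hg(\hsigma(t))\in B(\hg(\hx),\eps,\heps)\}$ is, up to the distinction between the two scales, the preimage under $\hg\circ\hsigma$ of a ball in $\hM$ of radius comparable to $\heps$; by the definition of the fibered ball $B(\hg(\hx),\eps,\heps)$ it is contained in $\{t:\hg(\hsigma(t))\in B_{\hM}(\hg(\hx),\heps)\}$, so it suffices to reparametrize over the larger set.

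Next I would invoke the Yomdin--Gromov reparametrization lemma (in the form proved in \cite{Yomdin-Volume-Growth-Entropy}, or \cite{Gromov-entropy}, or the quantitative version in \cite{Burguet}) applied to the single map $\hg$ and the curve $\hsigma$ rescaled to \Crr\ unit size: after rescaling so that $\hsigma$ has $C^{r-1}$ norm $\le 1$, there is a family $\widehat{\cR}$ of affine reparametrizations $\psi\colon[0,1]\to[0,1]$, covering the portion of $[0,1]$ mapped by $\hg\circ\hsigma$ into a unit ball, such that $\hg\circ\hsigma\circ\psi$ has $C^{r-1}$ norm $\le 1$, with $|\widehat{\cR}|\le \Upsilon(r)$, where $\Upsilon$ depends only on $r$ (this uniformity in the map, given the bound on $Q_r(g)$, is the content of the quantitative Yomdin theorem). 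The extra factor $\|D\hg\|_\supnorm^{1/(r-1)}$ in conclusion~(2) arises from the rescaling: when the initial curve has size $(\eps,\heps)$ with $\eps,\heps$ possibly much smaller than the scale to which Yomdin's lemma is normalized, one must first dilate the parameter interval so the image curve $\hg\circ\hsigma$ has size $1$, apply the lemma, then re-dilate; each application of $\hg$ can expand $C^{r-1}$ size by at most $\|D\hg\|_\supnorm$ (at the level of first derivatives) and the number of affine pieces needed to restore unit size scales like the $(r-1)$-th root of this expansion, which is the standard Yomdin bookkeeping. The threshold $\eps_Y=\eps_Y(r,Q)$ below which the argument is valid comes from requiring the rescaled curve to sit inside a chart domain where the bounds on derivatives of $\hg$ of all orders up to $r-1$ apply; its dependence on $Q$ (not just $r$) reflects that we need the $C^{r-1}$ size of $\hg$, bounded by $Q$, to be small relative to the working scale.

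Finally I would translate back: set $\cR:=\widehat{\cR}$; for each $\psi\in\cR$, the curve $g\circ\sigma\circ\psi$ in $M$ has its lift equal to $\hg\circ\hsigma\circ\psi$ (because lifting commutes with $g$ and with affine reparametrization, as already used in Lemma~\ref{l.concatenation}), so the $C^{r-1}$ bound on $\hg\circ\hsigma\circ\psi$ gives $\|\widehat{g\circ\sigma\circ\psi}\|_{C^{r-1}}\le\heps$, and projecting to $M$ (which can only decrease derivatives, up to the fixed distortion of the atlas absorbed into $\eps_Y$) gives $\|g\circ\sigma\circ\psi\|_{C^r}\le\eps$; this is exactly conclusion~(1). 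The bound $|\cR|\le\Upsilon\|D\hg\|_\supnorm^{1/(r-1)}$ is conclusion~(2). The main obstacle, and the place where all the real work lies, is the uniform quantitative control in the Yomdin--Gromov lemma: one needs the number of reparametrizing pieces to depend only on $r$ (given the a priori bound $Q_r(g)\le Q$ and the smallness $\eps,\heps\le\eps_Y$), and to extract the clean power $\|D\hg\|_\supnorm^{1/(r-1)}$ of the first-derivative norm rather than some cruder function of the full $C^{r-1}$ norm. This is precisely the refinement of Yomdin's theorem established in \cite{Burguet} (building on \cite{Gromov-entropy,Yomdin-Volume-Growth-Entropy}), and I would cite it in that form; verifying that our fibered-ball setup and the $C^r$-versus-$C^{r-1}$ scale distinction fit into its hypotheses is the one genuinely delicate bookkeeping step.
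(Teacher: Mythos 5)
Your plan rests on the claim that applying Yomdin's reparametrization lemma once, to the lifted dynamics $\hg$ on $\hM$, simultaneously yields both halves of the definition of \Crr\ size: $\|\hg\circ\hsigma\circ\psi\|_{C^{r-1}}\le\heps$ \emph{and} $\|g\circ\sigma\circ\psi\|_{C^r}\le\eps$. The first half is correct: the lift of $g\circ\sigma\circ\psi$ is indeed $\hg\circ\hsigma\circ\psi$, so the $C^{r-1}$ bound on the lifted curve is exactly the second condition in the definition of size. But the step ``projecting to $M$ gives $\|g\circ\sigma\circ\psi\|_{C^r}\le\eps$'' does not work: projecting $\hg\circ\hsigma\circ\psi$ to its base component recovers $g\circ\sigma\circ\psi$ only in the $C^{r-1}$ topology, one order short of what is needed. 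Control of the fiber component (the direction $[\sigma']$) in $C^{r-1}$ does not close the gap either, since projectivization forgets the magnitude of $\sigma'$; for instance any $C^r$ curve of the form $t\mapsto\phi(t)\,v$ with $\phi$ scalar and $v$ a fixed vector has a lift whose fiber component is constant (hence with zero $C^{r-1}$ size), so $\|\hsigma\|_{C^{r-1}}$ gives no control on $\phi^{(r)}$, i.e.\ no control on $\|\sigma\|_{C^r}$.

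The paper handles this by applying Yomdin's classical theorem \emph{twice}, independently: once to $(g,\sigma)$ on $M$, producing a family $\cR_1$ of affine reparametrizations with $\|g\circ\sigma\circ\psi\|_{C^r}\le\eps$ and $|\cR_1|\le\Upsilon_1\|Dg\|_\supnorm^{1/r}$, and once to $(\hg,\hsigma)$ on $\hM$, producing a family $\cR_2$ with $\|\hg\circ\hsigma\circ\psi\|_{C^{r-1}}\le\heps$ and $|\cR_2|\le\Upsilon_2\|D\hg\|_\supnorm^{1/(r-1)}$. It then takes the common refinement (after arranging the pieces of each family to have disjoint interiors), whose images cover $T$ and whose cardinality is bounded, using the order structure on the interval, by $|\cR_1|+|\cR_2|-1$; the final clean exponent $1/(r-1)$ is obtained by absorbing the $\|Dg\|^{1/r}$ term via $\|Dg\|_\supnorm^{1/r}\le\|D\hg\|_\supnorm^{1/(r-1)}$. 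Your scheme, as written, delivers only the $\heps$-half of conclusion~(1); to produce the $\eps$-half you would need a separate application of Yomdin on $M$ and a merging argument, which is precisely what the paper does.
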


\begin{proof}
The reader who will compare this theorem to  Yomdin's original statement in \cite{Yomdin-Volume-Growth-Entropy} will find that the two results are nearly the same, except for the following differences: (1)  Yomdin considered the more general case of $\sigma:[0,1]^\ell\to M$ whereas we restrict to $\ell=1$; (2) Yomdin did not specify that all reparametrizations are affine as we do; (3) Our result allows $r$ to be real, not just an integer;  and (4) We use $(n,\eps,\heps)$ balls in $\hM$, whereas Yomdin used $(n,\eps)$ balls in $M$.

In the special case of curves $\ell=1$, Yomdin's proof works verbatim with affine reparametrizations, see, e.g., \cite[p. 297--298]{Yomdin-Volume-Growth-Entropy}. The extension to  non-integer smoothness is also simple and well-known, see, e.g., \cite[p.133]{BuzziSIM}.

To deal with (4), we  apply the Yomdin's original theorem twice, first for $(g,\sigma)$ {on $M$} and then for the lift $(\hg,\hsigma)$ {on $\wh M$}.
This yields numbers $\Upsilon_i=\Upsilon_i(r)$ $(i=1,2)$ and ${\eps_Y}=\eps_Y(r,Q)>0$ as follows. Suppose  $0<\eps,\heps\le\eps_Y$, $g\in\Diff^r(M)$,  $Q_r(g)\le Q$, and  $\sigma:[0,1]\to M$ has $C^r$ size at most $(\eps,\heps)$. Then

\smallbreak
\noindent
(1) 
There is a family $\cR_1$ of
reparametrizations of $g\circ \sigma$
over $T$ s.t. $|\cR_1|\leq \Upsilon_1 \|Dg\|_\supnorm^{1/r}$, and so that each $\psi\in\cR_1$ is affine, contracting, and  $\|g\circ\sigma\circ\psi\|_{C^r}\leq\eps$.

\smallbreak
\noindent
(2)
There is a family $\cR_2$ of
reparametrizations
of $\hg\circ\hsigma$
 over $T$ s.t. $|\cR_2|\leq \Upsilon_2 \|D\hg\|_\supnorm^{\frac{1}{r-1}}$ and so that each $\psi\in\cR_2$ is affine, contracting, and
  $\|\hg\circ\hsigma\circ\psi\|_{C^r}\leq\heps$.

\smallbreak
{Each {family of reparametrizations} $\cR_i$ generates a cover of $T$ by the intervals $\psi([0,1])$, $\psi\in\cR_i$. Without loss of generality, the interiors of these intervals are pairwise disjoint (otherwise discard some of them and shrink the rest  by composing the reparametrizations by affine contractions). We define
 $$
    \cR := \{ \phi_{\psi_1,\psi_2} \mid (\psi_1,\psi_2)\in\cR_1\times\cR_2 \text{ s.t. } \psi_1((0,1))\cap\psi_2((0,1))\ne\emptyset \}
 $$
where $\phi_{\psi_1,\psi_2}:[0,1]\to\psi_1([0,1])\cap\psi_2([0,1])$ is an affine diffeomorphism.
The image of this new family of reparametrizations contains the intersection of the images of $\cR_1$ and $\cR_2$, hence $\cR$ is a family of reparametrizations of $\sigma$ over $T$.

As the reparametrizations are affine and contracting, we see that
 $$
    \|g\circ\sigma\circ\phi_{\psi_1,\psi_2}\|_{C^r} \leq \|g\circ\sigma\circ\psi_1\|_{C^r}
    {<}\eps
 $$
Likewise for $\hg\circ\hsigma\circ\phi_{\psi_1,\psi_2}$. Since $\hg\circ\hsigma\circ\phi_{\psi_1,\psi_2}$  coincides with the lift of $g\circ\sigma\circ\phi_{\psi_1,\psi_2}$,  item (1) of the theorem holds.

Next, using the order structure on the interval, it is not difficult to show that
 $
   |\cR| \leq |\cR_1|+|\cR_2|-1 \leq \Upsilon_1 \|Dg\|_\supnorm^{1/r} +  \Upsilon_2\|D\hg\|_\supnorm^{\frac{1}{r-1}}.
$
Obviously $ \|D g\|_\supnorm^{1/r}\leq  \|D\hg\|_\supnorm^{1/(r-1)}$, so item (2) holds with  $\Upsilon:=\Upsilon_1+\Upsilon_2$.
}
\end{proof}

\begin{corollary}[Existence of admissible reparametrizations]\label{coro.reparametrization}
For $2\leq r<\infty$, $Q>0$, let $\Upsilon(r)$, ${\eps_Y}(r,Q)$ be the constants from Theorem~\ref{t-yomdin}. Suppose
\begin{enumerate}[\quad$\bullet$]
\item $g\in\Diff^r(M)$ and  $Q_{r,N}(g)< Q$,
\item $\sigma:[0,1]\to M$ is a regular curve with \Crr\ size $\leq (\eps,\heps)$, where  $0<\eps,\heps<\eps_Y$,
\item $N,n\geq 1$, and  $T\subset [0,1]$.
\end{enumerate}
Then there exists a family $\cR$ of reparametrizations of $\sigma$ over $T$, which is
$(C^r,g,N,\eps,\heps)$-admissible up to the time $n$, and with cardinality
$$|\cR|\leq C(r,\hg) r_{\hg}(n,\eps,\heps,\hg\circ\hsigma(T)),$$ where
$
 C(r,\hg)=\Upsilon(r)^{\lceil \frac{n}{N}\rceil}
   \|D\hg^N\|_\supnorm^{\frac{\lfloor \frac{n}{N}\rfloor}{r-1}}
  \|D\hg^{n-N\lfloor \frac{n}{N}\rfloor}\|_\supnorm^{\frac{1}{r-1}}.
 $
 \end{corollary}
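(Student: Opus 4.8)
The plan is to iterate Yomdin's theorem (Theorem~\ref{t-yomdin}) together with the concatenation lemma (Lemma~\ref{l.concatenation}), tracking how the covering number on the bundle controls the number of reparametrization pieces one creates at each stage. Write $n = qN + s$ with $q = \lfloor n/N\rfloor$ and $0\le s < N$. The idea is to build the family $\cR$ by induction on the blocks $[0,N], [N,2N],\dots$; at the $j$-th step we have already produced a family admissible up to time $jN$, and for each reparametrization $\psi$ in it we apply Yomdin's theorem to the curve $g^{jN}\circ\sigma\circ\psi$ (which has $C^r$ size $<(\eps,\heps)$ by admissibility) — but we apply it iteratively $N$ times, once for each of $g, g^2,\dots,g^N$ composed appropriately, so that at the end of the block the new pieces again have size $<(\eps,\heps)$ and we have picked up intermediate admissible times inside the block. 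For the final partial block of length $s$ we do the same with $s$ applications of Yomdin.

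First I would make precise the "iterate Yomdin $N$ times within a block" step. Given a regular curve $\tau$ of size $<(\eps,\heps)$ and a subset $T'$, one application of Theorem~\ref{t-yomdin} to $(g,\tau)$ over $\{t: \hg(\hat\tau(t))\in B(\hg(\hat x),\eps,\heps)\}$ produces at most $\Upsilon\|D\hg\|_\supnorm^{1/(r-1)}$ reparametrizations $\psi$ with $g\circ\tau\circ\psi$ of size $<(\eps,\heps)$; repeating this $m$ times for $g\circ\tau\circ\psi, g^2\circ\tau\circ\psi,\dots$ yields a family of at most $\Upsilon^m\bigl(\prod_{i=1}^{m}\|D\hg\|_\supnorm^{1/(r-1)}\bigr) \le \Upsilon^m \|D\hg^m\|_\supnorm^{1/(r-1)}$ pieces, covering the points whose $\hg$-orbit stays in the relevant fibered balls for $m$ steps, and with every intermediate time $0,1,\dots,m$ being an admissible time. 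Applying this with $m=N$ for each full block and $m=s$ for the last block, and composing via Lemma~\ref{l.concatenation}, gives a family admissible up to time $n$ of cardinality at most
\begin{equation*}
\Upsilon(r)^{qN+s}\,\|D\hg^N\|_\supnorm^{q/(r-1)}\,\|D\hg^{s}\|_\supnorm^{1/(r-1)}\cdot(\text{number of surviving fibered Bowen pieces}).
\end{equation*}

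The point is then to identify the "number of surviving pieces" with $r_{\hg}(n,\eps,\heps,\hg\circ\hat\sigma(T))$. Each leaf of the final reparametrization family corresponds to a point $\hy\in\hg\circ\hat\sigma(T)$ whose $\hg$-orbit — more precisely its projection to $M$ and its fibered position — remains within $(\eps,\heps)$ of the orbit of a chosen center for all $n$ steps, i.e. to a fibered $(n,\eps,\heps)$-Bowen ball; so a minimal cover of $\hg\circ\hat\sigma(T)$ by such balls bounds the number of distinct leaves that must be kept, and the Yomdin bounds supply at most $C(r,\hg)$ pieces per ball. The exponent bookkeeping must be arranged so that the product of all the $\Upsilon$ and $\|D\hg^\bullet\|_\supnorm$ factors collapses to exactly the stated $C(r,\hg)=\Upsilon(r)^{\lceil n/N\rceil}\|D\hg^N\|_\supnorm^{\lfloor n/N\rfloor/(r-1)}\|D\hg^{n-N\lfloor n/N\rfloor}\|_\supnorm^{1/(r-1)}$; note $\lceil n/N\rceil$ counts the blocks (including the partial one), which matches the number of times the concatenation lemma is invoked.

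The main obstacle I expect is the synchronization between the multiplicative counting in Yomdin's theorem and the covering number $r_{\hg}$: one must apply Theorem~\ref{t-yomdin} at each step over the set $T'=\{t:\hg(\hat\tau(t))\in B(\hg(\hat x),\eps,\heps)\}$ relative to a \emph{center} $\hat x$, and different leaves require different centers, so the clean statement "$C(r,\hg)$ pieces per Bowen ball" needs the set $T$ to be first partitioned (at the level of the reparametrizations, not of $[0,1]$) according to which fibered Bowen ball each sub-piece lands in, and one must check this partition can be taken to have size $r_{\hg}(n,\eps,\heps,\hg\circ\hat\sigma(T))$ without extra loss. This is the delicate accounting step; the rest is a routine induction combining Lemma~\ref{l.concatenation} with the submultiplicativity $\|D\hg^{a+b}\|_\supnorm\le\|D\hg^{a}\|_\supnorm\|D\hg^{b}\|_\supnorm$ and the condition $Q_{r,N}(g)<Q$, which guarantees that at every intermediate time up to $N$ the hypothesis $Q_r(g^m)\le Q$ of Yomdin's theorem (hence the same $\eps_Y$) applies.
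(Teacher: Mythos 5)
Your overall architecture — decompose $T$ over a minimal fibered Bowen cover of $\hg\circ\hsigma(T)$, process each piece block-by-block with $n=qN+s$, and glue with Lemma~\ref{l.concatenation} — matches the paper. But the central quantitative step is wrong, and not in a fixable-by-bookkeeping way.

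You propose to traverse each block of length $m$ ($m=N$ or $m=s$) by applying Theorem~\ref{t-yomdin} to $(g,\tau)$ iteratively $m$ times, producing $\le\Upsilon\|D\hg\|_\supnorm^{1/(r-1)}$ pieces per unit step, and then you claim
$\Upsilon^m\prod_{i=1}^{m}\|D\hg\|_\supnorm^{1/(r-1)}\le\Upsilon^m\|D\hg^m\|_\supnorm^{1/(r-1)}$.
This inequality is false: $\prod_{i=1}^m\|D\hg\|_\supnorm^{1/(r-1)}=\|D\hg\|_\supnorm^{m/(r-1)}$, and submultiplicativity gives $\|D\hg^m\|_\supnorm\le\|D\hg\|_\supnorm^m$, so the inequality goes the opposite direction. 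The one-step-at-a-time strategy genuinely yields the weaker bound $\Upsilon^n\|D\hg\|_\supnorm^{n/(r-1)}$, with $\Upsilon$ raised to the power $n$ rather than $\lceil n/N\rceil$ and with $\|D\hg\|_\supnorm^N$ in place of $\|D\hg^N\|_\supnorm$ — and closing that gap is precisely the reason the corollary carries the parameter $N$ in the first place. (In the later propositions one lets $N\to\infty$ so that $\frac1N\log\|D\hg^N\|_\supnorm\to\lambda(\hg)$ and $\Upsilon^{1/N}\to1$; with your bound neither limit improves anything.)

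The fix is to apply Theorem~\ref{t-yomdin} directly to the single map $g^N$ (and $g^p$ for the remainder block), not to $g$ repeated $N$ times. One application of Yomdin to $g^N$ gives a family of at most $\Upsilon\|D\hg^N\|_\supnorm^{1/(r-1)}$ reparametrizations whose images under $g^N$ have $C^r$ size $<(\eps,\heps)$; the only admissible times contributed inside the block are its two endpoints, which is permitted since Definition~\ref{d.reparametrization} only requires consecutive admissible times to be at most $N$ apart, not that every intermediate time be admissible. The hypothesis $Q_{r,N}(g)<Q$ is there exactly to license this: it ensures $Q_r(g^m)<Q$ for all $1\le m\le N$, so the same constant $\eps_Y(r,Q)$ from Theorem~\ref{t-yomdin} applies to $g^N$ and to $g^p$. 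With that change, $q$ block applications plus one remainder application give $\Upsilon^{q+1}\|D\hg^N\|_\supnorm^{q/(r-1)}\|D\hg^p\|_\supnorm^{1/(r-1)}$ per Bowen ball, which is exactly $C(r,\hg)$, and multiplying by the $r_{\hg}(n,\eps,\heps,\hg\circ\hsigma(T))$ balls gives the claim.
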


\begin{proof}
Fix $n\geq 1$ and  divide with remainder
$
n=qN+p
$,
 $q\geq 0$, $p=0,\ldots,N-1$.

Let  ${\ell}:=r_{\hg}(n,\eps,\heps,\hg\circ\hsigma(T))$, then there exists a cover of ${\hg\circ}\hsigma(T)$ by ${\ell}$ fibered $(n,\eps,\heps)$--Bowen balls
$B_i:=B(\hg(\hx_i),{n,}\eps,\heps)$.
For each $B_i$, we will construct an admissible family of reparametrizations of $\sigma$ over
$
T_i:=(\hg\circ\hsigma)^{-1}(B_i),
$ and then take the union over $i$.

\medskip
\noindent
{{\em Step 0.\/}  If $p=0$ move to step 1. Otherwise proceed as follows.}

Fix $1\leq i\leq {\ell}$. Yomdin's theorem for  $g^p$, ${\sigma}$ and  $\hx_i$ gives  a family of reparametrizations $\cR_{0}$ of $\sigma$ over
$$
T_i^0:=\{t\in [0,1]: \hg^p(\hsigma(t))\in B(\hg^p(\hx_i),\eps,\heps)\},
$$
which is $(C^r,{g,}N,\eps,\heps)$-admissible up to time $p$ (the admissible times are $0,p$), and such that
$
|\cR_0|\leq \Upsilon\|D\hg^p\|^{1/(r-1)}_{\supnorm}
.
$
Notice that $T_i^0\supset T_i$.
If $q=0$, we have a reparametrization up to time $n$, and we stop.

\medskip
\noindent
{{\em Step 1.}\/} Fix $\psi\in\cR_0$ and apply Yomdin's theorem to $g^N$,  $g^p\circ\sigma\circ\psi$, and  $\hg^p(\hx_i)$. The result is a family $\cR_\psi$ of reparametrizations  of $g^p\circ\sigma\circ\psi$ over
$$
T_i^1(\psi):=\{t\in [0,1]: \hg^{N+p}[(\hsigma\circ \psi)(t)]\in B(\hg^{N+p}(\hx_i),\eps,\heps)\}
$$
which is $(C^r,{g,}N,\eps,\heps)$-admissible up to time $N$ (the admissible times are $0,N$), and with cardinality
$
|\cR_{\psi}|\leq \Upsilon\|D\hg^N\|^{1/(r-1)}_{\supnorm}
$.

 Notice that $T_i^1(\psi)\supset\psi^{-1}(T_i)$, therefore the concatenation
$$
\cR_1:=\{\psi\circ\phi:\psi\in\cR_0, \phi\in\cR_\psi\}
$$
is an admissible family of reparametrizations of $\sigma$ over $T_i$ up to time $N+p$, with admissible times $0,p,N+p$ and cardinality
$$
|\cR_1|\leq \Upsilon^2\|D\hg^N\|^{1/(r-1)}_{\supnorm}\|D\hg^p\|^{1/(r-1)}_{\supnorm}.
$$
If $q=1$, we have a reparametrization up to time $n$, and we stop.

\medskip
 Otherwise we continue as before to a ``step 2" which applies Yomdin's theorem to $g^N$, $g^{N+p}\circ\sigma\circ\psi$ and $\hg^{N+p}(\hx_i)$.

Eventually, at step $q$,  we arrive to a family of re\-para\-metrizations $\cR_q$ over $T_i$ which is admissible up to time $qN+p=n$, and which has cardinality
$$
{|\cR_q|\leq \Upsilon^{q+1}\|D\hg^N\|^{q/(r-1)}_{\supnorm}\|D\hg^p\|^{1/(r-1)}_{\supnorm}\equiv \Upsilon^{\lceil n/N\rceil} \|D\hg^N\|_\supnorm^{\frac{\lfloor n/N\rfloor}{r-1}}
  \|D\hg^{n-N\lfloor n/N\rfloor}\|_\supnorm^{1/(r-1)}.}
$$
Taking the union over $i=1,\ldots,{\ell}$, we obtain the family of reparametrizations over $\bigcup T_i\supset T$, as required.
\end{proof}

\subsection{Entropy and growth of $C^r$ complexity of unstable manifolds}\label{ss.unstable-partitions}
{In this section $f$ is a $C^r$ diffeomorphism, $r>1$, of a surface $M$ and $\nu$ is an ergodic hyperbolic probability measure of saddle type, ie}
$\lambda^s:=\lambda^-(f,\nu)$ is strictly negative, and $\lambda^u:=\lambda^+(f,\nu)$ is strictly positive.
{Pesin's Unstable Manifold Theorem says that $\nu$--a.e. $x$ belongs to an unstable manifold $W^u(x)$, which is an injectively immersed $C^r$ curve
and is characterized as:
$$W^u(x)=\{y\::\: \limsup_{n\to +\infty} \frac 1 n \log d(f^{-n}(x),f^{-n}(y))<0\}.$$}

\mbox{}
{A measurable partition $\xi$ is \emph{subordinated} to the unstable lamination $W^u$ of $\nu$ if
for $\nu$-almost every $x\in M$, the atom $\xi(x)$ is a neighborhood of $x$ inside the curve $W^u(x)$ and
$\xi$ is increasing: Every atom of $f(\xi)$ is a union of atoms of $\xi$.
By~\cite{Ledrappier-Strelcyn}, such measurable partitions exist.}

Since $\xi$ is measurable, Rokhlin's disintegration theorem applies, and for $\nu$--a.e. $x$ there exists a probability measure $\nu^u_x$  on $\xi(x)$ so that
$$
        \nu=\int \nu^u_{x} d\nu(x).
        $$
The family $\{\nu^u_x\}$ is not unique, but {given $\xi$,} any two families like that are equal outside a set of $x$ of measure zero. Therefore it is not a serious abuse of terminology to call $\nu^u_x$ {\em the} conditional measure on $\xi(x)$.

In this section we use the entropy theory of Ledrappier and Young \cite{Ledrappier-Young-II} and especially the following corollary established by Zang (see \cite[Remark 1.8]{Zang-EntropyVolume}) to show that the entropy of $\nu$ can be bounded by the exponential rate of growth of the $C^r$ complexity of the curve $f^n(W^u_{loc}(x))$, as quantified in the previous section using admissible $C^r$ reparametrizations up to time $n$.

\begin{theorem}[Y. Zang]\label{t.zang}
Let us consider $f\in\Diff^r(M)$ with $r>1$,
an ergodic hyperbolic probability measure $\nu$, and a system of conditional measures $\{\nu^u_x\}$ on local unstable manifolds.
Then for $\nu$-a.e. $x\in M$, the measure $\nu^u_x$ satisfies:
 $$
    h(f,\nu) =\inf_{\gamma>0} \lim_{\eps\to0} \liminf_{n\to\infty} \frac1n\log r_f(n,\eps,\nu^u_x,\gamma).
 $$
\end{theorem}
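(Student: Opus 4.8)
The plan is to reduce the statement to a leafwise Brin--Katok type identity for the conditional measures $\nu^u_x$, and then to derive the covering--number formula by a Katok-type argument run along the unstable leaf. The core assertion is the following \emph{local entropy identity}: for $\nu$-a.e.\ $x$ and $\nu^u_x$-a.e.\ $y$,
\begin{equation}\label{e.zang-local}
\lim_{\eps\to0}\liminf_{n\to\infty}-\tfrac1n\log\nu^u_x\bigl(B_f(y,n,\eps)\cap W^u_\loc(x)\bigr)=h(f,\nu)=\lim_{\eps\to0}\limsup_{n\to\infty}-\tfrac1n\log\nu^u_x\bigl(B_f(y,n,\eps)\cap W^u_\loc(x)\bigr).
\end{equation}
Since $\nu^u_x$ is carried by $\xi(x)\subset W^u(x)$, we have $\nu^u_x(B_f(y,n,\eps))=\nu^u_x(B_f(y,n,\eps)\cap W^u_\loc(x))$, so \eqref{e.zang-local} is really a statement about the one-dimensional measure $\nu^u_x$ on the curve $W^u_\loc(x)$, and the forward constraints $d(f^iy,f^iz)<\eps$ ($0\le i<n$) cut out of this curve a sub-interval shrinking at an exponential rate governed by the positive Lyapunov exponent $\lambda^+(f,\nu)$.

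To establish \eqref{e.zang-local} one combines three inputs. First, Ledrappier--Young theory: for an increasing measurable partition $\xi$ subordinate to the unstable lamination of $\nu$ one has $h(f,\nu)=h(f,\nu,\xi)$, and hence, by the Shannon--McMillan--Breiman theorem applied to $\xi$ together with Rokhlin disintegration, $-\tfrac1n\log\nu^u_x(\xi_n(y))\to h(f,\nu)$ for $\nu^u_x$-a.e.\ $y$, where $\xi_n:=\bigvee_{i=0}^{n-1}f^{-i}\xi$ (see \cite{Ledrappier-Young-II}; for the existence and basic properties of such $\xi$, see \cite{Ledrappier-Strelcyn}). Second, Pesin theory: on a Pesin block $\Lambda$ the local unstable manifolds have uniform size, uniformly bounded curvature, and uniformly bounded distortion of the iterates of $f$ along them, while by ergodicity the forward orbit of $\nu^u_x$-a.e.\ $y$ visits $\Lambda$ with positive frequency; combining these, the dynamical piece $B_f(y,n,\eps)\cap W^u_\loc(x)$ is comparable with the atom $\xi_n(y)$ up to a multiplicative error that becomes negligible once we let $\eps\to0$ and work on a subset of $\nu^u_x$-measure arbitrarily close to $1$. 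Third, the regularity of $\nu^u_x$ along the curve (exact dimensionality, again a consequence of \cite{Ledrappier-Young-II}), which upgrades ``comparable intervals'' to ``comparable measures''. This is exactly the content packaged in \cite[Remark~1.8]{Zang-EntropyVolume}, which we would simply invoke.

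Granting \eqref{e.zang-local}, the covering formula follows by the classical Katok argument \cite{KatokIHES}, run here on the curve $W^u_\loc(x)$. For the lower bound, fix $\gamma,\delta>0$; by \eqref{e.zang-local} and Egorov's theorem, for each small $\eps$ there are a set $G\subset W^u_\loc(x)$ with $\nu^u_x(G)>1-\gamma/2$ and an $n_0$ such that $\nu^u_x(B_f(y,n,\eps))\le e^{-n(h(f,\nu)-\delta)}$ whenever $y\in G$ and $n\ge n_0$; since any $Z$ with $\nu^u_x(Z)>\gamma$ meets $G$ in $\nu^u_x$-measure at least $\gamma/2$, covering it needs at least $\tfrac{\gamma}{2}e^{n(h(f,\nu)-\delta)}$ Bowen balls, so $\liminf_n\tfrac1n\log r_f(n,\eps,\nu^u_x,\gamma)\ge h(f,\nu)-\delta$, and letting $\eps\to0$ and $\delta\to0$ gives the inequality ``$\ge$'' for every $\gamma>0$. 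For the upper bound, fix $\delta>0$; by \eqref{e.zang-local} and Egorov, for each small $\eps$ there are a set $G'\subset W^u_\loc(x)$ with $\nu^u_x(G')>\tfrac12$ and an $n_1$ with $\nu^u_x(B_f(y,n,\eps))\ge e^{-n(h(f,\nu)+\delta)}$ for $y\in G'$, $n\ge n_1$; a maximal pairwise disjoint family of Bowen balls $B_f(y_i,n,\eps)$ with $y_i\in G'$ then has cardinality at most $e^{n(h(f,\nu)+\delta)}$, and by maximality the doubled balls $B_f(y_i,n,2\eps)$ cover $G'$, so $r_f(n,2\eps,\nu^u_x,\tfrac12)\le e^{n(h(f,\nu)+\delta)}$; letting $\eps\to0$, then $\delta\to0$, and using that $\gamma\mapsto r_f(n,\eps,\nu^u_x,\gamma)$ is non-decreasing, we get $\inf_{\gamma>0}\lim_{\eps\to0}\liminf_{n\to\infty}\tfrac1n\log r_f(n,\eps,\nu^u_x,\gamma)\le h(f,\nu)$. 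The two bounds together give the asserted identity. \emph{The main obstacle} is the second input above --- comparing, with multiplicative control, the dynamical Bowen pieces along $W^u$ with the atoms of the refining partition $\xi_n$ --- since this is where all the Pesin non-uniformity enters; the outer limit $\lim_{\eps\to0}$ together with the freedom to discard sets of small $\nu^u_x$-measure is exactly what makes the comparison work.
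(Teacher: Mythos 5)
The paper does not prove this theorem: it is stated as a citation of Zang's work (\cite[Remark~1.8]{Zang-EntropyVolume}), following earlier remarks about Ledrappier--Young theory and the role of $C^r$, $r>1$, regularity and hyperbolicity. So there is no ``paper's own proof'' to compare against; what you have produced is a plausible reconstruction of the argument one would give, and its overall structure --- reduce the covering-number formula to a leafwise Brin--Katok identity for the conditional measures $\nu^u_x$ on unstable leaves, then run the classical Katok argument along the leaf for both inequalities --- is the right one.

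Two remarks on the substance. First, the structure is slightly circular as written: you assert that the leafwise Brin--Katok identity \eqref{e.zang-local} is ``exactly the content packaged in Zang's Remark~1.8, which we would simply invoke,'' yet the theorem you are being asked to prove \emph{is} the result attributed to that remark. If Zang's remark already states the covering-number formula, your invocation is not a genuine reduction; if it instead states the Brin--Katok identity, you should say so explicitly, because the bulk of the difficulty (and the point of assuming only $r>1$ smoothness plus hyperbolicity rather than $C^2$) sits in establishing \eqref{e.zang-local} --- in particular in comparing, with multiplicative Pesin control, the Bowen piece $B_f(y,n,\eps)\cap W^u_\loc(x)$ with the atom $\xi_n(y)$ of the increasing subordinate partition. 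You flag this as the ``main obstacle,'' which is accurate, but you do not attempt it; you defer to the very reference being attributed. Second, a small technical wrinkle in your Katok-type lower bound: the estimate $\nu^u_x(B_f(y,n,\eps))\le e^{-n(h-\delta)}$ is only available for $y\in G$, whereas the balls in a cover of $Z$ need not be centered in $G$. The standard fix is to note that any ball $B_f(z,n,\eps)$ meeting $G$ at some $y$ satisfies $B_f(z,n,\eps)\subset B_f(y,n,2\eps)$ and to apply \eqref{e.zang-local} at radius $2\eps$; since the outer limit $\eps\to0$ absorbs the doubling, the conclusion is unaffected, but the step should be said. Modulo these, your reasoning from \eqref{e.zang-local} to the stated formula is sound.
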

\noindent
The difference between this result and Ledrappier-Young theory is that Zang assumes  $C^r$ smoothness for some $r>1$ and hyperbolicity, whereas  Ledrappier and Young assume $C^2$ smoothness, but no hyperbolicity.

\begin{corollary}\label{lemma entropy reparametrizations}
Let $f\in\Diff^2(M)$, and let
$\nu$ be an ergodic hyperbolic measure with a system of conditional measures $\{\nu^u_x\}$ on local unstable manifolds.
For any $F\subset M$ with positive {$\nu$-}measure, for $\nu$-a.e. $x_0\in F$, and for any choice of
 \begin{itemize}
 
  \item $\sigma\colon [0,1]\to W^u(x_0)$,  a { regular $C^r$ curve,}
  \item $T\subset [0,1]$, a set such that $\nu^u_{x_0}(\sigma(T)\cap F)>0$,
\end{itemize}
the following holds.
If  $\cR_n$ $(n\geq 1)$ are families of reparametrizations
of $\sigma$ over $T$ which are $(C^r,f,N,\eps_*,\heps_*)$-admissible up to time $n$ for some (any)
$\eps_\ast,\heps_\ast>0, N\geq 1$ independent of $n$,  then
  $\displaystyle
      h(f,\nu) \leq \liminf_{n\to\infty} \frac1{n}\log |\mathcal R_{n}|.
   $
\end{corollary}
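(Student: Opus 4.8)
The plan is to bound $h(f,\nu)$ from below by converting the admissible families $\cR_n$ into covers of $\sigma(T)$ by fibered Bowen balls, and then invoking Zang's formula (Theorem~\ref{t.zang}) applied to $f^{-1}$-unstable... no: applied directly to $f$ and the conditional measures $\nu^u_{x_0}$. Concretely, fix $\eps_\ast,\heps_\ast>0$ and $N\geq1$ as in the hypothesis, and let $\cR_n$ be an $(C^r,f,N,\eps_\ast,\heps_\ast)$-admissible family up to time $n$. By Lemma~\ref{lemma covering} (Bowen covers from admissible reparametrizations), for every $\eps,\heps>0$ we have
\begin{equation*}
r_{\widehat f}(n,\eps,\heps,\hsigma(T)) \leq \frac{2\heps_\ast\|D\hf\|_\supnorm^N}{\min(\eps,\heps)}\,|\cR_n|.
\end{equation*}
Since the constant $\frac{2\heps_\ast\|D\hf\|_\supnorm^N}{\min(\eps,\heps)}$ does not depend on $n$, taking $\frac1n\log$ and $\liminf_{n\to\infty}$ kills it, so for all $\eps,\heps>0$,
\begin{equation*}
\liminf_{n\to\infty}\frac1n\log r_{\widehat f}(n,\eps,\heps,\hsigma(T)) \leq \liminf_{n\to\infty}\frac1n\log|\cR_n|.
\end{equation*}

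Next I would relate $r_{\widehat f}(n,\eps,\heps,\hsigma(T))$ to the ordinary covering numbers $r_f(n,\eps,\sigma(T))$ on $M$. Since $\hpi\circ\hf=f\circ\hpi$ and $\hpi(\hsigma(t))=\sigma(t)$, the projection of a fibered $(n,\eps,\heps)$-Bowen ball for $\hf$ is contained in an $(n,\eps)$-Bowen ball for $f$; hence any cover of $\hsigma(T)$ by fibered Bowen balls projects to a cover of $\sigma(T)$ of the same cardinality, giving
\begin{equation*}
r_f(n,\eps,\sigma(T)) \leq r_{\widehat f}(n,\eps,\heps,\hsigma(T)).
\end{equation*}
Combining, $\liminf_n\frac1n\log r_f(n,\eps,\sigma(T))\leq\liminf_n\frac1n\log|\cR_n|$ for every $\eps>0$. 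Now I want to feed $Z:=\sigma(T)\cap F$ (which carries positive $\nu^u_{x_0}$-mass by hypothesis) into Zang's formula. Since $\sigma(T)\cap F\subset\sigma(T)$, monotonicity of covering numbers in the set gives $r_f(n,\eps,\nu^u_{x_0},\gamma)\leq r_f(n,\eps,\sigma(T))$ for every $\gamma<\nu^u_{x_0}(\sigma(T)\cap F)$; taking $\inf_{\gamma>0}\lim_{\eps\to0}\liminf_n\frac1n\log(\cdot)$ and using Theorem~\ref{t.zang} at the $\nu$-generic point $x_0$ yields $h(f,\nu)\leq\liminf_n\frac1n\log|\cR_n|$, as desired.

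The points that need care, and which I expect to be the main (if modest) obstacle, are the genericity bookkeeping and the applicability of Zang's theorem. Theorem~\ref{t.zang} holds for $\nu$-a.e. $x$; one must intersect the full-measure set from Zang with the full-measure set on which $\nu^u_{x_0}$ is well-defined and with $F$ (which has positive measure) to get a positive-measure set of valid base points $x_0\in F$, and then observe that $\nu^u_{x_0}(\sigma(T)\cap F)>0$ is exactly the standing hypothesis. One should also note that the curve $\sigma$ parametrizes a piece of $W^u(x_0)$, so $\sigma(T)$ lies in a single unstable-manifold atom and the conditional measure $\nu^u_{x_0}$ is supported near $x_0$ in $W^u(x_0)$; the set $T$ is chosen precisely so that $\sigma(T)$ meets $F$ in positive conditional mass, which is all Zang's formula requires. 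Finally, the hypothesis "for some (any) $\eps_\ast,\heps_\ast,N$" is harmless: the bound above holds for whichever values the family $\cR_n$ happens to be admissible for, and the right-hand side $\liminf_n\frac1n\log|\cR_n|$ is the only quantity that survives to the conclusion. Everything else is routine monotonicity and the $\frac1n\log$ asymptotics, so no genuinely hard estimate is involved — the work is entirely in chaining Lemma~\ref{lemma covering}, the projection inequality, and Theorem~\ref{t.zang} in the correct order.
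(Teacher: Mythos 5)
Your proof is correct and follows essentially the same route as the paper: Lemma~\ref{lemma covering} to pass from $|\cR_n|$ to fibered Bowen covers, the projection $\hpi$ to convert these into $(n,\eps)$-covers of $\sigma(T)$ in $M$, and then Zang's formula (Theorem~\ref{t.zang}) with the positive-mass set $\sigma(T)\cap F$. The genericity bookkeeping you flag at the end is exactly the point the paper handles implicitly by quantifying over $\nu$-a.e.\ $x_0\in F$.
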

\begin{proof}
Fix $\eps_\ast,\heps_\ast,N>0$ and let $\cR_n$ be families of admissible reparametrizations as in the statement.
Let $\hsigma:[0,1]\to\hM$ be the canonical lift of the regular curve $\sigma$.
Fix $\eps,\heps>0$ arbitrarily small.
By Lemma \ref{lemma covering}, $$r_{\hf}(n,\eps,\heps,\hsigma(T))\leq C|\cR_n|,$$ where
$
C:=C(f,\eps,\heps,\heps_\ast,N):=
{ \frac{2\heps_\ast\|D\hf\|_\supnorm^N }{\min(\eps,\heps)}}
$  is independent of $n$.

By the definition of $r_{\hf}(n,\eps,\heps,\hsigma(T))$, there exist $\hx_1,\ldots,\hx_\ell\in\hM$ with $\ell\leq C|\cR_n|$ such that $\bigcup_{i=1}^\ell {B_{\hf}}(\hx_i,n,\eps,\heps)\supset\hsigma(T)$. Necessarily
$$
\bigcup_{i=1}^\ell {B_f}(\pi(\hx_i),n,\eps)\supset\sigma(T).
$$
In particular,
$
r_f(n,\eps,\sigma(T))
\leq r_{\hf}(n,\eps,\heps,\hsigma(T))\leq C|\cR_n|,
$
whence by Zang's Theorem, $
h(f,\nu)\leq \lim\limits_{\eps\to 0}
\liminf\limits_{n\to\infty}\frac{1}{n}\log r_f(n,\eps,{\sigma(T)\cap F})\leq \liminf\limits_{n\to\infty} \frac{1}{n}\log|\cR_n|.$
\end{proof}

\section{Main reparametrization lemmas}\label{s.reparametrization}

This section collects our main technical results on the existence of admissible families of reparametrizations of pieces of unstable manifolds.

The point is to produce families with cardinality as small as possible. The first result provides admissible families of reparametrizations of local unstable manifolds, with cardinality controlled in terms of the entropy.
The second result, which is much more subtle, produces much smaller families of reparametrizations for the subset of the local unstable stable where there is little expansion up to some iterate, see Definition~\ref{d.neutralblock}.

\subsection{Statements}
Throghout this section $M$ is a compact smooth surface without boundary, $f:M\to M$ is a diffeomorphism, $\hf:\hM\to\hM$ is the canonical lift \eqref{canonical-lift}, and
$\hmu$ is a (possibly non-ergodic!) $\hf$-invariant probability measure, which projects to an $f$-invariant
measure $\mu$.
 $Q_{r,N}(f)$ is given by \eqref{Q-r-n} {in the previous section,} $\oh$ is the essential entropy \eqref{eq-ergdec-h}, and $\lambda(\hf)$ is the
 asymptotic dilation of $\hf$, see \eqref{asymp-dil} and \S\ref{ss.size-lift}.

The statements of the following two propositions should be formally understood as stating the existence of functions  $N_1,
\overline{n}_1,$
{$\gamma_0$,} and $N_0$ with values in $(0,\infty)$ such that the following {stated} properties hold.

\begin{proposition}\label{p.estimate1'}
Let  us consider $f\in\Diff^r(M)$ with $2\leq r<\infty$, an $\hf$-invariant probability $\hmu$,
some real numbers ${Q},\eta,\gamma,\eps,\heps>0$, and integers $N,n$. Assume that:
 \begin{itemize}
  \item[--] $N\ge N_1(r,f,{\eta})$,
  \item[--]  $0<\eps,\heps \le {\eps_Y(r,{Q})}$ {(the constant in Yomdin's theorem~\ref{t-yomdin})}, and
  \item[--] $n\geq \overline{n}_1(f,\hmu,\eta,\gamma,N,\varepsilon,\widehat\varepsilon)$.
 \end{itemize}
Then there are:
 \begin{itemize}
  \item[--] a {$C^2$} neighborhood $\mathcal U_1(f,\eta,\gamma,\eps,\heps,N,n)$ of $f$ in $\Diff^r(M)$,
  \item[--] an open set $\hU_1(f,\hmu,\eta,\gamma,\eps,\heps,n)\subset\hM$ with $\hmu(\hU_1)>1-\gamma^2$ and $\hmu(\partial\hU_1)=0$,
\end{itemize}
such that the following property holds:

\begin{enumerate}[(*)]
\item For any $g\in\mathcal U_1$ with $Q_{r,N}(g)<{Q}$ and for any regular curve $\sigma$ with $C^r$ size at most $(\eps,\heps)$, there is a family $\mathcal R$ of reparametrizations over $\widehat \sigma^{-1}(\widehat U_1)$ s.t.
 \begin{itemize}
  \item[--] $\mathcal R$ is $(C^r,g,N,\eps,\heps)$-admissible up to time $n$,
  \item[--] $|\mathcal R|\leq \exp \left[n\left(\oh({f},\mu)+\frac{\lambda({\hf})}{r-1}+\eta\right)\right]$,
 \end{itemize}
 \end{enumerate}
\end{proposition}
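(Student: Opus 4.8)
\emph{Proof proposal.} The plan is to obtain $\cR$ from the reparametrization Corollary~\ref{coro.reparametrization}, applied with $T:=\hsigma^{-1}(\hU_1)$: it gives a family of reparametrizations of $\sigma$ over $T$, $(C^r,g,N,\eps,\heps)$-admissible up to time $n$, with
$|\cR|\leq C(r,\hg)\,r_{\hg}(n,\eps,\heps,\hg\circ\hsigma(T))$, where
$C(r,\hg)=\Upsilon(r)^{\lceil n/N\rceil}\|D\hg^N\|_\supnorm^{\lfloor n/N\rfloor/(r-1)}\|D\hg^{\,n-N\lfloor n/N\rfloor}\|_\supnorm^{1/(r-1)}$. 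The two factors are bounded separately, and the term $\lambda(\hf)/(r-1)$ comes entirely from $C(r,\hg)$. Since $\log\|D\hf^N\|_\supnorm$ is subadditive in $N$ we have $\tfrac1N\log\|D\hf^N\|_\supnorm\to\lambda(\hf)$, and $\Upsilon(r)^{1/N}\to1$; hence we may fix $N_1=N_1(r,f,\eta)$ so that
$\tfrac{\log\Upsilon(r)}{N}+\tfrac1{N(r-1)}\log\|D\hf^N\|_\supnorm<\tfrac{\lambda(\hf)}{r-1}+\tfrac\eta8$ for every $N\geq N_1$. After we choose $\cU_1$ (below) small enough that $\|D\hg^N\|_\supnorm$ is close to $\|D\hf^N\|_\supnorm$, and once $n$ is large (the trailing factor and the ceiling/floor discrepancies contribute $O(1/n)$, uniformly over $g$ in a fixed small $C^2$-neighborhood of $f$, using $\|D\hg\|_\supnorm\leq2\|D\hf\|_\supnorm$ there), this yields $\tfrac1n\log C(r,\hg)<\tfrac{\lambda(\hf)}{r-1}+\tfrac\eta2$.

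\emph{The set $\hU_1$.} The entropy term comes from $r_{\hg}(n,\eps,\heps,\hg\circ\hsigma(T))\leq r_{\hg}(n,\eps,\heps,\hg(\hU_1))$ via Katok's formula on $\hM$. Monotonicity of $r_{\hf}(m,\eps',\heps',\hmu,\gamma')$ in the scales and in $\gamma'$, together with Proposition~\ref{fibered Bowen-Katok}(\ref{i.Katok}) and the identity $\oh(\hf,\hmu)=\oh(f,\mu)$ of Corollary~\ref{Lemma-Entropy-Extension}, shows that for \emph{every} $\eps',\heps'>0$ and \emph{every} $\gamma'\in(0,1)$,
\begin{equation*}
\limsup_{m\to\infty}\tfrac1m\log r_{\hf}(m,\eps',\heps',\hmu,\gamma')\;\leq\;\oh(f,\mu).
\end{equation*}
We may assume $\gamma<1$. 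Applying this with $\gamma'=1-\gamma^2/2$ and with scales $(\eps/2,\heps/2)$, we get, for $n$ larger than a threshold $\overline n_1(f,\hmu,\eta,\gamma,N,\eps,\heps)$, a set $Z\subset\hM$ with $\hmu(Z)>1-\gamma^2/2>1-\gamma^2$ covered by $K\leq e^{(n+1)(\oh(f,\mu)+\eta/5)}\leq e^{n(\oh(f,\mu)+\eta/4)}$ fibered $(n+1,\eps/2,\heps/2)$-Bowen balls $B_{\hf}(\hz_i,n+1,\eps/2,\heps/2)$. Enlarging these slightly, set $\hU_1:=\bigcup_iB_{\hf}(\hz_i,n+1,\eps_1,\heps_1)$, where $\eps_1\in[\eps/2,\eps)$ and $\heps_1\in[\heps/2,\heps)$ are chosen (the centres $\hz_i$ being now fixed) to avoid the at most countably many values for which $\hmu$ charges a sphere $\{d(\cdot,\hf^k\hz_i)=\heps_1\}$ or $\{d(\hpi\cdot,\hpi\hf^k\hz_i)=\eps_1\}$, $0\leq k\leq n$. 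Then $\hU_1$ is open, $\hmu(\hU_1)\geq\hmu(Z)>1-\gamma^2$, and $\hmu(\partial\hU_1)=0$.

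\emph{The neighborhood $\cU_1$ and conclusion.} Here the dependence of $\cU_1$ on $n$ is essential. Choose the $C^2$-neighborhood $\cU_1=\cU_1(f,\eta,\gamma,\eps,\heps,N,n)$ small enough that, besides the closeness of $\|D\hg^N\|_\supnorm$ to $\|D\hf^N\|_\supnorm$ used above, every $g\in\cU_1$ satisfies $\max_{0\leq k\leq n+1}\|\hg^k-\hf^k\|_{C^0}<\tfrac12\min(\eps-\eps_1,\heps-\heps_1)$; this is possible because $g\mapsto\hg$ and its first $n+1$ iterates depend continuously on $g$ in the $C^2$ topology. Then, by the triangle inequality on $\hM$ and (after projecting by $\hpi$) on $M$, each $B_{\hf}(\hz_i,n+1,\eps_1,\heps_1)$ is contained in $B_{\hg}(\hz_i,n+1,\eps,\heps)$, so, using $\hg(B_{\hg}(\hz,n+1,\eps,\heps))=B_{\hg}(\hg\hz,n,\eps,\heps)$, we get $\hg(\hU_1)\subset\bigcup_iB_{\hg}(\hg\hz_i,n,\eps,\heps)$, i.e.\ $r_{\hg}(n,\eps,\heps,\hg(\hU_1))\leq K$. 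Now fix $g\in\cU_1$ with $Q_{r,N}(g)<Q$ and a regular curve $\sigma$ of $C^r$ size $\leq(\eps,\heps)$ with $\eps,\heps\leq\eps_Y(r,Q)$. By Corollary~\ref{coro.reparametrization} with $T=\hsigma^{-1}(\hU_1)$ there is a family $\cR$ of reparametrizations of $\sigma$ over $\hsigma^{-1}(\hU_1)$, $(C^r,g,N,\eps,\heps)$-admissible up to time $n$, and, since $\hg\circ\hsigma(T)=\hg(\hU_1\cap\hsigma([0,1]))\subset\hg(\hU_1)$,
\begin{equation*}
|\cR|\;\leq\;C(r,\hg)\,r_{\hg}(n,\eps,\heps,\hg\circ\hsigma(T))\;\leq\;e^{n(\lambda(\hf)/(r-1)+\eta/2)}\cdot e^{n(\oh(f,\mu)+\eta/4)}\;\leq\;\exp\!\Big[n\Big(\oh(f,\mu)+\tfrac{\lambda(\hf)}{r-1}+\eta\Big)\Big],
\end{equation*}
after enlarging $\overline n_1$ to absorb the $O(1/n)$ terms. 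This is the assertion.

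\emph{Main obstacle.} The delicate points are two. First, the order of quantifiers: $\hU_1$ and $\cU_1$ depend on $n$, and this is what allows an $\hf$-Bowen cover of $\hU_1$ to be converted into a $\hg$-Bowen cover valid for all $g\in\cU_1$ — impossible for a neighborhood independent of $n$, since $\hf$- and $\hg$-orbits diverge exponentially. Second, the upgrade of the Katok cover from measure $>\gamma'$ (any $\gamma'<1$) to the required measure $>1-\gamma^2$ works precisely because the $\limsup$ of the normalized log-covering numbers is $\leq\oh(f,\mu)$ for \emph{every} $\gamma'<1$, not only in the limit $\gamma'\to1$. The condition $\hmu(\partial\hU_1)=0$ is arranged by a harmless perturbation of the scales.
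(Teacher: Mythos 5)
Your proof is correct and follows the same route as the paper: Katok's formula (Proposition~\ref{fibered Bowen-Katok}) for $\oh(f,\mu)$ combined with the Yomdin reparametrization bound (Corollary~\ref{coro.reparametrization}), with both $\hU_1$ and $\cU_1$ depending on $n$ exactly as in the paper. Your treatment of the passage from an $\hf$-Bowen cover to a $\hg$-Bowen cover (via $(n+1)$-balls and a shift by one iterate) and of the regularity condition $\hmu(\partial\hU_1)=0$ (by avoiding countably many bad radii) is more explicit than the paper's terse formulation, but these are technical refinements of the same argument, not a different one.
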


The following  and key estimate applies to the part of the local unstable manifold which
{does not (initially) see much expansion}.
More precisely, suppose $g$ is a diffeomorphism with canonical lift $\hg$, and let  $\alpha>0$. An \emph{orbit segment} with length $n$ is a string $\vartheta:=(\hx,\hg(\hx),\ldots,\hg^{n-1}(\hx))$.
\begin{definition}
An orbit segment $\vartheta=(\hx,\hg(\hx),\ldots,\hg^{n-1}(\hx))$ is {\em $\alpha$-neutral}, if, denoting $\hx=(x,E)$, we have
$
\|Dg_x^m|_E\|\leq e^{\alpha m}\text{ for every }1\leq m\leq  n.
$
\end{definition}
\begin{proposition}\label{p.estimate2'}
Let  us consider $f\in\Diff^r(M)$ with $2\leq r<\infty$, an $\hf$-invariant probability $\hmu$,
some real numbers ${Q},\eta,\gamma,\delta>0$, and an integer $N$. Assume that:
 \begin{itemize}
  \item[--] $0<\gamma\leq\gamma_0(r,f,\eta)$,
  \item[--] $N\ge N_0(r,f,\eta,\gamma)$.
 \end{itemize}
Then there are:
 \begin{itemize}
  \item[--] $0<\eps,\heps\le\delta$.
  \item[--] a {$C^2$} neighborhood $\mathcal U_0(f,\hmu,{Q},\eta,\gamma,\delta,N)$ of $f$ in $\Diff^r(M)$,
  \item[--] an open set $\hU_0(f,\hmu,{Q},\eta,\gamma,\delta,N)\subset\hM$ with $\hmu(\hU_0)>1-\gamma^2$ and $\hmu(\partial\hU_0)=0$,
  \item[--] an integer $\overline{n}_0:=\overline{n}_0(f,\hmu,{Q},\eta,\gamma,N,\delta)\geq1$,
\end{itemize}
such that the following property holds:

\begin{enumerate}[(**)]
\item For any $g\in\mathcal U_0$ with $Q_{r,N}(g)<{Q}$, any regular curve $\sigma$ with \Crr size at most $(\eps,\heps)$, and any $n\geq\overline{n}_0$ there is a family $\mathcal R_n$ of reparametrizations over
\begin{equation}\label{e.T}
T:=\widehat \sigma^{-1}\bigg\{\hx\colon
\text{ $(\hx,\dots,\widehat g^{n-1}(\hx))$  is  $\textstyle\frac \eta{10}$-neutral and
$\frac{1}{n}\sum_{j=0}^{n-1}\delta_{\hg^j(\hx)}(\hU_0)> 1-\gamma$}\bigg\},
\end{equation}
such that
 \begin{itemize}
  \item[--] $\mathcal R_n$ is $(C^r,g,N,\eps,\heps)$-admissible up to time $n$,

  \item[--] $|\mathcal R_n|\leq \exp \left[n\left(\frac{\lambda({\hf})}{r-1}+\eta\right)\right]$.
 \end{itemize}
\end{enumerate}
\end{proposition}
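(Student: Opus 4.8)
The plan is to deduce the statement from Yomdin's theorem in the form of Corollary~\ref{coro.reparametrization}: that corollary reduces the desired bound on $|\cR_n|$ to a \emph{sub}-exponential estimate on the number of fibered Bowen balls needed to cover the image of the neutral part of $\hsigma$, and this estimate is then obtained from the neutrality of the orbits together with the fact that they spend a proportion $\ge 1-\gamma$ of their time in $\hU_0$. Concretely, applying Corollary~\ref{coro.reparametrization} to $g$, $\sigma$ and the set $T$ of~\eqref{e.T} produces a family $\cR_n$ over $T$ which is $(C^r,g,N,\eps,\heps)$-admissible up to time $n$ and satisfies
$$
|\cR_n|\ \le\ C(r,\hg)\cdot r_{\hg}\bigl(n,\eps,\heps,\hg\circ\hsigma(T)\bigr),
\qquad
\limsup_{n\to\infty}C(r,\hg)^{1/n}\le\Upsilon(r)^{1/N}\,\|D\hg^N\|_\supnorm^{1/(N(r-1))}.
$$

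For the parameters: fix first a $C^2$-neighbourhood $\cU^\ast$ of $f$ in $\Diff^r(M)$ depending only on $r,f$, put $C^\ast:=1+\sup_{g\in\cU^\ast}\|D\hg\|_\supnorm$, and set $\gamma_0(r,f,\eta):=\eta/(8\log C^\ast)$. Given $\gamma\le\gamma_0$, let $N_0(r,f,\eta,\gamma)$ be large enough that $\Upsilon(r)^{1/N}<e^{\eta/8}$ and $\|D\hf^N\|_\supnorm^{1/N}<e^{\lambda(\hf)+\eta/8}$ for $N\ge N_0$, and also large enough for Lemma~\ref{l.length}. Given $N\ge N_0$: using Lusin/Egorov-type arguments on the $\hmu$-a.e.\ defined Oseledets objects, choose a compact set on which the splitting $E^+\oplus E^-$ (where defined), its expansion/contraction rates, and the modulus of continuity of $x\mapsto Df_x$ are uniformly controlled, and thicken/trim it into an open set $\hU_0\subset\hM$ with $\hmu(\hU_0)>1-\gamma^2$ and $\hmu(\partial\hU_0)=0$; then pick $\eps,\heps\le\delta$, both below $\eps_Y(r,Q)$ and below the threshold of Lemma~\ref{l.length}, small enough that (by uniform continuity on $\hU_0$) a regular curve whose $M$-projection has diameter $<\eps$ has a lift of diameter $<\heps$ along which $\vf$ oscillates by $<\eta/10$; finally take a $C^2$-neighbourhood $\cU_0\subset\cU^\ast$ of $f$ on which these controls persist, and $\overline n_0$ large. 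With $N$ fixed and $\cU_0$ small we have $\|D\hg^N\|_\supnorm\le 2\|D\hf^N\|_\supnorm$, hence the displayed $\limsup$ is $<e^{\lambda(\hf)/(r-1)+\eta/2}$, so $C(r,\hg)\le\exp\!\big[n\big(\tfrac{\lambda(\hf)}{r-1}+\tfrac\eta2\big)\big]$ for $n\ge\overline n_0$. It thus remains to prove $r_{\hg}\bigl(n,\eps,\heps,\hg\circ\hsigma(T)\bigr)\le e^{\eta n/2}$.

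For this, subdivide $[0,1]$ into a bounded number of subintervals $I$ on which $\sigma$ has tiny $M$-diameter and nearly constant tangent; it suffices to cover each $\hg\circ\hsigma(I\cap T)$ by $e^{\eta n/3}$ fibered $(n,\eps,\heps)$-Bowen balls. Fix such an $I$ meeting $T$ and $t_\ast\in I\cap T$. \emph{Base control:} for every $t\in I\cap T$ the orbit of $\hsigma(t)$ is $\tfrac\eta{10}$-neutral, so applying Lemma~\ref{l.length} to the admissible pieces from Corollary~\ref{coro.reparametrization} gives $\operatorname{Length}(g^m\circ\sigma|_I)<e^{\eta m/10}\cdot e^{\eta m/10}\operatorname{Length}(\sigma|_I)<\eps$ for all $m\le n$, so $\operatorname{diam}_M\!\big(f^m(\sigma(I\cap T))\big)<\eps$ on $0\le m\le n$. \emph{Fibre control:} neutrality confines, for each $m\le n$ and $t\in I\cap T$, the tangent direction at $\hg^m\hsigma(t)$ to the narrow arc of directions contracted up to time $n$; where the orbit lies in $\hU_0$ the uniform hyperbolic gap makes this arc, and its $Dg$-images, shrink, so two such directions remain $\heps$-close under $\hg^m$, while the at most $\gamma n$ steps spent outside $\hU_0$ multiply their spread by at most $(C^\ast)^{\gamma n}<e^{\eta n/8}$ (this is where $\gamma\le\gamma_0$ enters). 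Hence $\operatorname{diam}_{\hM}\!\big(\hg^m\hsigma(I\cap T)\big)<\heps\,e^{\eta n/8}$ for all $m\le n$, so $\hg\circ\hsigma(I\cap T)$ is covered by $\lesssim e^{\eta n/8}\le e^{\eta n/3}$ fibered $(n,\eps,\heps)$-Bowen balls for $n\ge\overline n_0$. Summing over the $I$'s and combining with the reduction yields $|\cR_n|\le\exp\!\big[n\big(\tfrac{\lambda(\hf)}{r-1}+\eta\big)\big]$.

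The hard part is the fibre control in the last paragraph: proving that, for the neutral orbits parametrised by $T$, the tangent directions along $\hsigma$ do not spread in $\hM$ under iteration by more than a factor controlled by $\gamma$. This is precisely "the control of expansion during neutral blocks"; it is what forces the two scales $\eps$ (on $M$) and $\heps$ (on $\hM$) to be decoupled, both slaved to the modulus of continuity of $Df$ and to the uniform Oseledets data available on $\hU_0$, and it is the reason the whole argument has to be carried out on the projective bundle with $\hg$ rather than directly with $f$.
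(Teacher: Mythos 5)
Your high-level reduction---apply Corollary~\ref{coro.reparametrization} once to $T$ and then bound $r_{\hg}(n,\eps,\heps,\hg\circ\hsigma(T))$ by $e^{\eta n/2}$---is a plausible strategy, and the Yomdin ``price'' $C(r,\hg)\lesssim\exp[n(\tfrac{\lambda(\hf)}{r-1}+\tfrac\eta2)]$ is correctly computed from \eqref{e.choice-N}. But both halves of your covering-number estimate have genuine gaps.

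\emph{The base control is wrong.} From neutrality together with Lemma~\ref{l.length} you only get $\operatorname{Length}(g^m\circ\sigma|_{I})\lesssim e^{\eta m/5}\operatorname{Length}(\sigma|_I)$, and this is $<\eps$ for \emph{all} $m\le n$ only if $\operatorname{Length}(\sigma|_I)<\eps e^{-\eta n/5}$. So you need roughly $e^{\eta n/5}$ subintervals $I$, not a bounded number; the claim ``subdivide $[0,1]$ into a bounded number of subintervals'' is false. Moreover you invoke Lemma~\ref{l.length} ``on the admissible pieces from Corollary~\ref{coro.reparametrization}'', but those pieces are the very objects whose cardinality you are trying to bound---Corollary~\ref{coro.reparametrization} takes the covering number as input, so this is circular. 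The paper avoids the circle by building $\cR_n$ inductively segment-by-segment (Steps 7--10 of its proof), re-cutting the interval at each step of length $N$ or $n_*$ to absorb the $e^{\eta \cdot/5}$ expansion of length, with the bookkeeping carried by the induction hypothesis (iii).

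\emph{The fibre control cannot work for a general $\hmu$.} Proposition~\ref{p.estimate2'} is stated for an \emph{arbitrary} $\hf$-invariant $\hmu$, which may give substantial (even full) mass to points whose two Lyapunov exponents coincide, i.e.\ to the complement of $\operatorname{graph}(E^+)\cup\operatorname{graph}(E^-)$. On that part of $\hM$ there is no Oseledets splitting and no ``uniform hyperbolic gap''; your Lusin-type compact set built from the splitting would have small $\hmu$-measure, so you cannot simultaneously have a hyperbolic gap on $\hU_0$ \emph{and} $\hmu(\hU_0)>1-\gamma^2$. This is exactly why the paper's proof decomposes $\hmu=a\hmu_\#+(1-a)\hmu_0$, observes via Ruelle's inequality that $\mu_0$ (the equal-exponents part) has zero entropy, and covers the corresponding orbit segments by \emph{subexponentially many} fibered $(n_*,\eps,\heps)$-Bowen balls using the Katok formula (Proposition~\ref{fibered Bowen-Katok}) applied to $\hmu_0$ (Step 4, eq.~\eqref{e.growth}). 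The hyperbolic-gap argument, which your proposal applies globally, is used in the paper only on $\hW_\#$ (Step~3, eqs.~\eqref{ss.W-plus}--\eqref{e.control-diameter}), only over $N$-step blocks starting there, and it has to be re-initialized each time the orbit re-enters $\hW_\#$ because the uniform modulus-of-continuity control on the splitting is only valid for $N$ iterates at a time.

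In short, a direct estimate on $r_{\hg}(n,\eps,\heps,\hg\circ\hsigma(T))$ must already contain the Katok/zero-entropy ingredient for the non-hyperbolic part of $\hmu$ and an $N$-step-at-a-time argument for the hyperbolic part, i.e.\ essentially the paper's Steps 1--6; the proposal's uniform ``gap inside $\hU_0$ plus $\gamma n$ bad steps'' shortcut does not supply either, and its base control is off by an exponential factor.
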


\noindent
Unlike Proposition \ref{p.estimate1'}, here the upper bound has  no entropic term. Indeed,
in the $C^\infty$ case,
{the exponential rate of growth} tends to zero with $\eta$. This low complexity is due to the neutrality of the piece of $\sigma$ we are parametrizing.

\medskip
The proofs of these two propositions may be skipped at the first reading.

\subsection{Proof of Proposition \ref{p.estimate1'}}
Let $r\in[2,\infty)$, $f\in\Diff^r(M)$, ${Q},\eta,\gamma>0$, and consider a $\hf$-invariant probability measure $\hmu$.
Note that the lift $\hf\in\Diff^{r-1}(\hM)$ is uniquely defined by $f$ and depends continuously on  $f\in\Diff^r(M)$.
Recall  the number $\Upsilon:=\Upsilon(r)$ given by Yomdin's Theorem~\ref{t-yomdin}.

\proofitem
Fix an integer $N_1=N_1(r,f,\eta)\geq 1$ such that for all $N\ge N_1$,
\begin{equation}\label{e.choice-N}
    \Upsilon<\exp(\tfrac{\eta N}{10}) \;\;\text{ and } \;\; \tfrac{1}{N}\log\|D\hf^{N}\|_\supnorm< \lambda(\hf)+\tfrac{\eta}{10}.
\end{equation}

This is possible since $\lambda(\hf)=\lim_{N\to +\infty}\frac{1}{N}\log\|D\hf^N\|_\supnorm$.

\proofitem
Fix $N\ge N_1$.

\proofitem
Set ${\eps_Y}:={\eps_Y}(r,{Q})$ as in Yomdin's Theorem ~\ref{t-yomdin}.

\proofitem
Let $\eps,\heps$ be arbitrary in $(0,\eps_Y)$.

\proofitem
Pick an integer $\overline{n}_1=\overline{n}_1(f,\hmu,\eta,\gamma,N,\eps,\heps) > N$ using Proposition~\ref{fibered Bowen-Katok}\eqref{i.Katok} such that for any $n\geq \overline{n}_1$,
 \begin{equation}\label{eq-K-oh}
\tfrac 1 n \log r_{\widehat f}(n,\tfrac{\varepsilon}{2},\tfrac{\widehat \varepsilon}{ 2},\hmu,1-\gamma^2)< \overline h({\hf},\hmu)+\tfrac \eta 4,
 \end{equation}
  \begin{equation}\label{eq-choicen1}
  \log\|D\hf\|_\supnorm^N< \tfrac {n\eta} {10}.
 \end{equation}

\proofitem
Let $n$ be some integer{larger than $\overline{n}_1$}.

\proofitem
Let $\mathcal U_1=\mathcal U_1(f,\eta,N,n,\eps,\heps)\subset\Diff^r(M)$ be a small enough {$C^2$} neighborhood of $f$ in $\Diff^r(M)$ such that the lift $\hg$ of any $g\in\mathcal U_1$ satisfies:
 \begin{equation}\label{eq-choice-g}
 \|D\hg^{N}\|_\supnorm\leq e^{\eta N/10}\|D\hf^{N}\|_\supnorm,\quad \|D\hg\|_\supnorm\leq e^{\eta /10}\|D\hf\|_\supnorm,
 \end{equation}
 and so that every fibered $(n,\frac{\eps}{2},\frac{\heps}{2})$-Bowen ball for $\hf$ is contained in a fibered  $(n,{\eps},{\heps})$-Bowen ball for $\hg$.

By the regularity of Borel measures, there is a compact set $\hK=\hK(f,\hmu,\eta,\gamma,n,\varepsilon,\widehat{\varepsilon})$ with $\hmu(\hK)>1-\gamma^2$. By eq.~\eqref{eq-K-oh}
 there is  a neighborhood $\hU_1$ of $\hK$, such that $\hg(\hU_1)$ is contained in the union of a collection $\mathcal C$ of fibered $(n,\frac{\eps}{2},\frac{\heps}{2})$-Bowen balls for $\hf$ with cardinality at most $\exp(n(\oh(f,\hmu)+\frac{\eta}{4}))$. Passing to a smaller open set  containing $\hK$, we can ensure that $\hmu(\partial\hU_1)=0$.

\medbreak

Suppose $g\in\mathcal U_1$, $Q_{r,N}(g)<{Q}$, and let $\sigma$ be a regular $C^r$ curve with $C^r$  size at most $(\eps,\heps)$.
By construction, we can cover  $\hg(\hsigma[0,1]\cap \hU_1)$ using only the $(n,\frac{\eps}{ 2},\frac{\heps}{ 2})$-Bowen balls for $\hf$ from the collection $\mathcal C$. Every  ball in $\mathcal C$  is contained in some ${(n,\eps,\heps)}$--Bowen ball for $\hg$. Thus,
 $$r_{\hg}(n,\eps,\heps,{\hg(\hsigma[0,1]\cap \hU_1)})\le \exp(n(\overline{h}( {f},\hmu)+{\textstyle \frac \eta 4})).$$

Since $\varepsilon,\widehat{\varepsilon}<{\eps_Y}$, we can apply Yomdin theory in the form of
{Corollary~\ref{coro.reparametrization}. The result is  a family $\cR$ of reparametrizations of $\sigma$ over $\hsigma^{-1}\hU_1$
which is $(C^r,g,N,\eps,\heps)$--admissible up to time $n$, and such that
{
$$|\cR|\leq \Upsilon^{\lceil n/N \rceil}     \|D\hg^N\|_\supnorm^{\frac{\lfloor n/N\rfloor}{r-1}}
\|D\hg\|_\supnorm^\frac{N}{r-1}
\exp(n(\oh(f,\hmu)+{\textstyle \frac \eta 4})).$$
}
{Using~\eqref{e.choice-N}, \eqref{eq-choicen1} and~\eqref{eq-choice-g}, one gets
$|\cR|\leq \exp(n(\oh(f,\hmu)+\frac{\lambda(\hf)}{r-1}+\eta))$.}\hfill$\Box$

\subsection{Proof of Proposition \ref{p.estimate2'}}\label{proof-2'}
{The proof splits into two parts: In steps 0-6 we {select the parameters}
 $\gamma_0,N_0,\varepsilon,\heps,\cU_0,\hU_0,\overline{n}_0$;
In steps 7-11 we build admissible families of reparametrizations as in the statement,
 and estimate their cardinality.}

Fix $r\in[2,\infty)$, $f\in\Diff^r(M)$, ${Q},\eta,\delta>0$, and consider an $\hf$-invariant probability measure $\hmu$.
Let $\Upsilon:=\Upsilon(r)$ given by  Yomdin's Theorem \ref{t-yomdin}. Let
\begin{equation}\label{H-def}
H(t):=t\ln\tfrac{1}{t}+(1-t)\ln\tfrac{1}{1-t}.
\end{equation}

\subsection*{Step 0 (Preliminary choices)}

\proofitem
Choose $\gamma_0=\gamma_0(r,f,\eta)>0$ such that:
\begin{equation}\label{e.choice-gamma}
\forall0<\gamma\le\gamma_0,\quad
3\Upsilon\leq \exp(\tfrac{\eta }{10\gamma}), \ H(4\gamma)<\tfrac \eta {10}, \;  \|D\widehat f\|_\supnorm < \exp\big(\tfrac{\eta}{10\gamma}\big).
\end{equation}

\proofitem Let $0<\gamma<\gamma_0$ be arbitrary.

\proofitem Fix an integer $N_0=N_0(r,f,\gamma,\eta)>\max(\tfrac 1\gamma,\tfrac{10}\eta)$  such that (cf. \eqref{asymp-dil}, \S\ref{ss.size-lift}):
\begin{equation}\label{e.N-zero}
   \forall N\geq N_0,\quad
   \tfrac{1}{N}\log \|D\widehat f^N\|_\supnorm<
   {\lambda(\widehat f)}+\tfrac \eta{10} .
\end{equation}

\proofitem
Let $N\ge N_0$ be arbitrary.

\proofitem Fix ${\eps_Y}={\eps_Y}(r,{Q})$ as in
Yomdin's Theorem \ref{t-yomdin}.

\proofitem
Let $\cV=\cV(f,\eta,N)$ be a {$C^2$} neighborhood of $f$ in $\Diff^r(M)$
such that for $g\in\cV$,
\begin{equation}\label{delta-sharp}
\|D\hg^k\|_\supnorm\leq e^{\eta/10}\|D\hf^k\|_\supnorm\qquad  (k=1,\ldots,N).
\end{equation}

\subsection*{Step 1 (Decomposition of $\hmu$)}
$\hmu$ projects to an $f$-invariant probability measure. Let $\lambda_x^{\pm}:=\lambda^{\pm}(f,x)$ denote  the  Lyapunov exponents. These are well-defined $\mu$-a.e., but since we are assuming nothing on $\hmu$ they could be equal on a set of positive measure. Consider the invariant measurable subset
$$
   M_\#:=\{x\in M \mid \lambda_{x}^-\text{ and }\lambda_{ x}^+\text{ are defined and distinct}\}.
$$
We decompose $\hmu$ as a barycenter of two invariant probability measures:
$$\hmu=a\hmu_\# + (1-a)\hmu_0,$$
where $a:=\mu(M_\#)$,
$a \hmu_\#:={\hmu(\cdot\cap \hpi^{-1}(M_\#))}$ and
$(1-a)\hmu_0:={\hmu(\cdot\cap \hpi^{-1}(M\setminus M_\#))}$.
In the following we assume $a\in (0,1)$.
Indeed, in the special case where $a=0$ (resp. $a=1$), we simply write $\hmu=\hmu_0$ (resp. $\hmu=\hmu_\#$) and the proof of Proposition \ref{p.estimate2'}
becomes simpler and can be easily obtained by adapting the general case.

Let $\mu_0$ and $\mu_{\#}$ be the projections of $\hmu_0,\hmu_{\#}$ to $M$. These are  $f$-invariant measures;
 $\mu_0$--a.e. $x$ has two equal Lyapunov exponents; and  $\mu_\#$--a.e. $x$ has two different Lyapunov exponents.

\subsection*{Step 2 (Compact subsets $\hK_+,\hK_-$ approximating $\hmu_\#$)}
As in section~\ref{ss.exponent}, the Oseledets splitting induces two $\hf$-invariant measurable sections $x\mapsto (x,E^\pm(x))$, $M\to\hM$, defined $\mu_\#$-a.e.

\proofitem
By Lusin's theorem there exists a compact set $K_\#=K_\#(f,\hmu,\gamma,N)$ inside  $M_\#$ such that  $\mu_\#(K_\#)>1-\gamma^2$, and so that  the functions
  $$
     x\mapsto \bigl(x,E^{+}(f^k(x))\bigr)\ ,\ x\mapsto \bigl(x,E^-(f^k(x))\bigr)\qquad  (0\leq k\leq N)
  $$
 {are} continuous on $K_\#$.
By Lemma \ref{Lemma-Lyap}, $\hmu_{\#}$ is carried by
$
\mathrm{graph}(E^+)\cup\mathrm{graph}(E^-)
$. Therefore, the sets
 $$
   \hK^+:=\operatorname{graph}(E^+|_{K_\#}) \text{ and }
   \hK^-:=\operatorname{graph}(E^-|_{K_\#})
 $$
are compact and disjoint, and  $\hmu_\#(\hK^+\cup\hK^-)=\mu_\#(K_\#)>1-\gamma^2$.
We set
 $$
    \hK_\# := \hpi^{-1}(K_\#) = \hK_-\sqcup\hK_+.
 $$
Every $z\in K_\#$ has two lifts: $\hz^+\in\hK^+$, and $\hz^-\in\hK^+$. It is easy to see that  $\hf^k(\widehat{z}^{\pm})=(f^k(z),E^{\pm}(f^k(z)))$.

\subsection*{Step 3 (Control of $N$ iterates starting near $\hK_\#$)} $ $

\proofitem There exist  ${\eps_{*}}={\eps_{*}}(r,f,\eta,N)>0$ and a {$C^2$} neighborhood ${\cU_{*}}={\cU_{*}}(f,\eta,N)$ of $f$ in $\Diff^r(M)$
such that the conclusion of Lemma~\ref{l.length} holds.

\proofitem Let $\heps=\heps(r,f,{Q},\eta,\delta,N,K_\#):=\tfrac 1 {10} \min(\delta,{\eps_Y},
{\varepsilon_{*}},\dist(\hK_+,\hK_-)).$ Note that the latter distance is positive since
$\hK_+$ and $\hK_-$ are disjoint compact sets.

\proofitem
By construction of $K_\#$, there is $\eps=\eps(f,\heps,K_\#,N)\in (0,\heps)$  such that
 if $x,y\in K_\#$ satisfy $d(x,y)\leq 
 \varepsilon$,
then
$$
\forall 0\leq k\leq N,\quad d(\hf^k(\hx^+),\hf^k(\hy^+))<\tfrac \heps 2 \text{ and } \ d(\hf^k(\hx^-),\hf^k(\hy^-))<\tfrac \heps 2.
$$
 By the choice of $\heps$,  if $\hx,\hy\in\hK_\#$ satisfy  $d(\hx,\hy)\leq 
 \heps$, then
 either $\hx=\hx^+, \hy=\hy^+$ or
$\hx=\hx^-, \hy=\hy^-$.
 Consequently, for any $\hx,\hy\in\hM$,
$$
\left.
\begin{array}{c}
\hx,\hy\in \hK_\#\\
d(\hx,\hy)\leq \heps,\;\;
d(x,y)\leq \eps
\end{array}\right\}\Longrightarrow\;\;
\forall 0\leq k\leq N,\quad
d(\hf^k(\hx),\hf^k(\hy))< \tfrac \heps 2.
$$

Below, $B(S,\beta)$ denotes the $\beta$-neighborhood of a subset $S$. 

 \proofitem
Since $\hM$ is compact, there exist ${\delta_{\ast\ast}}={\delta_{\ast\ast}}(f,N,\eps,\heps)>0$ and a {$C^2$} neighborhood ${\cU_{**}}={\cU_{**}}(f,N,\eps,\heps)$ of $f$ in $\Diff^r(M)$ such that
\begin{equation}\label{ss.W-plus}
\left.
\begin{array}{c}
g\in{\cU_{**}}\\
\hx,\hy\in B(\hK_\#,{\delta_{\ast\ast}})\\
d(x,y)\leq \eps,\;\;
d(\hx,\hy)\leq\heps
\end{array}\right\}\Longrightarrow\;\;
\forall 0\leq k\leq N,\quad
d(\hg^k(\hx),\hg^k(\hy))<\heps.
\end{equation}

 \proofitem
Choose an open set $\hW_\#=\hW_\#(\hK_\#,{\delta_{\ast\ast}})$ such that  $\hK_\#\subset \hW_\#\subset B(\hK_\#,{\delta_{\ast\ast}})$ and $\hmu(\partial\hW_\#)=0$. By \eqref{ss.W-plus}, if $\sigma$  is a regular curve
with $C^r$ size at most $(\varepsilon,\widehat \varepsilon)$, then for
all $g\in{\cU_{**}}$,
\begin{equation}\label{e.control-diameter}
\forall 0\leq k\leq N\quad \diam(\widehat g^k(\widehat \sigma\cap \hW_\#))<\widehat \varepsilon.
\end{equation}

\subsection*{Step 4 (Control of $n_*$ iterates starting near $\hmu_0$)}
Recall that $\mu_0$ has equal Lyapunov exponents almost everywhere. By Ruelle's inequality $\mu_0$ must have zero entropy. It follows that $\oh(f,\mu_0)=0$.

\proofitem
By Proposition~\ref{fibered Bowen-Katok}, $\displaystyle\lim_{\overline{\gamma}\to 1}\lim_{\overline{\eps}\to 0}\limsup_{n\to\infty}\frac{1}{n}\log r_{\hf}(n,\overline{\eps},\tfrac{\heps}{2},\hmu_0,\overline{\gamma})=0$.
If $0<\overline{\eps}<\frac{\eps}{2}$ and $\gamma<\overline{\gamma}<1$, then
$r_{\hf}(n,\frac{\eps}{2},\frac{\heps}{2},\hmu,{\gamma})\leq r_{\hf}(n,\overline{\eps},\frac{\heps}{2},\hmu_0,\overline{\gamma})$. It follows that
$$
\limsup_{n\to\infty}\frac{1}{n}\log r_{\hf}(n,\tfrac{\eps}{2},\tfrac{\heps}{2},\hmu_0,\overline{\gamma})=0.
$$
So we can find a large integer $n_*=n_*(f,\hmu,\eta,\gamma,\varepsilon,\widehat \varepsilon,N)$ that is a multiple of $N$
and a compact set $\hK_0:=\hK_0(\hf,\hmu,\eta,\gamma,\eps,\heps,n_*)\subset \hM$ with $\hmu_0(\hK_0)>1-\gamma^2$, s.t.
\begin{equation*}
\tfrac 1 {n_*} \log r_{\widehat f}(n_*,\tfrac\varepsilon 2, \tfrac{\widehat \varepsilon}2,{\hf}(\widehat K_{0}))< \tfrac \eta{10}.
\end{equation*}

\proofitem
By continuity, we can choose a neighborhood $\hW_0=\hW_0(f,\eta,\eps,\heps,n_*,\hK_0)$ of $\hK_0$ with $\hmu(\partial\hW_0)=0$, and a $C^2$ neighborhood $\cU_{***}=\cU_{***}(f,\eta,\eps,\heps,n_*,\hK_0)$ of $f$ in $\Diff^r(M)$ so that:
 \begin{equation}\label{e.growth}
   \forall g\in\cU_{***},\quad
   \tfrac 1 {n_*} \log r_{\hg}(n_*,\varepsilon, \widehat \varepsilon,\hg(\widehat W_{0}))< \tfrac \eta{10}.
 \end{equation}

\subsection*{Step 5 ($\hU_0$ and a decomposition of long typical orbit segments)}
Let $$\hU_0:=\hW_0\cup\hW_\#.$$
Then  $\hU_0=\hU_0(r,f,\hmu,{Q},\eta,\gamma,\delta,N)$, $\hmu(\hU_0)> 1-\gamma^2$  and $\hmu(\partial \hU_0)=0$.
\begin{claim}\label{c.decomp}
Suppose $n>{n_{*}/\gamma}$.
Any orbit segment $(\hx,\hg(\hx)\dots,\widehat g^{n-1}(\hx))$ which spends a proportion of time
larger than $1-\gamma$ in $\hU_0$
can be decomposed into:
\begin{enumerate}[(a)]
\item orbit segments of length $n_*$ with initial point in $\hW_0$,
\item orbit segments of length $N$ with initial point in $\hW_\#$,
\item orbit segments of length $1$, of total number less than $2\gamma n$.
\end{enumerate}
\end{claim}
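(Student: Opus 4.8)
The plan is a greedy left-to-right scan of the orbit segment. Write $B:=\{0\le j<n:\hg^j(\hx)\notin\hU_0\}$; the assumption that the orbit spends a proportion of time larger than $1-\gamma$ in $\hU_0$ means precisely that $\Card(B)<\gamma n$. I would build the decomposition inductively, maintaining a current position $i\in\{0,1,\dots,n\}$, starting at $i=0$ and stopping at $i=n$: if $\hg^i(\hx)\in\hW_0$ and $i+n_*\le n$, append a block of type (a) of length $n_*$ and replace $i$ by $i+n_*$; otherwise, if $\hg^i(\hx)\in\hW_\#$ and $i+N\le n$, append a block of type (b) of length $N$ and replace $i$ by $i+N$; otherwise append a block of type (c) of length $1$ and replace $i$ by $i+1$. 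Each step increases $i$ by a positive integer, and the two long options are used only when the block fits inside $\{0,\dots,n-1\}$, so the procedure terminates and produces an exact partition of the orbit segment into blocks of types (a), (b), (c) with the prescribed initial points.

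It then remains to bound the number of type (c) blocks. I claim that if a type (c) block is produced at position $i$, then either $i\in B$, or else $\hg^i(\hx)\in\hU_0$ and $i>n-n_*$. Indeed, assume $i\notin B$, so that $\hg^i(\hx)\in\hW_0\cup\hW_\#$. If $\hg^i(\hx)\in\hW_\#$, then option (b) was rejected, so $i+N>n$; since $n_*$ is a positive multiple of $N$, we have $n_*\ge N$, hence $i>n-N\ge n-n_*$. If instead $\hg^i(\hx)\in\hW_0\setminus\hW_\#$, then option (a) was rejected, so $i+n_*>n$, i.e.\ $i>n-n_*$. In both cases $i>n-n_*$, which proves the claim. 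Consequently the number of type (c) blocks is at most $\Card(B)$ plus the number of integers in the interval $(n-n_*,n)$, hence less than $\gamma n+n_*$; and the hypothesis $n>n_*/\gamma$ gives $n_*<\gamma n$, so this is less than $2\gamma n$, as required.

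This argument is purely combinatorial and presents no real obstacle. The only two ingredients that are not entirely formal are the inequality $n_*\ge N$, which lets me absorb the right-endpoint overflow of both $\hW_0$-type and $\hW_\#$-type positions into the single count of integers in $(n-n_*,n)$, and the standing relation $n>n_*/\gamma$, which converts that count into the fraction $\gamma n$ of the total length; both are guaranteed by the choice of $n_*$ in Step 4 (as a multiple of $N$) and by the hypothesis of the claim.
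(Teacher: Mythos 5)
Your proof is correct and takes essentially the same approach as the paper: a greedy left-to-right scan that prefers the long blocks when they fit, followed by a count showing that type (c) positions either lie in the small bad set or within $n_*$ of the right endpoint. The only cosmetic difference is the tie-breaking rule when $\hg^i(\hx)\in\hW_0\cap\hW_\#$ (you favor the length-$n_*$ block while the paper favors the length-$N$ block), but both produce decompositions meeting the claim's requirements and yield the same $<2\gamma n$ bound.
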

\noindent
($\hW_0$ and $\hW_\#$ are not necessarily disjoint, so the  decomposition may not be unique.)

\begin{proof}
A decomposition as in the statement is completely characterized by the increasing sequence of times
$(n_0,n_1,\dots,n_\ell)$, where  $\hg^{n_i}(\hx)$ is the initial point of the $i$-th segment. (So  $n_0=0$, $n_\ell=n$.)

We set $n_0=0$ and define the sequence inductively.
Assuming that $n_i<n$ has already been defined, we set
\begin{enumerate}[(a)]
\item $n_{i+1}:=n_i+n_*$ if $n_i+n_*\leq n$ and $\hg^{n_i}(\hx)\in\hW_0\setminus \hW_\#$,
\item $n_{i+1}:=n_i+N$ if $n_i+N\leq n$ and $\hg^{n_i}(\hx)\in\hW_\#$,
\item $n_{i+1}:=n_i+1$ otherwise.
\end{enumerate}
We stop when $n_{i+1}=n$.

{Since $n_*\geq N$,} the times $n_i$ such that
$\widehat g^{n_i}(\hx)\in \hU_0$ but which are not associated to case (a) or (b) must
satisfy ${n_i}>n-n_*$. Since {$n>n_*/\gamma$} and since $(\hx,\dots,\widehat g^{n-1}(\hx))$ spends a proportion of time
larger than $1-\gamma$ in $\widehat U_0$, the set of times $n_i$ corresponding to case (c)
has size smaller than $2\gamma n$.
\end{proof}

The sequence of times $\theta:= (n_0,n_1,\dots,n_{\ell})$
obtained in the previous claim is called
\emph{type of a decomposition}. Recall that $H(t)=t\ln \frac{1}{t}+(1-t)\ln\frac{1}{1-t}$.
\begin{claim}\label{c.bound-types}
There exists $n_H:=n_H(\gamma)$ such that for all  $n>n_H$, the number of possible types $\theta$ is less than $\exp(H(4\gamma)n)$.
\end{claim}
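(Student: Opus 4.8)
The plan is to bound the number of types $\theta=(n_0,\dots,n_\ell)$ by a binomial coefficient and then apply the standard entropy estimate. A type is an increasing sequence from $0$ to $n$ whose consecutive gaps belong to $\{1,N,n_*\}$. From Claim \ref{c.decomp}, the number of gaps equal to $1$ (case (c)) is at most $2\gamma n$; write $c$ for this number, so $0\le c\le 2\gamma n$. The remaining $\ell-c$ gaps are each at least $N$, hence $\ell-c\le n/N<\gamma n$ (using $N>1/\gamma$ from the choice of $N_0$), so $\ell\le c+\gamma n\le 3\gamma n$. A type is therefore completely determined by choosing which of the $\ell\le 3\gamma n$ positions among the blocks produce the ``case (c)'' gaps together with the values of the non-unit gaps; but more simply, a type is determined by the subset $\{n_0,n_1,\dots,n_{\ell-1}\}\subset\{0,1,\dots,n-1\}$ of left endpoints, which has cardinality $\ell\le 3\gamma n$.

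First I would fix $n_H(\gamma)$ large enough that $3\gamma n$ is an integer bound we can feed to the binomial estimate and that $4\gamma n<n/2$ (so that the binomial coefficient $\binom{n}{k}$ is increasing in $k$ for $k\le 4\gamma n$; this needs $\gamma<\gamma_0\le 1/8$, which holds by the choice of $\gamma_0$). Then the number of types is at most
\begin{equation*}
\sum_{k=0}^{\lfloor 3\gamma n\rfloor}\binom{n}{k}\ \le\ (3\gamma n+1)\binom{n}{\lfloor 3\gamma n\rfloor}\ \le\ (3\gamma n+1)\binom{n}{\lfloor 4\gamma n\rfloor}.
\end{equation*}
Now I would invoke the classical bound $\binom{n}{\lfloor tn\rfloor}\le \exp(H(t)n)$ valid for $t\le 1/2$ (with $H$ as defined in \eqref{H-def}, up to the usual $\log$ versus $\ln$ normalization which is consistent here since $H$ is written with $\ln$), applied with $t=4\gamma<1/2$. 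This gives that the number of types is at most $(3\gamma n+1)\exp(H(4\gamma)n)$.

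Finally, the polynomial prefactor $(3\gamma n+1)$ is absorbed: since $H(4\gamma)>0$ for $\gamma>0$, there is $n_H=n_H(\gamma)$ so that $(3\gamma n+1)\le \exp\big((H(4\gamma')-H(4\gamma))n\big)$ is not quite the right phrasing — instead I would simply note that the statement as used later only needs the bound $\exp(H(4\gamma)n)$ with $H(4\gamma)<\eta/10$ (guaranteed by \eqref{e.choice-gamma}), so I would either state the claim with a harmless extra constant factor, or, cleaner, replace $H(4\gamma)$ by $H(4\gamma)+o(1)$ and observe $(3\gamma n+1)\le e^{\epsilon n}$ for any $\epsilon>0$ once $n$ is large; choosing $n_H(\gamma)$ so that $(3\gamma n+1)\le e^{n/10}$ and noting the later use \eqref{e.choice-gamma} already has slack.

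The only mild obstacle is bookkeeping: making sure the constant $n_H$ depends only on $\gamma$ (it does — $N$, $n_*$ etc. do not enter the counting, only the structural fact that non-unit gaps are $\ge N\ge 1/\gamma$ and that the number of unit gaps is $\le 2\gamma n$, both of which come from Claim \ref{c.decomp} and the choice of $N_0$), and handling the polynomial prefactor cleanly so the displayed bound is exactly $\exp(H(4\gamma)n)$. If one wants the bound literally without a prefactor, replace the counting of $\ell\le 3\gamma n$ endpoints with a direct count: a type is determined by the word in $\{1,N,n_*\}^\ell$, and $\sum_{\ell\le 3\gamma n} 3^\ell\le 3^{3\gamma n+1}$; combined with the requirement $3^{3\gamma}\le e^{H(4\gamma)}$ — which may fail for tiny $\gamma$ — this is worse, so the binomial route is preferable and the prefactor is best dealt with by enlarging $n_H$.
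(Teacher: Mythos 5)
Your counting of types — gaps of size $\ge N$ or $\ge n_*$ number at most $\gamma n$ since $N,n_*>1/\gamma$, gaps of size $1$ number at most $2\gamma n$ by Claim \ref{c.decomp}, hence $\ell\le 3\gamma n$ and the type is determined by a subset of $\{0,\dots,n-1\}$ of size $\le 3\gamma n$ — matches the paper. The gap is in the final absorption step, and your own attempted repairs go in the wrong direction.

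You pass to the binomial coefficient at index $\lfloor 4\gamma n\rfloor$ before applying the entropy bound, which gives $(3\gamma n+1)\exp\bigl(H(4\gamma)n\bigr)$; this is \emph{not} $\le \exp\bigl(H(4\gamma)n\bigr)$, and none of your suggested fixes close the gap: weakening the claim ("a harmless extra constant factor," "replace $H(4\gamma)$ by $H(4\gamma)+o(1)$," "the later use has slack") is not an option — the claim must be proved as stated. The paper's proof deliberately stops at $3\gamma$: by de Moivre/Stirling,
\begin{equation*}
\sum_{k\le \lfloor 3\gamma n\rfloor}\binom{n}{k}\ \le\ 3\gamma n\binom{n}{\lfloor 3\gamma n\rfloor}\ \le\ \exp\bigl(H(3\gamma)n+o(n)\bigr),
\end{equation*}
and then, since $H$ is strictly increasing on $(0,1/2)$ so that $H(3\gamma)<H(4\gamma)$, the polynomial prefactor (hidden in the $o(n)$) is swallowed by the strictly positive margin $\bigl(H(4\gamma)-H(3\gamma)\bigr)n$ once $n\ge n_H(\gamma)$. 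The moral: you want the binomial estimate at the index where the count actually lives ($3\gamma n$), and the looser $4\gamma$ in the claim is precisely the slack that lets you take $n_H$ depending only on $\gamma$. Replacing your display with this sequence of inequalities completes the proof.
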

\begin{proof}
By our choices of $N,N_0$ and $n_\ast$, we have $n_*,N>1/\gamma$. Hence there can be at most  $\gamma n$ times $n_i$ such that $n_{i+1}-n_i\in \{n_\ast,N\}$. Since there are also at most $2\gamma n$ times $n_i$ such that $n_{i+1}-n_i=1$, we must have
$
\ell\leq \lfloor 3\gamma n\rfloor.
$

The number of types is thus bounded by $\sum_{\ell=1}^{\lfloor 3\gamma n\rfloor}{{n} \choose \ell-1}$. Since $3\gamma<\frac{1}{2}$, this is less than
$3\gamma n {n\choose\lfloor{3\gamma n}\rfloor}$, which by de Moivre's approximation is less than $\exp[n H(3\gamma)+o(n)]$. Since $4\gamma<\frac{1}{2}$, we have $H(3\gamma)<H(4\gamma)$, and the claim follows.
\end{proof}

\subsection*{Step 6 (Definition of $\cU_0,\overline{n}_0$)}
We fix the last parameters of our construction.
Recall the  {$C^2$} neighborhoods of $f$ in $\Diff^r(M)$ introduced in Lemma~\ref{l.length} and eqs. \eqref{delta-sharp}, \eqref{ss.W-plus}, \eqref{e.growth}.

\proofitem
Let 
$\cU_0:=\cU_0(r,f,\hmu,{Q},\eta,\gamma,\eps,\heps,n_*,N):=\cV\cap\cU_{*}\cap\cU_{**}\cap\cU_{***}$.

\proofitem
Define $\overline{n}_0=\overline{n}_0(f,\hmu,\eta,\gamma,\delta,N):=\max\{{n_{*}/\gamma},n_H\}$.

\bigskip

\subsection*{Step 7 (An inductive scheme)}
{Now} we fix some $g\in\cU_0$ with $Q_{r,N}(g)<{Q}$,
a regular $C^r$ curve  $\sigma$ with $C^r$ size at most $(\varepsilon,\widehat \varepsilon)$ and $n>\overline{n}_0$.
We need to {bound} the minimal cardinality of a family $\cR_n$ of reparametrizations of $\sigma$
which are $(C^r,g, N,\varepsilon, \widehat \varepsilon)$-admissible up to time $n$, over the set
$$T=\widehat \sigma^{-1}\bigg\{\hx\colon
\text{ $(\hx,\hg(\hx),\dots,\widehat g^{n-1}(\hx))$  is  $\textstyle\frac \eta{10}$-neutral and
$\frac{1}{n}\sum_{j=0}^{n-1}\delta_{\hg^j(\hx)}(\hU_0)\geq 1-\gamma$}\bigg\}.$$

For each type $\theta=(n_0,\ldots,n_\ell)$, we introduce the corresponding subset
$$T_\theta:=T\cap \widehat \sigma^{-1}\bigg\{\hx\colon
\text{ $(\hx,\hg(\hx),\dots,\widehat g^{n-1}(\hx))$ has type $\theta$}\bigg\}.$$
Then $T$ is the union of $T_\theta$ over all possible type $\theta$.

Fixed some type $\theta=(n_0,\ldots,n_\ell)$. We will build by  induction a family
$\cR_{n_i}^\theta$ of reparametrizations $\psi$ of  $\sigma$ over $T_\theta$ satisfying the following properties:
 \begin{enumerate}[\quad(i)]
  \item admissibility: $\cR_{n_i}^\theta\text{ is $(C^r, g,N,\varepsilon, \widehat \varepsilon)$-admissible up to time $n_i$};$
  \item small cardinality: if $i\geq 1$,
\begin{equation*}
\begin{aligned}
|\cR^\theta_{n_i}|
    &\le \exp \left((\tfrac{\lambda(\hf)}{r-1} + \tfrac {7\eta}{10})n_i\right)|\cR^\theta_{n_{i-1}}| \qquad \text{when }n_{i}-n_{i-1}\geq N,\\
|\cR^\theta_{n_i}|    & \leq \exp(\eta/\gamma) \; |\cR^\theta_{n_{i-1}}| \qquad\;\quad\text{otherwise;}
    \end{aligned}
\end{equation*}
  \item small length: for each $\psi\in \cR_{n_i}^\theta$ and
any $(x,E)\in\hsigma\circ\psi([0,1])$,
\begin{equation*}
\operatorname{Length}(g^{n_i}\circ\sigma\circ \psi)< \varepsilon e^{-\frac {\eta} {10} n_i}\|Dg^{n_i}_x|_E\|.
\end{equation*}
 \end{enumerate}
At the end of the construction,
one obtains a family $\cR_n^\theta:=\cR_{n_\ell}^\theta$ over $T_\theta$ which is admissible up to time $n$. Then one can take the union over all $\theta$ and finish the construction.

We begin the construction by defining $\cR_{0}^\theta:=\{\operatorname{Id}\}$: This meets our requirements because $n_0=0$ and  $\sigma$ has $C^r$ size at most
$(\varepsilon,\widehat \varepsilon)$.

\smallskip

 Now we assume by induction that  $\cR_{n_i}^\theta$ has been constructed, and  we build $\cR_{n_{i+1}}^\theta$.  The construction uses the concatenation procedure described in Lemma \ref{l.concatenation}.
For each $\psi\in\cR^\theta_{n_i}$ we have to build a family $\cR_\psi$ of reparametrizations of the curve
$g^{n_i}\circ \sigma \circ \psi$ over $\psi^{-1}(T_\theta)$ with the following properties:
 \begin{enumerate}[\quad(i')]
   \item $\cR_\psi$ is  $(C^r,g,N,\eps,\heps)$-admissible up to time $n_{i+1}-n_i$,
  \item $\log |\cR_\psi|$ is bounded by $(\tfrac{\lambda(\hf)}{r-1} + \tfrac {7\eta}{10} )(n_{i+1}-n_i)$ if $n_{i+1}-n_i\geq N$ and by  $\tfrac\eta\gamma$ otherwise,
  \item for each $\varphi\in \cR_\psi$ and $(x,E)\in\hsigma\circ\psi\circ \varphi([0,1])$,
$$
\operatorname{Length}(g^{n_{i+1}}\circ\sigma\circ \psi\circ \varphi)< \varepsilon e^{-\frac {\eta} {10} n_{i+1}}\|Dg^{n_{i+1}}_x|_E\|.$$
\end{enumerate}
The family $\cR^\theta_{n_{i+1}}:=\{\psi\circ \varphi,\; \psi\in \cR^\theta_{n_i}\; \varphi\in \cR_\psi\}$ then satisfies (i--iii) above.

\proofitem Given a type $\theta=(n_0,\ldots,n_\ell)$, an integer $i\in\{0,\dots,\ell-1\}$ and a reparametrization
$\psi\in \cR^\theta_{n_i}$, the construction of the families $\cR_\psi$ depends on which of the following cases from Claim \ref{c.decomp} holds for $n_i$:
\begin{enumerate}[\quad C{a}se (a):]
\item $\hg^{n_i}(\hx)\in \hW_0$ and $n_{i+1}-n_i=n_\ast$.
\item $\hg^{n_i}(\hx)\in\hW_\#$ and $n_{i+1}-n_i=N$.
\item $n_{i+1}-n_i=1$.
\end{enumerate}
The three cases are discussed in  steps 8-10 below.

In order to simplify the notations,
we set $$\sigma':=g^{n_i}\circ\sigma\circ\psi \text{ and } T':=\psi^{-1}(T_\theta).$$ Note that $\sigma'$ has $C^r$ size at most $(\eps,\heps)$;
moreover the induction assumption (iii) gives for each $(x,E)\in \hg^{-n_i}\circ \hsigma'[0,1]$,
\begin{equation}\label{e.length}
\operatorname{Length}(\sigma')< \varepsilon e^{-\frac {\eta} {10} n_i}\|Dg^{n_i}_x|_E\|.
\end{equation}

\subsection*{Step 8 (Case (a)):} In this case $n_{i+1}-n_i=n_*$ and $\hsigma'(T')\subset\hW_0$.  By eq.~\eqref{e.growth},
 \begin{equation}\label{eq-rhW0}
    r_{\hg}(n_*,\eps,\heps,\hg\circ\hsigma'(T')) \leq e^{\eta n_*/10}.
 \end{equation}
The integer $n_*$ is a multiple of $N$.
Corollary~\ref{coro.reparametrization} of Yomdin's theorem yields a family $\cR^0_\psi$ of reparametrizations of $\sigma'$ over $T'$ which is $(C^r,g,N,\eps,\heps)$-admissible up to time $n_*$ and with cardinality:
 $$\begin{aligned}
    |\cR^0_\psi| &\leq \Upsilon^{n_*/N} \|D\hg^N\|_\supnorm^{n_*/(r-1)N} r_{\hg}(n_*,\eps,\heps,\hg\circ\hsigma'(T')). \end{aligned}$$
Combining with~\eqref{e.choice-gamma}, \eqref{e.N-zero}, \eqref{delta-sharp}, \eqref{eq-rhW0}
and $N>\frac 1 \gamma$, we get $\log |\cR^0_\psi| \leq (\tfrac{\lambda(\hf)}{r-1} + \tfrac {4\eta} {10})n_*$.

The conclusion of Lemma~\ref{l.length}, together with~\eqref{e.length} gives that, for each $\varphi\in\cR^0_\psi$
and $(x,E)\in \hg^{-n_i}\circ\hsigma'([0,1])$,
\begin{align}
&\mathrm{Length}(g^{n_*}\circ\sigma'\circ \varphi)<
  \varepsilon e^{-\frac{\eta}{10} n_{i+1}}\|D_xg^{n_{i+1}}|_E\|  \cdot e^{\frac{\eta}{5} n_*}.\label{length-est-a}
 \end{align}
In order to compensate for the factor $e^{\frac{\eta}{5} n_*}$, we  subdivide $[0,1]$ into
intervals $I_1,\dots I_m$ with length less or equal to $e^{-\frac{\eta}{5} n_*}$.
Since $n_\ast\geq N\geq N_0>\tfrac{10}\eta$, $e^{\frac{\eta}{10}n_\ast}>2$, whence
$$
  m\leq \lceil e^{\frac{\eta}{5} n_*}\rceil {< 2 e^{\frac{\eta}{5} n_*}} <e^{\frac{3\eta}{10} n_*}.
$$
Let  $\chi_j\colon [0,1]\to I_j$ be affine bijections, and let
$$\cR_\psi:=\{\varphi\circ \chi_j, \: \varphi\in {\cR^0_\psi}, j=1,\dots,m\}.$$
The cardinality of $\log |\cR_\psi|$ is thus bounded as required by $(\tfrac{\lambda(\hf)}{r-1} + \tfrac {7\eta} {10})n_*$, so (ii') holds.
Property (iii') follows from \eqref{length-est-a} and the choice of $m$ and $\chi_j$, and (i') follows from Lemma \ref{l.concatenation}.

\subsection*{Step 9 (Case (b)):} In this case $n_{i+1}-n_i=N$ and $\hsigma'(T')\subset\hW_\#$.
We combine Lemma~\ref{l.length} with~\eqref{e.length} and get that, for each $(x,E)\in \hg^{-n_i}\circ\hsigma'([0,1])$,
\begin{align*}
&\mathrm{Length}(g^{N}\circ\sigma')<
  \varepsilon e^{-\frac{\eta}{10} n_{i+1}}\|D_xg^{n_{i+1}}|_E\|  \cdot e^{\frac{\eta}{5} N}.
 \end{align*}
One can thus subdivide $[0,1]$ into intervals $I_1,\dots,I_m$ with $m\leq {\lceil e^{\frac{\eta}{5} N} \rceil \leq e^{\frac{3\eta}{10}N}}$,
such that:
\begin{equation}\label{e.check-length}
\mathrm{Length}(g^{N}\circ\sigma'(I_j))<
  \varepsilon e^{-\frac{\eta}{10} n_{i+1}}\|D_xg^{n_{i+1}}|_E\|\quad \text{for }(x,E)\in \hg^{-n_i}\circ\hsigma'([0,1]).
  \end{equation}
We can focus on the intervals $I_j$ such that $I_j\cap T'\ne\emptyset$.
Fixing such an $I_j$, there exists $\hx=(x,E)\in (\hg^{-n_i}\circ\hsigma')(I_j)$,
such that $(\hx,\hg(\hx),\dots,\widehat g^{{n_{i+1}}-1}(\hx))$ is $\frac \eta {10}$-neutral.  In particular, $\|D_xg^{n_{i+1}}|_E\|\leq e^{\frac{\eta}{10}n_{i+1}}$, whence
 $$
  \mathrm{Length}(g^{N}\circ\sigma'(I_j))< \eps e^{-\frac{\eta}{10}n_{i+1}}e^{\frac{\eta}{10} n_{i+1}}=\eps.
 $$
On the other hand since $\hsigma'(T')\subset \hW_\#$, eq.~\eqref{e.control-diameter} implies:
 $$
   \diam(\widehat g^{N}\circ\widehat \sigma' (T'\cap I_j))<\heps.
   $$
We have shown that the image $\hg^N\circ \hsigma'(T'\cap I_j)$ is contained in a $(\eps,\heps)$-ball.
We can apply Yomdin's Theorem~\ref{t-yomdin} and obtain a family $\cR_j$  of reparametrizations $\varphi$ of $\sigma'$ over $T'\cap I_j$
with cardinality at most $\Upsilon\|D\hg^N\|_\supnorm^{1/(r-1)}$ such that each curve $g^N\circ \sigma'\circ \varphi$ has
$C^r$ size at most $(\eps,\heps)$. Consequently, $\cR_j$ is $(C^r,g,N,\eps,\heps)$-admissible up to time $N$.

The union  $   \cR_\psi:=\bigcup_j \cR_j  $ is thus a family of reparametrizations $\varphi$ of $\sigma'$ over $T'$ which is
$(C^r,g,N,\eps,\heps)$-admissible up to time $N$.
By~\eqref{e.check-length} they satisfy the bound (iii') of the induction scheme (Step 7) on the length of $\sigma'\circ \varphi$.
Combining the bounds on $m$ and $|\cR_j|$, one bounds the cardinality of $\cR_\psi$ by
   $$
      |\cR_\psi|<  \Upsilon e^{{\frac{3\eta}{10}} N} \|D\hg^N\|_\supnorm^{1/(r-1)}.
   $$
   which by~\eqref{e.choice-gamma}, \eqref{delta-sharp} and $N>\frac 1 \gamma$, is bounded by $\exp(\frac{\lambda(\hf)}{r-1} N+{\frac{6\eta}{10}}N)$ as required.

\subsection*{Step 10 (Case (c)):}  In this case $n_{i+1}-n_i=1$. 
By our assumptions on $\sigma'$, we have $\text{Length}(\hsigma')<\heps$ and $\text{Length}(\sigma')<\varepsilon e^{-\tfrac \eta {10} n_i} \|Dg_x^{n_i}|_E\|$ for each $(x,E)\in \hg^{-n_i}\circ\hsigma' [0,1]$.
One can thus decompose $[0,1]$ into intervals $I_1,\dots, I_m$
with $m\leq \lceil \|D\hg\|_\supnorm \rceil+\lceil e^{\frac\eta{10}}\|D g\|_\supnorm \rceil-1$ such that:
\begin{itemize}
\item[(a)] $\text{Length}(g\circ \sigma'(I_j))<\varepsilon e^{-\tfrac \eta {10} n_{i+1}} \|Dg_x^{n_{i+1}}|_E\|$ for each $(x,E)\in \hg^{-n_i}\circ\hsigma' [0,1]$,
\item[(b)] $\text{Length}(\hg\circ \hsigma'(I_j))<\heps$.
\end{itemize}
In particular, for each $1\leq j\leq m$, the image $\hg\circ \hsigma'(I_j)$ is contained in a $(\eps,\heps)$-ball.
We can thus apply Yomdin's Theorem \ref{t-yomdin} and obtain a family $\cR_j$  of reparametrizations  of $\sigma'$ over $I_j$
which is $(C^r,g,1,\eps,\heps)$-admissible up to time $1$ and with cardinality at most $\Upsilon\|D\hg\|_\supnorm^{1/(r-1)}$.

The union  $   \cR_\psi:=\bigcup_j \cR_j  $ is thus a family of reparametrizations $\varphi$ of $\sigma'$ over $[0,1]$ which is
$(C^r,g,N,\eps,\heps)$-admissible up to time $1$.
By property (a) above they satisfy the bound (iii') of the induction scheme (Step 7) on the length of $\sigma'\circ \varphi$.
Note that $m\leq 3 e^{\frac{\eta}{10}}\|D\hg\|_\supnorm^{1/(r-1)}$, hence
$\cR_\psi$ has cardinality smaller than
   $$
      |\cR_\psi|< 3\Upsilon e^{\frac{\eta}{10}}\|D\hg\|_\supnorm^{1+1/(r-1)},
   $$
   which by~\eqref{e.choice-gamma} and \eqref{delta-sharp}, is bounded by $\exp(\eta/\gamma)$.

\subsection*{Step 11 (Completion of the proof)}
Steps 7--10 provide the construction of the family $\cR_{n}^\theta$ for each type $\theta$.
The inductive bounds (ii) for $|\cR_{n_{i}}^\theta|/|{\cR_{n_{i-1}}^\theta}|$ imply
$$|\cR_n^\theta|\leq \exp\bigg(\frac{\lambda(\widehat f)}{r-1}n+\frac {7\eta} {10} n +\frac{\eta}{\gamma}A_c(\theta) \bigg),$$
where $A_c(\theta)$ is the number of times $n_i$ belongs to case (c) for the type $\theta$.

Let $\cR_n$ denote the union of $\cR_n^\theta$ over all possible types $\theta$. This is a family of reparametrizations of $\sigma$ over $T=\bigcup_\theta T_\theta$, which is
$(C^r,g,N,\eps,\heps)$-admissible up to time $n$.

Since $A_c(\theta)\leq 2\gamma n$ for all $\theta$ (by Claim~\ref{c.decomp}), since  the number of types $\theta$ is bounded by $\exp(H(4\gamma)n)$
(by Claim~\ref{c.bound-types}), and since $H(4\gamma)<\frac{\eta}{10}$ (by our choice of $\gamma$, see~\eqref{e.choice-gamma}), this gives
\begin{equation*}
\begin{split}
|\cR_n|&\leq \exp\bigg(\frac{\lambda(\widehat f)}{r-1}n+\frac {7\eta} {10} n +\frac{\eta}{10\gamma}\cdot 2\gamma n + H(4\gamma)n \bigg)\\
&\leq \exp\bigg(\frac{\lambda(\widehat f)}{r-1}n+\frac {7\eta} {10} n +\frac{\eta}{5}n + \frac \eta {10} n \bigg)= \exp\bigg(\frac{\lambda(\widehat f)}{r-1}n+\eta n \bigg).
\end{split}
\end{equation*}
This concludes the proof of Proposition~\ref{p.estimate2'}. \qed

\section{The neutral decomposition}
\label{s.neutral}
Let $f$ be a homeomorphism on a compact metric space $X$.
We denote the point mass measure at $x\in X$ by $\delta_x$. Given   ${\mathfrak N}\subset\NN$, let  $$
      \mu^{\mathfrak N}_{x,n} := \frac1n \sum_{j\in [0,n)\cap {\mathfrak N}} \delta_{f^j(x)}.
 $$
The \weakstar\  limit points of $(\mu^{\mathfrak N}_{x,n})_{n\geq 1}$ are called  the \emph{${\mathfrak N}$-empirical measures} of $x$.

\begin{definition}\label{d.neutralblock}
Suppose   $\varphi\colon X\to \RR$ is continuous, $\alpha>0$ and $L\geq 1$.
An \emph{$(\alpha,L)$-neutral block} of $(x,f,\varphi)$ is an interval of integers $(n_0,n_0+1,\ldots,n_1-1)$ s.t.
\begin{itemize}
\item $n_1-n_0\geq L$, and
\item $\varphi(f^{n_0}(x))+\varphi(f^{n_0+1}(x))+\dots+\varphi(f^{n-1}(x))\leq \alpha\cdot(n-n_0)$ for all $n_0< n \leq n_1$.
\end{itemize}
We denote by $\Neutral_{\alpha,L}(x,f,\varphi)$ the union of the $(\alpha,L)$-neutral blocks of $(x,f,\varphi)$.
\end{definition}
Any interval of integers which is a union of two neutral blocks is still a neutral block.
Therefore, if $\liminf\limits_{n\to\infty}\frac1{n}\sum_{k=0}^{n-1}\varphi(f^k(x)), \liminf\limits_{n\to\infty}\frac{1}{n}\sum_{k=-n}^{-1}{\varphi}(f^{k}(x))>\alpha$, then  $\Neutral_{\alpha,L}(x,f,\varphi)$ is a disjoint union of (finite)  maximal neutral blocks.

\begin{proposition}\label{p.neutral-block}
Let  $f,f_1,f_2,\dots$ be homeomorphisms of a compact metric space $X$,
and let $\varphi,\varphi_1,\varphi_2,\dots$ be continuous functions on $X$ such that $f_k\to f$ and $\varphi_k\to \varphi$ uniformly.
For each $k$, let $\nu_k$ be an ergodic probability for $f_k$ such that $\int\varphi_k d\nu_k\geq 0$.
Then there exist a subsequence $(\nu_{k_i})$ and (positive) measures $m_0,m_1$ such that:
\begin{enumerate}[(i)]
\item Both $m_0$ and $m_1$ are $f$-invariant.
\item\label{item2} The subsequence $(\nu_{k_i})$ converges \weakstar\  to $m_0+m_1$.
\item\label{item3} For any neighborhoods $V=(V_0,V_1)$ of $m_0,m_1$, if $\alpha<\alpha_*(V)$, $L\geq L_*(V)$, $i\geq i_*(V,\alpha,L)$,  then for $\nu_{k_i}$-a.e. $x$,
the $\Neutral_{\alpha,L}(x,f_{k_i},\varphi_{k_i})$-empirical measures belong to $V_0$, and
the $(\NN\setminus \Neutral_{\alpha,L}(x,f_{k_i},\varphi_{k_i}))$-empirical measures belong to $V_1$.
\item $\int \varphi dm_0 = 0$.
\item For $m_1$-almost every point $x$, the limit of
$\frac 1 n \sum_{j=0}^{n-1}\varphi(f^j(x))$ is positive.
\end{enumerate}
\end{proposition}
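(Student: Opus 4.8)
The plan is to build the measures $m_0,m_1$ by a double-averaging / diagonal extraction. For a fixed small $\alpha>0$ and large $L\geq 1$, and for each $k$, consider the $f_k$-invariant (since $\nu_k$ is ergodic) set function $x\mapsto \Neutral_{\alpha,L}(x,f_k,\varphi_k)$; because $\int\varphi_k\,d\nu_k\geq 0$, for $\nu_k$-a.e.\ $x$ we may assume (after adjusting by an arbitrarily small constant and using ergodicity together with the remark preceding the proposition) that the neutral set is a disjoint union of finite maximal blocks. Set
$$
   m_0^{\alpha,L,k}:=\lim_{n\to\infty}\mu^{\Neutral_{\alpha,L}(x,f_k,\varphi_k)}_{x,n},\qquad
   m_1^{\alpha,L,k}:=\lim_{n\to\infty}\mu^{\NN\setminus\Neutral_{\alpha,L}(x,f_k,\varphi_k)}_{x,n}.
$$
By the ergodic theorem these limits exist and are a.s.\ constant (they are measurable functions of the ergodic system, hence constant by ergodicity; alternatively average over $x$ first), and $m_0^{\alpha,L,k}+m_1^{\alpha,L,k}=\nu_k$ since the two index sets partition $\NN$. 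Each $m_0^{\alpha,L,k},m_1^{\alpha,L,k}$ is a nonnegative measure of total mass in $[0,1]$, and each is $f_k$-invariant: the neutral set $\Neutral_{\alpha,L}$ is almost invariant (shifting the index set by one changes only the finitely many block endpoints in $[0,n)$, a vanishing fraction), so the empirical averages along it and along its complement are asymptotically shift-invariant. Now use compactness of the space of sub-probability measures on $X$ (in the \weakstar\ topology) and a diagonal argument: first let $k\to\infty$ along a subsequence so that $m_0^{\alpha,L,k}\to m_0^{\alpha,L}$ and $m_1^{\alpha,L,k}\to m_1^{\alpha,L}$; then let $L\to\infty$ and $\alpha\to 0$ along further subsequences so that $m_0^{\alpha,L}\to m_0$ and $m_1^{\alpha,L}\to m_1$. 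Relabel the resulting sub-subsequence as $(\nu_{k_i})$. Invariance of $m_0,m_1$ under $f$ (not $f_{k_i}$) follows from $f_k\to f$ uniformly plus the uniform bound $\|f_k\|$, since for a fixed continuous test function $g$ the defect $|m_j^{\alpha,L,k}(g\circ f_k)-m_j^{\alpha,L,k}(g\circ f)|\to 0$. This gives (i), and (ii) follows from $m_0^{\alpha,L,k}+m_1^{\alpha,L,k}=\nu_k$ passing to the limit.

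For (iii), fix neighborhoods $V_0\ni m_0$, $V_1\ni m_1$. Choosing $L$ large and $\alpha$ small, we have $m_j^{\alpha,L}$ close to $m_j$ (by construction of the $L\to\infty,\alpha\to0$ limit), hence $m_j^{\alpha,L}\in V_j$ for $L\geq L_*(V)$, $\alpha\leq\alpha_*(V)$; then for $i\geq i_*(V,\alpha,L)$ we have $m_j^{\alpha,L,k_i}\in V_j$ as well (by the $k\to\infty$ limit). But $m_j^{\alpha,L,k_i}$ is exactly the a.s.\ limit of the empirical measures along the relevant index set, so these empirical measures eventually lie in $V_j$ for $\nu_{k_i}$-a.e.\ $x$ — which is the assertion. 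Here one must be a little careful that the \weakstar\ convergence of the empirical averages is uniform enough, but since the limit is a.s.\ a fixed measure, for each finite collection of test functions the convergence holds a.s., and a countable dense set of test functions suffices.

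For (iv), the key point is that on a neutral block $(n_0,\dots,n_1-1)$ the ergodic sums of $\varphi_k$ are nonpositive up to the additive slack, i.e.\ $\sum_{j=n_0}^{n-1}\varphi_k(f_k^j x)\leq\alpha(n-n_0)$ for all $n_0<n\leq n_1$; summing over the maximal blocks contained in $[0,N)$ and using that the leftover (a fraction $\to0$ of $N$) is controlled by $\|\varphi_k\|_\infty$, one gets $\int\varphi_k\,dm_0^{\alpha,L,k}\le\alpha\, m_0^{\alpha,L,k}(X)+o(1)\leq\alpha$. A matching lower bound comes from the maximality of the blocks: just past the right endpoint $n_1$ the inequality fails, which (together with a uniform bound on $\varphi_k$ over one step) forces $\sum_{j=n_0}^{n_1-1}\varphi_k(f_k^jx)\geq -C$ for a constant $C$ independent of the block, so $\int\varphi_k\,dm_0^{\alpha,L,k}\geq -C\cdot(\text{number of blocks in }[0,N))/N\to 0$ as $N\to\infty$ because maximal neutral blocks have length $\geq L$ and hence number $\le N/L$; thus $\int\varphi_k\,dm_0^{\alpha,L,k}\geq -C/L$. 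Since $\varphi_k\to\varphi$ uniformly, passing $k\to\infty$, then $L\to\infty$, $\alpha\to0$ yields $\int\varphi\,dm_0=0$. Finally (v): $m_1$ is supported on the complement of neutral blocks in the limit, and the failure of the neutral inequality there means the forward Birkhoff averages of $\varphi$ stay bounded away from $0$ from above along that set; more precisely, $\int\varphi\,dm_1=\lim\int\varphi_k\,d\nu_k-\int\varphi\,dm_0=\lim\int\varphi_k\,d\nu_k\geq 0$, and on each ergodic component of $m_1$ the asymptotic average of $\varphi$ must be $\geq\alpha$ for every small $\alpha$ along the approximating sequence (a point outside all $(\alpha,L)$-neutral blocks has, by definition, infinitely many forward times at which the average exceeds $\alpha$; combined with invariance and the ergodic theorem this forces the a.s.\ average to be $>0$), so $\frac1n\sum_{j=0}^{n-1}\varphi(f^jx)\to\lambda^+(\cdot)>0$ for $m_1$-a.e.\ $x$.

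\medskip\noindent
The main obstacle is (iv)–(v), specifically extracting a \emph{uniform} lower bound on the ergodic sum over a maximal neutral block (the quantitative use of maximality, which caps the number of blocks via their minimum length $L$), and then interchanging the three limits $k\to\infty$, $L\to\infty$, $\alpha\to0$ while keeping track of the $o(1)$ error terms — this is where the bookkeeping about which quantities are controlled by $\|\varphi_k\|_\infty$ versus by the block structure has to be done carefully.
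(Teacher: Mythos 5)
Your construction of $m_0,m_1$ via double extraction, the argument for (i)--(ii), and the two-sided bookkeeping for (iv) (upper bound $\alpha$ from neutrality, lower bound $-C/L$ from maximality and from the $\leq N/L$ count of maximal blocks) all match the paper's approach. But there are two genuine gaps.

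\emph{Item (iii).} You extract subsequences in $(\alpha,L)$ so that $m_j^{\alpha,L}$ converges, but the conclusion quantifies over \emph{all} $\alpha<\alpha_*$ and $L>L_*$, not just the chosen sequence; for intermediate $(\alpha,L)$ the $k$-limit $m_j^{\alpha,L}$ is not even defined in your construction, so the step ``hence $m_j^{\alpha,L}\in V_j$ for $L\geq L_*(V),\ \alpha\leq\alpha_*(V)$'' does not follow. The paper resolves this (Claim 5.3) by first extracting over a countable dense set of $\alpha$'s, noting that $\alpha\mapsto\chi^k_{\alpha,L}\nu_k(u_j)$ is monotone, extending to a left-continuous function with countably many discontinuities $D_{L,j}$, and passing one more subsequence to handle $\bigcup D_{L,j}$. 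One could alternatively try a monotone sandwich argument (using $N^k_{\alpha,L}$ increasing in $\alpha$ and decreasing in $L$), but that needs to be written out; as it stands the claim is asserted, not proved.

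\emph{Item (v).} This is the serious gap. You argue that a time $j$ outside $\Neutral_{\alpha,L}(x,f_{k_i},\varphi_{k_i})$ sees a forward average exceeding $\alpha$, and that ``combined with invariance and the ergodic theorem this forces the a.s.\ average to be $>0$''. The first observation is about the $f_{k_i}$-orbit of a $\nu_{k_i}$-typical point; the conclusion is about the $f$-Birkhoff averages of $m_1$-typical points. There is no mechanism in your argument to transfer the failure of the neutrality inequality along $f_{k_i}$-orbits to a pointwise statement about $f$-orbits in the support of a \weakstar\ limit: \weakstar\ convergence of empirical measures does not control individual orbit segments, and $m_1$-generic $f$-orbits are not limits of excursions of $f_{k_i}$-orbits outside neutral blocks. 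Moreover your chain gives at best $\geq\alpha$ and then $\geq 0$ after $\alpha\to 0$, not strict positivity. The paper closes this gap by contradiction: assuming $m_1(\{x:\lim\frac1n S^\infty_n\varphi(x)\leq 0\})=2\gamma>0$, it uses the Pliss lemma (for $f$) to produce, $\mu$-a.e.\ on that set, arbitrarily long backward $(\alpha_0/2,L_0)$-neutral blocks, encodes this into a \emph{closed} set $W^\infty_0(K)$ with $\mu(\partial W^\infty_0(K))=0$ so that $\nu_k(W^\infty_0(K))\to\mu(W^\infty_0(K))$ and $W^\infty_0(K)\subset V_0^k(K)$ for large $k$, and then runs a density-accounting argument (comparing $\Neutral_0$, $\Neutral_1$, and the extra $(\alpha_0,L_0)$-neutral intervals seeded by visits to $V_0^k(K_0)$) to reach a contradiction with the near-equality of the densities of $\Neutral_0$ and $\Neutral_1$. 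None of this machinery is present in your sketch, and something of this kind is essential.
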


\begin{remark}
The measures $m_0,m_1$ are not normalized, but there are $f$-invariant probabilities
$\mu_0,\mu_1$ such that $m_0=(1-\beta) \mu_0$ and $m_1=\beta\mu_1$, where $\beta=m_1(X)$.
\end{remark}

\noindent
{\bf Example 1.} The following {constructions} show that the decomposition $m_0+m_1$  depends on the sequence $(\nu_k)$ and not just on its limit.

 Let $X=\{-2,1,2\}^\ZZ$, $f_k=f=$ the left shift, and $\varphi_k(x):=\varphi(x)=x_0$. For each $k\geq1$, consider the periodic sequence $p^{(k)}$ with period
 $$
    \underbrace{-2,\dots,-2}_{k}, \underbrace{+2,\dots,+2}_{k}, \underbrace{+1,\dots,+1}_{k}.
 $$
Let $\nu_k$ be the unique shift invariant probability measure on the orbit of $p^{(k)}$. Let $\delta_s:=$ the probability measure concentrated on $(\cdots s,s,s\cdots)$. It is easy to see that $\nu_k$ converges to $\mu:=\frac13(\delta_{-2}+\delta_{+1}+\delta_{+2})$

If $0<\alpha<1$, $L\geq 1$, and $k\geq L/2$, then the maximal $(\alpha,L)$-neutral blocks of $(p^{(k)},f_k,\vf_k)$ are $[n_0,n_1)\cap\ZZ$ where
$$
(p^{(k)}_{n_0},\ldots,p^{(k)}_{n_1-1})=(    \underbrace{-2,\dots,-2}_{k}, \underbrace{+2,\dots,+2}_{k}, {\underbrace{+1,\dots,+1}_{\ell}}),
$$
{with $\ell=\lfloor \tfrac{2\alpha}{1-\alpha}k\rfloor$.}
So $
   m_0=\frac13(\delta_{-2}+\delta_{+2})\text{ and }m_1=\frac13 \delta_{+1}.
 $

Now consider the measures $\nu_k'$ obtained from the periodic sequence $q^{(k)}$ with period
$
    \underbrace{-2,\dots,-2}_{k},  \underbrace{+1,\dots,+1}_{k}, \underbrace{+2,\dots,+2}_{k}.
 $
These measures also converge to $\mu$. But now, if $0<\alpha<1$, $L\geq 1$ and $k>L/2$, then the maximal neutral blocks of $(q^{(k)},f_k,\vf_k)$
 are $[n_0,n_1)\cap\ZZ$ where
$$(q^{(k)}_{n_0},\ldots,q^{(k)}_{n_1-1})=
    (\underbrace{-2,\dots,-2}_{k},  \underbrace{+1,\dots,+1}_{k}, \underbrace{+2,\dots,+2}_{\ell}),
$$
with {$\ell=\lfloor \frac{2\alpha+1}{2-\alpha} k\rfloor$.}
So
 $
   m_0=\frac13\delta_{-2}+\frac13\delta_{+1}+\frac16\delta_{+2}\text{ and }m_1=\frac16 \delta_{+2}
 $.

\medskip
\noindent
{\bf Example 2:} Suppose $(f_k)$ converges to $f$ in $\Diff^r(M)$, and $\nu_k$ are $f_k$-invariant measures which converge to an $f$-invariant measure $\mu$.
Assume the limiting measure $\mu:=\lim\nu_k$ is  ergodic and hyperbolic of saddle type. Let $\lambda^-_\mu<0, \lambda^+_\mu$ be the Lyapunov exponents of $\mu$.

Consider the unstable lifts $\hnu_k^u$ to the fibered bundle. Passing to a subsequence, we may assume that $\hnu_k^u$ converge \weakstar\  to a limit $\hmu$  (a lift of $\mu$).
Let $\vf_k:\hM\to\R$ be
$
\vf_k(x,E):=\log\|(Df_k)_x|_E\|.
$
We apply Proposition \ref{p.neutral-block} to $\hf_k,\vf_k,\hnu_k^u$, {obtaining} a decomposition
$\hmu=m_0+m_1=(1-\beta)\hmu_0+\beta\hmu_1$. 
On the other hand, since $\mu$ is ergodic and hyperbolic,
$$\hmu=a\hmu^-+(1-a)\hmu^+$$  where $\hmu^{\pm}$ are the unique lifts of $\mu$ to $\mathrm{graph}(E^{\pm})$. So $\hmu_0,\hmu_1\ll \hmu^++\hmu^-$, whence by the ergodicity of $\hmu^{\pm}$,  $\hmu_0,\hmu_1$ are convex combinations of $\hmu^+,\hmu^-$. By (v), $\hmu_1$ has no $\hmu^-$ component, so $\hmu_1=\hmu^+$. By (iv), if $\hmu_0=b\hmu^-+(1-b)\hmu^+$ then necessarily
$
0=\int\vf d\hmu_0=b\lambda^-_\mu+(1-b)\lambda^+_\mu,
$
whence $b=\lambda^+_\mu/(\lambda^+_\mu+|\lambda^-_\mu|)$. It follows that
  $$\begin{aligned}
   &{\widehat{\mu}_1}=\hmu^+,\quad
   {\widehat{\mu}_0}=\frac{\lambda^+_\mu\hmu^- + |\lambda^-_\mu|\hmu^+}{\lambda^+_\mu + |\lambda^-_\mu|}.
  \end{aligned}$$

To finish the calculation of  $m_0=(1-\beta)\hmu_0,m_1=\beta\hmu_1$ it remains to determine $\beta$.  To do this, we substitute the formulas for $\hmu_0,\hmu_1$ in the identity $a\hmu^-+(1-a)\hmu^+=\hmu=(1-\beta)\hmu_0+\beta\hmu_1$.
Since $\hmu^{\pm}$ are ergodic, the coefficient of $\hmu^{-}$ on both sides must be equal, which leads to
$
\beta=1-a\frac{\lambda^+_\mu+|\lambda^-_\mu|}{\lambda^+_\mu}.
$
Looking at case (2) of Theorem~\ref{thm-defect-cont}, we recognize  that
$\displaystyle\beta=\frac{1}{\lambda^+(f,\mu)} \lim_{k\to\infty}\lambda^+(f_k,\nu_k).$

\begin{proof}[Proof of Proposition~\ref{p.neutral-block}]
Without loss of generality,  $(\nu_k)$ converges \weakstar\ to an $f$-invariant probability measure $\mu$ (otherwise pass to a suitable subsequence).
We abbreviate  $\Neutral^k_{\alpha,L}(x):=\Neutral_{\alpha,L}(x,f_k,\varphi_k)\subset\ZZ$ and define
 \begin{equation}\label{N_alpha_L}
     N_{\alpha,L}^k := \{x\in X: 0\in\Neutral^k_{\alpha,L}(x)\}.
 \end{equation}
This is a measurable set. We call it  the {\em $(\alpha,L)$-neutral set}
of $(\varphi_k,f_k)$.
Let $\chi^k_{\alpha,L}$ denote the indicator function of  $N^k_{\alpha,L}$.
Since $\nu_k$ is ergodic, for $\nu_k$-a.e. point $x$,  $$
    \lim_{n\to\infty} \mu_{x,n}^{\Neutral^k_{\alpha,L}(x)} = \chi^k_{\alpha,L}\nu_k\text{ in the \weakstar\  topology.}$$

\begin{claim}\label{claim-subseq}
There  exists an increasing sequence of integers $(k_i)$ such that
 $$
    \forall(\alpha,L)\in(0,1]\times\NN\quad  \lim_{ i\to\infty} \chi^{k_i}_{\alpha,L}\nu_{k_i} \text{ exists}
 $$
in the \weakstar\  topology. We write these limits as $m_{\alpha,L}$.
\end{claim}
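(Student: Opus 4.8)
The plan is to extract the subsequence $(k_i)$ by a diagonal argument over a countable parameter set, using the fact that the space of finite Borel measures on the compact space $X$ (with total mass bounded by $1$) is sequentially compact in the weak-$*$ topology.

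First I would restrict attention to the countable parameter set $\mathcal P := (\mathbb Q\cap(0,1])\times\mathbb N$, i.e.\ I only need to arrange convergence of $\chi^{k_i}_{\alpha,L}\nu_{k_i}$ for \emph{rational} $\alpha$ and integer $L$. Indeed, the key monotonicity observation is that for fixed $L$ the neutral set $N^k_{\alpha,L}$ is nondecreasing in $\alpha$: if $\alpha\le\alpha'$ then any $(\alpha,L)$-neutral block is also an $(\alpha',L)$-neutral block, so $\chi^k_{\alpha,L}\le\chi^k_{\alpha',L}$ pointwise, hence $\chi^k_{\alpha,L}\nu_k\le\chi^k_{\alpha',L}\nu_k$ as measures. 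Therefore, once convergence is established along the rational $\alpha$'s, for an arbitrary real $\alpha\in(0,1]$ one squeezes $\chi^{k_i}_{\alpha,L}\nu_{k_i}$ between $\chi^{k_i}_{\alpha_-,L}\nu_{k_i}$ and $\chi^{k_i}_{\alpha_+,L}\nu_{k_i}$ for rationals $\alpha_-<\alpha<\alpha_+$; since the limiting measures $m_{\alpha_-,L}\le m_{\alpha_+,L}$ can be made arbitrarily close in total mass (and one can further thin to the countably many $\alpha$ that are atoms of the monotone family $\{m_{\alpha_+,L}\}$, or simply observe that the set of $\alpha$ where the monotone family $L\mapsto$ is discontinuous is countable), a standard squeezing argument gives convergence for every real $\alpha$ to a measure $m_{\alpha,L}$. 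Actually it is cleaner to just prove convergence for $\alpha\in\mathbb Q$; the extension to all real $\alpha$ used later in Section~\ref{s.neutral} can be handled there via the same monotonicity.

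The extraction itself is the classical diagonal procedure. Enumerate $\mathcal P = \{(\alpha_1,L_1),(\alpha_2,L_2),\dots\}$. Since $\{\chi^k_{\alpha_1,L_1}\nu_k\}_{k\ge1}$ is a sequence of positive Borel measures on the compact metric space $X$ with total mass $\le 1$, by weak-$*$ sequential compactness (Banach–Alaoglu, or the Helly/Prokhorov argument for measures on a compact metric space) there is a subsequence $(k^{(1)}_j)$ along which it converges; pass to a further subsequence $(k^{(2)}_j)\subset(k^{(1)}_j)$ along which $\chi^k_{\alpha_2,L_2}\nu_k$ also converges; and so on. The diagonal sequence $k_i := k^{(i)}_i$ is eventually a subsequence of each $(k^{(m)}_j)$, so $\lim_{i\to\infty}\chi^{k_i}_{\alpha,L}\nu_{k_i}$ exists for every $(\alpha,L)\in\mathcal P$. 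I would additionally arrange, by tossing one more index into the enumeration, that $\nu_{k_i}$ itself converges weak-$*$ (to the measure $\mu$ fixed at the start of the proof); this costs nothing and will be used immediately afterwards (e.g.\ so that $m_{\alpha,L}$ and $\mu - m_{\alpha,L}$ are both positive). We then define $m_{\alpha,L}$ to be the resulting limit.

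I do not expect any serious obstacle here — this is a bookkeeping lemma. The only point requiring a word of care is that $\chi^k_{\alpha,L}$ is only measurable, not continuous, so $\chi^k_{\alpha,L}\nu_k$ need not be weak-$*$ close to $\chi^{k'}_{\alpha,L}\nu_{k'}$ even when $(f_k,\varphi_k,\nu_k)$ and $(f_{k'},\varphi_{k'},\nu_{k'})$ are close; that is precisely why we must \emph{extract} a subsequence rather than hope for convergence of the whole sequence. Measurability of $N^k_{\alpha,L}$ (already asserted in the text right after \eqref{N_alpha_L}) guarantees $\chi^k_{\alpha,L}\nu_k$ is a genuine Borel measure, so the compactness argument applies. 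The properties (i)–(v) of the proposition are not part of this claim; they will be derived afterwards from the $m_{\alpha,L}$ by letting $\alpha\to0$ and $L\to\infty$ and invoking the ergodic theorem, uniform convergence of $(f_k,\varphi_k)$, and the definition of neutral blocks.
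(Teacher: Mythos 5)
Your overall strategy coincides with the paper's: a diagonal extraction over a countable dense set of parameters, followed by an exploitation of the monotonicity of $\alpha\mapsto\chi^{k}_{\alpha,L}\nu_k$ and a further extraction over the (countable) set of problematic $\alpha$'s. One point where you actually improve on the paper's exposition: you observe that since the measures are nested ($m_{\alpha_-,L}\le m_{\alpha_+,L}$), controlling the total mass alone controls the full \weakstar\ distance, because for positive measures $m_1\le m_2$ and any continuous $u$ one has $|(m_2-m_1)(u)|\le\|u\|_\infty\,(m_2(X)-m_1(X))$. This lets you work with the single monotone function $\alpha\mapsto m_{\alpha,L}(X)$, whereas the paper carries around a countable dense family $(u_j)$ of test functions and the corresponding discontinuity sets $D_{L,j}$. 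Both are correct, but yours is a small, genuine streamlining.

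However, there is a gap in how you close the argument. You gesture at the countable set of discontinuities of the monotone total-mass function (in a garbled parenthetical, writing ``the monotone family $L\mapsto$'' when you mean $\alpha\mapsto m_{\alpha,L}(X)$) and at a further diagonal extraction over it, and then you undercut yourself with ``Actually it is cleaner to just prove convergence for $\alpha\in\mathbb Q$; the extension to all real $\alpha$ used later can be handled there.'' That is not a proof of the claim as stated: the claim explicitly asserts convergence for every $(\alpha,L)\in(0,1]\times\NN$, and the proof of item (v) of the proposition does use irrational values of $\alpha$ — for instance, $\alpha_0/2$ is specifically chosen to avoid a bad countable set, which makes it almost surely irrational. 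You already have all the ingredients (countable dense $\mathcal P$, monotonicity, countable discontinuity set, diagonal argument), so you should simply carry them through: for each $L$, the monotone bounded function $\alpha\mapsto m_{\alpha,L}(X)$ (defined first on $\mathbb Q\cap(0,1]$) has a countable set $D_L$ of discontinuities; the squeezing argument you describe gives convergence for every $\alpha\notin\bigcup_L D_L$; a further diagonal extraction over the countable set $\bigcup_L D_L$ finishes the job. Replace the punt by these two sentences and the proof is complete.
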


\begin{proof}[Proof of the claim]
Fix some  countable dense set $ E\subset(0,1]$. By compactness and a diagonal argument, there is an increasing sequence $k_i\to\infty$ such that the following limits exist in the \weakstar\  topology:
 $$
    \forall (\alpha,L)\in E\times\NN\quad  \lim_{p\to\infty} \chi^{k_i}_{\alpha,L}\nu_{k_i} = m_{\alpha,L}.
 $$

Let us check that this can be extended to all $(\alpha,L)\in(0,1]\times\NN$, maybe after passing to a subsequence. Indeed,  select a  countable family $(u_j)$ of nonnegative continuous functions which generate a countable dense algebra over $\mathbb Q$ in $C^0(M)$ (the space of continuous real-valued functions on $M$ with the supremum norm). Fix $L$ and $u_j$. The function $\alpha\in E\mapsto m_{\alpha,L}(u_j)$ is non-decreasing on $E$, and therefore extends uniquely to a left-continuous function $\alpha\in[0,1]\to\widetilde m_{\alpha,L}(u_j)$. The discontinuity points form a countable set $D_{L,j}$. Again by monotonicity with respect to $\alpha$,
 $$
   m_{\alpha,L}(u_j)=\widetilde m_{\alpha,L}(u_j)=\lim_i \chi^{k_i}_{\alpha,L}\nu_{k_i}(u_j)
 $$
at every $\alpha\in[0,1)\setminus D_{L,j}$. By a further extraction of a subsequence, we ensure that $\chi_{\alpha,L}\nu_{k_i}$ converge for all $(\alpha,L)$ in the countable set $\bigcup_{L\in\NN,j\geq1} D_{L,j}$.
\end{proof}

We return to the proof of Proposition 5.2. To simplify notation, from now on $(\nu_k)$ will denote the subsequence $(\nu_{k_i})$.

The following \weakstar\  limit exists by monotonicity:
 $$
    m_0:= \lim_{\tiny\begin{array}{c} \alpha\to0 \\ L\to\infty
     \end{array}} m_{\alpha,L} = \inf_{\alpha>0,L\geq1} m_{\alpha,L}.
 $$
We set $m_1:=\mu-m_0$. Since $0\leq \chi^k_{\alpha,L}\nu_k\leq \nu_k$, it follows that $0\leq m_0\leq\mu$ so that both $m_0$ and $m_1$ are positive measures.

\medskip

Neutral blocks have length {at least} $L$, therefore
for every continuous function $u$, we have $(\chi^k_{\alpha,L}\nu_k)(u-u\circ f_k)\leq (2/L)\|u\|_{\sup}$. Since $u\circ f-u\circ f_k\to 0$ uniformly, $m_{\alpha,L}(u-u\circ f)\leq (2/L)\|u\|_{\sup}$. It follows that $m_0$ is $f$-invariant. So is  $m_1=\mu-m_0$. This proves items (i) and (ii).

Item (iii) is a simple consequence of the construction.
\medskip

We turn to (iv). For any function $\psi\colon X\to \RR$, we define
 $$
    S_n^k\psi:= \sum_{j=0}^{n-1} \psi\circ f_k^j \text{ and } S_n^\infty\psi:= \sum_{j=0}^{n-1} \psi\circ f^j.
 $$
For every $x$, we decompose $\Neutral^k_{\alpha,L}(x)\cap [0,\infty)$ into maximal disjoint intervals:
$$
   \Neutral^k_{\alpha,L}(x)\cap[0,\infty)=\bigsqcup_{{i\geq1}}[{a}_i,{a}_i+{b}_i).
 $$
Since $\nu_k$ is ergodic, for $\nu_k$-a.e. $x$,
$$\begin{aligned}
&(\chi^k_{\alpha,L}\nu_k)(\varphi_k) = \int \chi^k_{\alpha,L}\varphi_k \, d\nu_k
    =\lim_{n\to\infty}\frac1nS^k_n(\chi^k_{\alpha,L}\varphi_k)(x)\\
& = \lim_{{j}\to\infty}\frac1{{a}_{j}+{b}_{j}}\sum_{{i\leq j}} \left(S^k_{{b}_i}\varphi_k\right)(f_k^{{a}_i}(x)).\\
\end{aligned}$$
Each interval $[{a}_i,{a}_i+{b}_i)$ is a maximal $(\alpha,L)$-neutral block except possibly the initial one, if it contains $0$. The first block contributes {${C_0}(x)/n\to0$} to the limit. The other blocks are all  maximal neutral blocks, and satisfy the bounds
 $$
      \alpha ({b}_i+1) - \varphi_k(f_k^{{a}_i+{b}_i}(x)) <  (S^k_{{b}_i}\varphi_k)( f_k^{{a}_i}(x)) \leq \alpha {b}_i.
 $$
The first inequality comes from the maximality of the block, the second is the definition of neutrality.
{Summing over $i=1,\dots,j$, we obtain the bounds
 $$
   {C_0(x)+}\alpha \big(\underset{{1<i}\leq j}{\textstyle \sum} {b}_i + j\big) - j \sup_{x,k}\varphi_k(x) <(S^k_{{b}_j+{a}_j}\chi_{\alpha,L}{\varphi_k})(x) \leq {C_0(x)+}\alpha \underset{{1<}i\leq j}{\textstyle \sum} {b}_i.
  $$
  }
 Since each such complete neutral block  has length at least $L$, there are {$j\leq n/L$ maximal $(\alpha,L)$-blocks in $[0,{a}_j+{b}_j)$. Dividing by ${a}_j+{b}_j\geq\sum_{i\leq j}{b}_i$  and discarding some nonnegative terms from the lower bound, we obtain in the limit $j\to\infty$,}
 $$
{-\sup_{x,k}\varphi_k(x)/L }< (\chi_{\alpha,L}\nu_k)(\varphi_k) \leq \alpha.
 $$
Passing to the limit $\alpha\to 0, L\to\infty$ {and recalling that ${\varphi_k}\to\varphi$ uniformly,} we obtain item (iv): $m_0(\varphi)=0$.
\medskip

We prove item (v) by contradiction, assuming that
 $$
    \gamma:= \frac12 m_1(\{x:\lim_{n\to\infty} (1/n)(S_n^\infty\varphi)(x)\leq 0\})>0.
 $$
There are $\alpha_0>0,L_0<\infty$ such that, for $0<\alpha\leq\alpha_0$, $L\geq L_0$,
 \begin{equation}\label{eq-m1-conv}
       \left| m_{\alpha,L}(X)-m_0(X) \right| < \frac\gamma{100}.
 \end{equation}

Given $K\geq 0$, let
 $$\begin{aligned}
   &V_0^k(K) := \{x\in M | \exists 0\leq a\leq K \text{ s.t. }
   [-a,0]\cap\ZZ \text{ is $(\alpha_0,L_0)$-neutral for $(x,f_k,\vf_k)$}
   \},\\
   &W^\infty_0(K) := \{x\in M | \exists 0\leq a\leq K \text{ s.t. }
   [-a,0]\cap\ZZ \text{ is $(\alpha_0/2,L_0)$-neutral for $(x,f,\vf)$}
   \}.
 \end{aligned}$$
These are closed sets, and $W^\infty_0(K)\subset V^k_0(K)$ for all large $k$. We can ensure that $\alpha_0/2$ is not a member of  the countable set of $\alpha$'s such that
 $$
  \mu\left(\left\{x\in X:\exists n\geq1\;(1/n)S^\infty_n\varphi(x)=\alpha\right\}\right)>0.
 $$
In particular, $\mu(\partial W^\infty_0(K))=0$ for each integer $K\geq1$. Therefore, for any $K\geq1$,
 \begin{equation}\label{eq-m1-VK}
  \lim_{\alpha,L}\lim_k \nu_k(W^\infty_0(K)\setminus N^k_{\alpha,L})=
   \lim_{\alpha,L}\lim_k \left[(1-\chi_{\alpha,L}^k)\nu_k\right](W^\infty_0(K))=
  m_1(W^\infty_0(K)).
 \end{equation}

By the ergodic theorem, for $\mu$-a.e.\ $x\in M$ such that $\lim_{n\to\infty}(1/n)S_n^\infty\varphi(x)\leq0$, it is also the case that
$
\lim_{n\to\infty}\frac{1}{n}\sum_{j={-n{+1}}}^0 \vf(f^j(x))\leq 0.
$
For such $x$, the Pliss lemma {\cite{Pliss}, \cite[Ch. IV.11]{Mane-Book}} yields arbitrarily large integers $a\geq0$ such that $[-a,0]$ is $\alpha_0/2$-neutral. In particular, fixing $K_0\geq L_0$ large enough,
 $$m_1( W^\infty_0(K_0)) > \gamma.
  $$
  Hence there exist $0<\alpha_1\leq\alpha_0$ and $L_1>\max(200 K_0/\gamma,L_0)$ such that
for all $k$ large enough,
 \begin{equation}\label{eq-m1-VK1}
   \nu_k( V_0^k(K_0)\setminus N^k_{\alpha_1,L_1}) \geq \nu_k( W^\infty_0(K_0)\setminus  N^k_{\alpha_1,L_1})> \gamma.
 \end{equation}

Since $\nu_k$ is ergodic, for $\nu_k$-a.e. $x$, the set $\Neutral_1:=\Neutral^k_{\alpha_1,L_1}(x)$ of visits under iterations of $f_k$ to the $(\alpha_1,L_1)$-neutral set $N^k_{\alpha_1,L_1}$ has density
$$ d(\Neutral_1):=\lim_{n\to\infty} \frac{|\Neutral_1\cap[0,n-1]|}n
      = \nu_k(N^k_{\alpha_1,L_1})\xrightarrow[k\to\infty]{}m_{\alpha_1,L_1}(X).$$
So by eq. \eqref{eq-m1-conv},  for $k$ large enough and  $\nu_k$-a.e. $x$,
\begin{equation}\label{N_1}
     d(\Neutral_1)> m_0(X)-\gamma/100.
\end{equation}
Similarly, the set $\Neutral_0:=\Neutral^k_{\alpha_0,L_0}(x)$  has density
  \begin{equation}\label{eq-m0}
    d(\Neutral_0) =\nu_k(N^k_{\alpha_0,L_0})< m_0(X)+\gamma/100.
  \end{equation}
 for $k$ large enough and  $\nu_k$-a.e. $x$. Finally, let ${\frak V}$ denotes the set of $j$ such that  $f^j(x)\in V_0^k(K_0)\setminus N^k_{\alpha_1,L_1}$. Then  by eq.~\eqref{eq-m1-VK1},  for $k$ large enough and  $\nu_k$-a.e. $x$,
\begin{equation}\label{frak-V}
   d({\frak V}) = \nu_k(V_0^k(K_0)\setminus N_{\alpha_1,L_1}^k)>\gamma.
\end{equation}
We will show that \eqref{N_1}--\eqref{frak-V} lead to a contradiction.

By definition {of $V_0^k(K_0)$}, each $j\in \frak V$ is the last element of an $(\alpha_0,L_0)$-neutral block $I(j)$ with length  $\leq K_0$ (we do not claim that this block is maximal). Let
$$
\mathfrak I:=\bigcup\{I(j):j\in\mathfrak V, I(j)\cap\Neutral_1=\emptyset\}\ ,\ \mathfrak I':=\bigcup \{I(j):j\in\mathfrak V, I(j)\cap\Neutral_1\neq \emptyset\}.
$$

We claim that the {upper} asymptotic density {$\overline{d}(\mathfrak I'):=\limsup \frac{1}{n}|\mathfrak I'\cap [0,n)|$} is less than $\gamma/100$. To see this note that if $j\in\mathfrak V$ and  $I(j)\cap\Neutral_1\neq \emptyset$, then $j\not\in\Neutral_1$ (by definition of $\frak V$)
and since $L_0<L_1$, $I(j)$ contains the last element of a maximal sub-interval of $\Neutral_1$. The interval with length $2K_0$ centered at this last element must contain $I(j)$.
Since the number of maximal sub-intervals of $\Neutral_1\cap [0,n]$ is bounded by $n/{L_1}$,
the {upper} asymptotic  density of $\mathfrak I'$ is no more than $2K_0/L_1<\gamma/100$.

It follows that the {upper} asymptotic density of $\mathfrak I$ is at least
 $$
   \overline{d}(\frak I) \geq {d}(\frak V) - \frac\gamma{100} > \frac{99}{100}\gamma,
 $$
and since  $\Neutral_1$ and $\frak I$ are disjoint
 $
   \overline{d}(\Neutral_1\cup \frak I) = d(\Neutral_1)+\overline{d}(\frak I) > m_0(X)+\frac{98}{100}\gamma.
 $

But  $\mathfrak N_1$ and $\mathfrak I$  are a union of $(\alpha_0,L_0)$-neutral blocks, so $\mathfrak N_1\cup\mathfrak I\subset \mathfrak N_0$, whence by  eq.~\eqref{eq-m0},
$\overline{d}(\mathfrak N_1\cup\mathfrak I)<m_0(X)+\gamma/100$. This contradiction proves item (v).
\end{proof}

\section{Proof of the main theorem }\label{s.conclude}

We recall the notation $\hlambda(\hf,\hmu):=\int_{\hM} \log\|Df_x|_E\| d\hmu(x,E)$.
In this section, we prove  the following stronger version of Theorem~\ref{t.ergodic3}.

\begin{maintheorem}\label{t.ergodic4}
Fix a real number $r>2$.
For every $k\geq 1$, let  $f_k\in\Diff^r(M)$ and
let $\nu_k$ be an $f_k$-ergodic measure. Let $\hnu_k$ be an $\hf_k$-ergodic lift satisfying
$\hlambda(\hf_k,\hnu_k)=\lambda^+(f_k,\nu_k)$ such that:
\begin{itemize}
  \item[--] the limits $\lim_k\lambda^+(f_k,\nu_k)$ and $\lim_k h(f_k,\nu_k)$ exist and $\lim_k\lambda^+(f_k,\nu_k)\geq 0$,
  \item[--] $f_k\overset{\scriptscriptstyle r-\text{bd}}\longrightarrow f$ for some $f\in\Diff^r(M)$ (i.e. $f_k\to f$ uniformly and $\sup_k \|f_k\|_{C^r}<\infty$),
  \item[--] $\widehat \nu_k\overset{w^*}\to \widehat \mu$ for some $\widehat f$-invariant probability measure $\widehat \mu$ on $\widehat M$, perhaps non-ergodic.
 \end{itemize}
Then there exist $\beta\in [0,1]$, two
$f$-invariant measures $\mu_0,\mu_1$
with $\widehat f$-invariant lifts $\widehat \mu_0,\widehat \mu_1$ s.t.

\begin{equation}\label{e.eq0}
\widehat \mu=(1-\beta)\widehat \mu_0+\beta\widehat \mu_1,
\end{equation}
\begin{equation}\label{e.eq1}\lim_{k\to\infty} \lambda^+(f_k,\nu_k) =  \beta\lambda^+(f,\mu_1),
\end{equation}
\begin{equation}\label{e.eq2}\lim_{k\to\infty} h(f_k,\nu_k)-\tfrac1{r-1}\lambda(\hf)\leq \beta h(f,\mu_1).
\end{equation}
Moreover:
\begin{itemize}
  \item[--] If $\beta>0$, then $\hlambda(\hf,\hmu_1)=\lambda^+(f,\mu_1)$ and $\lambda^+(f,x)>0$ for $\mu_1$-a.e. $x$.
  \item[--] If $\beta<1$, then $\hlambda(\hf,\hmu_0)=0$.
\end{itemize}
\end{maintheorem}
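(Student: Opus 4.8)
The plan is to assemble Theorem~\ref{t.ergodic4} from the two main reparametrization propositions of Section~\ref{s.reparametrization} and the neutral decomposition of Section~\ref{s.neutral}, applied to the function $\vf(x,E)=\log\|Df_x|_E\|$ on $\hM$. First I would normalize: since $\hlambda(\hf_k,\hnu_k)=\lambda^+(f_k,\nu_k)\geq 0$ in the limit, I may discard finitely many terms and assume $\int\vf_k\,d\hnu_k\geq 0$ for all $k$, where $\vf_k(x,E)=\log\|(Df_k)_x|_E\|$ converges uniformly to $\vf$ (because $f_k\rbd{r}f$ implies $f_k\to f$ in $C^1$, hence $Df_k\to Df$ uniformly). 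Then apply Proposition~\ref{p.neutral-block} to $(\hf_k,\vf_k,\hnu_k)$: after passing to a subsequence we obtain $\hf$-invariant positive measures $\hm_0,\hm_1$ with $\hmu=\hm_0+\hm_1$, $\int\vf\,d\hm_0=0$, and $\lim_n\frac1n S_n^\infty\vf>0$ $\hm_1$-a.e. Write $\beta:=\hm_1(\hM)$, $\hmu_0:=(1-\beta)^{-1}\hm_0$, $\hmu_1:=\beta^{-1}\hm_1$ (with the obvious degenerate conventions if $\beta\in\{0,1\}$), and let $\mu_0,\mu_1$ be their projections. This gives \eqref{e.eq0} immediately, and $\hlambda(\hf,\hmu_0)=0$ when $\beta<1$. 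For the ``moreover'' clause when $\beta>0$: item (v) of Proposition~\ref{p.neutral-block} says $\lim_n\frac1n\sum_{j<n}\vf(\hf^j(\hx))>0$ $\hmu_1$-a.e.; by \eqref{varphi-sums} this limit is $\lim_n\frac1n\log\|Df^n_x|_E\|$, so $\hmu_1$ is carried by a set where this exponent is positive, which forces (via Oseledets, as in Corollary~\ref{cor-nonergodiclift}) $\lambda^+(f,x)>0$ for $\mu_1$-a.e.\ $x$ and $\hmu_1$-a.e.\ $(x,E)$ to lie in $\mathrm{graph}(E^+)$, whence $\hlambda(\hf,\hmu_1)=\int\vf\,d\hmu_1=\lambda^+(f,\mu_1)$.

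Next I establish \eqref{e.eq1}. We have $\lim_k\lambda^+(f_k,\nu_k)=\lim_k\int\vf_k\,d\hnu_k=\int\vf\,d\hmu$ (uniform convergence of $\vf_k\to\vf$ plus weak-$*$ convergence $\hnu_k\to\hmu$), and $\int\vf\,d\hmu=\int\vf\,d\hm_0+\int\vf\,d\hm_1=0+\beta\hlambda(\hf,\hmu_1)$. Combined with the identity $\hlambda(\hf,\hmu_1)=\lambda^+(f,\mu_1)$ from the previous paragraph (valid when $\beta>0$; when $\beta=0$ both sides of \eqref{e.eq1} are zero), this yields $\lim_k\lambda^+(f_k,\nu_k)=\beta\lambda^+(f,\mu_1)$.

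The heart of the matter is \eqref{e.eq2}, the entropy bound. Assume $\beta>0$ (if $\beta=0$ then $\lim_k\lambda^+(f_k,\nu_k)=0$, and by Ruelle's inequality $\lim_k h(f_k,\nu_k)\le\lim_k\lambda^+(f_k,\nu_k)=0$ — wait, that needs $h\le\lambda^+$ which is Ruelle — so $\lim_k h(f_k,\nu_k)=0\le\frac1{r-1}\lambda(\hf)$ trivially; more carefully one checks $\mu_1$ can be taken with the stated properties). For $\beta>0$: the strategy is to fix $\eta>0$, choose $\gamma\le\gamma_0(r,f,\eta)$ small, then $N\ge\max(N_0,N_1)$, get $\eps,\heps$ and neighborhoods $\hU_0,\hU_1$, $\cU_0,\cU_1$ and thresholds from Propositions~\ref{p.estimate1'} and~\ref{p.estimate2'}. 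Apply these to $g=f_k$ for $k$ large (so $f_k\in\cU_0\cap\cU_1$, $Q_{r,N}(f_k)<Q$ with $Q:=\sup_k Q_{r,N}(f_k)<\infty$ by the $C^r$-bounded convergence and Lemma~\ref{l-Ds-hg}), and to $\sigma$ a length-parametrized $C^r$ piece of a local unstable manifold $W^u_\loc(x_k)$ of $\nu_k$ with $C^r$ size $<(\eps,\heps)$, where $x_k$ is $\nu_k^u$-generic. The key combinatorial point is that for a $\hnu_k$-generic $\hx_k=(x_k,E)$, since $\int\vf_k\,d\hnu_k=\lambda^+(f_k,\nu_k)$, the orbit spends asymptotically a $(1-\beta)$-fraction of its time in maximal $(\tfrac\eta{10},L)$-neutral blocks (by item (iii) of Proposition~\ref{p.neutral-block}, the neutral-block empirical measures approximate $\hm_0=(1-\beta)\hmu_0$ which has mass $1-\beta$), and during those blocks the orbit is mostly in $\hU_0$ while outside them it is mostly in $\hU_1$. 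Split $[0,n]$ into neutral sub-blocks and complementary sub-blocks; on each complementary sub-block use Proposition~\ref{p.estimate1'} (cost $\exp(\text{length}\cdot(\oh(f,\mu_1')+\tfrac{\lambda(\hf)}{r-1}+\eta))$ where $\mu_1'$ is the relevant piece of $\mu_1$, so essentially $\exp(\text{length}\cdot(h(f,\mu_1)+\tfrac{\lambda(\hf)}{r-1}+\eta))$ after controlling essential entropy), on each neutral sub-block use Proposition~\ref{p.estimate2'} (cost only $\exp(\text{length}\cdot(\tfrac{\lambda(\hf)}{r-1}+\eta))$), concatenate via Lemma~\ref{l.concatenation}, and count: the total $|\cR_n|$ is at most roughly $\exp\big(n[\beta h(f,\mu_1)+\tfrac{\lambda(\hf)}{r-1}+O(\eta)]\big)$. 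Then Corollary~\ref{lemma entropy reparametrizations} (Zang's theorem) gives $h(f_k,\nu_k)\le\liminf_n\frac1n\log|\cR_n|\le\beta h(f,\mu_1)+\tfrac{\lambda(\hf)}{r-1}+O(\eta)$; letting $\eta\to0$ and $k\to\infty$ gives \eqref{e.eq2}.

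The main obstacle is the bookkeeping in \eqref{e.eq2}: organizing the decomposition of a generic orbit into neutral and non-neutral stretches so that the two propositions can be invoked on the correct pieces with compatible parameters, matching $\hU_0,\hU_1$ to the neutral/non-neutral dichotomy of Proposition~\ref{p.neutral-block}(iii), handling the boundary/error terms (blocks shorter than $L$, the $2\gamma n$ exceptional times, the crude type-counting $\exp(H(4\gamma)n)$), and — most delicately — passing from the essential-entropy bounds $\oh(f,\mu_i)$ appearing in Propositions~\ref{p.estimate1'}–\ref{p.estimate2'} to the genuine entropy $h(f,\mu_1)$ of the limit measure. For the latter one must use that $\mu_0$ has zero entropy (it has equal Lyapunov exponents $\hm_0$-a.e.\ since $\int\vf\,d\hm_0=0$ and $\vf$ is the top exponent, so by Ruelle $h(f,\mu_0)=0$ hence $\oh(f,\mu_0)=0$ as observed in Step 4 of §\ref{proof-2'}), together with the essential-supremum entropy being bounded above by $h(f,\mu_1)$ along the ergodic components that actually arise, or more precisely that one should decompose $\hmu_1$ itself into the pieces seen by the non-neutral parts of the various orbits. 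Finally, one reconciles with the $C^\infty$ / $C^r$ statement: $\lambda(\hf)\le\lambda(f)+\lambda(f^{-1})$ by the Lemma in §\ref{ss.size-lift}, giving Theorem~\ref{t.ergodic3}, and for $f_k,f\in\Diff^\infty$ one lets $r\to\infty$ so the term $\tfrac1{r-1}\lambda(\hf)\to0$, yielding Theorem~\ref{t.ergodic}.
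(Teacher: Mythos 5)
Your skeleton is the same as the paper's: apply Proposition~\ref{p.neutral-block} to $(\hf_k,\vf_k,\hnu_k)$ to get $\hmu=\hm_0+\hm_1$, define $\beta,\hmu_0,\hmu_1$ from this, prove~\eqref{e.eq1} by integrating $\vf$, and prove~\eqref{e.eq2} by decomposing generic orbits into neutral and non-neutral blocks and invoking Propositions~\ref{p.estimate1'}~and~\ref{p.estimate2'} via concatenation and Zang's formula. However there are two genuine gaps.

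First, your derivation of $\hlambda(\hf,\hmu_1)=\lambda^+(f,\mu_1)$ (and hence of~\eqref{e.eq1}) is not valid as stated. Item~(v) of Proposition~\ref{p.neutral-block} only tells you that $\lim_n\tfrac1n\log\|Df^n_x|_E\|>0$ for $\hmu_1$-a.e.\ $(x,E)$, i.e.\ that $\lambda^+(f,x)>0$. This does \emph{not} by itself force $(x,E)\in\mathrm{graph}(E^+)$: if $0<\lambda^-(f,x)<\lambda^+(f,x)$ on a set of positive $\mu_1$-measure, then $(x,E^-(x))$ also has positive exponent and item~(v) would be consistent with $\hmu_1$ charging $\mathrm{graph}(E^-)$, which would make $\int\vf\,d\hmu_1<\lambda^+(f,\mu_1)$. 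The paper closes this hole with Claim~\ref{c.sign}: $\mu_1$-a.e.\ $x$ has one positive and one \emph{non-positive} exponent. That claim is not automatic; its proof uses the dissipativity of the situation via Pesin's local stable manifold theorem to show that a positive-measure set of points with two positive exponents would be hyperbolic sinks, contradicting (since $\nu_k\to\mu$ and $\beta\ne0$) the fact that each $\nu_k$ is hyperbolic of saddle type with a strictly negative exponent. Without this step the identification $\hmu_1=\hmu_1^+$, which you use, is unsupported.

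Second, the ``most delicate'' point you flag in~\eqref{e.eq2} is indeed a genuine missing idea, not just bookkeeping. Proposition~\ref{p.estimate1'} yields a bound in terms of $\oh(f,\cdot)$, the essential \emph{supremum} of the entropies of ergodic components, which can be strictly larger than $h(f,\mu_1)$; and the orbit of a $\hnu_k$-generic point does not a priori spend time near a single ergodic component of $\mu_1$. You write ``decompose $\hmu_1$ itself into the pieces seen by the non-neutral parts of the various orbits'' but do not say how. The paper's resolution (Step~1 of the proof and the colored-segment machinery of Steps~3, 6, 7) is to fix a small $\eta$, write $\mu_1=\sum_c a_c\mu_{1,c}$ with all ergodic components of $\mu_{1,c}$ having entropy in $[h_c,h_c+\eta)$, choose pairwise disjoint open sets $\hU_{1,c}$ carrying $\hmu_{1,c}$ together with integers $n_{1,c}$ of \emph{different} lengths so the color can be recovered from a segment's length, and then count decomposition types of orbit segments into blank/colored/filler pieces, using Lemma~\ref{l.decomposition} to bound the total length of each color by $\beta a_c n+\gamma n$. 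Only then does the sum $\sum_c h_c\cdot(\text{length in color }c)$ recombine to $\beta h(f,\mu_1)n$ up to $O(\eta)n$. This decomposition-by-entropy-level and the associated combinatorial control is the central technical step of the whole proof, and your proposal does not supply it.
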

Note that when $\nu_k$ is hyperbolic, the measure $\hnu_k$ above is simply the unstable lift $\hnu_k^+$.

\subsection{Reductions}\label{s.Reduction-Ptv-Ent}
We assume the setting of Theorem~\ref{t.ergodic4}.
There is no loss of generality in assuming  that $r$ is finite, since the $C^\infty$ case follows from the $C^r$ case by letting $r\to\infty$.
By Lemma~\ref{l-ArzelaAscoli} and since   $f_k\rbd{r} f$ with $r>2$,
 \begin{equation}\label{eq-conv}
   f_k\to f \text{ in the $C^2$ topology and } \hf_k\to\hf \text{ in the $C^1$ topology}.
 \end{equation}

Let $h:=\lim\limits_{k\to\infty} h(f_k,\nu_k)$.
By Ruelle's inequality and \eqref{eq-conv}, $h$ is bounded by $\sup_k\|Df_k\|_\supnorm<\infty$.
It is clearly non-negative. The theorem has a simple proof when $h=0$:

\begin{proof}[Proof of Theorem~\ref{t.ergodic4} when $h=0$] In this case eq.~\eqref{e.eq2} is trivial.
Since $\log\|Dg(x)|_E\|$ depends continuously on $(x,E)$ and $g$, one gets
$
0\leq \lim_k \lambda^+(f_k,\nu_k)=\lim_k \lambda(f_k,\hnu_k)=\lambda(f,\hmu)
$.

If all ergodic components $\hmu'$ of $\hmu$ satisfy $\hlambda(\hf,\hmu')>0$,
it is enough to take $\hmu_0=\hmu_1=\hmu$ and fix $\beta=1$.
If some ergodic components of $\hmu$ satisfy $\hlambda(\hf,\hmu)\leq 0$ and since $\hlambda(\hf,\hmu)>0$,
one can decomposes $\hmu=(1-\beta)\hmu_1+\beta\hmu_0$ where $\hmu_0,\hmu_1$ are two $\hf$-invariant measures
such that $\hlambda(\hf,\hmu_0)=0$ and all ergodic components $\hmu'$ of $\hmu_1$ satisfy $\hlambda(\hf,\hmu')>0$.
\end{proof}

\noindent
Henceforth, we assume that
\medskip
\begin{center}
{\em $h:=\lim\limits_{k\to\infty}h(f_k,\nu_k)>0$, and $h(f_k,\nu_k)>0$ for all $k$.}
\end{center}
In particular, each measure $\nu_k$ is hyperbolic, i.e. has one positive and one negative Lyapunov exponent.
Note that it is enough to prove the theorem for any convenient further subsequence.

\subsection{\bf The decomposition of the limiting measure}\label{s.proofA''.subsequence}
Theorem~\ref{t.ergodic4} is stated in terms of the properties of a special decomposition $\mu=(1-\beta)\mu_0+\beta\mu_1$ of the $\mu=\lim\nu_k$. In this section we construct $\beta,\mu_0$ and $\mu_1$.

The idea is to apply Proposition~\ref{p.neutral-block} to a suitable sequence of measures.
By Ruelle's inequality and the reduction to the case $h(f_k,\nu_k)>0$, $\nu_k$ must be
$f_k$-hyperbolic of saddle type.
Let $\hnu_k^+$ denote the unstable lift of $\nu_k$ to $\hM$, and let $\hf_k, \hf$ be the lifts of $f_k, f$ to $\hM$. Define $\vf_k,\vf:\hM\to\RR$ by
$$
\vf_k(x,E):=\log\|Df_k|_E\|\ ,\vf(x,E):=\log\|Df|_E\|.
$$
We apply Proposition \ref{p.neutral-block} to $\hM$, $\hf_k$, $\vf_k$, $\hnu_k^+$.
(The proposition is applicable,  because  {by eq.~\eqref{eq-conv}}, $\hf_k\to\hf$ in $\Diff^1(\hM)$  and $\vf_k\to\vf$ uniformly on $\hM$,    and because  by Lemma~\ref{Lemma-Lyap} and Ruelle's inequality,
$\int\vf_k d\hnu_k=\lambda^+(f_k,\nu_k)\geq h(f_k,\nu_k)>0.$)

Proposition \ref{p.neutral-block} gives us a subsequence $\{k_i\}$ and two finite positive measures $\hm_0,\hm_1$ with the following properties.
\begin{enumerate}[(i)]
\item $\hm_i\circ\hf^{-1}=\hm_i$.
\item $\hnu_{k_i}^+\xrightarrow[i\to\infty]{w^\ast}\hm_0+\hm_1=:\hmu$. The limit $\hmu$ is $\hf$--invariant and lifts $\mu$.
\item Suppose $\hV_0,\hV_1$ are \weakstar\  open sets of measures such that  $\hV_i\owns\hm_i$, then  there are $\alpha_\ast(\hV_0,\hV_1)\in (0,1)$ and $L_\ast(\hV_0,\hV_1)\geq 1$ as follows. If $0<\alpha<\alpha_\ast$ and  $L>L_\ast$, then for all $k_i>k_\ast(\hV_0,\hV_1,\alpha,L)$, for $\hnu_{k_i}$--a.e. $\hx\in\hM$,
    \begin{enumerate}[--]
    \item the $\Neutral_{\alpha,L}(\hx,\hf_{k_i},\vf_{k_i})$--empirical measures of $\hx$ belong to $\hV_0$,
    \item the $\NN\setminus \Neutral_{\alpha,L}(\hx,\hf_{k_i},\vf_{k_i})$--empirical measures of $\hx$ belong to $\hV_1$.
    \end{enumerate}
\item\label{i-hmu0} $\int\vf d\hm_0=0$.
\item For $\hm_1$--a.e. point $\hx$, $\lim_{n\to\infty}\frac{1}{n}\sum_{j=0}^{n-1}\vf(\hf^j(\hx))>0$.
\item $h(f_{k_i},\nu_{k_i})\xrightarrow[i\to\infty]{} h>0$ and $h(f_{k_i},\nu_{k_i})>0$ for all $i$.
\item $\lambda^+(f_{k_i},\nu_{k_i})\xrightarrow[i\to\infty]{}\int \vf d\hmu$.
\end{enumerate}
Parts (i)--(v) are in Proposition \ref{p.neutral-block}; Part (vi) is the reduction in \S\ref{s.Reduction-Ptv-Ent}; and Part (vii) is because  $\lambda^+(f_{k_i},\nu_{k_i})=\int \vf_{k_i} d\hnu_{k_i}^+$ (by Lemma~\ref{Lemma-Lyap}), $\vf_{k_i}\to\vf$ uniformly, and $\hnu_{k_i}^+\to\hmu$ \weakstar\ .
To keep the notation as simple as possible we will henceforth  assume without loss of generality that
$
\{\nu_k\}=\{\nu_{k_i}\}
$.

\medskip
Let
\begin{align*}
\beta&:=1-\hm_0(\hM)=\hm_1(\hM).\\
\hmu_i&:=\frac{1}{\hm_i(\hM)}\hm_i,\text{ or {any invariant probability measure} if $\hm_i(\hM)=0$.}\\
\mu_i&:=\text{the projections of $\hmu_i$ to the corresponding $f$-invariant measures on $M$}.
\end{align*}
Notice that $\hmu=(1-\beta)\hmu_0+\beta\hmu_1$,  $\mu=(1-\beta)\mu_0+\beta\mu_1$, and $0\leq \beta\leq 1$.

\begin{claim}\label{c.non-one}
If $\beta<1$, then $\hmu_0(\vf) = \hm_0(\vf)/(1-\beta)=0$.
\end{claim}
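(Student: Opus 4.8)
The claim is immediate once one unwinds the definitions of $\beta$, $\hm_0$, and $\hmu_0$ and invokes property (iv) of the list in \S\ref{s.proofA''.subsequence} (that is, item~\ref{i-hmu0} of Proposition~\ref{p.neutral-block}), so the plan is very short. First I would note that $\beta$ is defined by $\beta = 1 - \hm_0(\hM)$, so the hypothesis $\beta < 1$ is exactly the statement $\hm_0(\hM) = 1-\beta > 0$. In particular $\hm_0$ is not the zero measure, so we are in the genuine normalization case of the definition of $\hmu_0$, namely $\hmu_0 = \tfrac{1}{\hm_0(\hM)}\hm_0 = \tfrac{1}{1-\beta}\hm_0$, rather than the ``arbitrary invariant probability measure'' fallback.

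Next, since $f$ is a $C^1$ diffeomorphism of the compact manifold $M$, the function $\vf(x,E) = \log\|Df_x|_E\|$ is bounded and continuous on $\hM$ (see \S\ref{s.lyapunov-exp}), hence $\hm_0$-integrable because $\hm_0$ is a finite positive measure (indeed $0 \le \hm_0 \le \hmu$). By linearity of the integral we get $\hmu_0(\vf) = \tfrac{1}{1-\beta}\hm_0(\vf)$, which is the first asserted equality. Finally, property (iv) gives $\hm_0(\vf) = 0$, so $\hmu_0(\vf) = \tfrac{1}{1-\beta}\cdot 0 = 0$, which is the second asserted equality.

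There is essentially no obstacle here; the only thing that actually requires the hypothesis $\beta < 1$ is the passage from $\hm_0$ to its normalization $\hmu_0$, which is legitimate precisely because $\hm_0(\hM) = 1-\beta$ is then positive. (If $\beta = 1$, then $\hm_0$ is the zero measure, $\hmu_0$ is chosen arbitrarily, and the claim is vacuous — which is why the hypothesis appears.)
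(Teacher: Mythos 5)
Your proof is correct and takes essentially the same approach as the paper, which simply cites property (iv) of the list in \S\ref{s.proofA''.subsequence}; you have just spelled out the normalization step $\hmu_0 = \hm_0/(1-\beta)$ and the integrability of $\vf$, which the paper leaves implicit.
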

\begin{proof} This is property \eqref{i-hmu0}.\end{proof}

\begin{claim}\label{c.non-zero}
$\beta\neq 0$. Consequently, $\hm_1(\hM)\neq 0$, and  $\mu_1$ is a probability measure.
\end{claim}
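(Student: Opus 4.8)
The plan is to argue by contradiction, extracting a contradiction from $\beta=0$ via Ruelle's inequality together with the identity $\hmu_0(\vf)=0$ already recorded in property~\eqref{i-hmu0}. The key point is that $\beta=0$ would force all the mass of $\hmu$ to sit in the ``neutral'' part $\hm_0$, on which $\vf$ integrates to zero, whereas the Lyapunov exponents $\lambda^+(f_k,\nu_k)$ are bounded below by the (positive) limiting entropy.

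Concretely, first I would note that $\beta=1-\hm_0(\hM)=\hm_1(\hM)$, so $\beta=0$ means $\hm_1$ is a positive measure of total mass zero, hence $\hm_1\equiv0$ and $\hmu=\hm_0$. Therefore $\int\vf\,d\hmu=\int\vf\,d\hm_0=0$ by property~\eqref{i-hmu0}. On the other hand, by property (vii) of the list in \S\ref{s.proofA''.subsequence} (and the reduction there identifying the working sequence with the chosen subsequence), $\int\vf\,d\hmu=\lim_{k\to\infty}\lambda^+(f_k,\nu_k)$. Since, by the reduction in \S\ref{s.Reduction-Ptv-Ent}, each $\nu_k$ is $f_k$-hyperbolic of saddle type, Ruelle's inequality gives $h(f_k,\nu_k)\le\lambda^+(f_k,\nu_k)$ for every $k$, so $\lim_{k\to\infty}\lambda^+(f_k,\nu_k)\ge\lim_{k\to\infty}h(f_k,\nu_k)=h>0$. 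Combining the displays yields $0=\int\vf\,d\hm_0\ge h>0$, which is absurd; hence $\beta\neq0$.

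Once $\beta=\hm_1(\hM)>0$ is established, the ``consequently'' part is immediate: $\hmu_1=\hm_1/\hm_1(\hM)$ is then a genuine $\hf$-invariant probability measure (not the arbitrary fallback choice), and its projection $\mu_1=\hpi_*\hmu_1$ to $M$ is an $f$-invariant probability measure. I do not expect any real obstacle here; the only point that requires a moment's care is the elementary observation that a nonnegative measure with zero total mass is the zero measure, so that $\beta=0$ genuinely forces $\hmu=\hm_0$ — everything else (Ruelle's inequality, the vanishing $\hm_0(\vf)=0$, and the convergence $\lambda^+(f_k,\nu_k)\to\int\vf\,d\hmu$) is already available.
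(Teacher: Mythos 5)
Your proof is correct and follows the same line of reasoning as the paper: argue by contradiction from $\beta=0$, use property (iv) to get $\int\vf\,d\hmu=0$, property (vii) to identify this with $\lim_k\lambda^+(f_k,\nu_k)$, and Ruelle's inequality together with property (vi) to get the contradiction $0\ge h>0$. The extra remarks you make (that a nonnegative measure of zero mass vanishes, and that the ``consequently'' part is immediate from $\hm_1(\hM)>0$) are accurate and match what the paper leaves implicit.
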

\begin{proof}
Assume by contradiction that $\beta=0$. Then  $\hmu=\hm_0$, and
\begin{align*}
0&=\int\vf d\hm_0=
\int\vf d\hmu,\ \ \ \text{ by (iv)}\\
&=\lim_{k\to\infty}\lambda^+(f_k,\nu_k),\ \ \ \text{ by (vii) and the assumption that $\{k_i\}=\{k\}$}\\
&\geq \lim_{k\to\infty}h(f_k,\nu_k)>0,\ \ \ \text{ by Ruelle's inequality and (vi)}.
\end{align*}
This contradiction shows that  $\beta\neq 0$.
\end{proof}

\begin{claim}\label{c.sign}
 $\mu_1$--a.e. $x$ has one positive and one non-positive Lyapunov exponent.
\end{claim}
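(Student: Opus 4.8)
We must show that $\mu_1$-a.e.\ $x$ has $\lambda^+(f,x)>0\geq\lambda^-(f,x)$. The key input is property (v) above (coming from Proposition~\ref{p.neutral-block}): for $\hm_1$-a.e.\ point $\hx$, the Birkhoff average $\tfrac1n\sum_{j=0}^{n-1}\vf(\hf^j(\hx))$ converges to a positive limit. Since $\hmu_1=\tfrac1{\hm_1(\hM)}\hm_1$ and $\beta=\hm_1(\hM)\neq0$ by Claim~\ref{c.non-zero}, the same holds $\hmu_1$-a.e. First I would argue that this forces $\hmu_1$ to give full measure to the set $M_\#$ of points with two distinct Lyapunov exponents, and moreover to be carried by $\operatorname{graph}(E^+)$: by \eqref{varphi-sums}, $\tfrac1n\log\|Df^n_x|_E\|=\tfrac1n\sum_{j=0}^{n-1}\vf(\hf^j(x,E))$, so the Birkhoff average of $\vf$ along $(x,E)$ equals the exponential growth rate of $\|Df^n_x|_E\|$. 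If this limit is positive, then $E$ cannot lie in the stable Oseledets space (where the rate would be $\lambda^-(f,x)\le\lambda^+(f,x)$ with the rate along $E$ being $\le\lambda^+$ but, if $E=E^-(x)$, equal to $\lambda^-(f,x)<$ that positive value only if $\lambda^-(f,x)>0$); more directly, a positive growth rate of $\|Df^n_x|_E\|$ implies $\lambda^+(f,x)>0$. Hence $\lambda^+(f,x)>0$ for $\hmu_1$-a.e.\ $(x,E)$, and since $\lambda^\pm$ depend only on $x$, we get $\lambda^+(f,x)>0$ for $\mu_1$-a.e.\ $x$.

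It remains to rule out $\lambda^-(f,x)>0$ on a set of positive $\mu_1$-measure, i.e.\ to exclude points that are uniformly expanding in every direction. This is where Ruelle's inequality enters, but applied carefully because $\mu_1$ need not be ergodic. The plan is: suppose for contradiction that the set $A:=\{x:\lambda^-(f,x)>0\}$ has $\mu_1(A)>0$. Restricting and renormalizing, $\mu_1|_A$ is an $f$-invariant measure (A is invariant) all of whose ergodic components have both exponents positive. For such an ergodic component $\mu'$, Ruelle's inequality in the form $h(f,\mu')\le\sum\lambda^+_i(\mu')$ counted with multiplicity gives an upper bound, but the relevant constraint comes instead from considering $f^{-1}$: if both exponents of $\mu'$ under $f$ are positive, then both exponents of $\mu'$ under $f^{-1}$ are negative, so Ruelle's inequality for $f^{-1}$ forces $h(f^{-1},\mu')=h(f,\mu')\le 0$, hence $h(f,\mu')=0$, and then the measure has no unstable manifold in the sense used later. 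Actually the cleanest route is: the conclusion of the claim is exactly that $\mu_1$ is hyperbolic of saddle type a.e., and the ``$\geq 0$'' half is precisely what we need for the SRB and entropy arguments downstream; so I would show directly that $\hmu_1$ being carried by $\operatorname{graph}(E^+)$ (the bundle of top Oseledets directions) combined with the positivity of $\int\vf\,d\hmu_1$ from (v) already gives $\lambda^+>0$ a.e., and that $\lambda^-\le 0$ a.e.\ will follow from the entropy bound \eqref{e.eq2} together with Ruelle's inequality applied in a subsequent step; but for the present claim the honest statement to prove is just $\lambda^+(f,x)>0$, with $\lambda^-(f,x)\le 0$ deferred or handled by the symmetric Ruelle argument.

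More precisely, here is the argument I would write out. By property (v) and Claim~\ref{c.non-zero}, for $\hmu_1$-a.e.\ $\hx=(x,E)$ the limit $\ell(\hx):=\lim_n\tfrac1n\sum_{j<n}\vf(\hf^j\hx)=\lim_n\tfrac1n\log\|Df^n_x|_E\|$ exists and is strictly positive. Since this limit is a Lyapunov exponent of $x$ in the direction $E$, it is one of $\lambda^+(f,x),\lambda^-(f,x)$; in either case $\lambda^+(f,x)\ge\ell(\hx)>0$. Therefore $\lambda^+(f,x)>0$ for $\mu_1$-a.e.\ $x$. For the second half, suppose $\mu_1(\{x:\lambda^-(f,x)>0\})>0$; on the invariant set where both exponents are positive, $\mu_1$ restricts to an $f$-invariant measure $\mu_1'$ with $\lambda^-(f^{-1},x)<\lambda^+(f^{-1},x)<0$ for $\mu_1'$-a.e.\ $x$. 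Ruelle's inequality applied to $f^{-1}$ and each ergodic component of $\mu_1'$ gives $h(f,\mu_1')=h(f^{-1},\mu_1')\le\int\max(0,\lambda^+(f^{-1},x))\,d\mu_1'=0$. This will be used, together with \eqref{e.eq2} which will be proved next and which says $h>\tfrac1{r-1}\lambda(\hf)$ forces $h(f,\mu_1)>0$, to obtain a contradiction with $\mu_1(\{\lambda^-(f,\cdot)>0\})>0$; hence $\lambda^-(f,x)\le 0$ $\mu_1$-a.e.

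\textbf{Main obstacle.} The delicate point is the logical dependency: the clean proof that $\lambda^-(f,x)\le 0$ $\mu_1$-a.e.\ seems to want the entropy bound \eqref{e.eq2} (to know $h(f,\mu_1)>0$ and thus that $\mu_1$ is ``genuinely'' hyperbolic and not all-expanding), but that bound is proved after this claim. I expect the resolution is that the authors prove only $\lambda^+(f,x)>0$ here (the easy half, directly from (v)), and defer the sign of $\lambda^-$ — or else they observe that if $\mu_1$ had an ergodic component with both exponents positive, its entropy would be zero by the $f^{-1}$-Ruelle argument above, so one may harmlessly move that component into $\mu_0$ (it has $\hlambda=\int\vf\,d\hmu'>0$ though, which conflicts with $\hmu_0(\vf)=0$, so this reshuffling is not free). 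Getting this bookkeeping right — making sure every ergodic piece of $\mu_1$ genuinely has $\lambda^+>0\ge\lambda^-$ without circularity — is the part I would be most careful about, and I would structure the proof to establish $\lambda^+>0$ a.e.\ outright and then invoke a short separate lemma (or Ruelle for $f^{-1}$ on the offending invariant subset) for $\lambda^-\le 0$.
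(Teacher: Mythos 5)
Your first half is correct and matches the paper: property (v) together with Fubini gives, for $\mu_1$-a.e.\ $x$, some line $E\subset T_xM$ along which $\tfrac1n\log\|Df^n_x|_E\|\to\ell>0$, and since any such limit is one of the two Lyapunov exponents of $x$, this forces $\lambda^+(f,x)>0$ $\mu_1$-a.e.

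For the second half your proposed route has a genuine gap, which you yourself flag in the last paragraph. Applying Ruelle's inequality to $f^{-1}$ on the invariant set $A=\{\lambda^-(f,\cdot)>0\}$ does show that $h(f,\mu_1|_A)=0$, but that fact by itself contradicts nothing: the construction of $\hmu_1$ via Proposition~\ref{p.neutral-block} only guarantees that $\int\vf\,d\hmu_1>0$ (positive average top exponent), not that $\mu_1$ has positive entropy, and ergodic components with two positive exponents certainly have $\int\vf\,d\hmu'>0$ so they cannot be quietly reshuffled into $\hmu_0$ (which would violate $\hmu_0(\vf)=0$). Invoking the entropy bound \eqref{e.eq2} to conclude $h(f,\mu_1)>0$ would be circular, since that bound is established only after this claim. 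So the Ruelle-for-$f^{-1}$ argument as written does not close.

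The paper's actual argument for $\lambda^-\le 0$ is different and avoids entropy considerations entirely. Suppose for contradiction there is an invariant set $\Omega$ with $\mu_1(\Omega)>0$ on which both exponents are positive. Pesin's local stable manifold theorem applied to $f^{-1}$ (both exponents of $f^{-1}$ are negative on $\Omega$) gives, for $\mu_1$-a.e.\ $x\in\Omega$, a full two-dimensional neighborhood $U_x$ such that $d(f^{-n}x,f^{-n}y)\le C\kappa^n$ for all $y\in U_x$; combined with Poincar\'e recurrence this forces $x$ to be a hyperbolic periodic sink for $f^{-1}$ (i.e.\ a hyperbolic source for $f$), so $\lim_n\tfrac1n\log\|Df^{-n}_y\|<0$ for all $y$ in a neighborhood of $x$. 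One then builds an open set $U$ with $\mu_1(U)>0$, $\mu(\partial U)=0$, contained in such neighborhoods. Since $\nu_k\to\mu=(1-\beta)\mu_0+\beta\mu_1$ weak-$*$ and $\beta>0$ (Claim~\ref{c.non-zero}), we get $\nu_k(U)>0$ for large $k$; but $\nu_k$ is hyperbolic of saddle type, so $\nu_k$-a.e.\ $y$ satisfies $\lim_n\tfrac1n\log\|Df_k^{-n}_y\|>0$, and $\nu_k$ cannot charge a union of basins of hyperbolic sources. This contradiction is what kills $\Omega$. The key input you were missing is thus not Ruelle but Pesin theory plus the weak-$*$ convergence of the hyperbolic measures $\nu_k$ to $\mu$.
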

\begin{proof}

By (v), the definition of $\vf$, and Fubini's Theorem,  for $\mu_1$--a.e. $x\in M$, there exists a one-dimensional subspace $E\subset T_x M$ such that
$$
\lim_{n\to+\infty}\frac{1}{n}\log\|Df^n|_E\|=\lim_{n\to+\infty}\frac{1}{n}\sum_{j=0}^{n-1}
(\vf\circ\hf^j)(x,E)>0.
$$
It follows that $\mu_1$--a.e. $x$ has  at least one positive Lyapunov exponent.

Assume by contradiction that the claim is false, then  there is an $f$-invariant set $\Omega$ of positive $\mu_1$-measure such that every $x\in \Omega$ has two (possibly equal) positive Lyapunov exponents. Recall the following well-known fact:

\medskip
\noindent
{\sc Fact:}
{\em If $\dim(M)=2$, then $\mu_1$-almost every $x$ with two positive Lyapunov exponents has an open neighborhood $U_x$ such that $\underset{\scriptscriptstyle n\to+\infty}\lim\frac{1}{n}\log\|Df^{-n}_y\|<0$ for all $y\in U_x$.}
{\begin{proof}
Pesin's local stable manifold theorem~\cite[Thm 2.2.1]{Pesin-Izvestia-1976} implies that $\mu_1$-almost every point $x$ admits a neighborhood $U_x$ and constants $C>0$ and $\kappa\in (0,1)$
such that for any $y\in U_x$ and $n\geq 0$,
$$
  d(f^{-n}(x),f^{-n}(y))\leq C\kappa^n.
  $$
Since the orbit of $x$ is  recurrent, this implies that the forward orbit of $x$ converges towards a periodic orbit $O$ {and in fact must coincide with that periodic orbit, again by recurrence.}
As a consequence, $x$ is a hyperbolic sink so
$\|Df^{-N}_x\|<\tfrac 1 2$ for some $N\geq1$ such that $f^N(x)=x$.  \end{proof}}

\medskip

\noindent

This fact enables us to build an open set $U$  such that
$$\mu_1(U)>0\ , \mu_1(\partial U)=\mu_0(\partial U)=0\ , \
\lim_{n\to+\infty}\frac{1}{n}\log\|Df^{-n}_y\|<0\text{ for all }y\in U.
$$
By  (ii), $\nu_k\xrightarrow[k\to\infty]{w^\ast}(1-\beta)\mu_0+\beta\mu_1$, and $\beta\neq 0$ by claim~\ref{c.non-zero}. So $\nu_k(U)>0$ for all $k$ large enough. But this is a contradiction, because the $\nu_k$ have one negative Lyapunov exponent, so that
$
\underset{n\to+\infty}\lim\frac{1}{n}\log\|Df^{-n}_y\|>0 \text{ $\nu_k$-almost everywhere.}
$
\end{proof}

\begin{claim}\label{c.lift-measure} $\hmu_1$ is the unstable lift $\hmu_1^+$ of $\mu_1$.
\end{claim}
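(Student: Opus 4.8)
The plan is to show that $\hmu_1$ puts no mass over the stable Oseledets direction, so that, being a lift of $\mu_1$, it must coincide with the unstable lift $\hmu_1^+$ by uniqueness.

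First I would collect the two ingredients already in hand. By Claim~\ref{c.non-zero}, $\beta=\hm_1(\hM)>0$, so $\hmu_1=\beta^{-1}\hm_1$ and every $\hm_1$-almost-sure statement is also $\hmu_1$-almost sure; in particular property (v) of Section~\ref{s.proofA''.subsequence} holds for $\hmu_1$-a.e. $\hx$. By Claim~\ref{c.sign}, for $\mu_1$-a.e.\ $x$ one has $\lambda^+(f,x)>0\geq\lambda^-(f,x)$; in particular $\lambda^+(f,x)>\lambda^-(f,x)$, so the Oseledets splitting $T_xM=E^+(x)\oplus E^-(x)$ is defined $\mu_1$-a.e., and Corollary~\ref{cor-nonergodiclift} applies to the lift $\hmu_1$ of $\mu_1$: it is carried by $\mathrm{graph}(E^+)\cup\mathrm{graph}(E^-)$, and there is a unique $\hf$-invariant lift $\hmu_1^+$ of $\mu_1$ satisfying $\hmu_1^+(\mathrm{graph}(E^+))=1$.

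Next I would compute the Birkhoff average of $\vf$ on the part of $\hmu_1$ lying over $\mathrm{graph}(E^-)$. Let $\rho$ denote the restriction of $\hmu_1$ to $\mathrm{graph}(E^-)$; its projection to $M$ is $\leq\mu_1$. For $\rho$-a.e.\ point $(x,E^-(x))$, the identity \eqref{varphi-sums} gives $\frac1n\sum_{j=0}^{n-1}\vf(\hf^j(x,E^-(x)))=\frac1n\log\|Df^n_x|_{E^-(x)}\|\to\lambda^-(f,x)$, and $\lambda^-(f,x)\leq 0$ by Claim~\ref{c.sign}. This contradicts property (v), which forces this limit to be strictly positive $\hmu_1$-a.e., hence $\rho$-a.e. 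Therefore $\rho=0$, i.e.\ $\hmu_1(\mathrm{graph}(E^-))=0$, and then Corollary~\ref{cor-nonergodiclift} yields $\hmu_1(\mathrm{graph}(E^+))=1$. By the uniqueness part of that corollary, $\hmu_1=\hmu_1^+$, as claimed.

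The argument is short and I do not expect a genuine obstacle; the only points needing a little care are the bookkeeping between $\hm_1$ and $\hmu_1$ (legitimate precisely because $\beta>0$) and the observation that Claim~\ref{c.sign} is what guarantees the two Oseledets exponents are distinct $\mu_1$-a.e., so that $\mathrm{graph}(E^\pm)$ and the lifts $\hmu_1^\pm$ are meaningful in the first place.
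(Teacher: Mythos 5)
Your proof is correct and follows essentially the same route as the paper's: Claim~\ref{c.sign} plus Oseledets gives the splitting $\mu_1$-a.e., Corollary~\ref{cor-nonergodiclift} restricts the lift $\hmu_1$ to $\mathrm{graph}(E^+)\cup\mathrm{graph}(E^-)$, and property~(v) rules out any mass on $\mathrm{graph}(E^-)$ since there the Birkhoff average of $\vf$ is $\lambda^-\leq 0$. The extra bookkeeping you add (using $\beta>0$ from Claim~\ref{c.non-zero} to pass between $\hm_1$ and $\hmu_1$) is a fine clarification but not a different argument.
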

\begin{proof}
By Claim~\ref{c.sign} {and the Oseledets theorem,}  for $\mu_1$-a.e. $x$, $T_x M=E^u(x)\oplus E^c(x)$, where $Df_x E^\ast (x)=E^\ast(f(x))$, $(\ast=u,c)$.
By Corollary~\ref{cor-nonergodiclift}, $\mu_1$ has a unique lift $\hmu_1^+$ to $\mathrm{graph}(E^u)$, and all other lifts charge some part of $\mathrm{graph}(E^c)$. Since $\lim (1/n)\sum_{j=0}^{n-1}\vf(\hf^j(\hx))\leq 0$ on $\mathrm{graph}(E^c)$, property (v) forces  $\hmu_1=\hmu_1^+$ a.e.
\end{proof}

\subsection{\bf Proof of Theorem~\ref{t.ergodic4} part (1)}
We compare the exponents of $\nu_k$ and $\mu$.
\begin{align*}
\beta\lambda^+(f,\mu_1)&=\beta\int\vf d\hmu^+_1 \text{ by Lemma~\ref{Lemma-Lyap}}\\
&=\beta\int\vf d\hmu_1=\int\vf d\hm_1 \text{ by  claim~\ref{c.lift-measure} and definition of $\hm_1$}\\
&=\int\vf d\hm_1+\int\vf d\hm_0=\int\vf d\hmu \text{ by  (iv) and definition of $\hmu$}\\
&=\lim_{k\to\infty}\lambda^+(f_k,\nu_k) \text{ by  (vii) and the convention $\{\nu_{k_i}\}=\{\nu_k\}$.}
\end{align*}
Thus $\lim_{k\to\infty}\lambda^+(f_k,\nu_k)=\beta\lambda^+(f,\mu_1)$, as required.

\subsection{Proof of  Theorem~\ref{t.ergodic4} part (2).}
{We now come to the heart of the proof of Theorem~\ref{t.ergodic4}.}
\subsection*{Step 1 (The decomposition  $\mu_1=\sum a_c\mu_{1,c}$)} We decompose $\mu_1$ into invariant measures $\mu_{1,c}$ all of whose ergodic components have  nearly the same entropy.

Let $\Upsilon:=\Upsilon(r)$ be as in Yomdin's Theorem~\ref{t-yomdin}, and let $\gamma_0(r,f,\eta)>0$ be as in Proposition~\ref{p.estimate2'}.
{We fix  $\eta,\gamma>0$ arbitrarily small, and $\ell\in\NN$ arbitrarily large as follows.
First we choose $\eta>0$; then we take an integer $\ell$ such that
\begin{equation}\label{e.ell}
\ell\eta>\overline{h}(f,\mu_1);
\end{equation}
and then we then fix $0<\gamma<\min(\gamma_0(r,f,r),1/20)$ such that for each $k\ge1$,
\begin{equation}\label{e.gamma}
10\gamma\bigg(\log(2\Upsilon)+\frac{r}{r-1}\log\|D\hf_k\|_\supnorm\bigg)<\eta,
\end{equation}
\begin{equation}\label{e.gamma2}
(\ell+10)\gamma<1, \; \; \text{ and } \;\;  \overline{h}(f,\mu_1)\ell\gamma<\eta.
\end{equation}

{By \eqref{e.ell}}, we can decompose
$
\displaystyle{\mu_1=\sum_{c=1}^{\ell'} a_c \mu_{1,c}\ ,\ a_c\in {(}0,1]\ ,\ \sum_{c=1}^{\ell'} a_c=1}
$
where $\ell'\leq \ell$ and   $\mu_{1,c}$ are $f$-invariant probability measures such that:
\begin{itemize}
\item[--] For $c\neq c'$, the measures $\mu_{1,c}$ and $\mu_{1,c'}$ are mutually singular;
\item[--] For each $c$ there is a number $h_c$ such that all the ergodic components of $\mu_{1,c}$ have entropy in $[h_c,h_c+\eta)$;
\item[--] $h(f,\mu_1)-\eta<\sum_c a_ch_c \leq h(f,\mu_1)$.
\end{itemize}

Since $\mu_{1,c}\ll\mu_1$,
Claim \ref{c.sign} implies that $\mu_{1,c}$-a.e. $x$ has one positive and one non-negative Lyapunov exponent.
By Corollary~\ref{cor-nonergodiclift}, $\mu_{1,c}$ has an unstable lift $\hmu_{1,c}^+$ carried by $\mathrm{graph}(E^u)$.
By Claim~\ref{c.lift-measure}, this gives
$$
\hmu_1=\hmu_1^+=\sum_{c=1}^{\ell'} a_c\hmu_{1,c}^+.
$$

\subsection*{Step 2 (The neutral segment parameters $\hU_0,\overline{n}_0$)}\label{ss.choice-n0}
\begin{enumerate}[$-$]
\item Let $N_1(r,f,\eta)$ and $N_0(r,f,\eta,\gamma)$ be as in Propositions~\ref{p.estimate1'} and~\ref{p.estimate2'}.

\item Fix $N$ larger than $N_1(r,f,\eta)$ and $N_0(r,f,\eta,\gamma)$.

\item Let $Q:={\sup_{k\geq1} Q_{r,N}(f_k)+1}$, with $Q_{r,N}(\cdot)$ defined as in \eqref{Q-r-n}. This supremum is finite because, for any $n=1,\dots,N$, any $k\geq1$,
 $$
     \|\hf_k^n\|_{C^{r-1}}\leq A(\|f_k^n\|_{C^r}\cdot\|Df_k^{-n}\|_\supnorm)^A
      \leq B(\|f_k\|_{C^r} \cdot \|Df_k^{-1}\|_\supnorm^n)^B
 $$
for some $A=A(r)$ by Lemma~\ref{l-Ds-hg}, and $B=B(r,n)$ by the formulas for the differential of a composition. Since $f_k\rbd{r} f$, the factors $\|f_k\|_{C^r}$, $k\geq1$, are bounded. Since $f_k\to f$ in $\Diff^1(M)$, $\|Df_k^{-1}\|_\supnorm$ converges to $\|Df^{-1}\|_\supnorm$ and is therefore bounded too.

\item Let  ${\eps_Y:=\eps_Y}(r,Q)$ be as in Yomdin's Theorem~\ref{t-yomdin}. We also set $\delta:=\varepsilon_Y$.
\end{enumerate}
With these choices of $\eta,\gamma,\delta,N$, we apply
Proposition~\ref{p.estimate2'}  to $\widehat \mu_0$ and obtain:
\begin{enumerate}[--]
    \item some numbers $0<\varepsilon,\widehat \varepsilon<\delta$,
    \item  an integer $\overline{n}_0$,
    \item a $C^2$ neighborhood $\mathcal U_0$ of $f$ in $\Diff^r(M)$,
    \item and an open set $\widehat U_0$ satisfying $\widehat \mu_0(\widehat U_0)>1-\gamma^2$ and $\widehat \mu_0(\partial \widehat U_0)=0$,
\end{enumerate}
such that property (**) holds.

{By further reducing $\hU_0$, we can also ensure} that
\begin{equation}\label{l.boundary-measure}
\widehat \mu(\partial \widehat U_0)=0.
\end{equation}

\subsection*{Step 3 (Expanding segment parameters $\hU_{1,c},n_{1,c}$)}
Having fixed $N,\varepsilon,\widehat \varepsilon$ as above,
Pro\-po\-sition~\ref{p.estimate1'} now associates to each $\widehat \mu_{1,c}$, with $1\leq c\leq \ell'$, an integer $\bar n_{1,c}:=\bar n_{1}(f,\hmu_{1,c},\eta,\gamma,N,\eps,\heps)$. We then introduce the integers
$$
n_1:=\max(\{\bar{n}_{1,c}:1\leq c\leq \ell'\}\cup\{1/\gamma\}),$$
$$
n_{1,c}:=n_1+c.
$$
We construct open sets $\hU_{1,c}$ and an integer $k_0$ with the following properties:
\begin{enumerate}[(a)]
\item $\widehat \mu_{1,c}(\widehat U_{1,c})>1-\gamma^2$
and $\widehat \mu(\partial \widehat U_{1,c})=0$.
\item For all $f_k$ with  $k>k_0$, for any regular curve $\sigma$ with $C^r$ size at most $(\eps,\heps)$, there exists a family of reparametrizations $\cR$ of $\sigma$  over $\hsigma^{-1}(\hU_{1,c})$ such that
    \begin{enumerate}[\;\;(b1)]
    \item $\cR$ is $(C^r,f_k,N,\eps,\heps)$--admissible up to time $n_{1,c}:=n_1+c$,
    \item $\frac{1}{n_1+c}\log |\cR|\leq h_c+\frac{\lambda(\hf_k)}{r-1}+\eta$.
    \end{enumerate}
\item For all $f_k$ with $k>k_0$, for any different $1\leq c,c'\leq \ell$, and for any $0\leq j\leq n_{1,c}$,
    $$
    \hf^j_k(\text{closure}(\hU_{1,c}))\cap \text{closure}(\hU_{1,c'})=\emptyset.
    $$
\item If $c\neq c'$, then $\hmu_{1,c}(\hU_{1,c'})<\gamma^2$.
\end{enumerate}

\medskip
\noindent
{\em Construction\/.}
For each $c$, we apply
Proposition \ref{p.estimate1'} to $f$, $\mu_{1,c}$ and to the parameters  $\eta,\gamma, \eps,\heps, N$ and $n=n_{1,c}$. This gives an open set $\hU_{1,c}$ s.t.
\begin{enumerate}[(a')]
\item $\hmu_{1,c}(\hU_{1,c})>1-\gamma^2$ and $\hmu_{1,c}(\partial\hU_{1,c})=0$.
\item For all $g$ sufficiently close to $f$ in $C^2$-topology such that $Q_{r,N}(g)<{Q}$
and for any regular $C^r$ curve $\sigma$ with $C^r$ size at most $(\eps,\heps)$, there exists a family of reparametrizations $\cR$ of $\sigma$ over $\hsigma^{-1}(\hU_{1,c})$
satisfying (b1) and (b2).
\end{enumerate}
Choose $k_0'$ so that (b') holds for all $g=f_k$ with  $k>k_0'$, for all $1\leq c\leq \ell$.

By assumption, the measures $\hmu_{1,c}$ (for $1\leq c\leq \ell'$) are mutually singular and there exist
pairwise disjoint $f$-invariant measurable sets $X_{c}$ such that $\hmu_{1,c}(X_{c'})$ equals one when $c=c'$, and zero otherwise.
Using (a'), one constructs compact sets $\hK_{1,c}\subset X_c\cap\hU_{1,c}$ such that $\hmu_{1,c}(\hK_{1,c})>1-\gamma^2$. Necessarily
$\hf^j(\hK_{1,c})\cap \hK_{1,c'}\subset X_{c}\cap X_{c'}=\emptyset$
for all different $1\leq c,c'\leq \ell'$ and every $0\leq j\leq n_{1,c}$. So
$$
\min\bigg\{\dist(\hf^j(\hK_{1,c}),\hK_{1,c'}):1\leq c,c'\leq \ell',\; c\neq c',\; j=1,\ldots,n_{1,c}\bigg\}>0.
$$
{This inequality remains true if one replaces $\hf$ by $\hf_k$ with $k$ large enough  and  the compact sets $\hK_{1,c}$ by small enough neighborhoods $\hU'_{1,c}$.
We may choose those neighborhoods so that $\hmu(\partial\hU'_{1,c})=0$.  Replacing each $\hU_{1,c}$ by its intersection with $\hU_{1,c}'$, we obtain sets satisfying  both the conclusion~(*) of Proposition~\ref{p.estimate1'} and:
$$
 \hf^j_k(\hU''_{1,c})\cap \hU''_{1,c'}=\emptyset
 \qquad ( 1\leq c\ne c'\leq\ell', \quad \forall 0\leq j \leq n_{1,c})
$$
for $k$ large enough. We replace the sets $\hU_{1,c}$ by these new $\hU''_{1,c}$}. 
Moreover (a') and (b') are preserved.

Notice that if $c\neq c'$, then $\hU_{1,c'}\cap \hU_{1,c}=\emptyset$, so $\hmu_{1,c}(\hU_{1,c'})<\hmu_{1,c}(\hM\setminus \hK_{1,c})<\gamma^2$.
Then, all the properties (a)-(d) hold.

\subsection*{Step 4 (\weakstar\  neighborhoods of $\hm_0$ and $\hm_1$)}\label{ss.choice-V} We construct \weakstar\  open neighborhoods $\hV_0,\hV_1$ of the measures $\hm_0,\hm_1$ s.t.
\begin{equation}\label{V-bounds}
\begin{aligned}
&\hm\in \hV_0\Rightarrow \begin{cases} |\hm(\hU_0)-\hm_0(\hU_0)|<\gamma^2,\\
|\hm(\hM)-\hm_0(\hM)|<\gamma^2,
\end{cases}\\
&\hm\in \hV_1\Rightarrow \begin{cases} |\hm(\hU_{1,c})-\hm_1(\hU_{1,c})|<\gamma^2, & 1\leq c\leq\ell',\\
|\hm(\hM)-\hm_1(\hM)|<\gamma^2.
\end{cases}
\end{aligned}
\end{equation}
Such neighborhoods exist since $\hm_i(\partial \hU_{1,c})=0$ and $\hm_i(\partial\hU_0)=0$ from the property (a) in step 3 and~\eqref{l.boundary-measure}.

\subsection*{Step 5 (Neutral block parameters $\alpha,L,k_\ast$)}\label{ss.choiceL}
Recall the integer $k_0$ obtained in step 3.
Using   property (iii) in \S\ref{s.proofA''.subsequence}, one finds {$\alpha\in(0,\eta/10)$, $L\geq 1$ satisfying
\begin{equation}\label{e.L}
L> {2\gamma^{-1}}\max\{\overline{n}_0, \; n_{1,1},\dots,n_{1,\ell'}\},
\end{equation}
and $k_\ast=k_\ast(\alpha,L)\geq k_0$ such that  for $k>k_\ast$ and $\hnu_k^+$-a.e. $\hx\in\hM$,
    \begin{enumerate}[--]
    \item the $\Neutral_{\alpha,L}(\hx,\hf_k,\vf_k)$-empirical measures are in a compact subset of $\hV_0$;
    \item the $\NN\setminus\Neutral_{\alpha,L}(\hx,\hf_k,\vf_k)$-empirical measures are in a compact subset of $\hV_1$.
    \end{enumerate}
These compact sets will give the extra margin necessary to deal with boundary terms
{(see the proof of Lemma~\ref{l.decomposition} below).}

\subsection*{Step 6 (Decomposition of orbits into orbit segments)}
Recall that the orbit segment of $\hf_k$ with \emph{length} $t$ and \emph{initial point} $\hx$ is the string $(\hx,\hf_k(\hx),\ldots,\hf_k^{t-1}(\hx))$.
It is associated with the measure $$
\hmu_{\hf_k,\hx}^t:=\frac{1}{t}\sum_{j=0}^{t-1}\delta_{\hf_k^j(\hx)}.
$$
An orbit segment will be called {\em neutral} if  $(0,1,\dots,t-1)$ is an $(\alpha,L)$-neutral block of $(\hx,\hf_k,\vf_k)$ as defined in Section~\ref{s.neutral}, i.e. if  $t\geq L$
and if  $\hx=(x,E)$ satisfies:
 \begin{equation}\label{eq-neutral}
   \|Df_k^m|_{E}\|\leq e^{\alpha m}\ \ \text{ for all $0< m\leq t$}.
 \end{equation}
Using the open sets $\hU_0,\hU_{1,c}$ and the integers $n_{1,c}$ defined at steps 2 and 3, we introduce $\ell'+2$ classes of orbit segments $(\hx,\hf_k(\hx),\ldots,\hf_k^{t-1}(\hx))$:
\begin{enumerate}[(a)]
\item {\bf Segments with color $1\leq c\leq  \ell'$:} orbit segments such that $\hx\in\hU_{1,c}$ and  $t=n_{1,c}$.
\item {\bf Blank segments:} neutral orbit segments  such that
    $\hmu^{t}_{\hf_k,\hx}(\hU_0)\geq 1-\gamma$.
\item {\bf Fillers:} orbit segments with length $t=1$.
\end{enumerate}
{The class of an orbit segment as above can be recognized from its length $t$: If $t=1$, it is a filler, if $t\in[n_1+1,n_1+\ell']$, it is colored with color $t-n_1$, and if $t$  is larger than $L$, then it is blank, see \eqref{e.L}.}
So these $\ell'+2$ classes are disjoint.

\begin{lemma}\label{l.decomposition}
For all $k>k_\ast$ and for $\widehat \nu_k^+$-a.e. $\widehat x$, there exists $n_k(\widehat x)\in\NN$ such that
all the orbit segments $(\widehat x, \widehat f_k(\widehat x),\dots,\widehat f^{n-1}_k(\widehat x))$ with $n\geq n_k(\widehat x)$ can be decomposed into:
\begin{enumerate}[(a)]
\item colored segments of total length at most  $\beta a_c n+\gamma n$, for each color $c$,
\item blank segments of total length at least $(1-\beta) n-4\gamma n$,
\item fillers of total length at most $6\gamma n$.
\end{enumerate}
\end{lemma}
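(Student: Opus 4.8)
Our plan is to run a greedy left-to-right decomposition of the orbit $(\hx,\dots,\hf_k^{n-1}(\hx))$ for a fixed $k>k_\ast$, a fixed $\hnu_k^+$-generic point $\hx$, and $n\geq n_k(\hx)$. First I would record the frequency estimates that drive everything, all valid for $\hnu_k^+$-a.e.\ $\hx$ once $k$ is large. Write $\Neutral:=\Neutral_{\alpha,L}(\hx,\hf_k,\vf_k)$. Ergodicity of $\nu_k$ together with property (iii) of \S\ref{s.proofA''.subsequence} gives $\frac1n\sum_{j\in[0,n)\cap\Neutral}\delta_{\hf_k^j(\hx)}\to\chi^k_{\alpha,L}\nu_k\in\hV_0$ and $\frac1n\sum_{j\in[0,n)\setminus\Neutral}\delta_{\hf_k^j(\hx)}\to(1-\chi^k_{\alpha,L})\nu_k\in\hV_1$; since $\hV_0,\hV_1$ are chosen so that \eqref{V-bounds} holds for \emph{every} measure they contain (possible because $\hm_i(\partial\hU_0)=\hm_i(\partial\hU_{1,c})=0$), one obtains, for $n$ large: the quantity $|\Neutral\cap[0,n)|$ differs from $(1-\beta)n$ by at most $\gamma^2 n$ (using $\hm_0(\hM)=1-\beta$); $|\{j\in\Neutral\cap[0,n):\hf_k^j(\hx)\notin\hU_0\}|\leq 3\gamma^2 n$ (using $\hm_0(\hU_0)>(1-\beta)(1-\gamma^2)$); and, for each color $c$, $|\{j\in[0,n)\setminus\Neutral:\hf_k^j(\hx)\in\hU_{1,c}\}|\leq(\beta a_c+2\gamma^2)n$ while $|\{j\in[0,n)\setminus\Neutral:\hf_k^j(\hx)\in\textstyle\bigcup_{c'\neq c}\hU_{1,c'}\}|\geq(\beta(1-a_c)-2\gamma)n$, where we use that the $\hU_{1,c}$ have pairwise disjoint closures (property (c) of Step 3 with $j=0$), $\hmu_{1,c}(\hU_{1,c})>1-\gamma^2$, $\hmu_{1,c}(\hU_{1,c'})<\gamma^2$, and \eqref{e.gamma2} to keep $\ell'\gamma^2$ below $\gamma$. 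Finally, since by part (1) $\lim_k\lambda^+(f_k,\nu_k)=\beta\lambda^+(f,\mu_1)\geq h>0$ and $\alpha<\eta/10<h$, we have $\lambda^+(f_k,\nu_k)>\alpha$ for $k$ large, so by the observation following Definition~\ref{d.neutralblock}, $\Neutral$ is a disjoint union of \emph{finite} maximal $(\alpha,L)$-blocks, each of length $\geq L$; the block straddling $0$ and the one straddling $n$ affect the count by $o(n)$ for a.e.\ $\hx$ (ergodic theorem applied to the length of the current neutral block, finite a.e.), absorbed into $n_k(\hx)$.

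Call a maximal neutral block \emph{defective} if it spends a fraction $\geq\gamma$ of its time outside $\hU_0$; by the second estimate the defective blocks have total length $\leq 3\gamma^2 n/\gamma\leq 3\gamma n$, and there are at most $n/L$ maximal blocks in $[0,n)$. The greedy rule produces $0=m_0<m_1<\dots<m_p=n$: at $m_i$, (a) if $m_i$ is the left endpoint of a non-defective maximal neutral block $[m_i,m_i+t)\subseteq[0,n)$, emit a blank segment of length $t$ — it is a legitimate blank segment since an initial (hence maximal) subinterval of $\Neutral$ is $(\alpha,L)$-neutral and non-defectiveness forces the $\hU_0$-frequency $\geq 1-\gamma$; (b) otherwise, if $\hf_k^{m_i}(\hx)\in\hU_{1,c}$ for the unique such color $c$ and $m_i+n_{1,c}$ exceeds neither $n$ nor the left endpoint of the next non-defective maximal neutral block, emit a colored-$c$ segment; (c) otherwise emit a filler ($t=1$). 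Colored and blank jumps never overshoot the start of the next non-defective block, and fillers advance by one, so the sequence $(m_i)$ meets the left endpoint of every non-defective block, which is therefore emitted as a blank segment; using that distinct neutral blocks are disjoint, the total blank length is $|\Neutral\cap[0,n)|$ minus the defective total minus the $o(n)$ end corrections, hence $\geq(1-\beta-\gamma^2)n-3\gamma n-o(n)\geq(1-\beta)n-4\gamma n$ for $n\geq n_k(\hx)$. This is point (b) of the Lemma.

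The nontrivial point is (a). Here we use property (c) of Step~3 in full: if $\hf_k^{m_i}(\hx)\in\hU_{1,c}$ then $\hf_k^{m_i+j}(\hx)\notin\overline{\hU_{1,c'}}$ for all $c'\neq c$ and all $0\leq j\leq n_{1,c}$, so along a colored-$c$ segment the orbit avoids $\bigcup_{c'\neq c}\hU_{1,c'}$. Hence the union $U_c$ of the colored-$c$ segments is disjoint both from the non-defective neutral blocks (which are blank segments) \emph{and} from $\{j\in[0,n)\setminus\Neutral:\hf_k^j(\hx)\in\bigcup_{c'\neq c}\hU_{1,c'}\}$; these two excluded sets are themselves disjoint (one lies in $\Neutral$, the other outside), so by the blank bound and the last frequency estimate $|U_c|\leq n-\bigl((1-\beta)n-4\gamma n\bigr)-\bigl(\beta(1-a_c)-2\gamma\bigr)n=\beta a_c n+6\gamma n$, which is point (a) up to renaming $\gamma$. (It is essential that colored segments are controlled through their \emph{aggregate} length $U_c$: counting them one by one only gives $\lesssim n_{1,c}(\beta a_c+\gamma^2)n$, useless since $n_{1,c}$ is large.) For (c), the fillers split into those in the at most $n/L$ conflict zones of length $<\max_c n_{1,c}$ preceding a non-defective block start, of total length $<\gamma n/2$ by \eqref{e.L}; those inside defective blocks, of total length $\leq 3\gamma n$; and those at times of $[0,n)\setminus\Neutral$ lying in no $\hU_{1,c}$, whose frequency is $\leq(1-\chi^k_{\alpha,L})\nu_k(\hM\setminus\bigcup_c\hU_{1,c})\leq(\beta+\gamma^2)-(\beta-2\gamma)<3\gamma$ by the same computation as before; with the $o(n)$ fillers at the two straddling blocks these sum to $\leq 6\gamma n$ once $n\geq n_k(\hx)$ (the constants $4$ and $6$ being reached on the nose by the parameter choices \eqref{e.gamma}, \eqref{e.gamma2}, \eqref{e.L}, or trivially after replacing $\gamma$ by $\gamma/20$ throughout).

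The main obstacle is precisely the colored bound (a). Without property (c) of Step~3 there is no way to pin down the aggregate length of the colored-$c$ segments — a visit-counting argument bounds only their number and is off by the large factor $n_{1,c}\geq 1/\gamma$. Property (c) turns the estimate into a pure volume statement: the colored-$c$ time is squeezed between the abundant visits to the other $\hU_{1,c'}$ and the abundant neutral time, and it is here that the decomposition $\mu_1=\sum_c a_c\mu_{1,c}$ into nearly monoentropic pieces, together with the orbit-disjointness built into the construction of the $\hU_{1,c}$ in Step~3, is used. Everything else is bookkeeping of the $O(\gamma)$, $O(\gamma^2)$, $\ell'\gamma^2$ and $n_{1,c}/L$ error terms against the prescribed constants.
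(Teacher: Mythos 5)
Your proof is correct in substance and follows the same global strategy as the paper: a greedy left-to-right scan driven by the frequency estimates that Step~4's neighborhoods $\hV_0,\hV_1$ and the neutral decomposition (Proposition~\ref{p.neutral-block}, property~(iii) of \S\ref{s.proofA''.subsequence}) provide, with Step~3~(c) doing the work for the colored bound. The parts about blank segments and fillers track the paper closely (the paper's case~(b) is precisely the emission of a non-defective maximal neutral block, its type~(ii) fillers are exactly your ``defective'' times, and its type~(iv) fillers are your conflict zones). Your observation that a visit-counting argument for colored segments is off by a factor $n_{1,c}$ and that (c) of Step~3 converts the estimate into an aggregate volume bound is also the point the paper is implicitly making.

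Where you genuinely diverge from the paper is the greedy rule for colored segments. The paper (Case~(a)) requires a colored segment to avoid \emph{every} element of $\mathcal S_{neut}(\vartheta)$, defective or not; you only forbid overlap with the next \emph{non-defective} block. The paper's stronger requirement yields $\textsf{C}_c\subset \textsf{Neut}^c\setminus\bigcup_{c'\ne c}\hU_{1,c'}$, so $|\textsf{C}_c|$ can be bounded directly through $\hm_1'$, picking up an error of size $O(\ell\gamma^2)n<\gamma n$ only, exactly matching item~(a) of the Lemma. Your colored segments can run through defective blocks, so you have to argue by complementary counting: $|U_c|\le n-|\text{blank}|-|\{j\notin\Neutral:\hf^j\!\in\!\bigcup_{c'\ne c}\hU_{1,c'}\}|$, and the $O(\gamma n)$ slack in the blank lower bound (from the defective blocks) propagates into item~(a), giving $\beta a_c n + 6\gamma n$. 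You acknowledge this (``up to renaming $\gamma$''), and for the use of the Lemma downstream this is harmless, but as written it does not prove the statement with the constants of the Lemma. If you want the constants on the nose you should use the paper's stronger exclusion in Case~(a): the extra times a colored segment would then sacrifice (positions in $\hU_{1,c}$ followed by a defective block within $n_{1,c}$ steps) land in the $O(\max_c n_{1,c}/L)\cdot n <\gamma n$ conflict-zone filler budget, which is where the paper in fact accounts for them (type~(iv)). Minor further quibbles: your filler item~3 should be computed as $\hm_1'(\hM\setminus\bigcup_c\hU_{1,c})<(\ell+2)\gamma^2<\gamma$ (as in eq.~\eqref{eq-m1compl}) rather than $3\gamma$, otherwise your filler total as written exceeds $6\gamma n$; and both you and the paper gloss over the $o(n)$ end-effect at the block straddling $n$ (whose length depends on $n$), which needs a one-line Birkhoff-density argument rather than ``finite a.e.''.
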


\begin{proof}
By the reduction in section~\ref{s.Reduction-Ptv-Ent}, the ergodic measures $\nu_k$ have positive entropy, and therefore
the  $\hnu_k$ measure of $\hf_k$-periodic points is zero. Thus it is sufficient to consider non-periodic $\hx$ only.
Orbit segments of non-periodic points can be identified with the non-ordered sets of points they contain without any loss of information, because there is only one way to order them to get an orbit segment. We will therefore feel free to
abuse terminology and  treat orbit segments as  sets,  subject to the usual set-theoretic operations.

\medskip
{Given an orbit segment $\vartheta:=(\hx,\dots,\hf^{n-1}(\hx))$, we are going to build a decomposition
 \begin{equation}\label{eqDec}
    (\underbrace{\hf^{t_0}(\hx),\dots,\hf^{t_1-1}(\hx)}_{\vartheta_1};
   \dots;
   \underbrace{\hf^{t_{m-1}}(\hx),\dots,\hf^{t_m-1}(\hx)}_{\vartheta_m})
  \end{equation}
where $t_0=0<t_1<\dots<t_m=n$ and each segment $\vartheta_i:=(\hf_k^{t_i-1}(\hx),\dots, \hf_k^{t_{i}-1}(\hx))$ is either a colored segment, a blank segment, or a filler.

We call the sequence $(t_0,\dots,t_m)$ the \emph{type of the decomposition} since it determines not only how the orbit segment is divided but to which class each segment belong.
}

{By analogy with Section~\ref{s.neutral},} a neutral sub-segment of $\vartheta$ is called {\em maximal}, if it does not lie in a strictly longer neutral sub-segment of $\vartheta$. Let
$
{\mathcal{S}_{neut}}(\vartheta)
$
denote the collection of all maximal neutral sub-segments of $\vartheta$.
It is not difficult to see that every neutral sub-segment of $\vartheta$ is contained in some element of ${\mathcal{S}_{neut}}(\vartheta)$, and that the segments in ${\mathcal{S}_{neut}}(\vartheta)$ are pairwise disjoint.

\medskip
\noindent
{\sc Decomposition.}
We define $t_i$ inductively beginning with $t_0:=0$. Assuming that $0\leq t_{i-1}<n$ has been defined, we consider the following three possibilities:

\begin{enumerate}[--]
\item {\em Case (a)\/}. There exists $1\leq c\leq \ell'$ such that
 $\hf^{t_{i-1}}(\hx)\in\hU_{1,c}$, the orbit segment 
$(\widehat f_k^{t_{i-1}}(\widehat x),\dots,$ $\widehat f^{t_{i-1}+n_{1,c}-1}_k(\widehat x))$
does not intersect any segment in $\mathcal S_{neut}(\vartheta)$,
 and $t_{i-1}+n_{1,c}\leq n$.
 The color $c$ is uniquely defined because the $\hU_{1,c}$ are disjoint. We set $t_{i}:=t_{i-1}+n_{1,c}$. The resulting orbit segment $\vartheta_i:=(\widehat f_k^{t_{i-1}}(\widehat x),\dots,\widehat f^{t_{i}-1}_k(\widehat x))$ with length $n_{1,c}$ is a segment with color $c$.

 \medskip

\item {\em Case (b)\/}. There exists $T$ such that $\vartheta_i:=(\widehat f_k^{t_{i-1}}(\widehat x),\dots,\widehat f^{t_{i-1}+T-1}_k(\widehat x))\in \mathcal S_{neut}(\vartheta)$, and $|{\vartheta_i}\cap\hU_0|>(1-\gamma)|{\vartheta_i}|$.
The integer $T$ is unique {by maximality and}, by the definition of $\mathcal S_{neut}(\vartheta)$, it satisfies $T\geq L$ and $t_{i-1}+T\leq n$. 
We set $t_{i}:=t_{i-1}+T$.
Then $\vartheta_i=(\widehat f_k^{t_{i-1}}(\widehat x),\dots,\widehat f^{t_{i}-1}_k(\widehat x))$, and $\vartheta_i$ is a blank segment with length $T$.

\medskip
\item {\em Case (c)\/}. There are no such $T$ or $c$. In this case we set $t_{i}:=t_{i-1}+1$, and $\vartheta_i:=(\hf_k^{t_{i-1}}(\hx))$. This is a filler.
\end{enumerate}
These cases are mutually exclusive and at least one of them must happen (case (b) implies that $\vartheta_i\in\mathcal S_{neut}(\vartheta)$, excluding case (a), and case (c) happens iff case (a) and case (b) both fail), and   in all cases, $t_{i}\le n$ since $t_{i-1}<n$. Thus we have defined $t_{i+1}\in (t_i,n]$ unambiguously.

The inductive process stops with $t_m=n$.
The result is a decomposition of $\vartheta$ {as in eq.~\eqref{eqDec}} into blank segments, colored segments, and fillers.

\medskip

\noindent
{\sc Size estimates.} \medskip
 We now fix a $\hnu_k$-typical $\hx$, a large $n$, set $\vartheta:=(\hx,\ldots,\hf^{n-1}(\hx))$, and estimate the total size of the fillers, blank segments, and the segments of given color in $\vartheta$. ``Typical" means that our estimates apply to a set of full $\hnu_k$-measure, and the ``largeness" of $n$ is allowed to depend on $\hx$.

Let $\textsf{Neut}$ be the union of all   neutral sub-segments of $\vartheta$ and $\textsf{Neut}^c$ its complement:
$$\textsf{Neut}:=\bigcup\mathcal S_{neut}(\vartheta), \quad
\textsf{Neut}^c:=\vartheta\setminus \textsf{Neut}.$$
Clearly $\textsf{Neut}\subseteq\mathfrak N_{\alpha,L}(\hx,\hf_k,\vf_k)\cap [0,n)$, but the sets  could be  different, because the neutral segments in $\mathfrak N_{\alpha,L}(\hx,\hf_k,\vf_k)$ which contains $0$ or $n$ may have a non-$(\alpha,L)$-neutral intersection with $[0,n)$.   However, it is not difficult to see that
$
\frac{1}{n}\bigl|\textsf{Neut}\triangle \bigl(\mathfrak N_{\alpha,L}(\hx,\hf_k,\vf_k)\cap [0,n)\bigr)\bigr|\xrightarrow[n\to\infty]{}0.
$
Therefore, for $\hnu_k$-a.e. $\hx$ there exists $n_k(\hx)$ such that for all $n>n_k(\hx)$ and  $k>k_\ast$ (cf. step 5)
$$\widehat m_0':=\frac 1 n \sum_{\widehat y \in \textsf{Neut} } \delta_{\widehat y} \in \widehat V_0, \quad\quad
\widehat m'_1:=\frac 1 n \sum_{\widehat y \in \textsf{Neut}^c} \delta_{\widehat y} \in \widehat V_1.$$

Recall that $\hm_1=\beta\sum_c a_c \hmu_{1,c}$ with $\sum_c a_c=1$ and $0<\beta\leq 1$.
Since $\hm_1'\in \hV_1$, eq.~\eqref{V-bounds} and items~(a) and (c)
of Step~3 imply:
 \begin{align}
  \hm_1'(\hM\setminus\bigcup_{c'}\hU_{1,c'}) &= \hm_1'(\hM) - \sum_{c'} \hm_1'(\hU_{1,c'})
   < \hm_1(\hM)+\gamma^2 - \sum_{c'}  (\hm_1(\hU_{1,c'}) - \gamma^2)\notag \\
   &\leq  \beta\left( 1  - \sum_{c'} a_{c'}(1-\gamma^2)\right)+(1+\ell')\gamma^2\notag \\
   &\le (\ell+2)\gamma^2, \text{because the number of colors $\ell'$ is at most $\ell$.} \label{eq-m1compl}
 \end{align}

Let $\textsf{C}_c$ denote the union of all colored segments with color $c$;
let $\textsf{B}$ denote the union of all blank segments;  and let $\textsf{F}$ denote the union of all fillers.

\medskip
\noindent
{\it (a) Colored segments.}
By construction, if $c\neq c'$, then $\bigcup_{j=0}^{n_{1,c}}\hf_k^j(\hU_{1,c})\cap \hU_{1,c'}=\emptyset$. So if $\vartheta_i$ has color $c$, then
$\vartheta_i\subset\textsf{Neut}^c\setminus{\bigcup}_{c'\neq c}\hU_{1,c'}
$, and
\begin{align*}
|\textsf{C}_c|&\leq |\textsf{Neut}^c\setminus{\bigcup}_{c'\neq c}\hU_{1,c'}|{\leq}|\textsf{Neut}^c\setminus{\bigcup}_{c'}\hU_{1,c'}|
+|\textsf{Neut}^c\cap\hU_{1,c}|\\
&{\leq} n\cdot \hm_1'(\hM\setminus\bigcup_{c'}\hU_{1,c'})+n\cdot\hm_1'(\hU_{1,c})
<n(\ell+2)\gamma^2+n[\hm_1(\hU_{1,c})+\gamma^2],
\end{align*}
by eqs. \eqref{V-bounds}~and~\eqref{eq-m1compl}.
By step 3 (d), $\hm_1(\hU_{1,c})<\beta(a_c+\gamma^2)$.
Substituting this in the above and using~\eqref{e.gamma2} give
$$
|\textsf{C}_c|\leq n\beta a_c+n(\ell+ 4)\gamma^2\leq n\beta a_c+n\gamma.$$

\medskip
\noindent
{\it (b) Blank segments.} By definition, every blank segment is neutral, so $\textsf{B}\subset\textsf{Neut}$, and  $|\textsf{B}|=|\textsf{Neut}|-|\textsf{Neut}\setminus\textsf{B}|$.
By \eqref{V-bounds} and the definition of $\hm_0'$,
$$
|\textsf{Neut}|=n\cdot \hm_0'(\hM)\geq n(\hm_0(\hM)-\gamma^2)=n(1-\beta-\gamma^2).
$$
 The set $\textsf{Neut}\setminus \textsf{B}$ is the union of the maximal neutral orbit segments which visit $\hU_0$  with frequency less than {$1-\gamma$}. Thus
 $
    \gamma\cdot |\textsf{Neut}\setminus \textsf{B}| < \hm_0'(\hM\setminus\hU_0)n
 $. By \eqref{V-bounds} and the bound
$\widehat \mu_{0}(\widehat U_{0})>1-\gamma^2$ in step 2,
$$
\widehat m'_0(\widehat M\setminus \widehat U_0)<
\widehat m_0(\widehat M)-\widehat m_0(\widehat U_0)+2\gamma^2
<(1-\beta)-(1-\beta)(1-\gamma^2)+2\gamma^2=\gamma^2(3-\beta),
$$
so
$|\textsf{Neut}\setminus \textsf{B}|\leq \gamma^{-1}\widehat m'_0(\widehat M\setminus \widehat U_0)n<\gamma(3-\beta)n.$
It follows that $|\textsf{B}|>(1-\beta) n-4\gamma n.$

\medskip

\noindent
{\it (c) Fillers.}
By construction, a filler is a segment of length one $(\hy)$ such that one of the following holds:
 \begin{enumerate}[(i)]
   \item $\hy$ does not belong to a colored segment or to a segment in $\mathcal S_{neut}(\vartheta)$;
   \item $\hy$ belongs to a segment in $\mathcal S_{neut}(\vartheta)$, but this segment is not  a blank segment;
   \item $\hy$ belongs to a
   segment of length $n_{1,c}$ which begins at $\hU_{1,c}$, but it fails to be a colored segment because it extends beyond the right endpoint of $\vartheta$;
   \item $\hy$ belongs to a
   segment of length $n_{1,c}$ which begins at $\hU_{1,c}$, but it fails to be a colored segment because it intersects an element of $\mathcal S_{neut}(\vartheta)$.
 \end{enumerate}

{The fillers of type (i) belong to $\textsf{Neut}^c\setminus\bigcup_c\hU_{1,c}$, so their cardinality is bounded by eq.~\eqref{eq-m1compl}: 
$
|\textsf{Neut}^c\setminus\bigcup_c\hU_{1,c}|=n\cdot\hm_1'(\hM\setminus \bigcup_c\hU_{1,c})
 < (2+\ell)\gamma^2 n < \gamma n
 $

The fillers of type (ii) belong to $\textsf{Neut}\setminus \textsf{B}$. As we saw above this means that  their cardinality is less than  $ \gamma(3-\beta)n<3\gamma n.$

The  number of fillers of type (iii) is clearly bounded by the maximum length of a colored segment $\max_c n_{1,c} = n_1+\ell'\leq n_1+\ell$. This can be assumed to be  less than $\gamma n$ when $n$ is large enough.

It remains to  control the fillers of type (iv).
Fix $\vartheta_0\in\mathcal S_{neut}(\vartheta)$, and suppose  $\hy$ belongs to a  colored segment which intersects $\vartheta_0$.  All colored segments have lengths at most $n_1+\ell'$, therefore $\hy$ must belong to one of two segments of length $n_1+\ell'$ adjacent to the endpoints of $\vartheta_0$. This gives the following bound for the number of fillers of type (iv): $   2(n_1+\ell')\cdot|\mathcal S_{neut}(\vartheta)|$.
Recalling that $\mathcal S_{neut}(\vartheta)$ consists of disjoint sub-segments of $\vartheta$, each with length at least $L$, we find that
$$
|\mathcal S_{neut}(\vartheta)|\leq\frac{n}{L}.
$$
Thus by \eqref{e.L}, the number of fillers of type (iv) is at most
 $
\frac{2(n_1+\ell')}{L} n < \gamma n.
 $

It follows that  the total length of the fillers is $|\textsf{F}|<6\gamma n$.
}
\end{proof}

\subsection*{Step 7 (A bound on the number of decomposition types)}
In the previous step we decomposed  orbit segments of typical points with length $n$ large enough into colored segments, blank segments and fillers.

Let
$\theta=(t_0,t_1,\dots,t_m)$ be the type of the decomposition, see \eqref{eqDec} and the discussion which follows it. Here we bound the number of possible types. As always, let
$H(t):=t\log \tfrac 1 t+(1-t)\log\tfrac 1{1-t} \text{ for $0<t<1$}.$
\begin{claim}\label{c.number-type}
There exists $n_H:=n_H(\gamma)$ such that the number of types of decompositions of all  $\hf_k$--orbit segments {as in Lemma~\ref{l.decomposition}} with length $n>n_H$ and $k$ arbitrary is  at most $\exp[n H({10}\gamma)]$.
\end{claim}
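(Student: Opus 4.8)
The plan is a pure counting argument. Recall that a type $\theta=(t_0,t_1,\dots,t_m)$ is just a composition of $n$ (an ordered partition), and, as already noted before Lemma~\ref{l.decomposition}, the class of each piece is recoverable from its length alone: fillers have length $1$, colored segments have length in $[n_1+1,n_1+\ell']$, blank segments have length $>L$, and these three sets of lengths are pairwise disjoint by~\eqref{e.L}. Hence it suffices to bound the number of compositions of $n$ whose parts obey the length constraints of Lemma~\ref{l.decomposition}, and the only feature I will really use is an upper bound on the number $m$ of parts.

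First I would bound $m=m_{\mathsf F}+m_{\mathsf C}+m_{\mathsf B}$, the numbers of fillers, colored segments, and blank segments in a decomposition as in Lemma~\ref{l.decomposition}. Every filler has length $1$ and, by Lemma~\ref{l.decomposition}(c), the fillers have total length $\le 6\gamma n$, so $m_{\mathsf F}\le 6\gamma n$. A colored segment of color $c$ has length $n_{1,c}=n_1+c>1/\gamma$ because $n_1\ge1/\gamma$ (by its definition in Step~3); by Lemma~\ref{l.decomposition}(a) the color-$c$ segments have total length $\le\beta a_c n+\gamma n$, so summing over $c$ and using $\sum_c a_c=1$, $\ell'\le\ell$, and $\ell\gamma<1$ (from~\eqref{e.gamma2}) gives $m_{\mathsf C}<\gamma(\beta n+\ell'\gamma n)\le\gamma n+\ell\gamma^2 n<2\gamma n$. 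Each blank segment has length $\ge L>2/\gamma$ (from~\eqref{e.L}) and the blank segments are disjoint subsegments of a segment of length $n$, so $m_{\mathsf B}\le n/L<\tfrac12\gamma n$. Altogether $m<9\gamma n$, hence the number of interior breakpoints $t_1<\dots<t_{m-1}$ of $\theta$ inside $\{1,\dots,n-1\}$ is $<9\gamma n$, and these breakpoints determine $\theta$.

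Consequently the number of types is at most $\sum_{j=0}^{\lfloor 9\gamma n\rfloor}\binom{n-1}{j}$. Since $\gamma<1/20$ we have $10\gamma<1/2$, and I would invoke the classical binomial tail estimate $\sum_{j=0}^{\lfloor pM\rfloor}\binom{M}{j}\le e^{MH(p)}$ (valid for every integer $M\ge1$ and $0<p\le1/2$; it is proved by bounding the left side against a partial sum of $\bigl(p+(1-p)\bigr)^M=1$) with $M=n-1$ and $p=10\gamma$. Choosing $n_H(\gamma)$ so that $9\gamma n\le 10\gamma(n-1)$ for $n>n_H(\gamma)$ — e.g.\ $n_H(\gamma):=10$, since that inequality is equivalent to $n\ge 10$ — we get $\lfloor 9\gamma n\rfloor\le\lfloor 10\gamma(n-1)\rfloor$, so, using monotonicity of $H$ on $(0,1/2)$, the number of types is at most
\[
\sum_{j=0}^{\lfloor 10\gamma(n-1)\rfloor}\binom{n-1}{j}\;\le\;e^{(n-1)H(10\gamma)}\;\le\;e^{nH(10\gamma)},
\]
which is precisely the claim (and the bound is manifestly uniform in $k$).

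There is no genuine difficulty here; the only point requiring care is the bookkeeping of constants so that the exponent comes out as exactly $H(10\gamma)$. Concretely, one must keep the slightly wasteful bound $m<9\gamma n$ (rather than pushing it toward $10\gamma n$), so that the gap between $9\gamma$ and $10\gamma$ absorbs both the floor functions and the passage from $n-1$ to $n$ in the exponent.
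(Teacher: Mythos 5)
Your proof is correct and follows the same combinatorial scheme as the paper's own proof: bound the number of segments in a type by $O(\gamma n)$ using the per-class length constraints of Lemma~\ref{l.decomposition} together with the lower bounds on piece lengths, then count types as compositions of $n$ via binomial coefficients and an entropy estimate. The only (harmless) differences are cosmetic: your segment count comes out as $9\gamma n$ rather than the paper's $8\gamma n$ — you route through Lemma~\ref{l.decomposition}(a) for the colored segments where the trivial bound ``total length $\le n$, each piece $>1/\gamma$'' already gives $\gamma n$ — and you close with the clean nonasymptotic binomial-tail inequality $\sum_{j\le pM}\binom{M}{j}\le e^{M H(p)}$, which neatly avoids the $o(n)$ term the paper carries from de Moivre's approximation; both leave enough slack up to $H(10\gamma)$.
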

\begin{proof}
A decomposition of  an orbit segment with length $n$ has
\begin{enumerate}[--]
\item  at most $\gamma n$ blank segments (because these have lengths $\geq L>1/\gamma$),
\item at most $\gamma n$ colored segments
(because these have lengths $\geq n_1+1>1/\gamma$),
\item and at most $6\gamma n$ fillers (by Lemma \ref{l.decomposition}).
\end{enumerate}
This gives a total of  at most $\lfloor 8\gamma n\rfloor$ segments.

So every type $\theta=(t_0,\ldots,t_m)$ has length $m< \lfloor 8\gamma n\rfloor+1$.
 Since $t_0=0, t_m=n$, there can be at most
$
\sum_{m=1}^{\lfloor 8\gamma n\rfloor}{n\choose m-1}
$ different types.
Since $8\gamma<1/2$,  the  sum is bounded by $8\gamma n {n\choose\lfloor 8\gamma n\rfloor}$.
By De Moivre's approximation, this is less than $\exp[n H(8\gamma)+o(n)]$ as $n\to\infty$.

The claim follows, because ${10}\gamma<1/2$ so
 $H({8}\gamma)<H({10}\gamma)$.
 \end{proof}

\subsection*{Step 8 (Conditional measures and choice of $N_k, F_k$)}
The measures $\nu_k$ are assumed to be $f_k$-ergodic, and  by  the reductions in section~\ref{s.Reduction-Ptv-Ent} they have positive entropy.   So by Ruelle's inequality,  each $\nu_k$ is a hyperbolic measure.

As explained in Section~\ref{ss.unstable-partitions}, one can introduce a measurable partition subordinated to the unstable lamination of $\nu_k$
and associate to it a system of conditionals probability measures $\nu^u_{k,x}$.

We fix $N_k\geq 1$ and a Borel set $F_k\subset M$ with $\nu_k(F_k)>\frac{1}{2}$ such that  for every point $x\in F_k$ and for the diffeomorphism $f_k$:
\begin{enumerate}[--]
\item $x$ has a well-defined unstable manifold, an immersed $C^r$ curve $W^u(x)\subset M$;
\item $\nu^u_{k,x}$ is well-defined and $x$ belongs to the support of the restriction of  $\nu^u_{k,x}$ to $F_k$;
\item $\hx:=(x,E^u(x))$ satisfies Lemma~\ref{l.decomposition} with $n_k(\hx)\leq N_k$. In particular for each $n\geq N_k$, the orbit segment  $(\hx,\hf_k(\hx),\ldots,\hf_k^{n-1}(\hx))$ has a decomposition {as in Lemma~\ref{l.decomposition}}.  Let $\theta=\theta(x,n)$ be the type of decomposition.
\end{enumerate}

\subsection*{Step 9 (Construction of reparametrizations)}
Choose a point $x\in F_k$ which satisfies Corollary~\ref{lemma entropy reparametrizations}. 

Let  $\sigma\colon [0,1]\to W^u(x)$ be a regular $C^r$-curve which  parametrizes a neighborhood of $x$ in $W^u(x)$ in the intrinsic topology, and which
has $C^r$ size at most $(\varepsilon,\heps)$.
{By the choice of $F_k$,}
$T:=\sigma^{-1}(F_k)$ has positive measure for $\nu^u_{k,x}$.

Fix $n\geq N_k$, and let $\eps,\heps$ and $N$ be as in step 2.
Our aim is to  construct a  particular family of reparametrizations $\mathcal R_n$ of $\sigma$ over $T$, which is $(C^r,f_k,N,\eps,\heps)$--admissible
up to time $n$. In later steps, we will estimate the cardinality of $\cR_n$ and use
Corollary~\ref{lemma entropy reparametrizations} to obtain the upper bound for $h(f_k,\nu_k)$ which completes the proof of the theorem.

We begin by fixing a type $\theta:= (t_0,t_1,\dots,t_m)$ with $t_m=n$, and  constructing  a family  of reparametrizations $\mathcal R^\theta_n$
of $\sigma$ admissible up to time $n$ over the set
$$T_\theta :=\sigma^{-1}\{y\in F_k \text{ with type $\theta$}\}.$$
Then we will take the union over all possible types and obtain the family $\cR_n$ of reparametrizations over $T$.
\medskip

$\mathcal R^\theta_n$  is obtained inductively by defining families
$\mathcal R^\theta_{t_i}$ of parametrizations of $\sigma$ over $T_\theta$, which are $(C^r,f_k,N,\eps,\heps)$--admissible up to time $t_i$.
The base of the induction is defined by taking $\mathcal R^\theta_{0}:=\{\operatorname{Id}\}$. This parametrization is admissible, because $\sigma$ has $C^r$ size at most
$(\varepsilon,\widehat \varepsilon)$.
After $m$ steps, we will obtain the family $\mathcal R^\theta_n=\mathcal R_{\theta,t_{m}}$, which is an admissible family of reparametrizations over $T_\theta$, up to time $n$.

\medskip
\noindent
{\sc Induction step:} We build $\mathcal R^\theta_{t_{i}}$,
 assuming $\mathcal R^\theta_{t_{i-1}}$ was already constructed.
 We proceed by concatenation (see Lemma~\ref{l.concatenation}). We will set
  $$
    \mathcal R^\theta_{t_{i-1}} := \{\psi\circ\vf:\psi\in\mathcal R^\theta_{t_{i-1}},\; \vf\in\mathcal R(\psi,t_{i-1})\}
 $$
for well-chosen families $\cR(\psi,t_{i-1})$ which parametrize of $f_k^{t_{i-1}}\circ\sigma\circ\psi$ over $\psi^{-1}(T_\theta)$ in a  $(C^r,f_k,N,\eps,\heps)$-admissible up to time $t_{i}-t_{i-1}$, and which we now construct.

Fix $\psi\in\cR_{t_{i-1}}^\theta$ and let $\sigma':=f^{t_{i-1}}_k\circ \sigma\circ \psi$ and $T':=\psi^{-1}({T_\theta})$. By the induction hypothesis, $\cR_{t_{i-1}}^\theta$ is admissible, therefore  $\sigma'$ has $C^r$ size at most $(\eps,\heps)$.

By the definition of $T_\theta$,  the orbit segments $(\hy,\hf_k(\hy),\ldots,\hf_k^{t_i-t_{i-1}}(\hy))$ have the same type for every $\widehat{y}\in \hsigma'(T')$: If $t_i-t_{i-1}=n_{1,c}$ they are all colored segments with color $c$; if $t_i-t_{i-1}\geq L$ they are all blank; and if $t_i-t_{i-1}=1$ they are all fillers. See step 6.
Our construction of $\cR(\psi,t_{i-1})$ depends on the case:
\medskip

\noindent
{\it Case (a): $t_{i}-t_{i-1}=n_{1,c}$.} In this case, $  (\hf_k^{t_{i-1}}(\hy),\ldots,\hf_k^{t_{i}-1}(\hy))$ are colored segments with the same color $c$ for all $\hy\in\hsigma(T_\theta)$.
 Thus  $\widehat f_k^{t_{i-1}}(\widehat \sigma (T_\theta))\subset\widehat U_{1,c}$.
Applying Proposition~\ref{p.estimate1'} to $\hmu_{1,c},n_{1,c}$ and
$\sigma'$, we obtain a family $\cR(\psi,t_{i-1})$ of reparametrizations $\varphi$ over the set $T'$ which is $(C^r,f_k,N,\eps,\heps)$--admissible up to time $t_{i}-t_{i-1}$, and which satisfies the  cardinality bound
$$\frac 1 {t_{i}-t_{i-1}} \log|\mathcal{R}(\psi,t_{i-1})|\leq \overline h(f,\mu_{1,c})+\frac{\lambda(\widehat f)}{r-1}+\eta
\leq h_{c}+\frac{\lambda(\hf)}{r-1}+ 2 \eta$$
(Recall that the entropy of every ergodic component of $\mu_{1,c}$ is in $[h_c,h_c+\eta)$.)

\medskip

\noindent
{\it Case (b): $t_{i}-t_{i-1}\geq L$.} {In this case $  (\hf_k^{t_{i-1}}(\hy),\ldots,\hf_k^{t_{i}-1}(\hy))$ are  blank for  all $\hy\in\hsigma(T_\theta)$. By the choice of $\alpha,L$ {in Step 5, these segments are $\tfrac\eta{10}$-neutral, and their lengths are larger or equal to $\overline{n}_0$.
Consequently, the set $T'$ is contained in the set {controlled by Proposition~\ref{p.estimate2'}, when  applied to the diffeomorphism $g=f_k$ and to the curve $\sigma'$  (see eq.~\eqref{e.T}).
Hence, this}
proposition gives us  a family $\cR(\psi,t_{i-1})$ of reparametrizations $\vf$
of $\sigma'$ over the set $T'$ which is $(C^r,f_k, N,\eps,\heps)$--admissible up to time $t_{i}-t_{i-1}$, and which satisfies the  cardinality bound
$$\frac 1 {t_{i}-t_{i-1}} \log|\mathcal{R}(\psi,t_{i-1})|\leq \frac{\lambda(\widehat f)}{r-1}+\eta.$$
\medskip

\noindent
{\it Case (c): $t_{i}-t_{i-1}=1$.} As $\sigma'$ has $C^r$ size at most $(\eps,\heps)$,
 $$
   r_{\hf_k}(\eps,\heps,1,\hf_k(\hsigma(T'))) \leq \| D\hf_k\|_\supnorm+1 \leq 2\| D\hf_k\|_\supnorm.
 $$
Since $\varepsilon,\widehat \varepsilon$ have been chosen smaller than $\varepsilon_Y$,
Corollary~\ref{coro.reparametrization} of Yomdin's Theorem applies and provides
a family $\cR(\psi,t_i)$ of reparametrizations $\vf$  over the set $T_\theta$
which are $(C^r,f_k, N,\eps,\heps)$--admissible up to time $t_{i}-t_{i-1}=1$, and which satisfy the  cardinality bound
$|\mathcal{R}(\psi,t_{i-1})| \leq \Upsilon\|D\hf_k\|_\supnorm^{1/(r-1)} \times 2 \|D\hf_k\|_\supnorm$, hence
$$\frac 1 {t_{i}-t_{i-1}} \log|\mathcal{R}(\psi,t_{i-1})|
   \leq \log(2\Upsilon)+
        \frac{r}{r-1}\log\|D\hf_k\|_\supnorm.
  $$

This completes the inductive step.

\subsection*{Step 10 (Cardinality of $\cR_n$) }
The families of reparametrizations obtained in step 9 satisfy
$|\mathcal{R}(\psi,t_i)|\leq \exp(\kappa_i(\theta)(t_{i}-t_{i-1})),$
where
\begin{align*}
\kappa_i(\theta)&:=\begin{cases}
h_{c}+(r-1)^{-1}\lambda(\hf)+2\eta & \text{if $t_{i}-t_{i-1}=n_{1,c}$},\quad \text{(case a)},\\
(r-1)^{-1}\lambda(\hf)+\eta & \text{if $t_{i}-t_{i-1}\geq L$},\quad \text{(case b)},\\
\log(2\Upsilon)+\tfrac{ r}{r-1}\log\|D\hf_k\|_\supnorm & \text{if $t_{i}-t_{i-1}=1$},\quad \text{(case c)}.
\end{cases}
\end{align*}
It follows that
$
|\mathcal R_n^\theta|\leq \exp\biggl(\sum_{i=1}^{m}\kappa_i(\theta)(t_{i}-t_{i-1})\biggr).
$

The total length of the blank segments is (trivially) less than $n$, and the total lengths of the segments with color $c$ and fillers is respectively, less than $\beta a_c n+\gamma n$ and $6\gamma n$, by Lemma \ref{l.decomposition}.
{Denoting the total length of colored segments with color $c$ (resp. blank segments, fillers) by $N_c$ (resp. $N_b$, $N_f$),} we find that
\begin{align*}
&\sum_{i=1}^{m}\kappa_i(\theta)(t_{i}-t_{i-1})\leq
\sum_{c} h_c N_c+ \frac{\lambda(\hf)}{r-1}\left(\sum_{c}N_c+N_b\right)+2\eta\sum_{c}N_c+\eta N_b\\
&+\left(\log(2\Upsilon)+\frac{ r}{r-1}\log\|D\hf_k\|_{\supnorm}\right) N_f.
\end{align*}
Using the trivial bounds $\sum_c N_c+N_b=n-N_f\leq n$, $N_b\leq n$, and the bounds $N_c\leq \beta a_c n+\gamma n$, $N_f\leq 6\gamma n$ from Lemma \ref{l.decomposition}, we find that
\begin{align*}
&\sum_{i=1}^{m}\kappa_i(\theta)(t_{i}-t_{i-1})\leq n\sum_c h_c[\beta a_c+\gamma]+n\frac{\lambda(\hf)}{r-1}+ 2\eta n\\
&+6\gamma n\left(\log(2\Upsilon)+\frac{r}{r-1}\log\|D\hf_k\|_{\supnorm}\right).
\end{align*}
Recall that we chose $a_c,h_c,\ell$ and $\gamma$ so that
 \begin{itemize}
  \item[--] $\sum a_c h_c  \leq h(f,\mu_1)$ by the choice of the decomposition of $\mu_1$ in Step 1;
  \item[--] $\gamma\sum_c h_c\leq \ell\gamma\max\{h_c\}<\eta$ by eq.~\eqref{e.gamma2};
  \item[--] $6\gamma(\log(2\Upsilon)+\frac{ r}{r-1}\log\|D\hf_k\|_\supnorm)<\eta$ by eq.~\eqref{e.gamma}.
\end{itemize}
Hence,
$
\displaystyle|\mathcal R_n^\theta|\leq \exp\biggl(\beta h(f,\mu_1)n+\frac{\lambda(\hf)}{r-1}n+ 4\eta n \biggr).
$

Recalling that   $\mathcal R_n:=\bigcup_{\theta}\mathcal R_n(\theta)$ and the number of types is bounded for all $n$ large enough by $\exp[n H(10\gamma)]$, we conclude that
$$
|\mathcal R_n|\leq \exp\biggl(\beta h(f,\mu_1)n+\frac{\lambda(\hf)}{r-1}n
+ 4\eta n+H({10}\gamma) n \biggr).
$$

\subsection*{Step 11 (Completion of the proof)} By  Corollary \ref{lemma entropy reparametrizations}, for all $k$ large enough
$$
h(f_k,\nu_k)\leq \limsup_{n\to\infty}\frac{1}{n}\log|\cR_n|\leq  \beta h(f,\mu_1)+\frac{\lambda(\hf)}{r-1} +(4\eta+H(10\gamma)).
$$
Passing to the limits in the order $k\to\infty$, $\gamma\to 0$, $\eta\to 0$ gives the second part of  Theorem~\ref{t.ergodic4}, and completes its proof. \qed

\section{Supplements}\label{s.supplements}
We prove here the additional properties mentioned in Section~\ref{s.comments}.

\subsection{Discontinuities: construction of the Example~\ref{e.example}}
Let us recall that two transitive hyperbolic sets $K_1,K_2$
are \emph{homoclinically related} if a stable manifold
of $K_1$ has a transverse intersection point with an unstable manifold of $K_2$
and a stable manifold
of $K_2$ has a transverse intersection point with an unstable manifold of $K_1$.
In this case there exists a transitive hyperbolic set that contains $K_1$ and $K_2$.

If $O$ is a periodic orbit, we will denote by $\mu_O$ the invariant probability measure
supported on $O$. We say that a sequence of periodic orbits $(O_k)$
converges \weakstar{} to a measure $\mu$, if the sequence of measures $(\mu_{O_k})$
converges \weakstar{} towards $\mu$.

We say that a $C^\infty$ diffeomorphism $f_0$ belongs to the \emph{Newhouse domain}
if there exist an attracting region $U$ where
$|\det Df_{0}|<1$, a transitive hyperbolic, locally maximal set $K\subset U$ (not reduced to a periodic orbit)
and a $C^\infty$ neighborhood $\cU$ of $f_0$ such that
for any diffeomorphism $f\in \cU$
the hyperbolic continuation of $K$ (still denoted by $K$) admits a stable manifold and an unstable manifold
with a non-transverse intersection.
The Newhouse domain is open by definition, and non-empty by  \cite{Ne79}.

We prove the following more precise version of Example~\ref{e.example}:

\begin{proposition}\label{propAlphaBeta}
The Newhouse domain in $\Diff^\infty(M)$ contains a dense G$_\delta$ subset of diffeomorphisms $f$ with the following property. For any pair of numbers $0<\alpha\leq\beta\leq1$, there is a sequence of ergodic measures $(\nu_k)$ converging \weakstar\ to a measure $\mu$ with $h(f,\mu)>0$ and such that:
 $$
     \lim h(f,\nu_k) = \alpha h(f,\mu) \text{ and } \lim \lambda^+(f,\nu_k) = \beta \lambda^+(f,\mu).
  $$
\begin{remark}
One can choose for $\mu$ any invariant probability measure with positive entropy, ergodic or not, and  carried by the hyperbolic set $K$ associated with the Newhouse domain of $f$.
\end{remark}

\end{proposition}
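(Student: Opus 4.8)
The plan is to realize the prescribed ratios $(\alpha,\beta)$ by concatenating, inside a Newhouse homoclinic tangency, periodic orbits that spend a controlled fraction of time in a ``neutral'' regime near the tangency and the complementary fraction shadowing a high-entropy hyperbolic measure. First I would fix, in the Newhouse domain, the hyperbolic set $K\subset U$ and a point $p$ of non-transverse intersection between $W^s(K)$ and $W^u(K)$; a standard genericity argument produces a dense $G_\delta$ of $f$ for which the tangency can be assumed to have good normal-form coordinates and for which $K$ contains a transitive horseshoe $\Lambda$ of positive topological entropy homoclinically related to the orbit of $p$. Fix $\mu$ an invariant measure with $h(f,\mu)>0$ carried by $\Lambda$; by the standard approximation of hyperbolic measures by periodic orbits (specification on horseshoes), choose periodic orbits $P_n$ with $\mu_{P_n}\to\mu$, $h(f,\mu_{P_n})$ irrelevant (they are periodic), and $\lambda^+(f,\mu_{P_n})\to\lambda^+(f,\mu)$, $\lambda^-(f,\mu_{P_n})\to\lambda^-(f,\mu)$.

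The heart of the construction is to build, for each $k$, an ergodic measure $\nu_k$ as follows. Using the homoclinic relation and a Markov partition, one produces periodic orbits $O_k$ (or, better, Bernoulli-type ergodic measures $\nu_k$ supported on locally maximal hyperbolic sets) which, along a proportion $1-\beta$ of their period, track an orbit that stays near the tangency long enough to experience essentially no net expansion in the unstable direction (a ``neutral block'' in the sense of Definition~\ref{d.neutralblock}, with the relevant Lyapunov-type sum close to zero), and along the complementary proportion $\beta$ shadow the horseshoe dynamics carrying a measure of entropy close to $h(f,\mu)$. Because the contribution of the neutral block to the top Lyapunov exponent is asymptotically $\lambda^-(f,\mu)\cdot(1-\beta)/(\text{normalization})$ while the expanding part contributes $\lambda^+(f,\mu)$, one gets
\begin{equation*}
\lambda^+(f,\nu_k)\longrightarrow \beta\,\lambda^+(f,\mu)
\end{equation*}
exactly as in the heuristic of Section~\ref{heuristic-neutral} (with $\rho=(1-\beta)\lambda^+/(\lambda^+-\lambda^-)$ chosen to match), while the entropy, which can only be created in the expanding part and scales linearly with the fraction of time spent there, satisfies $h(f,\nu_k)\to \beta\, h(f,\mu)$ if we use the full horseshoe entropy. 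To then separate $\alpha$ from $\beta$, one further thins the expanding part: inside the fraction $\beta$ of expanding time one uses only a sub-horseshoe (or a sub-Bernoulli measure) whose entropy is a fraction $\alpha/\beta\le 1$ of the available entropy $h(f,\mu)$, without changing the Lyapunov exponents (conjugate Lyapunov exponents on a horseshoe vary continuously and one can pick invariant measures with the same exponents but smaller entropy — e.g. Bernoulli measures on subshifts). This yields $h(f,\nu_k)\to\alpha\,h(f,\mu)$ and $\lambda^+(f,\nu_k)\to\beta\,\lambda^+(f,\mu)$, with $\nu_k\to\mu$ weak-$*$ since the neutral excursions accumulate on measures supported near the tangency which, after passing to a subsequence and rebalancing, project onto $\mu$ — more precisely one chooses the shadowed measures on the horseshoe side to drift to $\mu$ and the neutral side to contribute vanishingly little mass once renormalized, or one simply absorbs the tangency contribution into $\mu$ by taking $\mu$ non-ergodic with a component supported near $p$.

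The delicate points, and where I expect the main obstacle, are: (1) making the neutral blocks genuinely \emph{invariant} pieces of orbit — i.e. realizing them by actual ergodic measures rather than just pseudo-orbits — which requires the shadowing/closing lemma for the non-uniformly hyperbolic ``horseshoe with a tangency'' and careful use of the Newhouse normal form to guarantee that long sojourns near the tangency give unstable-direction growth with prescribed sub-exponential (here, controllably negative then recovering) behavior; and (2) controlling simultaneously three quantities — the weak-$*$ limit, the entropy, and the top exponent — which forces a three-parameter bookkeeping (fraction of neutral time $\to 1-\beta$, entropy density on the expanding part $\to \alpha/(1-(1-\beta))=\alpha/\beta$, and ensuring the empirical measures converge to $\mu$). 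I would handle (1) by working in a Markov coding where the tangency is replaced, after a blow-up, by a countable-alphabet subshift with a distinguished ``slow'' loop of variable length $L$, so that a neutral block is literally a power of that loop; admissibility and entropy estimates then reduce to combinatorics on the coding. For (2) I would first build $\nu_k$ achieving $(\beta,\beta)$ (entropy ratio equal to $\lambda^+$ ratio) and then, in a second step, replace the expanding sub-blocks by lower-entropy Bernoulli measures of the same exponents to push the entropy ratio down to any $\alpha\le\beta$; the monotone dependence of entropy on the Bernoulli weights makes the intermediate value $\alpha$ attainable. Finally, the remark — that $\mu$ may be taken to be any invariant measure of positive entropy carried by $K$, ergodic or not — follows because the only property of $\mu$ used is the existence of periodic (or Bernoulli) approximants with convergent entropy and exponents, which holds for every such $\mu$ by the variational principle on the transitive hyperbolic set containing $\Lambda$ and the orbit of $p$.
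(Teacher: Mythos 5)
Your proposal captures the right conceptual decomposition — a three-way mix of weight $\alpha$ carrying the entropy, weight $\beta-\alpha$ carrying exponent but no entropy, and weight $1-\beta$ neutral — and this is indeed what the paper does. But the execution in the paper is quite different, and your version has a gap that the paper's argument is specifically designed to close.

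The paper does not construct the ergodic measures $\nu_k$ by concatenating orbit blocks and shadowing. Instead it forms the \emph{non-ergodic} measure $m=\alpha\mu+(\beta-\alpha)\mu_P+(1-\beta)\mu_{O_k}$ on a single transitive, locally maximal hyperbolic set $\Lambda\supset K\cup O_k$, and then invokes a soft lemma (Lemma~\ref{lemmaApproxInEntropy}): on such a $\Lambda$, any invariant measure can be approximated by ergodic ones converging both weak-$*$ \emph{and in entropy}. This sidesteps all of the shadowing/Markov-coding work you flag as delicate. The intermediate weight $\beta-\alpha$ is realized not by a ``sub-Bernoulli measure with the same exponents but smaller entropy'' — a vague prescription that you never pin down — but simply by a periodic orbit $P\subset K$ chosen close to $\mu$, which has zero entropy and exponent within $1/k$ of $\lambda^+(f,\mu)$; periodic orbits are the cleanest way to decouple exponent from entropy.

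The genuine gap in your proposal is the weak-$*$ convergence. Your ``neutral blocks'' are long sojourns near the homoclinic tangency point, so the portion of the empirical measure coming from the neutral part accumulates on the tangency region, \emph{not} on $\mu$. You notice this and try to patch it by ``rebalancing'' or by redefining $\mu$ to have a component near $p$, but that changes the target measure, which is not allowed (the statement lets one prescribe $\mu$). The paper's Lemma~\ref{lemmaWeakPeriodic} is exactly the missing ingredient: it produces periodic orbits $O_k$ that \emph{simultaneously} have $\lambda^+(O_k)\to0$ and $\mu_{O_k}\to\mu_P$ weak-$*$. The mechanism is that, in the Sternberg linearizing chart near $P$, the newly created periodic point $q$ spends an arbitrarily large proportion of its period in the linearization domain near $P$ (while its unstable expansion over the full period is $O(1)$, hence exponent $\to0$), so its empirical measure concentrates on $P$, not on the tangency point. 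Without a construction with this double property — small exponent \emph{and} weak-$*$ proximity to $\mu$ — the sequence $\nu_k$ you build will not converge to the prescribed $\mu$, and the proof fails.
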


\begin{lemma}\label{lemmaWeakPeriodic}
There is a dense G$_\delta$ subset of the Newhouse domain in $\Diff^\infty(M)$,
made of diffeomorphisms $f$ with the following property. For any periodic orbit $P$ contained in the hyperbolic set $K$ associated to $f$,
there exists a sequence of hyperbolic periodic orbits
$O_k$ homoclinically related to $P$ which converge \weakstar{} towards $P$
and satisfy $\lambda^+(O_k)\to 0$.
\end{lemma}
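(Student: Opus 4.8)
The plan is to build, for a diffeomorphism $f_0$ in the Newhouse domain, periodic orbits that are homoclinically related to a given periodic orbit $P\subset K$ yet have top Lyapunov exponent as close to $0$ as we wish, and then to make this a $G_\delta$ statement by a standard Baire-category argument over a countable dense set of candidate periodic orbits.

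\medskip
\textbf{Step 1: A single perturbation producing a weak periodic orbit.} Fix $f_0$ in the Newhouse domain, with attracting region $U$ where $|\det Df|<1$, and the locally maximal transitive hyperbolic set $K\subset U$ that (robustly) has a stable manifold and an unstable manifold meeting non-transversally. Let $P\subset K$ be a periodic orbit. Since $K$ is transitive and the tangency involves branches of $W^s$ and $W^u$ of $K$, these tangency branches are homoclinically related to $P$; by an arbitrarily small $C^\infty$ perturbation supported near the tangency point we may assume the tangency is associated to $P$ itself (a homoclinic tangency of $P$), while staying in the Newhouse domain and keeping $f$ arbitrarily $C^\infty$-close to $f_0$. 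Now I invoke the classical renormalization/thick-horseshoe mechanism at a homoclinic tangency in the dissipative regime $|\det Df|<1$: unfolding the tangency creates, for parameters in a Newhouse interval, new hyperbolic horseshoes, and — crucially — one can produce periodic orbits $O$ that shadow a long stretch of $P$ and then make one near-tangential excursion. Along the excursion the derivative contracts strongly in one direction (because $|\det Df|<1$ forces the expansion along the tangency branch to be killed), so the average of $\log\|Df|_{E^u}\|$ over the orbit of $O$ is pushed arbitrarily close to $0$. Such an $O$ is, by construction, homoclinically related to $P$. Letting the shadowing length of $P$ grow while keeping the excursion's contribution dominant in controlling the exponent, one gets $O$ with $\lambda^+(f,O)$ as small as desired and $\mu_O$ as \weakstar-close to $\mu_P$ as desired.

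\medskip
\textbf{Step 2: From "for some nearby $f$" to a dense open condition, then $G_\delta$.} Fix a countable basis $\mathcal V$ of \weakstar-neighborhoods, a countable set of rationals $\varepsilon>0$, and enumerate the periodic orbits of the locally maximal $K$ (they vary by hyperbolic continuation over the Newhouse domain, so this makes sense locally; globally one works with a countable list of "combinatorial types" of periodic points of $K$). For each triple (periodic orbit type $P$, neighborhood $V\ni\mu_P$, rational $\varepsilon$), let $\mathcal O(P,V,\varepsilon)$ be the set of $f$ in the Newhouse domain for which there exists a hyperbolic periodic orbit $O$ homoclinically related to $P$ with $\mu_O\in V$ and $\lambda^+(f,O)<\varepsilon$. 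This set is open (homoclinic relation, hyperbolicity, the value $\lambda^+(f,O)$ and the measure $\mu_O$ all vary continuously/persist under $C^\infty$-perturbation once $O$ is a hyperbolic periodic orbit), and Step 1 shows it is dense in the Newhouse domain. The countable intersection $\mathcal G:=\bigcap_{P,V,\varepsilon}\mathcal O(P,V,\varepsilon)$ is the required dense $G_\delta$; any $f\in\mathcal G$ has, for every periodic $P\subset K$, a sequence $O_k$ homoclinically related to $P$ with $\mu_{O_k}\to\mu_P$ \weakstar{} and $\lambda^+(f,O_k)\to0$. This is exactly Lemma~\ref{lemmaWeakPeriodic}.

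\medskip
\textbf{Main obstacle.} The delicate point is Step 1: producing periodic orbits homoclinically related to $P$ with $\lambda^+$ arbitrarily close to $0$ by an arbitrarily small smooth perturbation, with quantitative control of both the exponent and the empirical measure. This is where one must use the dissipativity $|\det Df|<1$ together with the renormalization picture near a homoclinic tangency (as in Newhouse's and subsequent work on the Newhouse phenomenon): the tangential excursion contributes a near-parabolic/neutral block to the orbit of $O$, so that the unstable exponent averaged over $O$ is a convex-combination-type average of the (fixed, positive) exponent along the $P$-shadowing part and a vanishingly small contribution from the excursion, and by taking the excursion long relative to a logarithmic scale — or rather by tuning the return combinatorics — the average is driven below any prescribed $\varepsilon$ while $\mu_O$ stays close to $\mu_P$. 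Everything else (openness of homoclinic relations, the Baire argument, persistence of the Newhouse domain) is routine.
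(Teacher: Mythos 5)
Your overall plan is the same as the paper's: reduce to a density statement via Baire, perturb to produce a quadratic homoclinic tangency of $P$, and run a renormalization argument near the tangency to create new periodic orbits homoclinically related to $P$ that shadow $P$ for a long time and have small top exponent. The Baire step (your Step~2) is fine and is exactly what the paper invokes in one line.

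However, there is a genuine gap in Step~1: your stated reason for why $\lambda^+(O)$ is small is not the actual mechanism, and the heuristic would not survive an attempt to make it quantitative. You attribute the small exponent to ``$|\det Df|<1$ forces the expansion along the tangency branch to be killed,'' and later say one can drive the average down ``by taking the excursion long.'' But the excursion through the tangency has \emph{bounded} length $N$ (independent of $n$), so a contraction of bounded size there cannot, on its own, cancel the expansion $\mu^n$ accumulated over the $n$ iterates spent shadowing $P$. The real mechanism, carried out in the paper, is a slope cancellation coming from the quadratic tangency geometry: the renormalized rectangle $R$ near the tangency has vertical size $\sim\mu^{-n}$, its image $f^{n+N}(R)$ is a thin parabolic strip crossing $R$ with slope $s\simeq L\,a\simeq L\,\mu^{-n}$, and the unstable direction of the new periodic point $q\in R\cap f^{n+N}(R)$ is along this curve. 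Its vertical component is $\sim\mu^{-n}$, so pushing it through the $n$ linear iterates near $P$ (which multiply the vertical component by $\mu^n$) gives a vector of order $1$; the bounded transition $\mathcal T$ then contributes a fixed factor $\sim L$. Hence $\|Df^{n+N}_q|_{E^u(q)}\|\sim L$ is bounded uniformly in $n$, and $\lambda^+(O)=\tfrac{1}{n+N}\log\|Df^{n+N}_q|_{E^u(q)}\|\to 0$. Dissipativity $\lambda\mu<1$ enters elsewhere: it makes the width of $f^{n+N}(R)$ (of order $\delta\lambda^n$) much smaller than the width of $R$ (of order $\delta\mu^{-n}$), so the horseshoe-like intersection $R\cap f^{n+N}(R)$ is well defined and $q$ exists and is hyperbolic. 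Without identifying the slope cancellation, the crucial estimate $\lambda^+(O_k)\to 0$ is unjustified.
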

\begin{proof}
Let $f_0\in \cU$.
By an application of Baire's argument, it is enough to find $f$ $C^\infty$ close to $f_0$
with a periodic orbit $O$ homoclinically related to $P$
which is \weakstar{} close to $P$ and has a top Lyapunov exponent close to $0$.
We sketch the proof which uses classical arguments on the behavior near homoclinic tangencies,
and we refer to~\cite{palis-takens} for further details. In order to simplify the presentation,
we assume that $P$ is fixed and the eigenvalues $0<\lambda<1<\mu$ of $Df(P)$ are positive.
By dissipation, $\lambda\cdot \mu<1$.

Since the stable (resp. unstable) manifold of $P$ is dense in the stable
(resp. unstable) lamination of $K$, and since $f_0$ belongs to the Newhouse domain,
one can perturb $f_0$ in such a way that $P$ exhibits a quadratic homoclinic tangency $z\in W^s_{loc}(P)$.
One can also assume that the eigenvalues $\lambda,\mu$ are non-resonant, so that by Sternberg's theorem,
there exists a smooth chart on a neighborhood $U\simeq [-1,1]^2$ of $P$, where the dynamics is
linear: On $[-1,1]\times [-\mu^{-1}, \mu^{-1}]$, $f$ coincides with the map $\cL\colon (x,y)\mapsto(\lambda \cdot x,\mu \cdot y)$.
The local manifolds $W^s_{loc}(P)$ and $W^u_{loc}(P)$ coincide with $\{y=0\}$ and $\{x=0\}$.
Moreover $z$ has a preimage $z'\in W^u_{loc}(P)$ by an iterate $f^N$
and one denotes by $\mathcal T$ the map induced by $f^N$ from
a neighborhood of $z'$ to $z$.
The unstable manifold at $z$ is locally a graph $\{(x,\varphi(x))\}$
and by a suitable rescaling of the axis of $U$, one can require that $D^2\varphi\simeq 1$ near $z$.

Let us fix $\delta>0$ small. When $n$ is large
one considers a rectangle    $$R=z+[-\delta,\delta]\times [a-\delta \mu^{-n},a+\delta \mu^{-n}],$$
where $a$ is chosen such that $z'=(0,a\cdot \mu^{n})$. Note that $C^{-1}\leq a\cdot\mu^n\leq C$ where $C$ depends on the Sternberg linearization domain $U$, but not on $n$.

The rectangle $R$ is mapped by $f^{n+N}$ to a thin curved rectangle
$\mathcal T\circ \cL^n(R)$ whose width is of the order of $\delta \lambda^n$, hence much smaller than
the width of $R$. One perturbs $f$ near $z'$ in such a way that
the transition map $\cT$ is composed with a vertical translation.
The tip of the image can thus be adjusted to be at distance $L\cdot a$
from the rectangle $R$ where $L$ is a large constant independent from $n$.
Therefore $f^{n+N}(R)$ crosses $\{y=0\}$ and also $R$ with a slope $s$
close to $L\cdot a$ (since $D^2\varphi\simeq 1$).
See Figure~\ref{fig-returnmap}.

\begin{figure}[h]
\begin{center}
\includegraphics[width=6cm,angle=0]{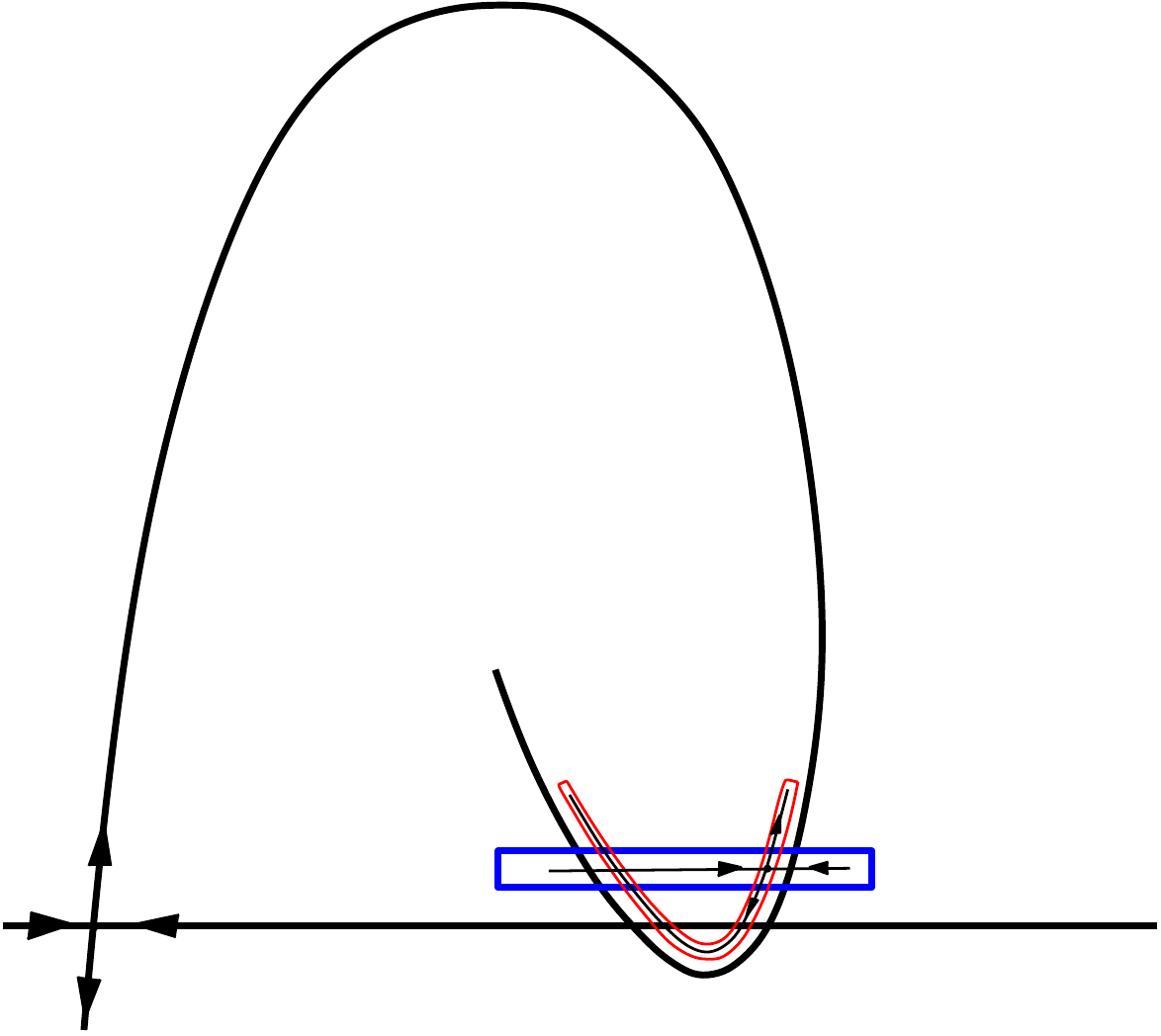}
\put(-63,30){\small $q$}
\put(-163,21){\small $P$}
\put(-110,21){\small $R$}
\put(-92,41){\tiny $f^{n+N}(R)$}
\end{center}
\caption{Return map near an homoclinic tangency.}\label{fig-returnmap}
\end{figure}

Moreover $R\cap f^{n+N}(R)$ contains a periodic point $q$ whose unstable
direction is dilated at the period by a factor of the order of
$s\exp(-\lambda^+n)\simeq L\cdot a\cdot \mu^n$,
which is close to a large constant (comparable to $L$).
As the period $n+N$  of $q$ can be chosen arbitrarily large, the unstable Lyapunov exponent of $q$
is close to $0$.

Note that the unstable manifold of $q$ crosses $f^n(R)$ along its largest dimension (see Figure~\ref{fig-returnmap}), hence crosses $W^s_{loc}(P)$.
The local stable manifold of $q$ is a graph which crosses $R$ horizontally.
The image $f^n(W^s_{loc}(P))$ if close to $f^n(R)$,
crosses $R$, and then the local stable manifold of $Q$.
Hence $P$ and the orbit of $q$ are homoclinically related.
As the $n$ first iterates of $q$ belong to the linearization domain $U$,
the orbit of $q$ spends an arbitrarily large proportion of time in any neighborhood of $P$,
as the period $n+N$ goes to infinity, proving that the invariant probability measure supported on the orbit of $q$
gets arbitrarily close to $P$ in the \weakstar{} topology.
\end{proof}

\newcommand\cE{\mathcal E}
\newcommand\cA{\mathcal A}
\newcommand\cB{\mathcal B}

We will also need the following fact:

\begin{lemma}\label{lemmaApproxInEntropy}
Let $\Lambda$ be a locally maximal, hyperbolic compact set carrying an invariant probability measure $m$, not necessarily ergodic. Then there exist ergodic invariant probability measures $m_k$ carried by $\Lambda$ which converges to $m$ in the \weakstar{} topology and in entropy: $\lim_k h(f,m_k)=h(f,m)$.
\end{lemma}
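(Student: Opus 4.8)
\textbf{Proof plan for Lemma~\ref{lemmaApproxInEntropy}.}

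The plan is to reduce the statement to the well-known approximation results available for Markov partitions of locally maximal hyperbolic sets. First I would fix a Markov partition $\mathcal P$ of $\Lambda$ of small diameter, so that $(\Lambda, f)$ is a factor of a subshift of finite type $(\Sigma_A, \sigma)$ via a H\"older coding map $\pi\colon \Sigma_A \to \Lambda$, and this factor map is finite-to-one and injective on a residual set of full measure for every ergodic invariant measure. Since $\pi$ is a factor map with uniformly bounded fibers, entropy is preserved: $h(\sigma, \tilde m) = h(f, \pi_* \tilde m)$ for every $\sigma$-invariant probability measure $\tilde m$, and \weakstar{} convergence of measures on $\Sigma_A$ projects to \weakstar{} convergence on $\Lambda$. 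So it suffices to prove the analogous statement on the subshift: given any $\sigma$-invariant probability measure $\tilde m$ on $\Sigma_A$ (obtained by lifting $m$, which is possible since any invariant measure of a topological factor lifts), there are ergodic measures $\tilde m_k$ converging to $\tilde m$ \weakstar{} and in entropy.

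The heart of the argument is therefore an approximation statement for invariant measures of a transitive (or irreducible-component-by-irreducible-component) SFT. For ergodic $\tilde m$ this is classical: ergodic measures of an SFT can be \weakstar{}-and-entropy approximated by measures supported on periodic orbits from below (Katok's horseshoe theorem or the specification property give this), but to get convergence in entropy rather than just from below one uses instead that Bernoulli/Markov measures, or measures supported on transitive subshifts of finite type (subsystem SFTs generated by longer and longer allowed words), are entropy-dense among ergodic measures — this is a consequence of the variational principle applied to nested subsystems together with upper semicontinuity of entropy on these compact symbolic systems. For non-ergodic $\tilde m$, I would use the ergodic decomposition $\tilde m = \int \tilde m_\xi \, d\tau(\xi)$ together with the fact (stated in the excerpt, see \eqref{eq-ergdec-h} and the surrounding discussion) that $h(\sigma, \tilde m) = \int h(\sigma, \tilde m_\xi)\, d\tau(\xi)$. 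Approximating $\tilde m$ by a finite convex combination $\sum_j c_j \tilde m_{\xi_j}$ — close in \weakstar{} by density of such combinations in the simplex of invariant measures, and close in entropy by the integral formula together with the fact that $\mu \mapsto h(\sigma,\mu)$ is affine — reduces us to the case of a finite convex combination of ergodic measures. Finally, a finite convex combination of ergodic measures on an SFT can itself be approximated by a single ergodic measure: concatenate long generic blocks for $\tilde m_{\xi_1}, \dots, \tilde m_{\xi_r}$ in the right proportions $c_1, \dots, c_r$ to produce a quasi-periodic orbit, or more cleanly pass to a transitive subsystem containing the supports and invoke that its ergodic measures are \weakstar-dense and entropy-dense; the entropy of the concatenated construction is at least $\sum_j c_j h(\sigma,\tilde m_{\xi_j}) - o(1)$ by a counting argument, while it cannot exceed the limit by upper semicontinuity, giving convergence in entropy.

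Pushing the resulting ergodic symbolic measures $\tilde m_k$ forward by $\pi$ yields ergodic measures $m_k$ on $\Lambda$ with $m_k \to m$ \weakstar{} and $h(f,m_k) = h(\sigma, \tilde m_k) \to h(\sigma,\tilde m) = h(f,m)$, as required.

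\textbf{Main obstacle.} The delicate point is the passage from "\weakstar{} density plus entropy from below" to genuine convergence in entropy for non-ergodic $m$: one must simultaneously control the \weakstar{} position and match the entropy, and the latter requires the entropy-density of "nice" measures (Bernoulli measures on subsystems, or concatenation measures) among invariant measures of the SFT, which rests on the variational principle for subsystems and the affinity and upper semicontinuity of the entropy map on symbolic systems. Handling the reducible case (when $\Lambda$ is not transitive, so $\Sigma_A$ decomposes into several irreducible SFTs) requires doing the construction on each irreducible piece carrying mass and then concatenating across pieces, which is routine but needs care so that the concatenation remains an allowed orbit — this is precisely where local maximality and the Markov property of $\mathcal P$ are used.
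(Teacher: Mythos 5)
Your argument is correct in its essentials, but it takes a noticeably different route from the paper. Both start by passing to a symbolic model via a Markov partition and using boundedness of the fibers (Ledrappier--Walters) to transfer entropy. The divergence is in how the approximating ergodic measures are produced on the subshift: the paper approximates the given invariant measure by the finite-memory Markov measure with the same $N$-block distribution (which has entropy $\geq h(m)$ and converges to $h(m)$ as $N\to\infty$), and then slightly perturbs the transition matrix to make it fully supported, hence mixing and in particular ergodic. You instead invoke the ergodic decomposition, truncate it to a finite convex combination $\sum c_j m_{\xi_j}$ (close in entropy by affinity), and then build a single ergodic measure by concatenating long generic blocks of the $m_{\xi_j}$ in the right proportions. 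Your concatenation step is the classical construction of Grillenberger/Sigmund and is correct, though you have left the counting estimate for the entropy of the concatenated measure as a claim rather than spelling it out; it can also be replaced by the cleaner citation that for systems with specification (in particular transitive SFTs) the ergodic measures are entropy-dense among all invariant measures. The paper's route avoids the ergodic decomposition entirely and is shorter, since a fully supported finite-memory Markov chain is automatically ergodic. Your route has the virtue of being more elementary and of making explicit where the affinity and upper semicontinuity of entropy enter.

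One concrete error: your closing remark that the reducible case can be handled by ``concatenating across pieces'' is false. If $\Sigma_A$ decomposes into disjoint irreducible components, there is no allowed transition between them, so no ergodic measure can approximate a nontrivial convex combination of measures supported on different components. (Concretely, take $\Lambda$ to be two disjoint hyperbolic fixed points and $m=\tfrac12\delta_{p_1}+\tfrac12\delta_{p_2}$; no ergodic measure on $\Lambda$ is weak-$*$ close to $m$.) Thus the lemma as stated actually requires $\Lambda$ transitive. The paper's proof silently makes the same reduction (``it is enough to show this for a transitive subshift of finite type''), and the hypothesis is harmless since in its sole application $\Lambda$ is chosen transitive; but your attempt to repair the non-transitive case would not succeed and should be dropped in favor of simply assuming transitivity.
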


\begin{proof}[Sketch of proof]
This is routine, even if we could not locate an exact reference. Observe that it is enough to show this for a transitive subshift of finite type $\Sigma$. Given an invariant probability measure on $\Sigma$, approximate it by a Markov measure with finite memory $N$. Taking $N$ sufficiently large, we can make this approximation arbitrarily close, both  \weakstar\ and in entropy. By a small modification of the transition probabilities we can make the measure fully supported on $\Sigma$, and therefore ergodic.
\end{proof}

\begin{proof}[Proof of Proposition~\ref{propAlphaBeta}]
For convenience, we fix some distance $d$ on the space of Borel probability measures of $M$,
compatible with the \weakstar\ topology.
Let $f$ be a diffeomorphism with a locally maximal transitive hyperbolic set $K$ as given by Lemma~\ref{lemmaWeakPeriodic}.
Since $K$ is not reduced to a single periodic orbit, it carries invariant probability measures with positive entropy.
We choose any one of them.
Lemma~\ref{lemmaWeakPeriodic} yields a sequence $(O_k)_{k\ge1}$ of hyperbolic periodic orbits
homoclinically related to $K$ such that $d(\mu_{O_k},\mu)<1/k$ and $|\lambda^+(O_k)|<1/k$.

Fix $k\geq1$. Let $P$ be a periodic orbit in $K$ so close to $\mu$ that $d(\mu_P,\mu)<1/k$ and  $|\lambda^+(f,\mu_P)-\lambda^+(f,\mu)|<1/k$ (by continuity of the unstable bundle over $K$).

Let $\Lambda$ be a transitive, hyperbolic, locally maximal invariant set containing $K\cup O_k$.  Define $m=\alpha\mu+(\beta-\alpha)\mu_P+(1-\beta)\mu_{O_k}$ on $\Lambda$. Now Lemma~\ref{lemmaApproxInEntropy} yields $\nu_k$ such that:
 \begin{enumerate}[\quad --]
  \item $|h(f,\nu_k)-\alpha h(f,\mu)|=|h(f,\nu_k)-h(f,m)|<1/k$;
  \item $d(\nu_k,\mu)<d(\nu_k,m)+1/k<2/k$;
  \item $|\lambda^+(f,\nu_k)-\beta\lambda^+(f,\mu)|<|\lambda^+(f,\nu_k)-\lambda^+(f,m)|+1/k<2/k$.
 \end{enumerate}
The sequence $(\nu_k)_{k\geq1}$ is as claimed.
\end{proof}

\subsection{Variant inequality on Lyapunov exponents: proof of Corollary~\ref{c.variant}}
By the Oseledets theorem,
 $\lambda^+(f,\mu)+\lambda^-(f,\mu)=\int\log|\det Df|d\mu,$
which is continuous with respect to $(f,\mu)$ in the $C^1\times$\weakstar\ topology.  Since the sum is continuous, the discontinuities in the summands must cancel out, whence
\begin{equation}\label{sym-disc}
\lambda^{+}(f,\mu)-\lim_{k\to\infty} \lambda^{+}(f_k,\nu_k)=-\left(\lambda^{-}(f,\mu)-\lim_{k\to\infty} \lambda^{-}(f_k,\nu_k)\right).
\end{equation}
Now, Theorem \ref{t.ergodic} is equivalent to the statement
\begin{equation}\label{gold-var}
\lambda^+(f,\mu)-\lim_{k\to\infty}\lambda^+(f_k,\nu_k)\leq \lambda^+(f,\mu)\left(1-\frac{\lim_k h(f_k,\nu_k)}{h(f,\mu)}\right).
\end{equation}
Applying this to $f_k^{-1}$ and noting \eqref{sym-disc} and  $\lambda^-(f,\mu)=-\lambda^+(f^{-1},\mu)$,  we obtain \eqref{stronger-ineq}.
{Since}  $0<-\lambda^-(f,\mu)<\lambda^+(f,\mu)$, this is stronger than the conclusion of Theorem \ref{t.ergodic}.
\qed

\subsection{Sequences of non-ergodic measures}
We state and prove a version of Theorem~\ref{t.ergodic4} removing its assumption that the converging measures $\nu_k$ are ergodic.

\begin{corollary}\label{c.non-ergodic-nu}
Fix a real number $r>2$.
For every $k\geq 1$, let  $f_k\in\Diff^r(M)$,
$\nu_k$ be an $f_k$-invariant measure, not necessarily ergodic, and $\hnu_k$ be a lift satisfying $\hlambda(\hf_k,\hnu_k)=\lambda^+(f_k,\nu_k)$. Let us assume that:
\begin{enumerate}[\quad(1)]
  \item $\lim_k\lambda^+(f_k,\nu_k)$ and $\lim_k h(f_k,\nu_k)$ exist
  and $\liminf_k \int\min( \lambda^+(f_k,x), 0) d\nu_k(x)= 0,$
  \item $f_k\overset{\scriptscriptstyle r-\text{bd}}\longrightarrow f$ for some $f\in\Diff^r(M)$,
  \item $\widehat \nu_k\overset{w^*}\to \widehat \mu$ for some $\widehat f$-invariant probability measure $\widehat \mu$ (perhaps non-ergodic) on $\widehat M$.
 \end{enumerate}
Then there exist $\beta\in [0,1]$, two
$f$-invariant measures $\mu_0,\mu_1$
with $\hf$-invariant lifts $\hmu_0,\hmu_1$ s.t.

\begin{enumerate}[\quad(a)]
\item\label{e.eqD0} $\widehat \mu=(1-\beta)\widehat \mu_0+\beta\widehat \mu_1;$
\item\label{e.eqD1} $\lim_{k\to\infty} \lambda^+(f_k,\nu_k) =  \beta\lambda^+(f,\mu_1);$
\item\label{e.eqD2} $\lim_{k\to\infty} h(f_k,\nu_k)-\tfrac1{r-1}\lambda(\hf)\leq \beta \left(h(f,\mu_1)+\tfrac{\lambda(f)}{r}\right);$
\item\label{e.eqD3} if $\beta<1$,\ $\hlambda(\hf,\hmu_0)=0$;
\item\label{e.eqD4} if $\beta>0$, then $\hlambda(\hf,\hmu_1)=\lambda^+(f,\mu_1)$ and $\lambda^+(f,x)>0$ for
$\mu_1$-a.e. $x$.
\end{enumerate}
\end{corollary}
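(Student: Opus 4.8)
\textbf{Proof plan for Corollary~\ref{c.non-ergodic-nu}.}
The plan is to reduce the non-ergodic statement to the ergodic case already handled by Theorem~\ref{t.ergodic4}, by decomposing each $\nu_k$ into its ergodic components and approximating by a single ergodic measure whose entropy and top exponent are controlled. First I would use the ergodic decomposition $\nu_k=\int (\nu_k)_\xi\, d\pi_k(\xi)$, together with the corresponding disintegration of the lift $\hnu_k=\int (\hnu_k)_\xi\, d\pi_k(\xi)$; by Lemma~\ref{Lemma-lift}, almost every $(\hnu_k)_\xi$ is a lift of $(\nu_k)_\xi$, and since $\hlambda(\hf_k,\hnu_k)=\lambda^+(f_k,\nu_k)=\int\lambda^+(f_k,x)\,d\nu_k(x)$ while always $\hlambda(\hf_k,(\hnu_k)_\xi)\le\lambda^+(f_k,(\nu_k)_\xi)$, equality must hold for $\pi_k$-a.e.\ $\xi$, i.e.\ $(\hnu_k)_\xi$ is the unstable lift whenever $(\nu_k)_\xi$ is hyperbolic of saddle type. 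The hypothesis $\liminf_k\int\min(\lambda^+(f_k,x),0)\,d\nu_k(x)=0$ says that the $\nu_k$-mass of ergodic components with non-positive top exponent, weighted by the size of that exponent, is asymptotically negligible; combined with Ruelle's inequality and $\lim_k h(f_k,\nu_k)>0$ this lets me discard a part of $\nu_k$ of small total weight and small exponent, and assume (up to a subsequence and a controlled error going to $0$) that $\nu_k$-a.e.\ ergodic component is hyperbolic of saddle type.

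Next I would apply Katok's entropy formula and Zang's theorem in the ergodic-component form to pass from $\nu_k$ to an ergodic measure. Concretely, for each $k$ pick $\varepsilon_k\to 0$ and, using the ergodic decomposition identities $h(f_k,\nu_k)=\int h(f_k,(\nu_k)_\xi)\,d\pi_k$ and $\lambda^+(f_k,\nu_k)=\int\lambda^+(f_k,(\nu_k)_\xi)\,d\pi_k$, choose an ergodic component $(\nu_k)_{\xi_k}$ (or rather a finite convex combination of ergodic components, then a nearby ergodic measure as in Lemma~\ref{lemmaApproxInEntropy}, transplanted to the non-uniformly hyperbolic setting via a dominated/Pesin block argument) whose entropy is at least $h(f_k,\nu_k)-\varepsilon_k$, whose top exponent differs from $\lambda^+(f_k,\nu_k)$ by at most $\varepsilon_k$, and which is weak-$*$ close to $\nu_k$ and whose unstable lift is weak-$*$ close to $\hnu_k$. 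The only genuine subtlety compared with the uniformly hyperbolic case is producing a \emph{single ergodic} measure $\nu_k'$ with these three properties: here one cannot simply invoke a subshift of finite type, so I would instead keep $\nu_k$ itself and redo Step~9 of the proof of Theorem~\ref{t.ergodic4} directly for the non-ergodic $\nu_k$ — the reparametrization construction there never used ergodicity of $\nu_k$ in an essential way, only the existence of conditional measures on unstable manifolds and the neutral decomposition of $\hnu_k$-typical orbits, which is exactly what Proposition~\ref{p.neutral-block} supplies for the (possibly non-ergodic) $\hnu_k$. This avoids the approximation step entirely; the extra term $\beta\lambda(f)/r$ in (c) then comes from Zang's/Ledrappier--Young's theorem applied to a non-ergodic measure, where one must allow for the sub-exponentially many local unstable manifolds needed to see all ergodic components, bounded by the volume-growth rate $\lambda(f)/r$ in the spirit of \cite{Burguet}, \cite{BurguetFibered2017}.

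Once the reparametrization bound $h(f_k,\nu_k)\le\beta h(f,\mu_1)+\lambda(\hf)/(r-1)+(\text{error})$ is in hand — with the additional $\beta\lambda(f)/r$ absorbed — the remaining items follow exactly as in Section~\ref{s.conclude}: Proposition~\ref{p.neutral-block} applied to $\hf_k,\vf_k,\hnu_k$ produces $\hm_0,\hm_1$ with $\hmu=\hm_0+\hm_1$, $\int\vf\,d\hm_0=0$, and $\hm_1$ carried by a set of strictly positive $\vf$-averages; then $\beta:=\hm_1(\hM)$, $\hmu_i:=\hm_i/\hm_i(\hM)$, and the arguments of Claims~\ref{c.non-one}--\ref{c.lift-measure} carry over verbatim (the hypothesis $\liminf_k\int\min(\lambda^+(f_k,x),0)\,d\nu_k=0$ replaces ``$\nu_k$ hyperbolic'' in the verification of Claim~\ref{c.sign}, since it is precisely the non-ergodic analogue of having one non-positive exponent with controlled defect), giving (a), (b), (d), (e) and the sign statement $\lambda^+(f,x)>0$ $\mu_1$-a.e. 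I expect the main obstacle to be exactly the point flagged above: justifying that the entropy reparametrization machinery of Section~\ref{s.reparametrization}, in particular Corollary~\ref{lemma entropy reparametrizations} which rests on Zang's Theorem~\ref{t.zang}, applies to a non-ergodic hyperbolic $\nu_k$ at the cost of only the additive term $\lambda(f)/r$ per unit of $\beta$; this is where the robust-tail-entropy estimates of \cite{BurguetFibered2017} enter, and where the constant $r$ (as opposed to $r-1$) in that term must be tracked carefully.
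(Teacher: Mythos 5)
Your plan diverges from the paper's proof and has a gap at its central step. The paper does not attempt to extend Zang's Theorem~\ref{t.zang} or Corollary~\ref{lemma entropy reparametrizations} to non-ergodic measures. Instead it discretizes the ergodic decomposition of each $\nu_k$ into a \emph{finite} family of $f_k$-ergodic measures $\nu^\eps_{k,i}$, $1\leq i\leq N^\eps$, using a compactness argument in $\Prob(M)$ to make $N^\eps$ independent of $k$ and to control all three quantities (weak-$*$ distance, entropy, top exponent) within $\eps$ uniformly over the atoms of a partition of the ergodic decomposition; it then applies the already proven ergodic Theorem~\ref{t.ergodic4} to each sequence $(f_k,\hnu^\eps_{k,i})_{k\geq1}$, recombines the resulting decompositions $(\beta^\eps_i,\hmu^\eps_{0,i},\hmu^\eps_{1,i})$ by convexity of $h$ and $\lambda^+$, and finally lets $\eps\to0$. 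The term $\lambda(f)/r$ in item~(c) arises only at that last limit: $\mu^{\eps_j}_1\to\mu_1$ weak-$*$ and the entropy is merely upper semi-continuous, with defect at most $\lambda(f)/r$ by eq.~\eqref{e.improved-eq.2} (the Yomdin--Newhouse--Burguet estimate applied to the \emph{constant} sequence of diffeomorphisms equal to $f$).

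You dismiss the approximation route because you believe a \emph{single} ergodic measure close to $\nu_k$ would be required and that Lemma~\ref{lemmaApproxInEntropy} is unavailable outside uniform hyperbolicity; but the paper never needs a single ergodic approximant --- a finite family of ergodic components suffices, and affinity of $h$ and $\lambda^+$ handles the recombination. Having set that aside, you propose to redo Step~9 of the proof of Theorem~\ref{t.ergodic4} directly for a non-ergodic $\nu_k$. This requires a non-ergodic version of Zang's theorem (hence of Corollary~\ref{lemma entropy reparametrizations}), which you explicitly flag as the ``main obstacle'' but do not resolve; and the claim that the resulting cost is exactly $\beta\lambda(f)/r$, attributed to ``sub-exponentially many local unstable manifolds,'' is asserted rather than derived --- in particular, the appearance of the denominator $r$ rather than $r-1$ is unexplained. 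The paper's route never invokes any such extension: every appeal to Zang is for an ergodic measure $\nu^\eps_{k,i}$, and the $\lambda(f)/r$ penalty is paid once, at the very end, as a standard upper semi-continuity estimate for a fixed smooth map.
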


Note that if $f_k\in \Diff^\infty(M)$ and $f_k\to f$ in $C^\infty$, then the corollary applies for all $r>2$, and Property~\eqref{e.eqD2} becomes $
\lim_{k\to\infty} h(f_k,\nu_k)\leq \beta h(f,\mu_1)$.

Note also that the conclusions (a)-(e) are the same as in Theorem~\ref{t.ergodic4}, except for the extra term $\lambda(f)/r$ on the right hand side. See the following remark on this term.

\begin{remark}
Our proof relies on discretizing the ergodic decompositions of the measures $\nu_k$ and applying Theorem~\ref{t.ergodic4} to the atoms thus defined and taking a limit.
This limiting process is responsible for the term $\lambda(f)/r$ in the entropy estimate in Property~\eqref{e.eqD2}.

Using the decomposition in the projective bundle is the key to avoid any such loss in the Lyapunov estimate~eq.\ \eqref{e.eqD1} and is therefore essential for our proof of this generalization.
\end{remark}

\newcommand\hZ{\widehat Z}

\begin{proof}
Let $\mathbb P(M)$ denote the set of Borel probability measures on $M$, and let $d$ be the $L^1$-Wasserstein distance over $\Prob(M)$.  This distance  is compatible with the \weakstar\ topology and satisfies $d(\sum_{i=1}^N \alpha_i\mu_i,\sum_{i=1}^N \alpha_i\nu_i)\leq \sum_{i=1}^N \alpha_i d(\mu_i,\nu_i)$ for all convex combinations.

We fix some $\eps>0$ and discretize the ergodic decompositions
 $$
   \nu_k = \int_X \nu_{k,\xi}\, dP_k(\xi).
 $$
By compactness of $\Prob(M)$, there are 
measurable partitions $X=X^\eps_{k,1}\sqcup\dots\sqcup X^\eps_{k,{ N^\eps}}$ with number of elements $N^\eps$ independent of $k$, and  with the following property for every $1\leq i\leq N^\eps$:
There is an $\hf_k$-ergodic measure $\hnu^\eps_{k,i}$ with projection $\nu^\eps_{k,i}$ satisfying $\hlambda(\hf_k,\hnu_{k,i}^\eps)=\lambda^+(f_k,\nu_{k,i}^\eps)$ 
and, for $P_k$-a.e. $\xi\in X^\eps_{k,i}$,
 \begin{equation}\label{eq-discretize}
 d(\nu_{k,\xi},\nu^\eps_{k,i})<\eps,\; |h(f_k,\nu_{k,\xi})-h(f_k,\nu^\eps_{k,i})|<\eps,\text{ and }
  \end{equation}
  \begin{equation}\label{eq-discretize2}
 \lambda^+(f_k,\nu_{k,\xi})-\tfrac 1 k \;<\;  \lambda^+(f_k,\nu^\eps_{k,i})\; <\;  \lambda^+(f_k,\nu_{k,\xi})+\eps.
 \end{equation}
Passing to a subsequence, we may assume without loss of generality the existence of the following limits:
 $$
   \hmu^{\eps}_i:=\lim_k \hnu^{\eps}_{k,i},\;\; \lim_k h(f_k,\nu^\eps_{k,i}),\; \;\lim_k \lambda^+(f_k,\nu^\eps_{k,i}), \; \;\alpha^\eps_i := \lim_k P_k(X^\eps_{k,i}).
 $$

For each $i=1,\dots,N$, we set $ \mu^\eps_i:=\hpi_*(\hmu^\eps_i)$.
By inequalities~\eqref{eq-discretize2} and
by our assumption $\liminf_k \int\min( \lambda^+(f_k,x), 0) d\nu_k(x)= 0$,
it follows that either $\alpha^\varepsilon_i=0$ or $\lim_k\lambda^+(f_k,\nu^\eps_{k,i})\geq 0$.
One can thus apply Theorem~\ref{t.ergodic4}
\color{black}
to the sequence $(f_k,\hnu^\eps_{k,i})_{k\geq1}$ converging to $(f,\hmu^\eps_i)$ and obtain  a decomposition
 $$
   \hmu^\eps_i=(1-\beta^\eps_i)\hmu^\eps_{0,i}+\beta^\eps_i\hmu^\eps_{1,i}
 $$
for some $0\leq\beta^\eps_i\leq1$ and $\hmu^\eps_{0,i},\hmu^\eps_{1,i}$ $\hf$-invariant measures such that
$$
    \lim_k h(f_k,\nu^\eps_{k,i})\leq \beta^\eps_i h(f,\mu^\eps_{1,i}) + \frac{\lambda(\hf)}{r-1}
     \text{ and } \lim_k\lambda^+(f_k,\nu^\eps_{k,i})=\beta^\eps_i \lambda^+(f,\mu^\eps_{1,i})
    =\beta^\eps_i \hmu^\eps_{1,i}(\vf).
    $$
We collect all the pieces, setting:
 $$
   \hnu^{\eps}_k:=\sum_{i=1}^{ N^\eps} \alpha^\eps_{k,i}\hnu^{\eps}_{k,i},\;
   \hmu^\eps:= \sum_{i=1}^{N^\eps} \alpha^\eps_i \,\hmu^{\eps}_{i},\;
   \beta^\eps:= \sum_{i=1}^{ N^\eps} \alpha^\eps_i \beta^{\eps}_{i},\;
   \hmu^\eps_s:= \sum_{i=1}^{ N^\eps} \alpha^\eps_i \hmu^{\eps}_{s,i} \text{ for }s=0,1.
 $$
We denote by $\nu^\eps_k,\mu^\eps,\mu^\eps_s$ the projections by $\hpi$.
We have that:
 $$
   \hmu^\eps = (1-\beta^\eps)\hmu^\eps_0 + \beta^\eps\hmu^\eps_1.
 $$
By eq.~\eqref{eq-discretize},
 $$
   d(\nu^\eps_k,\nu_k)\le\eps,\;
   |h(f_k,\nu^\eps_k) - h(f_k,\nu_k)|<\eps,\;
   |\lambda^+(f_k,\nu^\eps_k) - \lambda^+(f_k,\nu_k)|<\eps.
 $$
Using that the entropy and average exponents are affine functions, we get
 \begin{equation}\label{eq-h-eps}
     \lim_k h(f_k,\nu_k) \leq \liminf_k h(f_k,\nu^\eps_k) +\varepsilon \leq  \beta^\eps h(f,\mu^\eps_1) + \frac{\lambda(\hf)}{r-1} + \eps
 \end{equation}
and
 \begin{equation}\label{eq-lambda-eps}
     \left|\lim_k\lambda^+(f_k,\nu_k) - \lim_k\lambda^+(f_k,\nu^\eps_k)\right|\le\eps \text{ so }
     \left|\lim_k\lambda^+(f_k,\nu_k) -\beta^\eps \lambda^+(f,\mu^\eps_{1})\right|\le\eps.
 \end{equation}
To conclude, we pick a sequence of numbers $(\eps_j)_{j\geq1}$ decreasing to zero along which the three sequences $\hmu^\eps$, $\hmu^\eps_0$ and $\hmu^\eps_1$ converge to measures $\hmu$, $\hmu_0$ and $\hmu_1$. Since $d(\mu^\eps,\mu)\leq\eps$, we must have $\hpi_*\hmu=\mu$. We define $\mu_s:=\pi_*(\hmu_s)$ for $s=0,1$. Property~\eqref{e.eqD0} follows by continuity.

By Yomdin theory (see the discussion after eq.\ \eqref{e.improved-eq.2}),
 $$
   \lim_j h(f,\mu^{\eps_j}_1) \leq h(f,\mu_1) + \lambda(f)/r,
 $$
yielding Property~\eqref{e.eqD2}.
The decomposition converges in the projective bundle, hence, recalling that $\vf(x,E):=\log\|D_xf|_E\|$ is  a continuous function,
 $$
   \lim_j \lambda^+(f,\mu^{\eps_j}_1) = \lim_j \hmu^{\eps_j}_1(\vf) = \lambda^+(f,\mu_1).
 $$
Property \eqref{e.eqD1} follows.
For each $i$, one has $\hmu^\eps_{0,i}(\vf)=0$ once $\beta^\varepsilon_i\neq 0$;
this implies Property \eqref{e.eqD3}.

We now turn to Property \eqref{e.eqD4}.
We have $\beta \hlambda(\hf,\hmu_1)=\hlambda(\hf,\hmu)=\lim_{k\to\infty} \hlambda(f_k,\hnu_k)$
and by Property~\eqref{e.eqD1}:
$\lim_{k\to\infty} \hlambda(f_k,\hnu_k)=\lim_{k\to\infty} \lambda^+(f_k,\nu_k) =  \beta\lambda^+(f,\mu_1)$.
This gives the first part, assuming $\beta>0$.

Note that $\lambda^+(f,x)\geq0$ for $\mu$-a.e. $x$ since otherwise the ergodic decomposition of $\mu$ would contain a source as an atom and therefore $\nu_k$ would contain the same atom with uniform weight for large $k$, in contradiction to our assumption
$\liminf_k \int\min( \lambda^+(f_k,x), 0) d\nu_k(x)= 0$. Let
 $$
    \hZ:=\{\hx\in\hM : \lambda^+(f,\hpi(\hx))=0\}.
 $$
We assume $0<\hmu_1(\hZ)<1$ since it is otherwise easy to conclude. We set:
 $$
    \beta':=\beta(1-\hmu_1(\hZ)),\;
    (1-\beta')\hmu_0':=(1-\beta)\hmu_0+\beta\frac{\hmu_1(\,\cdot\cap\hZ)}{\hmu_1(\hZ)},\;
    \beta'\hmu_1':=\beta\frac{\hmu_1(\,\cdot\setminus\hZ)}{1-\hmu_1(\hZ)}
 $$
obtaining a new decomposition $\hmu=(1-\beta')\hmu'_0+\beta'\hmu_1'$.
Note that, setting $\mu'_1:=\hpi_*\hmu'_1$,
 $$
    \beta' h(f,\mu'_1) = \beta h(f,\mu_1) \text{ and } \beta'\lambda^+(f,\mu'_1)= \beta\lambda^+(f,\mu_1).
 $$
since the probability measure $\frac{\mu_1(\,\cdot\cap Z)}{\hmu_1(Z)}$ has both  zero entropy and zero top Lyapunov exponent.
This concludes the proof of the Corollary~\ref{c.non-ergodic-nu}.
\end{proof}

\small
\bibliographystyle{plain-like-initial}
\bibliography{ExponentContinuity-bib-file}
\bigskip

 \end{document}